\newcommand\relphantom[1]{\mathrel{\phantom{#1}}}
\numberwithin{equation}{chapter}
\numberwithin{figure}{chapter}
\newtheorem{Theorem}{Theorem}
\numberwithin{Theorem}{chapter}
\newtheorem{Lemma}[Theorem]{Lemma}
\newtheorem{Definition}[Theorem]{Definition}
\newtheorem{Remark}[Theorem]{Remark}
\newtheorem{Corollary}[Theorem]{Corollary}
\begin{document}

\begin{titlepage}
\centering
\LARGE 
\textcaps{University of Reading} \\ 
\textcaps{Department of Mathematics and Statistics}\\
\vspace{6cm}
Stochastic Resonance for a Model with Two Pathways\\
\vspace{5cm}
\Large Tommy Liu 
\vfill
\normalsize
Thesis submitted for the Degree of Doctor of Philosophy\\
September 2016
\end{titlepage}

\newpage\null\thispagestyle{empty}\addtocounter{page}{-1}\newpage


\chapter*{Abstract}
\addcontentsline{toc}{chapter}{Abstract}

In this thesis we consider stochastic resonance for a diffusion with drift given by a potential, which has two metastable states and two pathways between them. Depending on the direction of the forcing the height of the two barriers, one for each path, will either oscillate alternating or in synchronisation. 

We consider a simplified model given by  discrete and continuous 
 time Markov Chains with two states. 
This was done for  alternating and synchronised wells. 
The invariant measures are derived for both cases and shown to be constant for the synchronised case. 
A PDF for the escape time from an oscillatory potential is reviewed. 

Methods of detecting stochastic resonance are presented, which are linear response, signal-to-noise ratio, energy, out-of-phase measures, relative entropy and entropy.  
A new statistical test called the  conditional Kolmogorov-Smirnov test is developed, which can be used to analyse stochastic resonance. 

An explicit two dimensional potential is introduced, the critical point structure derived and the dynamics, the invariant state  and escape time studied numerically.

The six measures are unable to detect the stochastic resonance in the case of synchronised saddles. The distribution of escape times however not only shows a clear sign of stochastic resonance, but changing the direction of the forcing from alternating to synchronised  saddles an additional resonance at double the forcing frequency starts to appear. The conditional KS test  reliably detects the stochastic resonance even for forcing quick enough and for data so sparse that the stochastic resonance is not obvious directly from the histogram of escape times.

\chapter*{Declaration}
\addcontentsline{toc}{chapter}{Declaration}
I confirm that this is my own work and the use of all material from other sources
has been properly and fully acknowledged.
\\
\\
\noindent Tommy Liu

\chapter*{Acknowledgement}
\addcontentsline{toc}{chapter}{Acknowledgement}
I would like to thank Tobias Kuna for supervising this thesis;
Valerio Lucarini for co-supervising;
Tristan Pryer, 
Horatio Boedihardjo,
Martin Kolb
and the late Professor Alexei Likhtman 
for being on the Monitoring Committee;
Jochen Broecker and Ostap Hryniv for being the examiners on my viva; 
Peter Imkeller for helpful discussions; 
Pawel Stasiak for introducing me to the Meteorology Computer Clusters; 
Peta-Ann King and Sue Davis for their pastoral care; 
the EPSRC for funding
and finally to my family and friends for their support over the years. 
\\
\\
\noindent Tommy Liu\\
\noindent September 2016\\
\noindent University of Reading

\tableofcontents

\listoffigures

\chapter*{Introduction}
\addcontentsline{toc}{chapter}{Introduction}

\section*{Outline of Problem}
\addcontentsline{toc}{section}{Outline of Problem}

Consider the following problem. 
Let $X_t^\epsilon$ be the random variable describing the trajectory of a diffusion process in $\mathbb{R}^r$ where $t$ is the time and $\epsilon^2$ is the variance level. 
More precisely we consider processes described by the following type of stochastic differential equation 
\begin{align*}
dX^\epsilon_t=b\left(X^\epsilon_t,t\right)dt+\epsilon\,dW_t
\end{align*}
where $b:\mathbb{R}^r\times \mathbb{R}\longrightarrow \mathbb{R}^r$ and $W_t$ is a Wiener process in $\mathbb{R}^r$. 
We suppose that the drift term $b$ has the form
\begin{align*}
b(x,t)=-\nabla V_0 (x) + F\cos \Omega t 
\end{align*}
where $F,x\in\mathbb{R}^r$ and $V_0:\mathbb{R}^r\longrightarrow \mathbb{R}$ is called the unperturbed potential. 
We consider unperturbed potentials with two or more minimas (wells). 
Most importantly, we consider potentials where there are multiple pathways between the wells. 
To our knowledge systems with two pathways have not been studied in the context of stochastic resonance.

Consider the case $\Omega=0$ and where the noise $\epsilon$ is very small. 
The particle will stay very close to one of the wells of the potential and will occasionally escape to the other well. 
The time of the actual transition from one well to the other is very short compare to the time it stays in any particular well. 

Now consider the case where $\Omega>0$. For particular choices of $\Omega>0$ and $\epsilon>0$, these transitions between the two wells will become synchronised with the driving frequency $\Omega$. 
This is called stochastic resonance. 
Thus the term \emph{noise induced synchronisation} was used for systems where the amplitude of the forcing $F$ was not large \cite{tk_14, tk_15} (see also the discussions in \cite{tk_16}). 
New insights into the exact manner of these synchronised transitions will be studied in this thesis,
which may be more appropriate  in light of the results obtained in this thesis.

For small noise $\epsilon$, one would expect that stochastic resonance depends only on the essential properties of the system, such as the height difference between the wells and the pathways for escape. 
We investigate what effects these multiple pathways have on the appearance of stochastic resonance. 
Varying $F$, $\Omega$ and $\epsilon$ should thus reveal the qualitative structure of the unperturbed potential $V_0$. 
In this thesis we test this paradigm by studying a two dimensional example with two wells and two independent pathways between them, see Chapter 
\ref{sum_chap_mexican_hat_toy_model}.

\section*{Historical Background}
\addcontentsline{toc}{section}{Historical Background}

Stochastic resonance has attracted  interest among mathematicians and physicist. 
An overview of the studies that have occurred in both physics and mathematics are given here. 

\subsection*{Physical Background}
\addcontentsline{toc}{subsection}{Physical Background}

Stochastic resonance was first observed in 1981 \cite{benzi81,tk_17c,tk_17b}. 
The first example \cite{benzi81} considered transitions between two metastable states to model the cyclic occurrences of ice ages. 
Since then many examples of stochastic resonance were found in optics 
\cite{opt_1_PhysRevLett.60.2626,
opt_2_Guidoni1995,
opt_3_PhysRevA.49.2199,
opt_4_PhysRevLett.68.3375},
electronics 
\cite{elec_1_FAUVE19835,
elect_2_PhysRevE.49.R1792,
elect_3_Mantegna1995,
elect_4_PhysRevLett.76.563,
elect_5_PhysRevLett.74.3161,
elect_6_:/content/aip/journal/jap/76/10/10.1063/1.358258,
elect_7_PhysRevA.39.4323,
elect_8_PhysRevE.49.4878,
elect_9_PhysRevLett.67.1799},
neuronal systems 
\cite{neuron_1_PhysRevLett.67.656}, 
quantum systems 
\cite{quant_1_:/content/aip/journal/jap/77/6/10.1063/1.358720,
quant_2_:/content/aip/journal/apl/66/1/10.1063/1.114161} 
and paddlefish 
\cite{fish_1_PhysRevLett.84.4773,
fish_2_FREUND200271}.
Stochastic resonance could be thought of as quasi-deterministically periodic transition between two metastable states. 
For example, the climate of the Earth could be modelled by two states. There is a state corresponding to an Ice Age and another corresponding to the opposite of an Ice Age, a so-called ``Hot Age''. As the Earth's climate cyclically changes many times between Cold Ages and Hot Ages, its behaviour could be modelled by stochastic resonance.

A range of techniques for example
linear response 
\cite{lin_1_TUS:TUS1787,
lin_2_doi:10.1137/0143037}, 
signal-to-noise ratio
\cite{sign_1_PhysRevA.39.4668,
sign_2_escape_2_PhysRevA.41.4255}
and distribution of escape times
\cite{escape_1_PhysRevA.42.3161,
sign_2_escape_2_PhysRevA.41.4255,
escape_3_PhysRevE.49.4821}
were used to define, analyse and study stochastic resonance. 
These techniques along with other examples of stochastic resonance are reviewed in the long overview paper  by Gammaitoni, H\"anggi, Jung and Marchesoni \cite{RevModPhys.70.223}. 
We will evaluate the usefulness of some of these techniques for our problem, see Chapter \ref{sum_chap_results}.

\subsection*{Mathematical Background}
\addcontentsline{toc}{subsection}{Mathematical Background}

There are various mathematical studies of stochastic resonance. 
These often involve different orders of approximations for small noise levels. 
The first and second order of approximations are discussed below.
Adiabatic large deviation is also presented. 

In the first leading order of approximation, a key element of study is to control the escape times from the wells as given by the so called large deviation theory, see the monograph of Freidlin and Wentzell \cite{freidlin98}.
The distribution of the exit time was derived by Day in \cite{tk_19} and by 
Galves,
Kifer, Olivieri and Vares
\cite{freid_2_doi:10.1137/1119057,
oliveri_expoen,
freid_3}. 
To go beyond leading order has been much more difficult for the transition problem between two wells as WKB theory could up to now not be rigorously applied.

The next order of approximation was rigorously derived by  Bovier, Eckhoff, Gayrard, Klein \cite{Bovier02metastabilityin} and Berglund and Gentz  \cite{kram_2_2008arXiv0807.1681B} using techniques from potential theory.  
Berglund and Gentz in a series of papers studied the situation of low, non-quadratic barriers and drifts not given by autonomous potentials 
\cite{kram_2_2008arXiv0807.1681B, tk_16}. 
A review of different techniques used to derive Kramers' formula can be found in the review paper
\cite{krammer_review_2011arXiv1106.5799B}. 

In \cite{ld_1_comp_1_Freidlin2000333} Friedlin considered stochastic resonance in the adiabatic regime. 
This means the diffusion can effectively be described by a Markov process which describes the jumps between wells. 
This problem was revisited by Hermann, Imkeller and Pavlyukevich, see Chapter 4 in \cite{tran2014} and references therein, to derive results uniformly for varying time scale to identify the optimal resonance point asymptotically for small noise even outside the adiabatic regime leading to different logarithmic corrections including the famous cycling effect discovered by Day \cite{tk_20}, see also \cite{tk_18} for the connection with stochastic resonance.
Escape time outside of adiabatic regime is studied in \cite{tk_00_doi:10.1137/120887965}.

As mentioned above in leading order the transitions of the diffusion process $X^\epsilon_t$ between the wells can be approximated by a two state Markov Chain $Y^\epsilon_t=\pm1$ which have been studied \cite{pav_thesis,pav_imkell,pav03,tran2014}. 
Further comparative studies of the stochastic resonance for the diffusion case $X^\epsilon_t$ versus the Markov Chain $Y^\epsilon_t$ case were done by 
Hermann, Imkeller, Pavlyukevich and Peithmann 
in 
\cite{ld_3_comp_4_imkeller2004stochastic,
comp_2_Herrmann2005,
ld_2_comp_3_herrmann2005large,
herrmann2005}.
A collection of papers on comparative studies between stochastic resonance in  diffusion and Markov Chains can be found in the monograph \cite{tran2014}. 
One of the main conclusions in 
\cite{ld_2_comp_3_herrmann2005large,
herrmann2005,
comp_2_Herrmann2005,
tran2014}
is rigorously showing that using linear response and signal-to-noise ratio to analyse  stochastic resonance in the diffusion case $X^\epsilon_t$ gives a different result to analysing the Markov Chain case $Y^\epsilon_t=\pm1$ with the same techniques even asymptotically in the small noise limit.
Other common methods used to study stochastic resonance  include invariant measures and Fourier transforms. 
We consider six measures of stochastic resonance frequently used and considered by Pavlyukevich in his thesis \cite{pav_thesis,tran2014} which are  linear response, signal-to-noise ratio, energy, out-of-phase measure, relative entropy and entropy.

In this thesis we will study stochastic resonance on a two dimensional toy model, in  both the diffusion and Markov Chain cases, and where there are two independent pathways between the wells going through two different saddles. 
The escape times and the six measures of stochastic resonance introduced above are studied.

\section*{Summary of Research}
\addcontentsline{toc}{section}{Summary of Research}

In Chapter \ref{sum_chap_stochastic_resonance}
we review the first model in which  stochastic resonance was observed, that is, we are considering the unperturbed potential 
\begin{align*}
V_0(x)=\frac{x^4}{4}-a\frac{x^2}{2}
\end{align*}
and the corresponding SDE 
\begin{align*}
dX^\varepsilon_t=\left[-\nabla V_0+F\cos(\Omega t)\right]dt+\epsilon\,dW_t .
\end{align*}
In one dimension the escape time can be explicitly computed as the solution to an ODE and using Laplace method asymptotic formulas can be derived. 
In Chapter \ref{sum_chap_escape_static} a review of large deviation theory and results concerning escape times are given. 
In Chapter \ref{chap_sect_kram} further results, based on potential theory, are given and the analogue of Kramers' formula for our case is presented. 
In Chapter \ref{sum_chap_escape_oscill} discrete and continuous time Markov Chains are considered. 
The associated invariant measures and the relaxation time to this invariant measure is derived for alternating and synchronised wells. 
The probability density  function of escape times is derived as well. 
In Chapter \ref{sum_chap_analysis_theory} the six measures used to analyse stochastic resonance mentioned above are introduced. 
Furthermore, methods used  to study escape times are given and in particular a new version of the Kolmogorov-Smirnov test suitable for this problem is  discussed. 
In Chapter \ref{sum_chap_mexican_hat_toy_model} the main model under consideration in this thesis is studied, which has two wells and two saddles. The two wells are connected through to independent pathways each through one of the saddles. 
Due to its form, we nicknamed it the Mexican Hat Toy Model
\begin{align*}
V_0(x,y)=\frac{1}{4}r^4-\frac{1}{2}r^2-ax^2+by^2
\quad \text{where} \quad r=\sqrt{x^2+y^2}.
\end{align*}
We rigorously derive the qualitative structure of the potential with and without external forcing.
In Chapter \ref{sum_chap_numerical_methods} the numerical methods used to simulate the associated SDE
\begin{align*}
dx&=\left[-\frac{\partial V_0}{\partial x}+F_x\cos \Omega t \ \right]dt+\epsilon \ dw_x \\
dy&=\left[-\frac{\partial V_0}{\partial y}+F_y\cos \Omega t \ \right]dt+\epsilon \ dw_y
\end{align*}
are discussed and non rigorous estimates of all relevant error sources are given  necessary to be confident about the precision of the simulation needed.
The $dw_x$ and $dw_y$ are $x$ and $y$ components of the two dimensional Wiener processes. 
The numerical algorithm used is the Euler method \cite{num_2002IJMPC..13.1177M} which is sufficiently accurate for our purposes.
In Chapter \ref{sum_chap_results} the results from simulating the SDE are presented and interpreted. The six measures are studied and the quality of the approximation by the aforementioned Markov chains is tested using the Kolomogorov-Smirnov test developed. The results were repeated in a sparse data context. 

In Chapter \ref{sum_chap_results} the main findings and conclusions of this thesis are presented. It is shown that the six measures are unable to detect stochastic resonance in the case of synchronised saddles. The six measures show no sharp signature as the saddles change from alternating to synchronised saddles. This is due to the fact that the invariant measures are constant for synchronised saddles.  By contrast, not only did the distribution of escape times show a signature for stochastic resonance with synchronised saddles;  the distribution of escape times did show a clear signature as the saddles change from alternating to synchronised, by exhibiting signatures which we call the Single, Intermediate and Double Frequency. The newly developed conditional Kolomogorov-Smirnov test was shown to be a good method to analyse the statistics of the escape times. 

This thesis then finishes with a conclusion of the results obtained. 
In Appendix \ref{appendix_potential} the conventions used are collected. 
In Appendix \ref{append_further_methods} the methods used to calculate the Fourier transforms and the escape times are explained.

\chapter{Stochastic Resonance}
\label{sum_chap_stochastic_resonance}
The earliest known and simplest example of stochastic resonance is reviewed. 
This was done in 1981 \cite{benzi81}. 
Properties about its escape times are derived. 
Estimates for the resonance noise level $\epsilon_{res}$ are given.
The techniques involved include a review of Laplace method.
This study only works for $\epsilon$ in the small noise approximation.  
Only one dimensional systems will be studied in this Chapter. 
Deducing properties about the underlying potential is trivial. 

\section{Laplace Method}

The main technique used to study exit times in one dimension is the so called Laplace Method. 
For completeness and to get a better understanding of the mechanism we are going to study, a proof will be provided later on. 

\begin{Theorem}\label{chap_1_laplace_method}
(Laplace Method) Let $f:[a,b] \rightarrow \mathbb{R}$ be twice differentiable on $[a,b]$. Let $x_0\in(a,b)$ be unique such that $f(x_0)=\max_{x\in(a,b)}f(x)$. Assuming $f''(x)$ is continuous on $[a,b]$ with $f'(x_0)=0$ and $f''(x_0)<0$ then 
\begin{equation*}
\lim_{n\rightarrow \infty} \left(  \frac{  \int^b_a e^{   nf(x) }    dx}{e^{nf(x_0)} \sqrt{      \frac{2\pi}{n(-f''(x_0))}        }}\right)=1. 
\end{equation*}
\end{Theorem}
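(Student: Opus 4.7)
The plan is to factor out $e^{nf(x_0)}$ from the integral and show that the remaining integral $\int_a^b e^{n(f(x)-f(x_0))}\,dx$ is asymptotic to $\sqrt{2\pi/(n(-f''(x_0)))}$. The standard strategy is to localize near $x_0$ and approximate $f$ by its quadratic Taylor expansion there, then compare with a genuine Gaussian integral. The non-local part will be handled by a crude exponential estimate showing it is negligible.

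More concretely, I would first choose a small $\delta>0$ and split the integral into the neighbourhood $I_\delta=[x_0-\delta,x_0+\delta]$ and its complement $J_\delta$. Since $x_0$ is the \emph{unique} maximum and $f$ is continuous on the compact complement, there exists $m=m(\delta)>0$ with $f(x)\le f(x_0)-m$ on $J_\delta$, so
\begin{equation*}
\int_{J_\delta} e^{n(f(x)-f(x_0))}\,dx \le (b-a)\,e^{-nm},
\end{equation*}
which decays exponentially and is thus $o(n^{-1/2})$, negligible compared to the target $\sqrt{2\pi/(n(-f''(x_0)))}$.

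For the local part, set $A:=-f''(x_0)>0$. By continuity of $f''$, for any $\eta\in(0,A)$ there is a $\delta$ small enough that $|f''(\xi)+A|<\eta$ for all $\xi\in I_\delta$. Using the second-order Taylor expansion with Lagrange remainder, $f(x)-f(x_0)=\tfrac12 f''(\xi_x)(x-x_0)^2$ for some $\xi_x\in I_\delta$, so on $I_\delta$
\begin{equation*}
-\tfrac12(A+\eta)(x-x_0)^2 \;\le\; f(x)-f(x_0) \;\le\; -\tfrac12(A-\eta)(x-x_0)^2.
\end{equation*}
Integrating the exponentials of these bounds and substituting $u=\sqrt{n(A\pm\eta)}\,(x-x_0)$ turns the outer integrals into truncated Gaussians whose limits extend to $\pm\infty$ as $n\to\infty$, producing $\sqrt{2\pi/(n(A\pm\eta))}$. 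Dividing by the claimed asymptotic $\sqrt{2\pi/(nA)}$ and taking $n\to\infty$ yields
\begin{equation*}
\sqrt{\tfrac{A}{A+\eta}}\;\le\;\liminf_{n\to\infty}\frac{\int_a^b e^{n(f(x)-f(x_0))}\,dx}{\sqrt{2\pi/(nA)}}\;\le\;\limsup_{n\to\infty}(\cdots)\;\le\;\sqrt{\tfrac{A}{A-\eta}},
\end{equation*}
and letting $\eta\downarrow0$ squeezes the limit to $1$, as required.

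The main obstacle is making the Taylor comparison genuinely uniform on $I_\delta$: one must pick $\delta$ depending on $\eta$ \emph{before} introducing $n$, and then verify that the quantifier order ($\eta$ chosen first, $\delta$ from continuity of $f''$, $n\to\infty$ last, finally $\eta\to 0$) is consistent. The tail bound also implicitly uses that $x_0$ is the unique maximum on the closed interval, which is what allows $m(\delta)$ to be strictly positive; this is where the assumption about uniqueness is essential and not merely convenient.
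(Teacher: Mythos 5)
Your proposal is correct and follows essentially the same route as the paper: localize to $[x_0-\delta,x_0+\delta]$, use Taylor's theorem with Lagrange remainder plus continuity of $f''$ to sandwich $f(x)-f(x_0)$ between two quadratics, evaluate the resulting Gaussian integrals, control the tail via compactness together with uniqueness of the maximum (the paper isolates this as Lemma \ref{chap_1_lemma_inf}), and then squeeze. Your explicit attention to the quantifier order ($\eta$ before $\delta$ before $n$, with $\eta\downarrow 0$ last) is a nice touch that the paper leaves implicit, but it is the same argument.
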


\noindent A Corollary follows from Laplace Method as a special  case of Theorem \ref{chap_1_laplace_method}. 

\begin{Corollary}\label{chap_1:ts1}
Let $f:[a,b] \rightarrow \mathbb{R}$ be twice differentiable on $[a,b]$. Let $x_0=a$ or $x_0=b$ be unique such that $f(x_0)=\max_{x\in[a,b]}f(x)$. Assuming $f''(x)$ is continuous on $[a,b]$ with $f'(x_0)=0$ and $f''(x_0)<0$ then 
\begin{equation*}
\lim_{n\rightarrow \infty} \left(  \frac{  \int^b_a e^{   nf(x) }    dx}{\frac{1}{2}e^{nf(x_0)} \sqrt{      \frac{2\pi}{n(-f''(x_0))}        }}\right)=1. 
\end{equation*}
\end{Corollary}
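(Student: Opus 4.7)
The plan is to reduce the endpoint-maximum case to the interior-maximum case already established in Theorem \ref{chap_1_laplace_method} by a reflection trick, so that the extra factor of $1/2$ appears naturally because the reflected integral sees the full Gaussian peak while the original integral sees only half of it.

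First I would treat only the case $x_0 = a$; the case $x_0 = b$ follows by the change of variable $x \mapsto a+b-x$. Define the reflected function $\tilde f : [2a-b, b] \to \mathbb{R}$ by $\tilde f(x) = f(2a - x)$ for $x \in [2a-b, a]$ and $\tilde f(x) = f(x)$ for $x \in [a, b]$. The key verification is that $\tilde f$ satisfies the hypotheses of Theorem \ref{chap_1_laplace_method} on $[2a-b, b]$ with the maximum attained at the interior point $a$: continuity of $\tilde f$ at $a$ is immediate; the left and right derivatives at $a$ both equal $\pm f'(a) = 0$, so $\tilde f'(a) = 0$ exists; and the left and right values of $\tilde f''$ at $a$ both equal $f''(a)$, so $\tilde f''$ is continuous on $[2a-b, b]$ with $\tilde f''(a) = f''(a) < 0$. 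Uniqueness of $a$ as the maximiser of $\tilde f$ follows from uniqueness of $a$ as the maximiser of $f$ on $[a,b]$ combined with the symmetry of $\tilde f$ about $a$.

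Next, applying Theorem \ref{chap_1_laplace_method} to $\tilde f$ with $x_0 = a \in (2a-b, b)$ gives
\begin{equation*}
\int_{2a-b}^{b} e^{n \tilde f(x)}\,dx \;\sim\; e^{n f(a)} \sqrt{\frac{2\pi}{n(-f''(a))}}
\quad \text{as } n \to \infty.
\end{equation*}
On the other hand, the substitution $u = 2a - x$ on $[2a-b, a]$ together with the definition of $\tilde f$ shows
\begin{equation*}
\int_{2a-b}^{b} e^{n \tilde f(x)}\,dx \;=\; 2 \int_a^b e^{n f(x)}\,dx,
\end{equation*}
so dividing by $2$ yields the claimed asymptotic equivalent for $\int_a^b e^{nf(x)}\,dx$ with the extra factor $\tfrac{1}{2}$.

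I do not foresee a genuine obstacle here; the only point that requires real care is confirming that $\tilde f \in C^2([2a-b, b])$ at the gluing point $x = a$, which is where the hypothesis that $f''$ be continuous on the closed interval $[a,b]$ (rather than merely on $(a,b)$) is genuinely used. Once that is settled, the reduction is purely mechanical and the final constant $1/2$ appears simply because the original integration domain covers only one of the two symmetric halves of the Gaussian concentration around $a$.
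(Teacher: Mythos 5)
Your proof is correct, and it takes a genuinely different route from the one the paper sketches. The paper re-enters the proof of Theorem \ref{chap_1_laplace_method} and observes that when $x_0$ is an endpoint only half of the Gaussian concentration window $[x_0-\delta, x_0+\delta]$ lies inside the integration domain, so the local Gaussian integral picks up a factor of $\tfrac12$; everything else in that proof (the tail bound via Lemma \ref{chap_1_lemma_inf}, the sandwich in $\epsilon$) goes through unchanged. You instead reduce the endpoint case to the interior case by even reflection about $x_0 = a$: define $\tilde f$ on $[2a-b,b]$ by $\tilde f(x)=f(2a-x)$ for $x\le a$ and $\tilde f(x)=f(x)$ for $x\ge a$, check $\tilde f\in C^2$ with interior maximiser $a$, apply Theorem \ref{chap_1_laplace_method} as a black box, and use the substitution $u=2a-x$ to see that the reflected integral is exactly twice the original. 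Your approach buys modularity (no need to reopen the proof of the interior theorem) at the cost of the gluing verification, which you correctly flag as the one delicate step. One small point of care worth adding: to conclude $\tilde f''(a)$ exists from the matching one-sided limits, you implicitly invoke the standard fact that if $\tilde f'$ is continuous at $a$ and $\lim_{x\to a^{\pm}}\tilde f''(x)$ both equal $f''(a)$, then $\tilde f''(a)$ exists and equals that common limit (a mean-value-theorem argument); this is where the hypothesis that $f''$ is continuous on the \emph{closed} interval $[a,b]$, including one-sidedly at $a$, is essential, exactly as you say. Both approaches deliver the same conclusion; yours is cleaner conceptually, the paper's is shorter given that the interior proof has just been written out.
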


\noindent We recall Taylor's Remainder Theorem which is needed in the proofs. 

\begin{Theorem}
Suppose that $f:\mathbb{R} \rightarrow \mathbb{R}$ is  $(n+1)$ times differentiable on $\mathbb{R}$. Let $x,a \in \mathbb{R}$, with $x>a$ then $f$ can be expressed as 
\begin{equation*}
f(x)=f(a) + \frac{f'(a)}{1!}(x-a)+\frac{f''(a)}{2!}(x-a)^2 + \dots + \frac{f^n(a)}{n!}(x-a)^n + R_{n+1}(x)
\end{equation*}
where $R_{n+1}$ the remainder can be expressed as
\begin{align*}
\text{Integral Form} \ \ R_{n+1}(x)&=\frac{1}{n!} \int^x_a(x-t)^nf^{n+1}(t)dt \\[0.5em]
\text{Lagrange Form} \ \ R_{n+1}(x)&=\frac{f^{n+1}    (\xi)}{(n+1)!}(x-a)^{n+1} \ , \ \xi\in[a,x]. 
\end{align*}
\end{Theorem}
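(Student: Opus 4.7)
The plan is to establish the two forms separately, first the integral form and then the Lagrange form, since the Lagrange form follows quickly once the integral form is in place (or alternatively via a Cauchy mean-value-theorem argument).

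For the integral form, I would proceed by induction on $n$. The base case $n = 0$ is simply the fundamental theorem of calculus, $f(x) - f(a) = \int_a^x f'(t)\,dt$, which matches the claim since $R_1(x) = \int_a^x f'(t)\,dt$. For the inductive step, assume the formula holds for $n - 1$, so that
\begin{equation*}
R_n(x) = \frac{1}{(n-1)!}\int_a^x (x-t)^{n-1} f^{(n)}(t)\,dt.
\end{equation*}
I would apply integration by parts with $u = f^{(n)}(t)$ and $dv = (x-t)^{n-1}\,dt$, choosing the antiderivative $v = -(x-t)^n/n$ so that the boundary term vanishes at $t = x$. The boundary contribution at $t = a$ produces precisely the $n$-th Taylor term $f^{(n)}(a)(x-a)^n/n!$, and the remaining integral is the new $R_{n+1}(x)$ in the required form.

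For the Lagrange form, the cleanest route that uses only the stated hypothesis of $(n+1)$-fold differentiability (not continuity of $f^{(n+1)}$) is via Cauchy's mean value theorem. I would introduce the auxiliary function
\begin{equation*}
g(t) = f(x) - \sum_{k=0}^n \frac{f^{(k)}(t)}{k!}(x-t)^k, \qquad h(t) = (x-t)^{n+1},
\end{equation*}
noting that $g(a) = R_{n+1}(x)$, $g(x) = 0$, $h(a) = (x-a)^{n+1}$, $h(x) = 0$. A telescoping calculation shows that all interior terms of $g'(t)$ cancel to leave $g'(t) = -f^{(n+1)}(t)(x-t)^n/n!$, while $h'(t) = -(n+1)(x-t)^n$. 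Applying Cauchy's mean value theorem to $g,h$ on $[a,x]$ yields some $\xi \in (a,x)$ with $g'(\xi)/h'(\xi) = [g(x)-g(a)]/[h(x)-h(a)]$, and solving gives the desired expression for $R_{n+1}$. If one prefers to use the integral form and assumes continuity of $f^{(n+1)}$, one can instead invoke the integral mean value theorem directly, using that $(x-t)^n \geq 0$ on $[a,x]$.

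The main obstacle is essentially bookkeeping: verifying that the telescoping in $g'(t)$ really does collapse the interior terms, and being careful with the sign convention and with the placement of the boundary term in the integration-by-parts step so that the Taylor coefficients appear with the correct factorials. Neither step involves a genuine difficulty beyond this, and the result follows cleanly once the integral form is proved by induction and then reused to derive the Lagrange form.
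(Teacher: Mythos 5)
Your proof is correct, but note that the paper does not actually prove this result: the theorem is prefaced by the remark ``We recall Taylor's Remainder Theorem which is needed in the proofs,'' and no proof is supplied. It is invoked as a standard prerequisite for the Laplace-method calculation, not established from scratch, so there is no proof in the paper to compare against.

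That said, your argument is a textbook-correct route and worth a few remarks. The induction with integration by parts for the integral form is exactly right, and the choice of antiderivative $v = -(x-t)^n/n$ does indeed kill the boundary term at $t = x$ and produce the $n$-th Taylor coefficient from the boundary term at $t = a$. You are also right to flag the hypothesis mismatch: the integral form really needs $f^{(n+1)}$ to be (at least Riemann) integrable, which is not guaranteed by $(n+1)$-fold differentiability alone, whereas the Cauchy mean-value argument for the Lagrange form works under the stated hypothesis exactly. The telescoping check for $g'(t)$ is the only place requiring care, and your description of the cancellation between the positive term at index $k-1$ and the negative term at index $k$ is correct, leaving $g'(t) = -f^{(n+1)}(t)(x-t)^n/n!$. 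One small point you implicitly use but might state: Cauchy's mean value theorem requires $h'(\xi) \neq 0$ on $(a,x)$, which holds because $(x-\xi)^n > 0$ there. Overall this is a complete and valid proof of a result the paper takes for granted.
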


\noindent The following simple Lemma is also needed in the proof of Laplace Method. 

\begin{Lemma}\label{chap_1_lemma_inf}
Let $f:[a,b]\longrightarrow \mathbb{R}$ be continuous.
Let $x_0$ be a unique maximum such that $f(x_0)=\max_{x\in[a,b]}f(x)$,
then for any fixed $\delta>0$, there exists an $\eta>0$ such that for any $s\notin (x_0-\delta, x_0+\delta)$ we have 
\begin{align*}
\eta\leq f(x_0)-f(s). 
\end{align*}
\end{Lemma}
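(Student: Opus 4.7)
The plan is to exploit compactness of the complement of the neighborhood around $x_0$ and apply the extreme value theorem to obtain the required uniform gap.

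First I would fix $\delta>0$ arbitrary and introduce the set $K_\delta := [a,b]\setminus(x_0-\delta,x_0+\delta)$. The key observation is that $K_\delta$ is closed (as the intersection of the closed interval $[a,b]$ with the complement of an open set) and bounded, hence compact in $\mathbb{R}$. Since $f$ is continuous on $[a,b]$, its restriction to $K_\delta$ is continuous on a compact set, so by the extreme value theorem there exists some $s^\ast \in K_\delta$ with $f(s^\ast)=\max_{s\in K_\delta}f(s)$.

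Next I would use the uniqueness of $x_0$ as a maximizer. Since $x_0\notin K_\delta$ and $x_0$ is the \emph{unique} point of $[a,b]$ achieving $\max_{x\in[a,b]}f(x)$, we must have $f(s^\ast)<f(x_0)$, a strict inequality. Defining
\begin{align*}
\eta := f(x_0)-f(s^\ast) > 0,
\end{align*}
we obtain, for every $s\in K_\delta$, the chain
\begin{align*}
f(s)\le f(s^\ast) = f(x_0)-\eta,
\end{align*}
which rearranges to $\eta\le f(x_0)-f(s)$ as required.

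There is essentially no obstacle here; the only subtlety to flag is that the edge case $x_0\in\{a,b\}$ is handled automatically, because the definition $K_\delta = [a,b]\setminus(x_0-\delta,x_0+\delta)$ still gives a compact (possibly empty) subset of $[a,b]$, and when $K_\delta=\emptyset$ the conclusion is vacuous so any $\eta>0$ works. The crucial hypothesis being used is the \emph{uniqueness} of the maximizer, without which $f(s^\ast)$ could equal $f(x_0)$ and force $\eta=0$.
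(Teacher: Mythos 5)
Your proof is correct, and it takes a genuinely different (and more direct) route than the paper. The paper argues by contradiction: it assumes the infimum of $f(x_0)-f(s)$ over $s\in K_\delta$ is zero, extracts a sequence $(s_n)$ with $f(x_0)-f(s_n)<1/n$, and uses sequential compactness of $K_\delta$ to pass to a convergent subsequence $s_n\to s'$, whence continuity gives $f(x_0)\le f(s')$, contradicting uniqueness of the maximizer. You instead apply the extreme value theorem directly to the restriction $f|_{K_\delta}$ to produce a maximizer $s^\ast\in K_\delta$, then observe $f(s^\ast)<f(x_0)$ strictly because $x_0\notin K_\delta$ and the maximizer of $f$ on $[a,b]$ is unique. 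Both arguments hinge on compactness of $K_\delta$ and the uniqueness hypothesis, but yours is a one-shot positive argument rather than a contradiction, and it also makes the witness $\eta=f(x_0)-f(s^\ast)$ explicit. Your remark about the vacuous case $K_\delta=\emptyset$ is a sensible extra precaution that the paper omits (technically the extreme value theorem requires a nonempty domain, so flagging it is good hygiene), and your final sentence correctly identifies uniqueness as the load-bearing hypothesis.
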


\noindent Now we review proofs of the methods needed. 

\begin{proof}[Proof of Lemma \ref{chap_1_lemma_inf}]
We know that $x_0$ is the unique maximum, which means 
\begin{align*}
0<f(x_0)-f(s)
\end{align*}
for any $s\notin(x_0-\delta, x_0+\delta)$. 
This means the infinum of the set is bounded by zero
\begin{align*}
\inf \left\{ f(x_0)-f(s) : s\notin (x_0-\delta, x_0+\delta) \right\}
\geq0. 
\end{align*}
Suppose that the infinum of the set is zero
\begin{align*}
\inf \left\{ f(x_0)-f(s) : s\notin (x_0-\delta, x_0+\delta) \right\}
=0
\end{align*}
and yet all elements of the set are strictly greater than zero. 
This means  some members would be arbitrarily close to zero, 
\begin{align*}
0<f(x_0)-f(s)<\epsilon
\end{align*}
where $\epsilon$ is arbitrarily small. 
There exists a sequence 
\begin{align*}
(s_n)_{n\geq1}\subset [a,b]\backslash(x_0-\delta,x_0+\delta)=[a,x_0-\delta]\cup[x_0+\delta,b]
\end{align*}
such that 
\begin{align*}
0<f(x_0)-f(s_n)<\frac{1}{n}
\Longrightarrow
f(x_0)<f(s_n)+\frac{1}{n}. 
\end{align*}
But this sequence is in a compact set, which must have a subsequence which converges to a member $s'\in[a,x_0-\delta]\cup[x_0+\delta,b]$, that is 
\begin{align*}
f(x_0)\leq f(s')
\end{align*}
which contradicts the fact $x_0$ is the unique maximum. 
This implies that 
\begin{align*}
\inf \left\{ f(x_0)-f(s) : s\notin (x_0-\delta, x_0+\delta) \right\}
>0
\end{align*}
so the $\eta>0$ as in the assertion of the Lemma must exist. 
\end{proof}

\begin{proof}[Proof of Theorem \ref{chap_1_laplace_method}]
A differentiable function is also a continuous function. 
Since $f(x_0)=\max_{x\in[a,b]}f(x)$ we can say  $f'(x_0)=0$. 
Using the Taylor's Remainder Theorem we can rewrite $f(x)$ for $x\in [x_0, x_0+\delta]$ for some $\delta>0$ and $\xi\in[x_0,x]$ as 
\begin{align*}
f(x)&=f(x_0)+\frac{f''(\xi)}{2}(x-x_0)^2. 
\end{align*}
We can obtain an upper and lower bound for $f''(\xi)$ by exploiting its continuity on $[a,b]$. 
Since $x\in [x_0, x_0+\delta]$ we must also have $\xi\in [x_0, x_0+\delta]$. 
So 
\begin{equation*}
|\xi-x_0| \leq \delta .
\end{equation*}
For any $\epsilon>0$  and for a sufficiently small $\delta$, we can have 
\begin{equation*}
|f''(\xi)-f''(x_0)|<\epsilon. 
\end{equation*}
This means we can say 
\begin{align*}
-\epsilon < f''(\xi) &- f''(x_0) < \epsilon \\
f''(x_0)-\epsilon < &f''(\xi) < f''(x_0) +\epsilon 
\end{align*}
which gives 
\begin{align}
f(x) &\leq f(x_0) + \frac{1}{2} (f''(x_0)+\epsilon)(x-x_0)^2 \label{chap_1_laplace_method_eqn2}\\ 
f(x) &\geq f(x_0) + \frac{1}{2} (f''(x_0)-\epsilon)(x-x_0)^2.\label{chap_1_laplace_method_eqn3}
\end{align}
We start with the lower bound for $f(x)$ as in Equation \ref{chap_1_laplace_method_eqn2}
\begin{align*}
\int^b_a e^{nf(x)} dx &\geq \int^{x_0+\delta}_{x_0-\delta} e^{nf(x)}dx \\ 
&\geq e^{nf(x_0)}\int^{x_0+\delta}_{x_0-\delta}e^{\frac{n}{2}(f''(x_0)-\epsilon)(x-x_0)^2}dx \\
&=e^{nf(x_0)} \frac{1}{\sqrt{n(-f''(x_0)+\epsilon )} }\int^{+\delta\sqrt{ n(-f''(x_0)+\epsilon  )  }}_{-\delta\sqrt{  n(-f''(x_0)+\epsilon )}}e^{-\frac{1}{2}y^2} dy 
\end{align*}
where we have made a transformation using
\begin{align*}
y=\sqrt{n(-f''(x_0)+\epsilon   )}(x-x_0). 
\end{align*}
Dividing both sides by $e^{nf(x_0)} \sqrt{      \frac{2\pi}{n(-f''(x_0))}        }$ gives 
\begin{align}
 \left(  \frac{  \int^b_a e^{   nf(x) }    dx}{e^{nf(x_0)} \sqrt{      \frac{2\pi}{n(-f''(x_0))}        }}\right) &\geq \frac{1}{\sqrt{2\pi}} \sqrt{   \frac{   -f''(x_0)       }{     -f''(x_0)+\epsilon       }                    }         \int^{+\delta\sqrt{ n(-f''(x_0)+\epsilon  )  }}_{-\delta\sqrt{  n(-f''(x_0)+\epsilon )}}e^{-\frac{1}{2}y^2} dy.  
 \label{chap_1_laplace_method_eqn1}
\end{align}
Using Lemma \ref{chap_1_lemma_inf} we can say that for any fixed $\delta$, there exists an $\eta>0$ such that for any $s\notin(x_0-\delta,x_0+\delta)$ we have 
\begin{align*}
\eta\leq f(x_0)-f(s). 
\end{align*}
So we can proceed with 
\begin{align*}
\int^b_a e^{nf(x)} dx &= \int^{x_0-\delta}_{a} e^{nf(x)} dx + \int^{x_0+\delta}_{x_0-\delta}e^{nf(x)}dx + \int^b_{x_0+\delta}e^{nf(x)}dx\\
&\leq \int^{x_0-\delta}_{a} e^{n(f(x_0)-\eta)} dx + \int^{x_0+\delta}_{x_0-\delta}e^{nf(x)}dx + \int^b_{x_0+\delta}e^{n(f(x_0)-\eta)}dx\\
&=(x_0-\delta-a)e^{n(f(x_0)-\eta)}+(b-x_0-\delta)e^{n(f(x_0)-\eta)}+\int^{x_0+\delta}_{x_0-\delta}e^{nf(x)}dx\\
&=(b-a-2\delta)e^{n(f(x_0)-\eta)}+\int^{x_0+\delta}_{x_0-\delta}e^{nf(x)}dx. 
\end{align*}
Now we use the upper bound for $f(x)$ from Equation \ref{chap_1_laplace_method_eqn3}. So 
\begin{align*}
\int^b_a e^{nf(x)}dx &\leq (b-a)e^{n(f(x_0)-\eta)}+ e^{nf(x_0)}\int^{x_0+\delta}_{x_0-\delta}e^{\frac{n}{2}(f''(x_0)+\epsilon)(x-x_0)^2}dx\\
&\leq(b-a)e^{n(f(x_0)-\eta)}+ e^{nf(x_0)}\int^{+\infty}_{-\infty}e^{\frac{n}{2}(f''(x_0)+\epsilon)(x-x_0)^2}dx\\
&=(b-a)e^{n(f(x_0)-\eta)}+ e^{nf(x_0)} \sqrt{      \frac{2\pi}{n(-f''(x_0)-\epsilon)}       } 
\end{align*}
where $\epsilon$ is chosen small enough so that $(f''(x_0)+\epsilon)<0$ is still negative. Now divide both sides by $ e^{nf(x_0)} \sqrt{      \frac{2\pi}{n(-f''(x_0))}       } $ which gives 
\begin{align}
 \left(  \frac{  \int^b_a e^{   nf(x) }    dx}{e^{nf(x_0)} \sqrt{      \frac{2\pi}{n(-f''(x_0))}        }}\right)&\leq   \left(          (b-a)e^{-n\eta}\sqrt{ \frac{n(-f''(x_0))}{2\pi}    }+ \sqrt{\frac{  -f''(x_0)           }{        -f''(x_0)-\epsilon            }}     \   \    \right). 
 \label{chap_1_laplace_method_eqn4}
\end{align}
Now using the other bound for $ \left(  \frac{  \int^b_a e^{   nf(x) }    dx}{e^{nf(x_0)} \sqrt{      \frac{2\pi}{n(-f''(x_0))}        }}\right)$ from Equation \ref{chap_1_laplace_method_eqn1} together with Equation \ref{chap_1_laplace_method_eqn4} gives 
\begin{align*}
\frac{1}{\sqrt{2\pi}} \sqrt{   \frac{   -f''(x_0)       }{     -f''(x_0)+\epsilon       }                    }         \int^{+\delta\sqrt{ n(-f''(x_0)+\epsilon  )  }}_{-\delta\sqrt{  n(-f''(x_0)+\epsilon )}}e^{-\frac{1}{2}y^2} dy &\leq  \left(  \frac{  \int^b_a e^{   nf(x) }    dx}{e^{nf(x_0)} \sqrt{      \frac{2\pi}{n(-f''(x_0))}        }}\right)   \\
&\leq   \left(          (b-a)e^{-n\eta}\sqrt{ \frac{n(-f''(x_0))}{2\pi}    }+ \sqrt{\frac{  -f''(x_0)           }{        -f''(x_0)-\epsilon            }}     \   \    \right). 
\end{align*}
Now we can take the limit as $n \rightarrow +\infty$ which gives  
\begin{equation*}
\sqrt{\frac{  -f''(x_0)           }{        -f''(x_0)+\epsilon            }}\leq \lim_{n\rightarrow \infty} \left(  \frac{  \int^b_a e^{   nf(x) }    dx}{e^{nf(x_0)} \sqrt{      \frac{2\pi}{n(-f''(x_0))}        }}\right) \leq \sqrt{\frac{  -f''(x_0)           }{        -f''(x_0)-\epsilon            }}
\end{equation*}
after noting that $\lim_{n\rightarrow0}e^{-n\eta}\sqrt{n}=0$  and $\lim_{n\rightarrow \infty}\int^{+\delta\sqrt{ n(-f''(x_0)+\epsilon  )  }}_{-\delta\sqrt{  n(-f''(x_0)+\epsilon )}}e^{-\frac{1}{2}y^2} dy=\sqrt{2\pi}$. 
Since $\epsilon$ can be chosen to be arbitrarily small using the Sandwich Theorem gives
\begin{equation*}
 \lim_{n\rightarrow \infty} \left(  \frac{  \int^b_a e^{   nf(x) }    dx}{e^{nf(x_0)} \sqrt{      \frac{2\pi}{n(-f''(x_0))}        }}\right)=1. 
\end{equation*}
\end{proof}

\begin{proof}[Proof of Corollary \ref{chap_1:ts1}] 
If $x_0=a$ then the proof is the same but with a few adjustments. 
In other words, the interval $[x_0-\delta,a]$ does not need to be considered as it is outside the region of integration. 
\begin{align*}
\int^{x_0+\delta}_{x_0-\delta}&\rightarrow \int^{x_0+\delta}_{x_0}\\
\int^{x_0-\delta}_{a}&\rightarrow0\\
\int^{b}_{x_0+\delta}&\rightarrow \int^{b}_{x_0+\delta}
\end{align*}
and the resulting computation would give the extra factor of $\frac{1}{2}$ after using 
\begin{equation*}
\int^{x_0+\delta}_{x_0} e^{\frac{n}{2}(f(x_0)\pm \epsilon)} dy= \frac{1}{2}\int^{x_0+\delta}_{x_0-\delta} e^{\frac{n}{2}(f(x_0)\pm \epsilon)} dy.
\end{equation*}
A similar argument holds for $x_0=b$.
Note that the computation shows that only a small neighbourhood of $x_0$ is relevant asymptotically and that the average term is exponentially small in $n$. 
\end{proof}

\section{One Dimensional Potential}
The potential we are interested in is
\begin{align*}
V_0=\frac{x^4}{4}-a\frac{x^2}{2}
\end{align*}
where $a>0$. 
When this is given a driving frequency it is\footnote{See Appendix \ref{appendix_potential} for a full explanation of the notation used for the potentials.}
\begin{align*}
V_t&=V_0-Fx\cos\Omega t\\
&=\frac{x^4}{4}-a\frac{x^2}{2}-Fx\cos\Omega t
\end{align*}
which when the forcing is zero, the potential has two wells at $x_0=\pm\sqrt{a}$. 
The SDE we want to study is 
\begin{align*}
\frac{dx}{dt}
&=-\nabla V_t +\epsilon \frac{dw}{dt}\\
dx&=\left[x(a-x^2)+F\cos\Omega t\right]dt+\epsilon \ dw
\end{align*}
where $w$ is a one dimensional Wiener process. 
Consider a realisation of the trajectory $x(t)$ starting at $x(0)=y$. 
Its escape time from the left well $\tau_1$ and right well $\tau_2$ are defined as 
\begin{align}
\tau_1(y)&=\inf \{ t:x(t)=0 \quad \text{and} \quad x(0)=y\}\quad \text{where}\quad y\in(-\infty,0) \label{chap_1_benzi_escape_time_1}\\
\tau_2(y)&=\inf \{ t:x(t)=0 \quad \text{and} \quad x(0)=y\}\quad \text{where}\quad y\in(0,+\infty). \label{chap_1_benzi_escape_time_2}
\end{align}
Note that the trajectory $x(t)$  is related to the escape times $\tau_1$ and $\tau_2$. 
Define a new quantity by 
\begin{equation*}
f^i_n(y)=\left\langle \tau_i(y)^n\right\rangle
\end{equation*}
with $i=1,2$ for the two wells and $n=1,2,\ldots$.
Note $\langle\cdot\rangle$ denotes the mean average over all realisations. 
Also note that the $n$th moment is being used here. 
In \cite{benzi81} a method by Gihman and Skorohod   \cite{gihman72} was used to derive the following equation
\begin{equation}
\frac{1}{2} \epsilon^2\frac{d^2}{dy^2} f^i_n(y)-V' \frac{d}{dy}f^i_n(y)=-nf^i_{n-1}(y) \label{chap_1:es3}
\end{equation}
where $V'$ is a shorthand for $V'=\nabla V_0$ if we are escaping from the potential described by $V_0$.
Note that the potential is frozen in the case of $V_0$. 
Similarly $V'$ is a shorthand for $V'=\nabla V_t$ if we are escaping from the potential described by $V_t$. 
The following boundary conditions are
\begin{align*}
f^i_n(0)=0, \quad \frac{d}{dy}f^1_n(-\infty)=0 \quad \text{and} \quad \frac{d}{dy}f^2_n(+\infty)=0. 
\end{align*}
Having $f^i_n(0)=0$ is appropriate since being at $y=0$ means it is in neither well and so has already escaped at $t=0$ anyway. 
We can see how    $\frac{d}{dy}f^1_n(-\infty)=0$ and $\frac{d}{dy}f^2_n(+\infty)=0$     make sense by considering $f^2_1$ as an example. 
If the particle starts at $x(0)=y$, where $y$ is a very large positive number $y\gg N \sqrt{a}$ (where $N\gg1$) then with the equilibrium point being an attractor, it would more or less deterministically slide towards $x=+\sqrt{a}$.  
We call the time it takes for it to travel to $x=+\sqrt{a}$, $\tau'$. 
If the particle starts somewhere further beyond $y$ say $x=y+\delta$, where $\delta>0$ , it would also slide down to $x=+\sqrt{a}$ almost deterministically. 
We call this new time to get to $x=+\sqrt{a}$, $\tau''$. 
Intuitively, we would expect $\tau'\approx \tau''$ so $\frac {d}{dy}f^2_1(-\infty)=0$. 

The aim now is to solve Eqn \ref{chap_1:es3} for different cases. 
These are for $F=0$ and $F\neq0$, in the small noise approximation. 

\subsection{One Dimensional Potential - Case $F=0$}
The potential is stationary and does not depend on time. 
It  is symmetric at $x=0$ so we must have
\begin{align*}
f^1_1(-y)=f^2_1(y). 
\end{align*}
For simplicity we denote the following 
\begin{align*}
f=f^1_1 
\quad \text{and} \quad 
I=\frac{df}{dy}
\end{align*}
which rewrites the differential equation as 
\begin{align*}
\frac{1}{2} \epsilon^2\frac{dI}{dy} -V_0' I=-1
\end{align*}
where $V'_0=\nabla V_0$. 
Using an integrating factor gives 
\begin{align*}
\frac{d}{dy}
\left(
I\times
\exp
\left\{
\int^y_0-\frac{2}{\epsilon^2}
V'_0(s)\,ds
\right\}
\right)
&=
-\frac{2}{\epsilon^2}
\exp
\left\{
\int^y_0
-\frac{2}{\epsilon^2}
V'_0(s)\,ds
\right\}
\end{align*}
which gives 
\begin{align*}
\frac{d}{dy}
\left(
I\times
\exp
\left\{
-\frac{2}{\epsilon^2}
V_0(y)
\right\}
\right)
=
-\frac{2}{\epsilon^2}
\exp
\left\{
-\frac{2}{\epsilon^2}
V_0(y)
\right\}. 
\end{align*}
We know that $I(-\infty)=0$ so integrating we have 
\begin{align*}
I(y)\times
\exp
\left\{
-\frac{2}{\epsilon^2}
V_0(y)
\right\}
-
I(-\infty)\times
\exp
\left\{
-\frac{2}{\epsilon^2}
V_0(-\infty)
\right\}
=
-\frac{2}{\epsilon^2}
\int^y_{-\infty}
\exp
\left\{
-\frac{2}{\epsilon^2}
V_0(s)
\right\}
\,ds
\end{align*}
and proceeding we have 
\begin{align*}
I(y)\times
\exp
\left\{
-\frac{2}{\epsilon^2}
V_0(y)
\right\}
&=
-\frac{2}{\epsilon^2}
\int^y_{-\infty}
\exp
\left\{
-\frac{2}{\epsilon^2}
V_0(s)
\right\}
\,ds\\[0.5em]
I(y)
&=
-\frac{2}{\epsilon^2}
\exp
\left\{
+\frac{2}{\epsilon^2}
V_0(y)
\right\}
\int^y_{-\infty}
\exp
\left\{
-\frac{2}{\epsilon^2}
V_0(s)
\right\}
\,ds. 
\end{align*}
We know that $f(0)=0$ so integrating we have 
\begin{align*}
f(y)-f(0)
&=
-\frac{2}{\epsilon^2}
\int^y_0
\exp
\left\{
+\frac{2}{\epsilon^2}
V_0(u)
\right\}
\int^u_{-\infty}
\exp
\left\{
-\frac{2}{\epsilon^2}
V_0(s)
\right\}
\,ds
\,du
\end{align*}
which we rewrite as 
\begin{align}
f(y)&=
-\frac{2}{\epsilon^2}
\int^y_0
\exp
\left\{
+\frac{2}{\epsilon^2}
V_0(u)
\right\}
g(u)
\,du \label{chap_1_eval_eqn} \\[0.5em]
\text{where}\quad
g(u)&=
\int^u_{-\infty}
\exp
\left\{
-\frac{2}{\epsilon^2}
V_0(s)
\right\}
\,ds.\nonumber
\end{align}
Up to now the methods we have used for solving $f(\cdot)$ are exact and the boundary conditions on $f(\cdot)$ have also been kept. 
There are no approximations to our approach so far. 
Recall that $y<0$. 
We seek an approximate solution for $f(y)$ in the region $y\in[-\sqrt{a},0]$. 
Now we use Laplace Method in the small noise limit (small $\epsilon$) to evaluate the integrals in Equation \ref{chap_1_eval_eqn}. 
Note that 
\begin{align*}
\max_{u\in[-\infty,0]}
\left(
-\frac{2}{\epsilon^2}
V_0(u)
\right)
=
-\frac{2}{\epsilon^2}V_0\left(-\sqrt{a}\right). 
\end{align*}
Using the Laplace Method for small $\epsilon$ gives the approximation
\begin{align*}
g(u)\approx
\left\{
\begin{array}{lll}
\sqrt{\frac{\pi\epsilon^2}{2a}}
\exp\left\{\frac{a^2}{2\epsilon^2}\right\}
&
\text{if}
&
u\in(-\sqrt{a},0]\\[0.5em]
\frac{1}{2}
\sqrt{\frac{\pi\epsilon^2}{2a}}
\exp\left\{\frac{a^2}{2\epsilon^2}\right\}
&
\text{if}
&
u=-\sqrt{a}
\end{array}
\right.
\end{align*}
This means $g(u)\approx
\sqrt{\frac{\pi\epsilon^2}{2a}}
\exp\left\{\frac{a^2}{2\epsilon^2}\right\}$
almost everywhere with respect to the Lebesgue measure on $[-\sqrt{a},0]$. 
This approximates $f(\cdot)$ to 
\begin{align*}
f(y)\approx
+\frac{2}{\epsilon^2}
\sqrt{\frac{\pi\epsilon^2}{2a}}
\exp\left\{\frac{a^2}{2\epsilon^2}\right\}
\int_y^0
\exp\left\{+\frac{2}{\epsilon^2}V_0(s)\right\}
\,ds
\end{align*}
where we have switched the limits of the integral. 
We use Laplace Method again after noting that 
\begin{align*}
\max_{s\in[-\sqrt{a},0]}
\left(
+\frac{2}{\epsilon^2}V_0(s)
\right)
=+\frac{2}{\epsilon^2}V_0(0)
\end{align*}
here the maximum is on the edge on the boundary meaning we would need an extra factor of $\frac{1}{2}$. 
So
\begin{align*}
f(y)&\approx
+\frac{2}{\epsilon^2}
\sqrt{\frac{\pi\epsilon^2}{2a}}
\exp\left\{\frac{a^2}{2\epsilon^2}\right\}
\frac{1}{2}
\sqrt{\frac{\pi\epsilon^2}{a}}\\
&=\frac{1}{\sqrt{2}}
\frac{\pi}{a}
\exp\left\{\frac{a^2}{2\epsilon^2}\right\}. 
\end{align*}

\subsection{One Dimensional Potential - Case $F\neq0$}
The potential is now oscillating. 
We aim to do a similar calculation to the static potential case. 
We make an approximation by assuming that the amplitude of the oscillations $F$ is small enough such that there will always be two distinct wells.
The positions of the critical points and wells will be very close to the static potential case. 
We can just focus on one well $x_0(t)$ on the left of the hill which is dependent on the time $t$. 
The method is similar to what we have used in the  $F=0$ case.
We have 
\begin{align}
f(y)
&=
-\frac{2}{\epsilon^2}
\int^y_0
\exp
\left\{
+\frac{2}{\epsilon^2}
V_t(u)
\right\}
\int^u_{-\infty}
\exp
\left\{
-\frac{2}{\epsilon^2}
V_t(s)
\right\}
\,ds
\,du\label{chap_1_eva_eqn_2}\\[0.5em]
&=
+\frac{2}{\epsilon^2}
\int^0_y
\exp
\left\{
+\frac{2}{\epsilon^2}
V_t(u)
\right\}
g_t(u)
\,du\nonumber\\[0.5em]
\text{where}\quad 
g_t(u)&=\int_{-\infty}^u
\exp
\left\{
-\frac{2}{\epsilon^2}
V_t(s)
\right\}
\,ds\nonumber
\end{align}
since $y<0$ the limits of the integral may be switched. 
Notice we have approximated the situation by assuming that the hill moves very little away from $x=0$ which is what makes Equation \ref{chap_1_eva_eqn_2} valid. 
We seek a solution for $f(y)$ in the region $y\in[x_0(t),0]$. 
For small $\epsilon$, we can use Laplace's Method to approximate the integrals in Equation \ref{chap_1_eva_eqn_2}.
Note that
\begin{align*}
\max_{u\in[-\infty,0]}\left(-\frac{2}{\epsilon^2}V_t(u)\right)
=-\frac{2}{\epsilon^2}
V_t
\left(
x_0(t)
\right). 
\end{align*}
Now using the Laplace's Method gives 
\begin{align*}
g_t(u)\approx
\left\{
\begin{array}{lll}
\exp\left\{-\frac{2}{\epsilon^2}V_t\left(x_0(t)\right)\right\}
\sqrt{\frac{\pi\epsilon^2}{V_t''\left(x_0(t)\right)}}
&
\text{if}
&
u\in(x_0(t),0]\\[0.5em]
\frac{1}{2}\exp\left\{-\frac{2}{\epsilon^2}V_t\left(x_0(t)\right)\right\}
\sqrt{\frac{\pi\epsilon^2}{V_t''\left(x_0(t)\right)}}
&
\text{if}
&
u=x_0(t)
\end{array}
\right.
\end{align*}
where $V''_t=\nabla^2V_t$. 
In other words $g_t\approx
\exp\left\{-\frac{2}{\epsilon^2}V_t\left(x_0(t)\right)\right\}$ almost everywhere on $[x_0(t),0]$ with respect to the Lebesgue measure.  
So
\begin{align*}
f(y)\approx
+\frac{2}{\epsilon^2}
\exp\left\{-\frac{2}{\epsilon^2}V_t\left(x_0(t)\right)\right\}
\sqrt{\frac{\pi\epsilon^2}{V_t''\left(x_0(t)\right)}}
\int^0_y
\exp\left\{+\frac{2}{\epsilon^2}V_t(u)\right\}
\,du. 
\end{align*}
Now
\begin{align*}
\max_{u\in[x_0(t),0]}
\left(
+\frac{2}{\epsilon^2}
V_t(u)
\right)
=+\frac{2}{\epsilon^2}V_t(0)
\end{align*}
where the maximum is on the boundary of $[x_0(t),0]$ meaning we would need an extra factor of $\frac{1}{2}$.
So
\begin{align*}
f(y)\approx
+\frac{2}{\epsilon^2}
\exp\left\{-\frac{2}{\epsilon^2}V_t\left(x_0(t)\right)\right\}
\sqrt{\frac{\pi\epsilon^2}{V_t''\left(x_0(t)\right)}}
\frac{1}{2}
\sqrt{\frac{\pi\epsilon^2}{-V''_t(0)}}
\end{align*}
which gives 
\begin{align*}
f(y)\approx
\frac{\pi}{\sqrt{aV''_t\left(x_0(t)\right)}}
\exp\left\{-\frac{2}{\epsilon^2}V_t\left(x_0(t)\right)\right\}. 
\end{align*}
Finding $f(y)$ is very hard so we consider when the oscillations are very slow, that is for very small $\Omega$. 
At the two extremes we have 
\begin{align}
dx &= [x(a-x^2)+F]dt\,+\,\epsilon\, dw \ \ \text{when $t=0$} \label{chap_1:e111}\\
dx &= [x(a-x^2)-F]dt\,+\,\epsilon\, dw\ \ \text{when $t=\frac{\pi}{\Omega}$}. \label{chap_1:e112}
\end{align}
We also assume that the oscillations are so small, the now time dependent equilibrium point does not differ much from the time independent case $x_0=-\sqrt{a}$. 
We solve $f(y)$ for the case of Equation \ref{chap_1:e111}. 
The case for Equation \ref{chap_1:e112} is similar. 
Let $x_0(t)$ be approximated and denoted with a new notation by 
\begin{align*}
x_0(t)=z_0+\delta =s
\end{align*}
where $z_0=-\sqrt{a}$.  
We seek an expression for $\delta$ by 
\begin{align*}
[s(a-s^2)+F]dt&=0\quad \text{from Equation \ref{chap_1:e111}}\\[0.5em]
s(a-s^2)&=-F\\[0.5em]
\delta&\approx\frac{-F}{a-3z_0^2}= +\frac{F}{2a}
\end{align*}
after  ignoring terms of higher order than $\delta^2$.
Progressing further gives 
\begin{align}
-\frac{2}{\epsilon^2}V_{t=0}(s)
&=
-\frac{2}{\epsilon^2}
\left\{
\frac{s^4}{4}-a\frac{s^2}{2}-Fs
\right\}\nonumber
\\[0.5em]
&\approx-\frac{2}{\epsilon^2}\left[V_0(z_0)+  \delta(z_0^3-az_0-F) -Fz_0\right]\label{chap_1_eva_eqn_3}
\end{align}
again after ignoring terms of higher order than $\delta^2$. 
Equation \ref{chap_1_eva_eqn_3} is now approximated by 
\begin{align*}
-\frac{2}{\epsilon^2}V_{t=0}(s)&\approx-\frac{2}{\epsilon^2}\left[V_0(z_0)+  \delta(z_0^3-az_0-F) -Fz_0\right]
\\[0.5em]
&=\frac{a^2}{2\epsilon^2}\left\{  1+\frac{4F^2}{2a^3}  - \frac{4F}{a^{\frac{3}{2}}}  \right\}\\[0.5em]
&\approx\frac{a^2}{2\epsilon^2}\left\{  1  - \frac{4F}{a^{\frac{3}{2}}}  \right\}
\end{align*}
after assuming $F^2$ is small. 
We make another approximation by 
\begin{align*}
\sqrt{aV_{t=0}''\left(x_0(t)\right)}&\approx\sqrt{aV_0''(z_0)}\\
&=\sqrt{a[3z_0^2-a]} \\
&=a\sqrt{2}
\end{align*}
after noting that $z_0=-\sqrt{a}$. 
So $f(y)$ for Equation \ref{chap_1:e111} and \ref{chap_1:e112} are 
\begin{align*}
f(x_0(t))&= \frac{\pi}{a\sqrt{2}}\exp \left\{     \frac{a^2}{2\epsilon^2}\left(  1  - \frac{4F}{a^{\frac{3}{2}}}  \right)     \right\} \ \ \text{when $t=0$} \\[0.5em]
f(x_0(t))&= \frac{\pi}{a\sqrt{2}}\exp \left\{     \frac{a^2}{2\epsilon^2}\left(  1  + \frac{4F}{a^{\frac{3}{2}}}  \right)     \right\} \ \ \text{when $t=\frac{\pi}{\Omega}$}.
\end{align*}
We can see how the solution make physical sense because when $t=\frac{\pi}{\Omega}$ the left well is lower, and so the probability to escape is lower and the time to escape would also be longer. 
Now that all of our calculations are done for both the time independent and time dependent case we can compare them.

\subsection{Conclusion and Resonance Condition $\epsilon_{res}$}
We have effectively reviewed $f^1_1$ in the limit of small noise,
which is $\langle \tau\rangle$ the averaged escape time for small $\epsilon$. 
Comparing them more clearly here gives 
\begin{align*}
F=0 \quad 
\langle \tau \rangle
&=
\frac{1}{\sqrt{2}}
\frac{\pi}{a}
\exp\left\{\frac{a^2}{2\epsilon^2}\right\}\\[0.5em]
F\neq0 \quad 
\langle \tau \rangle
&=\frac{\pi}{a\sqrt{2}}\exp \left\{     \frac{a^2}{2\epsilon^2}\left(  1  - \frac{4F}{a^{\frac{3}{2}}}  \right)     \right\}
\quad \text{when} \quad t=0\\[0.5em]
\langle \tau \rangle
&=\frac{\pi}{a\sqrt{2}}\exp \left\{     \frac{a^2}{2\epsilon^2}\left(  1  + \frac{4F}{a^{\frac{3}{2}}}  \right)     \right\}
\quad \text{when} \quad t=\frac{\pi}{\Omega}.
\end{align*}
For the $F\neq0$ case if we impose
\begin{align}
\langle \tau \rangle&=\frac{\pi}{\Omega}\quad \text{for}\quad t=0\label{chap_1_noise_1}\\[0.5em]
\langle \tau \rangle&=\frac{\pi}{\Omega}\quad \text{for}\quad t=\frac{\pi}{\Omega}\label{chap_1_noise_2}
\end{align}
and solve for the noise in both cases (that is solving Equation \ref{chap_1_noise_1} and \ref{chap_1_noise_2} for $\epsilon$) we have 
\begin{align*}
\epsilon_1&= a\left(\frac{1-4F/a^{3/2}}{2\ln(2\sqrt{2}a / \Omega )}\right)^{1/2}\quad \text{for}\quad t=0\\[0.5em]
\epsilon_2&= a\left(\frac{1+4F/a^{3/2}}{2\ln(2\sqrt{2}a / \Omega )}\right)^{1/2}\quad \text{for}\quad t=\frac{\pi}{\Omega}
\end{align*}
then the resonance condition $\epsilon_{res}$ should be inside the interval $\epsilon_{res}\in[\epsilon_1,\epsilon_2]$. 
For the example below it just so happen that $[\epsilon_1,\epsilon_2]=[0.18,0.31]$ and $\epsilon_{res}\approx0.26$, which gives the trajectory

\begin{figure}[H]
\begin{center}
\centerline{\includegraphics[scale=0.29]{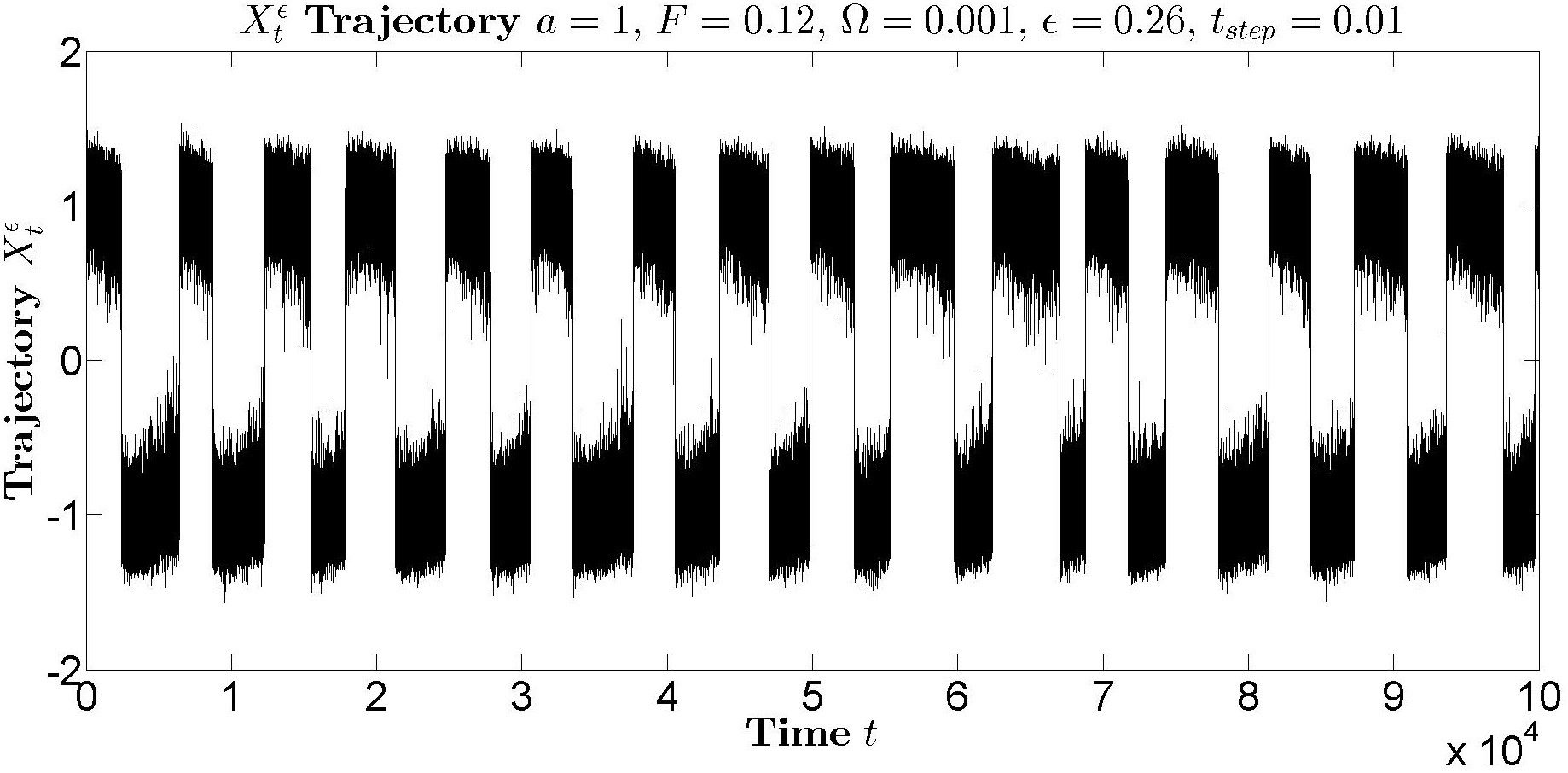}}
\caption{This trajectory is exhibiting quasi-determinism. It is near stochastic resonance.}
\label{chap_1_fig_1}
\end{center}
\end{figure}

\noindent We can increase and decrease the noise away from $\epsilon_{res}\approx0.26$, and transitions will occur more frequently or less frequently as we move away from resonance. 

\begin{figure}[H]
\centerline{\includegraphics[scale=0.29]{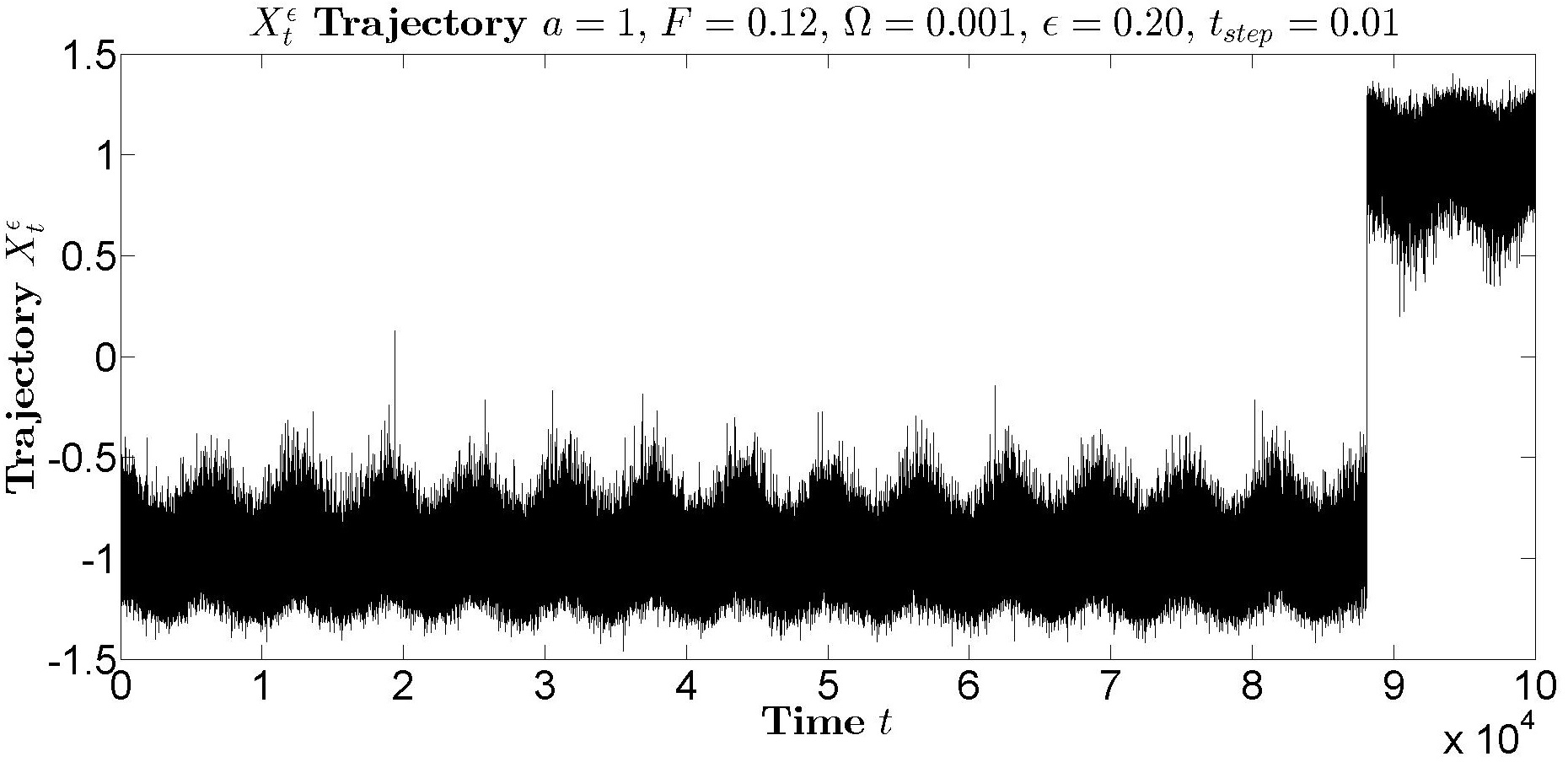}}
\caption{Transitions occur irregularly and are rare.}\label{chap_0_fig_4}
\end{figure}

\begin{figure}[H]
\centerline{\includegraphics[scale=0.29]{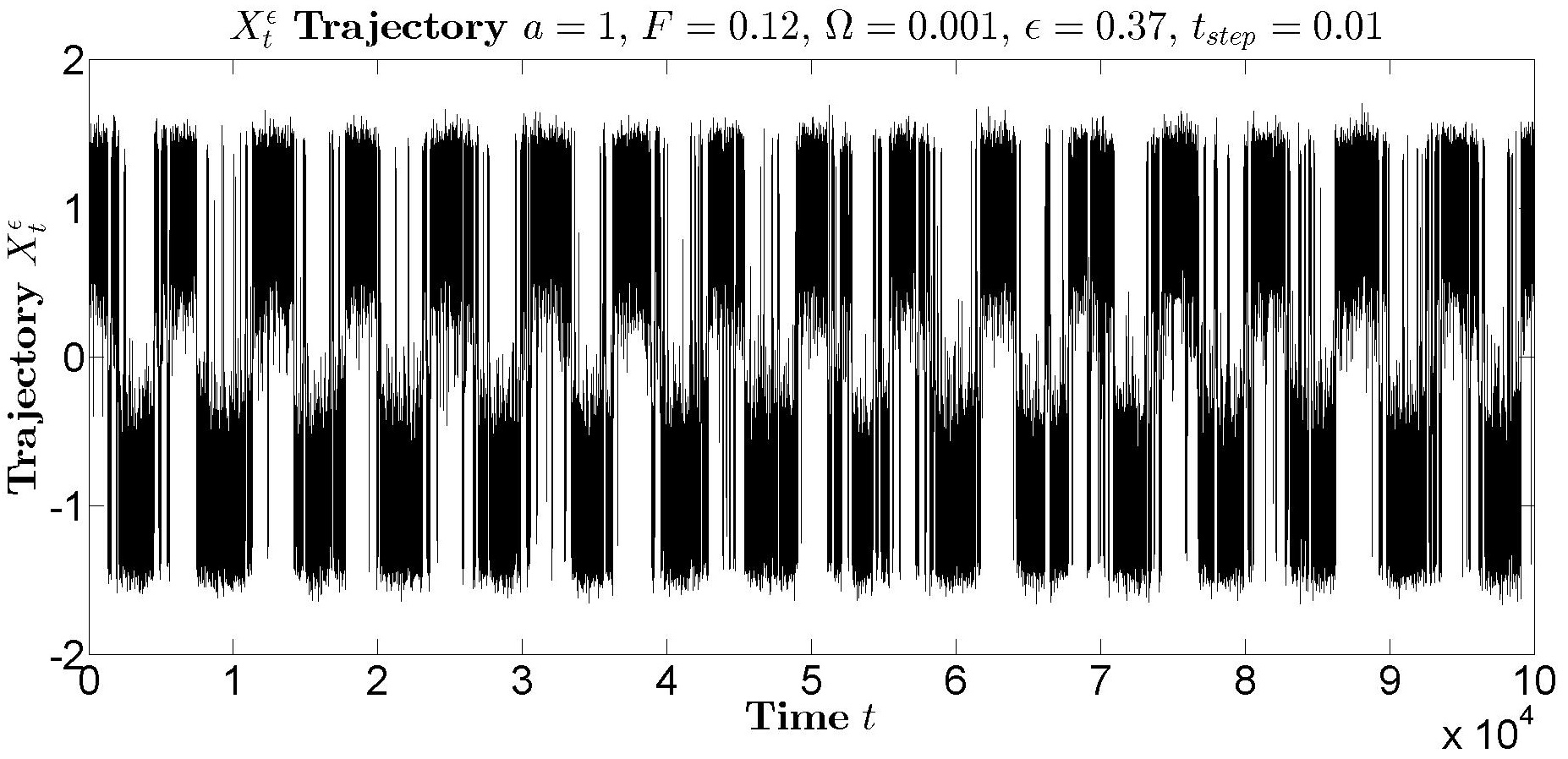}}
\caption{Transitions occur very often.}\label{chap_0_fig_5}
\end{figure}

\section{Remarks on One Dimensional Potential}
Notice that this is a very crude way to study the system.
The oscillating potential is being approximated by a frozen static potential. 
This is precisely the adiabatic approximation.  
Not only does it assume small noise, small forcing and adiabatic time development, the exact positions of the critical points (the two wells and the hill) were not calculated. 
All the calculations assumed that the hill was near $x=0$. 
This means the forcing is assumed to be small enough such that the hill does not move far away from $x=0$. 
Benzi et al's definition of the escape time is so crude it will be used only as a rough guide. 

Also note it is hard to tell if  Figures \ref{chap_1_fig_1},
\ref{chap_0_fig_4} and \ref{chap_0_fig_5}
show any regularity or not. 
As we shall see, regularity shows itself in the distribution of the escape times.

\chapter{Theoretical Escape Time from a Well of a Static Potential}
\label{sum_chap_escape_static}
We consider the theoretical escape time of a particle from a well of a static potential. 
This is done in two parts. 
The first part is Freidlin-Wentzell theory or large deviation
and the second part is Kramers' formula which is derived using potential theory. 
For small noise levels large deviation is considered and for higher noise levels potential theory is used.

\section{Freidlin-Wentzell Theory and Large Deviation}
A review of the major results of  the Freidlin-Wentzell theory is presented which is found in \cite{freidlin98}. 
This is done by considering stochastic systems converging to the deterministic limit for small noise, action functional for Wiener processes, action functional for general processes and the main theorems concerning the escape time.

\subsection{Stochastic Processes}
Let $\left(\Omega, \mathcal{F}, P\right)$ be a probability space. 
Let $\left(\mathbb{R}^r,\mathcal{B}\right)$ be a measure space on $\mathbb{R}^r$.  
Let $T$ be an indexing set. 
For $\omega\in\Omega$ and $t\in T$ define a mapping $\Omega \times T \longrightarrow \mathbb{R}^r$ by 
\begin{align*}
X_t(\omega):\Omega \times T \longrightarrow \mathbb{R}^r 
\end{align*}
where $X_t(\omega)\in \mathbb{R}^r$ is called a stochastic process on $\mathbb{R}^r$. 
The probability measure defined on $A\in\mathcal{F}$ is denoted by $P(A)$. 
But if this probability measure can depend on a value $x\in \mathbb{R}^r$, 
we will often put this dependence explicitly into the notation
\begin{align*}
P(A,x)=P_x(A). 
\end{align*}
We will only consider Markov processes in this thesis. 
There are further technical properties a Markov process has to fulfil.\footnote{For more details see page 20 in \cite{freidlin98}.} 
Intuitively this can be understood in the following way; 
the $\Omega$ can be thought of as the set of all trajectories of a stochastic process; 
the $T$ can be thought of as the set of time, for example $T=[0,\infty)$; 
the $\mathbb{R}^r$ can be thought as the space in which the trajectory is in, 
then $X_t(\omega)$ is a trajectory in $\mathbb{R}^r$ with continuous time $t\geq0$.

\subsection{Deterministic Limit}
Consider the following system. 
We have the $r$-dimensional real space $\mathbb{R}^r$. 
Let $x_t \in \mathbb{R}^r$ be a time dependent variable in $\mathbb{R}^r$.
Let $b(x_t)$ be a function $b:\mathbb{R}^r\rightarrow\mathbb{R}^r$ on $x_t$. 
We then let 
\begin{equation}
dx_t=b(x_t)dt \label{chap_2_tobias_deter}
\end{equation}
which can be seen as a system of $r$ differential equations for each of the elements of $x_t$. 
When we consider random systems we denote the (random) variable by $X^\epsilon_t$ with values in $\mathbb{R}^r$. 
The stochastic processes we consider in this thesis are diffusion processes. 
More precisely we consider random dynamical systems which are solutions of the following system of stochastic differential equations 
\begin{equation}
dX_t^\epsilon=b(X_t^\epsilon)dt+\epsilon\sigma(X_t^\epsilon)dw_t
\label{chap_2_tobias_rand}
\end{equation}
where $\epsilon$ is the noise level, $w_t$ is a $l$-dimensional Wiener process and $\sigma(X_t^\epsilon)$ is a function on $X_t^\epsilon$ returning a $l \times r$ matrix.

The first circle of results in the book of Freidlin and Wentzell are about how the solutions of the random system Equation \ref{chap_2_tobias_rand} approximate the solutions of the deterministic system Equation \ref{chap_2_tobias_deter}. 
For example we know that 
\begin{equation*}
\lim_{\epsilon\rightarrow0}X_t^\epsilon=x_t. 
\end{equation*}
But the exact manner of this limit and the conditions under which $X_t^\epsilon\rightarrow x_t$ is reached  is documented in Freidlin-Wentzell.\footnote{See pages 44-59 in \cite{freidlin98}.} 
For example we have\footnote{Adapted from Theorem 1.2 page 45 of \cite{freidlin98}.} 

\begin{Theorem} \label{thm:lip:growth}
Suppose  that the coefficients of Equation \ref{chap_2_tobias_rand} satisfy a Lipschitz condition and a growth condition given by 
\begin{align}
\sum_i\left[
b_i(x)-b_i(y)
\right]^2
+
\sum_{i,j}
\left[
\sigma_{ij}(x)-\sigma_{ij}(y)
\right]^2
&\leq K^2 |x-y|^2 \label{eqn:lip}\\
\sum_i\left[b_i(x)\right]^2
+
\sum_{i,j}
\left[\sigma_{ij}(x)\right]^2
&\leq K^2 (1+|x|^2)\label{eqn:growth}
\end{align}
then for all $t>0$ and $\delta>0$ we have 
\begin{align*}
E\left|X^\epsilon_t-x_t\right|^2
\leq \epsilon^2 a(t)
\quad \text{and} \quad 
\lim_{\epsilon\rightarrow0}P\left\{\max_{0\leq s \leq t}\left|X_s^\epsilon-x_s\right|>\delta\right\}=0
\end{align*}
where $a(t)$ is a monotone increasing function, which is expressed in terms of $|x|$ and $K$. 
\end{Theorem}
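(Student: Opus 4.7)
The plan is to control $E|X_t^\epsilon - x_t|^2$ by deriving a Gronwall-type integral inequality, and then to lift this $L^2$ convergence to the uniform-in-time statement and the probability statement using Doob's martingale inequality and Chebyshev.

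First I would subtract the integral form of the deterministic system (\ref{chap_2_tobias_deter}) from that of the SDE (\ref{chap_2_tobias_rand}), assuming $X_0^\epsilon = x_0$, to obtain
\begin{equation*}
X_t^\epsilon - x_t = \int_0^t\bigl[b(X_s^\epsilon) - b(x_s)\bigr]\,ds + \epsilon\int_0^t\sigma(X_s^\epsilon)\,dw_s.
\end{equation*}
Using $|A+B|^2 \le 2|A|^2 + 2|B|^2$, Cauchy-Schwarz in time on the drift contribution and the It\^o isometry on the stochastic integral, together with the Lipschitz bound (\ref{eqn:lip}), one finds
\begin{equation*}
\phi^\epsilon(t) \;\le\; 2tK^2\int_0^t\phi^\epsilon(s)\,ds \;+\; 2\epsilon^2 K^2\int_0^t\bigl(1 + E|X_s^\epsilon|^2\bigr)\,ds,
\end{equation*}
where $\phi^\epsilon(t) := E|X_t^\epsilon - x_t|^2$ and the growth bound (\ref{eqn:growth}) has been used to control $E|\sigma(X_s^\epsilon)|^2$.

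Next I would establish a uniform a priori second-moment bound on $X_s^\epsilon$. Applying the same Cauchy-Schwarz-plus-It\^o-isometry split directly to $X_t^\epsilon$ itself and invoking the growth condition (\ref{eqn:growth}) on both $b$ and $\sigma$ gives a closed Gronwall inequality for $E|X_t^\epsilon|^2$, which yields $E|X_s^\epsilon|^2 \le C(s,|x|,K)$ uniformly in $\epsilon \in (0,1]$. Substituting this back into the previous display and applying Gronwall once more produces $\phi^\epsilon(t) \le \epsilon^2 a(t)$ with $a(t)$ monotone increasing and depending only on $t$, $|x|$ and $K$, which is the first assertion.

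For the maximal statement I would apply Doob's $L^2$ inequality to the martingale $M_t^\epsilon := \epsilon\int_0^t\sigma(X_s^\epsilon)\,dw_s$ to bound $E\max_{s\le t}|M_s^\epsilon|^2$ by a constant multiple of $E|M_t^\epsilon|^2$, and treat the drift term by the same Cauchy-Schwarz estimate as before (the $\max_{s\le t}$ can be taken inside via monotonicity of the integrand bound). This produces $E\max_{0\le s\le t}|X_s^\epsilon - x_s|^2 \le \epsilon^2\tilde{a}(t)$, from which Chebyshev's inequality gives
\begin{equation*}
P\left\{\max_{0\le s\le t}|X_s^\epsilon - x_s| > \delta\right\} \;\le\; \frac{\epsilon^2\tilde{a}(t)}{\delta^2} \;\longrightarrow\; 0 \quad \text{as $\epsilon \to 0$}.
\end{equation*}
There is no genuine obstacle beyond careful bookkeeping; the only point requiring attention is that the a priori bound on $E|X_s^\epsilon|^2$ must be uniform in $\epsilon$ on the time interval of interest, which is secured by restricting to $\epsilon \le 1$ in the Gronwall step. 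Everything else reduces to the standard Lipschitz-Gronwall paradigm for SDEs.
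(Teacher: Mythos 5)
The paper does not actually prove this theorem: it is stated as a review item, with the footnote attributing it to Theorem~1.2, p.~45 of Freidlin and Wentzell \cite{freidlin98}, and no proof is supplied in the text. Your proposal is correct and is essentially the standard argument found in that reference: Cauchy--Schwarz in time on the drift, the It\^o isometry on the stochastic integral, a Gronwall a~priori bound on $E|X_s^\epsilon|^2$ to close the inequality, a second Gronwall for $E|X_t^\epsilon-x_t|^2\le\epsilon^2 a(t)$, and Doob's $L^2$ maximal inequality together with Chebyshev for the in-probability statement. Your remark about restricting to $\epsilon\le 1$ is the right observation --- without it the Gronwall constant in the a~priori moment bound, and hence $a(t)$, would carry a hidden $\epsilon$-dependence, contradicting the stated form of the bound; in the small-noise setting this restriction is harmless.
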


\noindent Theorem \ref{thm:lip:growth} can be explained in another way.  Intuitively as $\epsilon\rightarrow0$ we would expect to be back in the deterministic system $x_t$. 
Note that Equation \ref{eqn:lip} is the Lipschitz condition and \ref{eqn:growth} is the growth condition. 

There is also a stochastic analogue of Taylor's Remainder's Theorem where it can be shown that $X_t^\epsilon$ admits the following decomposition\footnote{See Theorem 2.1 page 52 of \cite{freidlin98}.}
\begin{equation*}
X_t^\epsilon=X_t^{(0)}+\epsilon X_t^{(1)}+\cdots+\epsilon^k X_t^{(k)}+R_{k+1}^\epsilon(t)
\end{equation*}
where the remainder is bounded by new functions
\begin{equation*}
\sup_{0\leq t \leq T}\left|R_{k+1}^\epsilon(t)\right|<C(\omega)\epsilon^{k+1}\quad \text{and} \quad P\left\{C(\omega)<\infty\right\}=1
\end{equation*}
and the $X^{(k)}_t$ are solutions of stochastic differential equations.

\subsection{Action Functional for Wiener processes} 
\label{chap_action_functional}
The second part of Freidlin-Wentzell has a new setting.\footnote{See pages 70-79 in \cite{freidlin98}.}  
Let $b(X_t^\epsilon)=0$, $\sigma(X_t^\epsilon)=1$ and $w_t$ be a $r$-dimensional Wiener process, that is to say 
\begin{equation*}
dX_t^\epsilon=\epsilon\, dw_t
\end{equation*}
where we have reduced the system to a $r$-dimensional Wiener process. 
Let $C_{T_1T_2}=C_{T_1T_2}(\mathbb{R}^r)$ denote the set of all continuous paths in $\mathbb{R}^r$ starting at time $T_1$ and ending at $T_2$. 
On this set we define a metric by 
\begin{equation*}
\rho_{T_1T_2}(\psi,\varphi)=\sup_{T_1\leq t \leq T_2}|\psi_t-\varphi_t|. 
\end{equation*}
We define a new functional by 
\begin{equation*}
S(\varphi)=S_{T_1T_2}(\varphi)=\frac{1}{2}\int^{T_2}_{T_1}|\dot{\varphi_s}|^2ds
\end{equation*}
for absolutely continuous (and differentiable) $\varphi_t$. 
If $\varphi_t$ is not absolutely continuous or if the integral is divergent, we set $S(\varphi)=+\infty$. 
We define the action functional by 
\begin{equation*}
I^\epsilon_{T_1T_2}(\varphi)=\epsilon^{-2}S_{T_1T_2}(\varphi)
\end{equation*}
and $S_{T_1T_2}(\varphi)$ will be called the normalized action functional. 
The paths should be interpreted as points,
that is elements of the functional space of paths, 
that is each point is itself a path. 
The distance between these points, and hence paths, is given by the metric just defined.  
We define a new set
\begin{equation*}
\Phi(s)=\left\{\varphi\in C_{0T}  \ \text{such that} \ \varphi_0=0\ \text{and}\ S_{0T}(\varphi)\leq s\right\}
\end{equation*}
which can be shown to be compact in the uniform topology.\footnote{See Lemma 2.1 page 77 of \cite{freidlin98}.}
 Now the SDE 
\begin{equation*}
dX_t^\epsilon=\epsilon\, dw_t
\end{equation*}
cannot be solved pathwise as in the deterministic case. 
The solution is a randomly chosen path 
out of an infinitude of possible paths. 
This is described by the probability of the path having certain properties. 
Also $X_t^\epsilon$ is self-similar and non-differentiable, but may be approximated by differentiable functions $\varphi_t$. 
The next major theorems in Freidlin-Wentzell show that for any $\delta>0$ and $\gamma>0$ we have\footnote{See Theorem 2.1 page 74 in \cite{freidlin98}.}
\begin{equation*}
P\left\{\rho_{0T}(X_t^\epsilon,\varphi_t)<\delta\right\}\geq \exp\left\{-\epsilon^{-2}\left[S_{0T}(\varphi)+\gamma\right]\right\}
\end{equation*}
and for any $\delta>0$, $\gamma>0$, $s_0>0$ with $s<s_0$ we have\footnote{See Theorem 2.2 page 74 in \cite{freidlin98}.} 
\begin{equation*}
P\left\{\rho_{0T}(X_t^\epsilon,\Phi(s))\geq \delta \right\}\leq \exp \left\{-\epsilon^{-2}(s-\gamma)\right\}. 
\end{equation*}
These two statements may be interpreted as a Laplace  type theorem in function spaces. 
A physical interpretation is that this gives an asymptotic description (in small $\epsilon$) for the probability that the path $X^\epsilon_t$ is near to $\varphi_t$, that is 
\begin{equation*}
P\left\{\rho(X_t^\epsilon,\varphi)<\delta \right\}\approx\exp\left\{-\epsilon^{-2}S(\varphi)\right\}. 
\end{equation*}
In the next section we develop the action functional for more general processes.

\subsection{Action Functional for General processes}
So far the action theory was developed for just one particular example of a stochastic process, that is the Wiener process. 
Now we develop an action theory for a general stochastic process described by Equation \ref{chap_2_tobias_rand}.\footnote{See pages 79-92 of \cite{freidlin98}.}  

Before we do that we state a list of properties a functional should have so that we can consider a suitable action functional. 
We state these properties in a more general context. 
Let $(X,\rho)$ be a metric space with metric $\rho$. 
On the $\sigma$-algebra of its Borel subsets let $\mu^h$ be a family of probability measures depending on a parameter $h>0$. Let $\lambda(h)$ be a positive function going to $+\infty$ as $h\downarrow0$. 
Let $S(x)$ be a function such that $S:X\rightarrow[0,\infty]$. 
We say that $\lambda(h)S(x)$ is an action function if the following holds. 
\\ \\
\indent (0) the set $\Phi(s)=\{x:S(x)\leq s\}$ is compact for every $s\geq 0$. \\
\indent (I) for any $\delta>0$, any $\gamma>0$ and any $x \in X$ there exists an $h_0>0$ such that
\begin{equation*}
\mu^h\{y:\rho(x,y)<\delta\}\geq\exp\{-\lambda(h)[S(x)+\gamma]\}
\end{equation*}
\indent $\relphantom{\text{(I)}}$ for all $h\leq h_0$\\
\indent (II) for any $\delta>0$, any $\gamma>0$ and any $s>0$ there exists an $h_0>0$ such that 
\begin{equation*}
\mu^h\{y:\rho(y,\Phi(s))\geq \delta\}\leq \exp\{-\lambda(h)(s-\gamma)\}
\end{equation*}
\indent $\relphantom{\text{(II)}}$ for all $h\leq h_0$.\\ \\
$S(x)$ and $\lambda(h)$ will be called the normalized action functional and normalizing coefficient. 

The results given in Chapter \ref{chap_action_functional} show that the functional considered there has all the above properties, 
where   $X=C_{T_1T_2}(\mathbb{R}^r)$, $S(\varphi)=S_{T_1T_2}(\varphi)$, $\lambda(h)=\epsilon^{-2}$ and $\epsilon=h$. 
Thus we can see how 
$X=C_{T_1T_2}(\mathbb{R}^r)$ with $I^\epsilon_{T_1T_2}$ is an action functional since it satisfied all three properties. 
Doubtless that there will be many other systems which satisfy all these three properties as well. This higher level of abstraction would allow us to prove powerful theorems.

\subsection{Main Theorems}
The action functional was given for a diffusion with drift term zero i.e. $b(X_t^\epsilon)=0$.
 We now put this term back in to consider the equation 
\begin{equation*}
dX_t^\epsilon=b(X_t^\epsilon)dt+\epsilon\,dw_t
\end{equation*}
where $w_t$ is a $r$-dimensional Wiener process. 
It can be shown that letting\footnote{See Theorem 1.1 page 104 of \cite{freidlin98}.}
\begin{equation*}
S_{0T}(\varphi)=\frac{1}{2}\int^T_0|\dot{\varphi_s}-b(\varphi_s)|^2ds
\end{equation*}
with $\lambda(h)=\epsilon^{-2}$ and $h=\epsilon$ satisfy the three properties of the action functional. 
The action functional then allows us to compute asymptotically different probabilities. 
For example, 
let $D\subset \mathbb{R}^r$ be a region of space in $\mathbb{R}^r$ and let 
\begin{align*}
H_D(t,x)&=\{\varphi\in C_{0T}(\mathbb{R}^r): \varphi_0=x, \ \varphi_t\in D \cup \partial D\}\\
\overline{H}_D(t,x)&=\{\varphi\in C_{0T}(\mathbb{R}^r): \varphi_0=x, \ x_s\notin D \ \text{for some } \ s \in [0,t]\}
\end{align*}
then it can be shown that\footnote{See Theorem 1.2 page 105 of \cite{freidlin98}.}  
\begin{align}
\lim_{\epsilon\rightarrow 0}\epsilon^2 \ln P_x\{X_t^\epsilon\in D\}&=-\min_{\varphi \in H_D(t,x)}S_{0T}(\varphi)\label{aug:eq1}\\
\lim_{\epsilon\rightarrow 0}\epsilon^2 \ln P_x\{\tau^\epsilon\leq t\}&=-\min_{\varphi \in \overline{H}_D(t,x)}S_{0T}(\varphi)\label{aug:eq2}
\end{align}
where $\tau^\epsilon=\min\{t:X_t^\epsilon \notin D\}$ is the escape time from $D$. 
This theorem gives us the leading term for the probabilities leaving this region of space $D$. 
The $H_D$ is the set of all paths that stay in $D$ and its boundary. 
The $\overline{H}_D$ is the set of all paths that leave $D$ at some time. 
Equation \ref{aug:eq1} is thus the probability of remaining in $D$ and Equation \ref{aug:eq2} is the probability of the escape time being less than $t$. 

So far  the above results hold for a general region $D$.
Now we want to consider the case where $D$ is the vicinity of a well, that is the region near and around a metastable state. 
This means $D$ is attracted to a point inside of $D$. 
Without loss of generality we can choose this point, which is the position of the well, to be zero $x_{well}=0$.
But the minimiser of $S_{0T}(\varphi)$ is very difficult to compute explicitly using the usual differential equations.
Let  
\begin{align*}
f(t,x,y)=
\min_{     \substack{\varphi_0=x  \\ \varphi_t=y}      }
S_{0t}(\varphi). 
\end{align*}
The Hamilton-Jacobi equations are given by 
\begin{align}
\frac{\partial f(t,x,y)}{\partial t}
+\frac{1}{2}
\left|
\nabla_yf(t,x,y)
\right|^2
+
\left(
b(y),\nabla_yf(t,x,y)
\right)
=0 \label{chap_1_euler_lang}
\end{align}
where $\nabla_y$ is the gradient operator in the variable $y$ and note that
\begin{align*}
\min_{\varphi\in H_D(t,x)}
S_{0t}(\varphi)
&=
\min_{y\in D\cup \partial D}
f(t,x,y)\\[0.5em]
\min_{\varphi\in \overline{H}_D(t,x)}
S_{0t}(\varphi)
&=
\min_{  \substack{0\leq s \leq t  \\ y\notin D}     }
f(s,x,y)
\end{align*}
and the solution to Equation \ref{chap_1_euler_lang} would be closely related to Equation \ref{aug:eq1} and \ref{aug:eq2}.\footnote{See pages 105-108 of \cite{freidlin98}.}
Let us introduce the so-called quasipotential
\begin{equation*}
\tilde{V}(x,y)=\inf\{S_{T_1T_2}(\varphi): \varphi\in C_{T_1T_2}(\mathbb{R}^r), \ 
\varphi_{T_1}=x, \ \varphi_{T_2}=y \}
\end{equation*}
which is the least action over all paths which starts at   $x$ and ends 
at $y$. 
Suppose that the drift term can be written as the gradient of a potential $V$
\begin{equation*}
b(x)=-\nabla V(x)
\end{equation*}
then it can be shown that\footnote{See Theorem 3.1 page 118 of \cite{freidlin98}.}  
\begin{align}
\tilde{V}(0,x)=2V(x). 
\label{eqn:main:height}
\end{align}
Note that this only holds for points $x\in\mathbb{R}^r$ such that $V(x)\leq\min_{y\in \partial D}V(y)$, that is for points lower than the exit point. 
Now suppose that there exists a unique point $y_0\in\partial D$ for which $\tilde{V}(0,y_0)=\min_{y\in\partial D}\tilde{V}(0,y)$ then\footnote{See Theorem 2.1 page 108 of \cite{freidlin98}.}
\begin{align}
\lim_{\epsilon\rightarrow 0} P_x\{\rho(X_s^\epsilon,y_0)<\delta\}=1
\quad \text{where} \quad 
s=\inf\left\{t:X^\epsilon_t\in\partial D\right\}
\label{eqn:main:escape:time}
\end{align}
for every $\delta>0$ and any $x\in D$. 
This means $X_t^\epsilon$ will exit near points of least height in the small noise limit. 

\subsection{Remarks on Freidlin-Wentzell and Large Deviation Theory}
The main Theorems of Freidlin-Wentzell were developed on a precise and rigorous mathematical setting. It would be appropriate to interpret what they mean in a more physical setting. 
Consider Equation \ref{eqn:main:height} and \ref{eqn:main:escape:time}. 
Equation \ref{eqn:main:height} gives an easier way to calculate the quasipotential, because the quasipotential is related in a very simple way to the height of the potential. Equation \ref{eqn:main:escape:time} means that the particle will escape whilst travelling through a path which gives the least height, or interpreted in another way, a path of least action. 
Thus, one of the main conclusion of the Fredlin-Wentzell theory is that the particle will tend to escape following close to a path which gives the least distance to climb out of a well.

\section{Kramers' Formula and Potential Theory}
\label{chap_sect_kram}
Let $V:\mathbb{R}^r\longrightarrow\mathbb{R}$.
Let 
$x\in\mathbb{R}^r$ be a well and  
$z_i\in\mathbb{R}^r$ be  saddles labelled by $i=1\ldots n$. 
The saddles would be gateways providing a passage for escape from the well. 
Define 
\begin{align*}
\Delta V_i=V(z_i)-V(x)
\end{align*}
which is the height difference between the well and the $i$th saddle. 
For small noise $\epsilon$, an approximate expression can be estimated for the escape time of the particle going through  the $i$th saddle. 
In the smallest order of the noise $\epsilon$ the mean exit time, as described in the previous section, is given by\footnote{See Theorem 4.1 and 4.2 on pages 124-127 of \cite{freidlin98}.}
\begin{align*}
\tau_i=e^{+2\Delta V_i /\epsilon^2}. 
\end{align*}
Inverting this gives the escape rate
\begin{align*}
R_i=e^{-2\Delta V_i /\epsilon^2}
\end{align*}
and the total escape rate would be to sum over all the saddles
\begin{align*}
R=\sum_{i=1}^n R_i
=\sum_{i=1}^n e^{-2\Delta V_i /\epsilon^2}. 
\end{align*}
The order correction is done by adding a coefficient called Kramers' coefficient and the resulting corrected rate is called Kramers' rate
\begin{align*}
R_i=k_ie^{-2\Delta V_i /\epsilon^2}
\end{align*}
where 
\begin{align*}
k_i=\frac{\sqrt{\left|\nabla^2V(x)\right|}}{2\pi}
\frac{\left|\lambda(z_i)\right|}{\sqrt{\left\Vert \nabla^2V(z_i) \right\Vert}}
\end{align*}
where $\left|\nabla^2(x)\right|$ denotes the determinant of the  Hessian of the potential at the well $x$, 
$\left\Vert \nabla^2V(z_i) \right\Vert$ denotes the modulus of the determinant of the potential at the saddle $z_i$
and 
$\left|\lambda(z_i)\right|$ denotes the minimum eigenvalue of the Hessian of the potential at the saddle $z_i$. 
This gives the escape rate in the next order of approximation to be 
\begin{align*}
R&=\sum_{i=1}^nR_i
=\sum_{i=1}^n k_ie^{-2\Delta V_i /\epsilon^2}
\end{align*}
which is rewritten as 
\begin{align*}
R=
\frac{\sqrt{\left|\nabla^2V(x)\right|}}{2\pi}
\sum_{i=1}^n
\frac{\left|\lambda(z_i)\right|}{\sqrt{\left\Vert \nabla^2V(z_i) \right\Vert}}
\exp\left\{\frac{-2\left(V(z_i)-V(x)\right)}{\epsilon^2}\right\}. 
\end{align*}
The last order of approximation for higher noise $\epsilon$ is done by bounding the error on Kramers' coefficient. 
This is 
\begin{align*}
k_i=\frac{\sqrt{\left|\nabla^2V(x)\right|}}{2\pi}
\frac{\left|\lambda(z_i)\right|}{\sqrt{\left\Vert \nabla^2V(z_i) \right\Vert}}
\left(
\frac{1}{1+\mathcal{O}\left(\frac{\epsilon^2}{2}\ln\frac{\epsilon^2}{2}\right)}
\right)
\end{align*}
which is rewritten as 
\begin{align}
\label{chap_2_krammers}
\frac{1}{k_i}
=
\frac{2\pi}{\sqrt{\left|\nabla^2V(x)\right|}}
\frac{\sqrt{\left\Vert \nabla^2V(z_i) \right\Vert}}{\left|\lambda(z_i)\right|}
\left[
1+\mathcal{O}\left(\frac{\epsilon^2}{2}\ln\frac{\epsilon^2}{2}\right)
\right]. 
\end{align}
We conclude with a few words on the derivation of Kramers' formula and the bound on its error.
Equation \ref{chap_2_krammers} was derived rigorously using techniques from potential theory instead of large deviations. 
Part of the technique involves the escape time being expressed in terms of a partial differential equation, similar to Equation \ref{chap_1:es3} for example. 
This derivation was done in  \cite{Bovier02metastabilityin}  which is beyond the scope of this thesis. 
This Chapter reviewed the escape rates of a particle from a static well, which will be relevant when we consider escape rates from an oscillating well.

\chapter{Theoretical Escape Time from a Well of an Oscillatory Potential}
\label{chapter_oscil_times}
\label{sum_chap_escape_oscill}

Stochastic resonance usually involves studying transitions between two stable states. 
Studying a stochastic differential equation in multidimensional real space can be complicated. 
It would be useful to simplify stochastic resonance  down to a  Markov Chain with transitions between two states $+1$ and $-1$, 
then we try to model stochastic resonance with a two state Markov Chain.  
This is done for both discrete and continuous time Markov Chains with two states, for both alternating and synchronised saddles.

\section{Markov Chain Reduction}
Let $V:\mathbb{R}^r\longrightarrow\mathbb{R}$ be a potential with two wells. 
This potential is subjected to a periodic forcing $F$ with frequency $\Omega$ and perturbed by noise $\epsilon$, which is described by the SDE\footnote{See Appendix \ref{appendix_potential} for how the forcing is denoted.}
\begin{align}
\dot{X^\epsilon_t}=-\nabla V+F\cos(\Omega t)+\epsilon \dot{W_t}
\label{chap_4:markov}
\end{align}
where $W_t$ is a Wiener process in $\mathbb{R}^r$ and  $F\in\mathbb{R}^r$. 
We call $X^\epsilon_t$  the diffusion case. 
The $X^\epsilon_t$ can be reduced to a Markov Chain $Y^\epsilon_t$ on $\{-1,+1\}$
\begin{align*}
X^\epsilon_t
\longrightarrow
Y^\epsilon_t
\end{align*}
in the following way. 
In what follows we will assume that the diffusion $X^\epsilon_t$ is continuous in time and space. 
Let $w_{l}(t)$ denote the position of the left well at time $t$ and $w_{r}(t)$ the position of the right well at time $t$. 
Note that $w_l(t)$ and $w_r(t)$ are also continuous in time. 
Let $R\in\mathbb{R}$ be constant. 
The reduction from the  $X^\epsilon_t$ to the Markov Chain $Y^\epsilon_t$ is
\begin{align*}
Y^\epsilon_t&=
\left\{
\begin{array}{lll}
-1 & \text{if} &
\left|X^\epsilon_t-w_l(t)\right|\leq R\\[0.5em]
+1 & \text{if} &
\left|X^\epsilon_t-w_r(t)\right|\leq R\\[0.5em]
Z & \text{if} & \left|X^\epsilon_t-w_l(t)\right|> R \quad \text{and} \quad \left|X^\epsilon_t-w_r(t)\right|> R
\end{array}
\right.
\end{align*}
where $Z$ is given by 
\begin{align*}
Z&=
\left\{
\begin{array}{lll}
-1 & \text{if} &
s_2<s_1\\[0.5em]
+1 & \text{if} &
s_1<s_2
\end{array}
\right.
\end{align*}
where $s_1$ and $s_2$ are given by 
\begin{align*}
s_1&=\max_{u<t}\left\{u:\left|X^\epsilon_u-w_l(u)\right|\leq R\right\}\\
s_2&=\max_{u<t}\left\{u:\left|X^\epsilon_u-w_r(u)\right|\leq R\right\}.
\end{align*}
When $Y^\epsilon_t=-1$ we say the particle is  in the left well 
and when $Y^\epsilon_t=+1$ we say the particle is  in the right well. 
Only when it enters the other well would $Y^\epsilon_t$ change sign. 
When the condition $\left|X^\epsilon_u-w_l(u)\right|\leq R$ is satisfied we say the particle is covered by the left well. 
When the condition $\left|X^\epsilon_u-w_r(u)\right|\leq R$ is satisfied we say the particle is covered by the right well.
Note that $R$ is chosen small enough such that it is impossible for the particle to be covered by both wells at any time, that is 
\begin{align*}
\left\{
x\in\mathbb{R}^r\,:\,
\left|X^\epsilon_t-w_l(t)\right|<R 
\quad \text{and} \quad 
\left|X^\epsilon_t-w_r(t)\right|<R 
\right\}=\emptyset
\end{align*}
for all times $t\geq0$. 
This means that $s_1$ is the most recent time the particle is covered by the left well and $s_2$ is the most recent time the particle is covered by the right well. 
Notice that if initially at $t=0$, the particle is covered by neither well then $Y^\epsilon_t$ cannot be derived nor defined by the above definitions. 
In this case either $Y^\epsilon_0=-1$ or $Y^\epsilon_0=+1$ is chosen depending on what initial conditions are required.  
In other words if $Y^\epsilon_0=-1$  is chosen as the initial condition then the particle is covered by the left well for $t<0$.  
If $Y^\epsilon_0=+1$ is chosen as the initial condition then the particle is covered by the right well for $t<0$.

The escape time from the left to right well $\tau_{-1+1}$ and from the right to left well $\tau_{+1-1}$ are defined in the following way\footnote{See Appendix \ref{appendix_escape_times} for details of the actual use of $R$ in the measurement of the escape times. \label{chapter_oscil_times_R}}
\begin{align*}
\tau_{-1+1}&=\mu\left(\left\{t:Y^\epsilon_t=-1\right\}\right)\quad \text{where} \quad \left\{t:Y^\epsilon_t=-1\right\} \quad \text{is an interval}\\
\tau_{+1-1}&=\mu\left(\left\{t:Y^\epsilon_t=+1\right\}\right)\quad \text{where} \quad \left\{t:Y^\epsilon_t=+1\right\} \quad \text{is an interval}
\end{align*}
where $\mu$ denotes the Lebesgue measure. 
In other words the time spent being in the state $Y^\epsilon_t=-1$ is $\tau_{-1+1}$ and the time spent being in the state $Y^\epsilon_t=+1$ is $\tau_{+1-1}$. 
These intervals will always be closed intervals. 
The process $Y^\epsilon_t$ has two states, hence each sample is a piecewise constant function.
The length of each piece is the escape time $\tau_{-1+1}$ or $\tau_{+1-1}$. 
Note that $\tau_{-1+1}$ and $\tau_{+1-1}$ are random times and random variables.

For the diffusion the escape time can be explained in the following way. 
Each well is surrounded by a circle with a constant radius $R$ which moves with the well.  
A particle is said to have entered the left well if it enters the region covered by the radius $R$ over the left well. 
The particle is then said to have entered the right well  when it enters the region covered by $R$ in the right well. 
The time difference between entering the left well and entering the right well is defined to be the escape time from left to right $\tau_{-1+1}$. 
A similar argument is said for $\tau_{+1-1}$. 
This also means the escape times in the Markov Chain is the same as the diffusion trajectory $X^\epsilon_t$ by definition. 
Notice that the diffusion $X^\epsilon_t$ has to be defined first before the Markov Chain $Y^\epsilon_t$ which is a derived quantity.  

Notice that all of our reasoning in deriving $Y^\epsilon_t$ only assumes that $X^\epsilon_t$ is a continuous time process in $\mathbb{R}^r$. We did not check whether $Y^\epsilon_t$ satisfy the strict definitions of a continuous time Markov Chain. If $Y^\epsilon_t$ is a Markov Chain it should also satisfy the Markov property, that is 
\begin{align*}
P\left(Y^\epsilon_t=i|Y^\epsilon_{t_1}=-1,Y^\epsilon_{t_2}=+1\right)&=P\left(Y^\epsilon_t=i|Y^\epsilon_{t_1}=-1\right)\\[0.5em]
&=P\left(Y^\epsilon_t=i|Y^\epsilon_{t_2}=+1\right)
\end{align*}
for any $0\leq t_1<t_2<t$. 
Again we stress that the only assumption we made when deriving $Y^\epsilon_t$ from $X^\epsilon_t$ is that $X^\epsilon_t$  is a continuous time process in $\mathbb{R}^r$, which is not sufficient for $Y^\epsilon_t$ to be a Markov Chain nor for $Y^\epsilon_t$ to satisfy the Markov property. But throughout the rest of this thesis the diffusion $X^\epsilon_t$ will be a Markov process, which means $Y^\epsilon_t$ should be a good approximation to a Markov Chain.\footnote{Whether $Y^\epsilon_t$ is a Markov Chain for a Markov process $X^\epsilon_t$, or for $X^\epsilon_t$ described by an SDE requires proof. This is an open question.}

A discrete time and continuous time Markov Chain model for Equation \ref{chap_4:markov} are studied in the following sections.

\section{Discrete Time Markov Chain}
Let the time be discrete. This to say time $t$ belongs to 
\begin{align*}
t\in\left\{0,1,2,\ldots\right\}.
\end{align*}
The Markov Chain is a time dependent stochastic process which can take values $+1$ or $-1$
\begin{align*}
Y_t=\pm1.
\end{align*}
At time $t$ the probability of $Y_t$ jumping from  $-1$ to $+1$ is denoted by $p_{-1+1}(t)$;
the probability of  jumping from  $+1$ to $-1$ is denoted by $p_{+1-1}(t)$;
the probability of staying in $-1$ is denoted by $p_{-1-1}(t)$;
and the probability of staying in $+1$ is denoted by $p_{+1+1}(t)$.
Notice that they have the following properties for all time $t$
\begin{align*}
p_{-1-1}(t)+p_{-1+1}(t)&=1\\
p_{+1+1}(t)+p_{+1-1}(t)&=1.
\end{align*}
A transition matrix can be defined as
\begin{align*}
P_t:=
\left(
\begin{array}{cc}
p_{-1-1}(t) & p_{-1+1}(t)\\[0.5em]
p_{+1-1}(t) & p_{+1+1}(t)
\end{array}
\right). 
\end{align*}
At every point in time it is possible to define a state probability, that is the probability of the trajectory being  $-1$ or $+1$,
\begin{align*}
P\left(Y_t=-1\right)&=\nu_-(t)\\
P\left(Y_t=+1\right)&=\nu_+(t).
\end{align*}
Notice that the state probability satisfy the following  condition for all time $t$
\begin{align*}
\nu_-(t)+\nu_+(t)=1.
\end{align*}
The two $\nu_-(t)$ and $\nu_+(t)$ can be written compactly in vector notation 
\begin{align*}
\nu(t)=
\left(
\begin{array}{c}
\nu_-(t) \\[0.5em] \nu_+(t)
\end{array}
\right). 
\end{align*}
The state probability at time $t+1$ can be expressed in terms of the last time $t$, that is
\begin{align*}
\nu(t+1)&=P_t^\dagger\nu(t)\\[0.5em]
\left(
\begin{array}{c}
\nu_-(t+1) \\ \nu_+(t+1)
\end{array}
\right)
&=
\left(
\begin{array}{cc}
p_{-1-1}(t) & p_{+1-1}(t)\\[0.5em]
p_{-1+1}(t) & p_{+1+1}(t)
\end{array}
\right)
\left(
\begin{array}{c}
\nu_-(t) \\[0.5em] \nu_+(t)
\end{array}
\right)
\end{align*}
where $P_t^\dagger$ denote the transpose of the matrix $P_t$. 
This means if the initial value of the state probability is known at $t=0$, then the future behaviour of the state probability can be described by computing all subsequent values of $\nu(t)$, that is 
\begin{align*}
\nu(t)=\prod_{i=0}^{i=t-1}P_i^\dagger\nu(0). 
\end{align*}
The main aim for the rest of our studies of the Markov Chain is to compute the state probability for various transition matrices. 
When the wells of the potential are oscillating such that one well is higher than the other, we model using $p\neq q $. 
When the wells of the potential are oscillating such that both wells are always at the same height as each other, we model using $p=q$.

\subsection{Discrete Time Markov Chain - Alternating Saddles $p\neq q$}
We want to study a system with periodic elements. 
The transition matrix would change periodically in time. 
Let the period be 
\begin{align*}
T=2m
\end{align*}
where $m$ is an integer. 
Let the time $t$ be written in the form
\begin{align*}
t=NT+n
\end{align*}
where $N$ is an integer number of periods. 
The transition matrix would vary periodically according to 
\begin{align*}
\text{For} \quad n=mod(t,T) \in T_1 \quad P_t=P_1=
\left(
\begin{array}{cc}
1-p & p \\[0.5em]
q & 1-q
\end{array}
\right)\\[0.5em]
\text{For} \quad n=mod(t,T) \in T_2 \quad P_t=P_2=
\left(
\begin{array}{cc}
1-q & q \\[0.5em]
p & 1-p
\end{array}
\right)
\end{align*}
where
\begin{align*}
T_1&=\left\{0,1,\ldots,m-1\right\}\\
T_2&=\left\{m,m+1,\ldots,2m-1\right\}.
\end{align*}
We  interpret $Y_t=-1$ as being in the left well and $Y_t=+1$ as being in the right well. 
We  also interpret $p$ as the probability of escape from a shallow well and $q$ as the probability of escape from a deep well. 
The transition matrix varying periodically in time can be used to model the periodic forcing being applied to the potential. 
The following Theorem derives the state probabilities. 
\begin{Theorem}\label{chap_4_thm:equal}
Let the time be $t=NT+n$. 
Let $\lambda=1-p-q$. 
The state probability at time $t$ is 
\begin{align*}
\text{For} \quad n\in T_1 \quad 
\nu&=\frac{1}{p+q}\left(\begin{array}{c}q\\p\end{array}\right)
-\frac{p-q}{p+q}\times\frac{\lambda^n}{1+\lambda^m}\left(\begin{array}{c}-1\\1\end{array}\right)\\[0.5em]
&\relphantom{=}
+
\frac{\nu_+(0)(p+q\lambda^m)-\nu_-(0)(q+p\lambda^m)}{1+\lambda^m}\lambda^{2mN+n}\left(\begin{array}{c}-1\\1\end{array}\right)\\[0.5em]
\text{For} \quad n\in T_2 \quad 
\nu&=\frac{1}{p+q}\left(\begin{array}{c}p\\q\end{array}\right)
+\frac{p-q}{p+q}\times\frac{\lambda^{n-m}}{1+\lambda^m}\left(\begin{array}{c}-1\\1\end{array}\right)\\[0.5em]
&\relphantom{=}
+
\frac{\nu_+(0)(p+q\lambda^m)-\nu_-(0)(q+p\lambda^m)}{1+\lambda^m}\lambda^{2mN+n}\left(\begin{array}{c}-1\\1\end{array}\right)
\end{align*}
\end{Theorem}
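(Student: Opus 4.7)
The plan is to exploit the fact that both $P_1^\dagger$ and $P_2^\dagger$ are $2\times 2$ stochastic matrices with the same eigenvalue spectrum $\{1,\lambda\}$, where $\lambda=1-p-q$, and crucially share the $\lambda$-eigenvector $(-1,1)^\dagger$. The $1$-eigenvectors are the respective stationary distributions
\begin{align*}
\pi_1=\frac{1}{p+q}\begin{pmatrix}q\\p\end{pmatrix},\qquad
\pi_2=\frac{1}{p+q}\begin{pmatrix}p\\q\end{pmatrix}.
\end{align*}
I would first record the spectral identity: for any vector $v$ with coordinate sum $1$ one has the clean decomposition
\begin{align*}
(P_i^\dagger)^k v = \pi_i + \lambda^k (v-\pi_i),\qquad i=1,2,
\end{align*}
because $v-\pi_i$ lies in the sum-zero subspace, which is precisely the $\lambda$-eigenspace of both matrices.

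Next I would set $M=(P_2^\dagger)^m(P_1^\dagger)^m$ and express the evolution over one full period. For $n\in T_1$ the iteration $\nu(t+1)=P_t^\dagger\nu(t)$ gives
\begin{align*}
\nu(NT+n)=(P_1^\dagger)^n M^N \nu(0),
\end{align*}
and for $n\in T_2$ it gives
\begin{align*}
\nu(NT+n)=(P_2^\dagger)^{n-m}(P_1^\dagger)^m M^N \nu(0).
\end{align*}
Applying the spectral identity twice (first $(P_1^\dagger)^m$, then $(P_2^\dagger)^m$) to a sum-$1$ vector, I obtain the stationary distribution of $M$,
\begin{align*}
\pi^* \;=\; \frac{\pi_2+\lambda^m \pi_1}{1+\lambda^m},
\end{align*}
by solving $M\pi^*=\pi^*$, and then $M^N v = \pi^*+\lambda^{2mN}(v-\pi^*)$, since $M$ inherits eigenvalues $1$ and $\lambda^{2m}$.

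Finally I would assemble the answer for $n\in T_1$ by applying the spectral identity for $P_1^\dagger$ to the sum-$1$ vector $M^N\nu(0)$, obtaining
\begin{align*}
\nu(NT+n)=\pi_1 + \lambda^n(\pi^*-\pi_1) + \lambda^{n+2mN}(\nu(0)-\pi^*),
\end{align*}
and then substitute $\pi^*-\pi_1=(\pi_2-\pi_1)/(1+\lambda^m)$ together with $\pi_2-\pi_1=-\tfrac{p-q}{p+q}(-1,1)^\dagger$ to recover the first two terms of the stated formula. For the last term, I would simplify the sum-zero vector $\nu(0)-\pi^*$ using $\nu_-(0)+\nu_+(0)=1$ to express it as a scalar multiple of $(-1,1)^\dagger$, the scalar being (up to a normalisation factor) $\nu_+(0)(p+q\lambda^m)-\nu_-(0)(q+p\lambda^m)$ divided by $(1+\lambda^m)$. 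The case $n\in T_2$ proceeds identically but with one extra application of the spectral identity for $P_2^\dagger$, which produces $\pi_2$ in place of $\pi_1$ and swaps the sign of the second term through $\pi_1-\pi_2=-(\pi_2-\pi_1)$; the third term picks up the factor $\lambda^{n-m}\cdot\lambda^m=\lambda^n$ and hence is identical in both cases.

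The main obstacle is not conceptual but bookkeeping: one must keep careful track of which stationary vector is which (they differ by a swap of coordinates), and of the combinatorial factor $(p+q)$ that arises from the normalisation of the stationary distributions when rewriting $\nu(0)-\pi^*$ in the form stated. Everything else is a routine consequence of the shared $\lambda$-eigenvector structure, which reduces the whole calculation to scalar iterations on the sum-zero line.
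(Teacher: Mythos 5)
Your proposal is correct and follows the same spectral route as the paper's proof, only packaged more cleanly: where the paper introduces the intermediate quantities $p',q'$, treats $(P_1^\dagger)^m$ and $(P_2^\dagger)^m$ as fresh transition matrices, and rediagonalizes $P_{tot}$, you extract the structural fact that $P_1^\dagger$ and $P_2^\dagger$ share the sum-zero line as their common $\lambda$-eigenspace and then iterate the scalar identity $(P_i^\dagger)^k v=\pi_i+\lambda^k(v-\pi_i)$ on sum-one vectors. The stationary vector $\pi^{*}=\tfrac{\pi_2+\lambda^m\pi_1}{1+\lambda^m}$ and the period map's eigenvalue $\lambda^{2m}$ drop out in one line, and the paper's $1+\lambda'$ is just $1+\lambda^m$ in disguise. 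So this is the same argument with less bookkeeping.

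One thing you should make precise rather than leave as ``up to a normalisation factor'': carrying out the last step exactly gives
\begin{align*}
\nu(0)-\pi^{*}=\frac{\nu_+(0)(p+q\lambda^m)-\nu_-(0)(q+p\lambda^m)}{(p+q)(1+\lambda^m)}\begin{pmatrix}-1\\1\end{pmatrix},
\end{align*}
i.e.\ the transient coefficient carries an extra $\tfrac{1}{p+q}$ that does not appear in the theorem as stated (nor in the final display of the paper's own proof, though the paper's intermediate line $\tfrac{[\nu_+(0)(1-q')-\nu_-(0)(1-p')]\lambda'^{2N}}{1+\lambda'}$ does have it, since $1-q'=\tfrac{p+q\lambda^m}{p+q}$ and $1-p'=\tfrac{q+p\lambda^m}{p+q}$). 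You can confirm the $\tfrac{1}{p+q}$ is required by taking $N=n=0$, $m=1$, $p\ne q$, $\nu_\pm(0)=\tfrac12$: the stated formula then fails to return $\nu(0)$, while the version with the extra $\tfrac{1}{p+q}$ does. So your instinct to flag the normalisation was right; nail it down and you will find the clean formula, which differs from the paper's by this missing factor.
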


\begin{proof}
Notice that the eigenvectors and eigenvalues of the transpose matrix $P_1^\dagger$ are 
\begin{align*}
\lambda_1&=1-p-q \quad  \quad \quad v_1=\left(\begin{array}{c}-1\\1\end{array}\right)\\[0.5em]
\lambda_2&=1 \relphantom{-p-q} \quad \quad \quad  v_2=\left(\begin{array}{c}q\\p\end{array}\right)
\end{align*}
which also spans the $\mathbb{R}^2$ space. 
For short we call $\lambda=\lambda_1$. 
This means an arbitrary vector can be expressed as a linear combination of the eigenvectors of $P_1^\dagger$. 
This is 
\begin{align*}
\left(
\begin{array}{c}
x\\y
\end{array}
\right)
=
\frac{1}{p+q}
\left\{
(qy-px)
\left(
\begin{array}{c}
-1\\1
\end{array}
\right)
+
(x+y)
\left(
\begin{array}{c}
q\\p
\end{array}
\right)
\right\}. 
\end{align*}
Now consider $m$ application of the $P_1^\dagger$ matrix on the arbitrary vector.
\begin{align*}
(P_1^\dagger)^m
\left(
\begin{array}{c}
x\\y
\end{array}
\right)
&=
\frac{1}{p+q}
\left\{
(qy-px)
\lambda^m
\left(
\begin{array}{c}
-1\\1
\end{array}
\right)
+
(x+y)
\left(
\begin{array}{c}
q\\p
\end{array}
\right)
\right\}\\[0.5em]
&=
\frac{1}{p+q}
\left(
\begin{array}{cc}
q+p\lambda^m & q(1-\lambda^m)\\
p(1-\lambda^m) & p+q\lambda^m
\end{array}
\right)
\left(
\begin{array}{c}
x\\y
\end{array}
\right)\\[0.5em]
&=
\left(
\begin{array}{cc}
1-p' & p'\\
q' & 1-q'
\end{array}
\right)^\dagger
\left(
\begin{array}{c}
x\\y
\end{array}
\right)
\end{align*}
where 
\begin{align*}
p'=\frac{p(1-\lambda^m)}{p+q}
\quad \text{and} \quad 
q'=\frac{q(1-\lambda^m)}{p+q}
\end{align*}
with a similar expression for the other transition matrix
\begin{align*}
(P_2^\dagger)^m
\left(
\begin{array}{c}
x\\y
\end{array}
\right)
=
\left(
\begin{array}{cc}
1-q' & q'\\
p' & 1-p'
\end{array}
\right)^\dagger
\left(
\begin{array}{c}
x\\y
\end{array}
\right). 
\end{align*}
Now denote a new matrix by 
\begin{align*}
P_{tot}&=(P_2^\dagger)^m(P_1^\dagger)^m\\
&=
\left(
\begin{array}{cc}
1-q' & p'\\
q' & 1-p'
\end{array}
\right)
\left(
\begin{array}{cc}
1-p' & q'\\
p' & 1-q'
\end{array}
\right)
\end{align*}
where $P_{tot}$ has eigenvalues and eigenvectors 
\begin{align*}
\xi_1&=1 \relphantom{(-p'-q')^2 } \quad \quad \quad e_1=\left(\begin{array}{c}1-q'\\1-p'\end{array}\right)\\[0.5em]
\xi_2&=(1-p'-q')^2 \quad \quad \quad e_2=\left(\begin{array}{c}-1\\1\end{array}\right)
\end{align*}
and these eigenvectors span the $\mathbb{R}^2$ space
\begin{align*}
\left(\begin{array}{c}x\\y\end{array}\right)
=
\frac{1}{1+\lambda'}
\left\{
(x+y)
\left(
\begin{array}{c}
1-q'\\1-p'
\end{array}
\right)
+
\left[
y(1-q')-x(1-p')
\right]
\left(
\begin{array}{c}
-1\\1
\end{array}
\right)
\right\}
\end{align*}
where we have denoted 
\begin{align*}
\lambda'=1-p'-q'. 
\end{align*}
Now consider $N$ applications of the matrix $P_{tot}$ on the initial value of the state probability
\begin{align*}
P_{tot}^N\,\nu(0)
&=
\frac{1}{1+\lambda'}
\left\{
\left(
\begin{array}{c}
1-q'\\1-p'
\end{array}
\right)
+
\left[
\nu_+(0)(1-q')-\nu_-(0)(1-p')
\right]
\lambda'^{2N}
\left(
\begin{array}{c}
-1\\1
\end{array}
\right)
\right\}
\end{align*}
and express the results in terms of the eigenvectors of $P_1^\dagger$
\begin{align*}
P_{tot}^N\,\nu(0)
&=
\frac{1}{1+\lambda'}
\times
\frac{1}{p+q}
\left\{
\left[
q(1-p')-p(1-q')
\right]
\left(
\begin{array}{c}
-1\\1
\end{array}
\right)
+
(1+\lambda')
\left(
\begin{array}{c}
q\\p
\end{array}
\right)
\right\}\\[0.5em]
&\relphantom{=}
+\frac{\left[\nu_+(0)(1-q')-\nu_-(0)(1-p')\right]\lambda'^{2N}}{1+\lambda'}
\left(
\begin{array}{c}
-1\\1
\end{array}
\right). 
\end{align*}
If $n\in T_1$ consider
\begin{align*}
(P_1^\dagger)^nP_{tot}^N\,\nu(0)
&=
\frac{1}{1+\lambda'}
\times
\frac{1}{p+q}
\left\{
\left[
q(1-p')-p(1-q')
\right]\lambda^n
\left(
\begin{array}{c}
-1\\1
\end{array}
\right)
+
(1+\lambda')
\left(
\begin{array}{c}
q\\p
\end{array}
\right)
\right\}\\[0.5em]
&\relphantom{=}
+\frac{\left[\nu_+(0)(1-q')-\nu_-(0)(1-p')\right]\lambda'^{2N}}{1+\lambda'}\lambda^n
\left(
\begin{array}{c}
-1\\1
\end{array}
\right)\\[0.5em]
&=\frac{1}{p+q}\left(\begin{array}{c}q\\p\end{array}\right)
-\frac{p-q}{p+q}\times\frac{\lambda^n}{1+\lambda^m}\left(\begin{array}{c}-1\\1\end{array}\right)\\[0.5em]
&\relphantom{=}
+
\frac{\nu_+(0)(p+q\lambda^m)-\nu_-(0)(q+p\lambda^m)}{1+\lambda^m}\lambda^{2mN+n}\left(\begin{array}{c}-1\\1\end{array}\right).
\end{align*}
If $n\in T_2$ consider
\begin{align*}
(P_2^\dagger)^{n-m}(P_1^\dagger)^mP_{tot}^N\,\nu(0)
&=(P_2^\dagger)^{n-m}
\left[
\frac{1}{p+q}\left(\begin{array}{c}q\\p\end{array}\right)
-\frac{p-q}{p+q}\times\frac{\lambda^m}{1+\lambda^m}\left(\begin{array}{c}-1\\1\end{array}\right)
\right. 
\\[0.5em]
&\relphantom{=}
+
\left.
\frac{\nu_+(0)(p+q\lambda^m)-\nu_-(0)(q+p\lambda^m)}{1+\lambda^m}\lambda^{2mN+m}\left(\begin{array}{c}-1\\1\end{array}\right)
\right]
\end{align*}
and we express $(q,p)^\dagger$ in terms of the eigenvectors of $P_2^\dagger$
\begin{align*}
(P_2^\dagger)^{n-m}(P_1^\dagger)^mP_{tot}^N\,\nu(0)
&=(P_2^\dagger)^{n-m}
\left[
\frac{p-q}{p+q}\left(\begin{array}{c}-1\\1\end{array}\right)
+\frac{1}{p+q}\left(\begin{array}{c}p\\q\end{array}\right)
-\frac{p-q}{p+q}\times\frac{\lambda^m}{1+\lambda^m}\left(\begin{array}{c}-1\\1\end{array}\right)
\right. 
\\[0.5em]
&\relphantom{=}
+
\left.
\frac{\nu_+(0)(p+q\lambda^m)-\nu_-(0)(q+p\lambda^m)}{1+\lambda^m}\lambda^{2mN+m}\left(\begin{array}{c}-1\\1\end{array}\right)
\right]\\[0.5em]
&=\frac{1}{p+q}\left(\begin{array}{c}p\\q\end{array}\right)
+\frac{p-q}{p+q}\times\frac{\lambda^{n-m}}{1+\lambda^m}\left(\begin{array}{c}-1\\1\end{array}\right)\\[0.5em]
&\relphantom{=}
+
\frac{\nu_+(0)(p+q\lambda^m)-\nu_-(0)(q+p\lambda^m)}{1+\lambda^m}\lambda^{2mN+n}\left(\begin{array}{c}-1\\1\end{array}\right). 
\end{align*}
This completes the proof. 
\end{proof}

\subsection{Discrete Time Markov Chain - Synchronised Saddles $p=q$}
Let the period be 
\begin{align*}
T=4m
\end{align*}
where $m$ is an integer. 
Let the time $t$ be written in the form
\begin{align*}
t=NT+n
\end{align*}
where $N$ is an integer number of periods. 
The transition matrix would vary periodically according to 
\begin{align*}
\text{For} \quad n=mod(t,T) \in T_1 \quad P_1=
\left(
\begin{array}{cc}
1-p & p \\[0.5em]
p & 1-p
\end{array}
\right)\\[0.5em]
\text{For} \quad n=mod(t,T) \in T_2 \quad P_2=
\left(
\begin{array}{cc}
1-q & q \\[0.5em]
q & 1-q
\end{array}
\right)\\[0.5em]
\text{For} \quad n=mod(t,T) \in T_3 \quad P_3=
\left(
\begin{array}{cc}
1-p & p \\[0.5em]
p & 1-p
\end{array}
\right)\\[0.5em]
\text{For} \quad n=mod(t,T) \in T_4 \quad P_4=
\left(
\begin{array}{cc}
1-q & q \\[0.5em]
q & 1-q
\end{array}
\right)
\end{align*}
where 
\begin{align*}
T_1&=\left\{0,1,\ldots,m-1\right\}\\
T_2&=\left\{m,m+1,\ldots,2m-1\right\}\\
T_3&=\left\{2m.2m+1,\ldots,3m-1\right\}\\
T_4&=\left\{3m,3m+1,\ldots,4m-1\right\}
\end{align*}
where again $p$ should be interpreted as the probability of escape from a shallow well and $q$ from a deep well.
The following Theorem derives the state probabilities. 
\begin{Theorem}\label{chap_4_thm:not_equal}
Let the time be $t=NT+n$. The state probability at time $t$ is 
\begin{align*}
\text{For} \quad n \in T_1 \quad 
\nu&=\frac{1}{2}
\left\{
\left(
\begin{array}{c}
1\\1
\end{array}
\right)
+
(1-2p)^{2mN+n}
(1-2q)^{2mN}
[\nu_+(0)-\nu_-(0)]
\left(
\begin{array}{c}
-1\\1
\end{array}
\right)
\right\}\\[0.5em]
\text{For} \quad n \in T_2 \quad 
\nu&=\frac{1}{2}
\left\{
\left(
\begin{array}{c}
1\\1
\end{array}
\right)
+
(1-2p)^{2mN+m}
(1-2q)^{2mN+(n-m)}
[\nu_+(0)-\nu_-(0)]
\left(
\begin{array}{c}
-1\\1
\end{array}
\right)
\right\}\\[0.5em]
\text{For} \quad n \in T_3 \quad 
\nu&=\frac{1}{2}
\left\{
\left(
\begin{array}{c}
1\\1
\end{array}
\right)
+
(1-2p)^{2mN+m+(n-2m)}
(1-2q)^{2mN+m}
[\nu_+(0)-\nu_-(0)]
\left(
\begin{array}{c}
-1\\1
\end{array}
\right)
\right\}\\[0.5em]
\text{For} \quad n \in T_4 \quad 
\nu&=\frac{1}{2}
\left\{
\left(
\begin{array}{c}
1\\1
\end{array}
\right)
+
(1-2p)^{2mN+2m}
(1-2q)^{2mN+m+(n-3m)}
[\nu_+(0)-\nu_-(0)]
\left(
\begin{array}{c}
-1\\1
\end{array}
\right)
\right\}
\end{align*}
\end{Theorem}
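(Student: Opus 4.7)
The plan is to exploit the fact that all four transition matrices are symmetric (doubly stochastic), so they equal their own transposes and share a common eigenbasis. Specifically, $P_1=P_3$ and $P_2=P_4$ both have eigenvectors $(1,1)^\dagger$ and $(-1,1)^\dagger$, with eigenvalues $1$ and $1-2p$ for $P_1$, and $1$ and $1-2q$ for $P_2$. Since both matrices are diagonal in this same basis, they commute and any product of them reduces to a single scalar multiplier on each eigenvector.

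Using $\nu_-(0)+\nu_+(0)=1$, I would decompose the initial state as
\begin{align*}
\nu(0) = \frac{1}{2}\left(\begin{array}{c}1\\1\end{array}\right) + \frac{\nu_+(0)-\nu_-(0)}{2}\left(\begin{array}{c}-1\\1\end{array}\right).
\end{align*}
The $(1,1)^\dagger$ component has eigenvalue $1$ under every $P_i^\dagger=P_i$ and is therefore preserved throughout the evolution, producing the constant term $\tfrac{1}{2}(1,1)^\dagger$ that appears in each case of the statement. Only the $(-1,1)^\dagger$ component accumulates a multiplicative factor, equal to the product of the associated eigenvalues of the matrices applied so far.

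For $t=NT+n$, I would split the history into $N$ complete periods of length $4m$ followed by the partial period of length $n$. In one complete period each of $P_1^\dagger$ and $P_2^\dagger$ is applied exactly $2m$ times, contributing a factor $(1-2p)^{2m}(1-2q)^{2m}$; after $N$ periods this gives $(1-2p)^{2mN}(1-2q)^{2mN}$. The partial period is then handled by splitting at the breakpoints $m,2m,3m$: we pick up an additional factor $(1-2p)^n$, $(1-2p)^m(1-2q)^{n-m}$, $(1-2p)^{m+(n-2m)}(1-2q)^m$, and $(1-2p)^{2m}(1-2q)^{m+(n-3m)}$ for $n\in T_1,T_2,T_3,T_4$ respectively. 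Multiplying by the $N$-period factor reproduces the exponents in each case of the statement, and combining with the fixed $(1,1)^\dagger$ component gives the claimed formula. The only real difficulty is the bookkeeping of exponents across the four subcases; the underlying linear algebra is immediate once the common eigenbasis is recognised.
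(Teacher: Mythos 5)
Your proposal is correct and takes essentially the same route as the paper: both decompose $\nu(0)$ in the shared eigenbasis $\{(1,1)^\dagger,(-1,1)^\dagger\}$ and track the scalar factor accumulated by the $(-1,1)^\dagger$ component over $N$ full periods plus the partial period, the paper packaging the full-period step as $P_{tot}^N\nu(0)$ and you phrasing the same thing via the observation that $P_1$ and $P_2$ commute. Your explicit remark that the matrices are symmetric (so $P_i^\dagger=P_i$) and simultaneously diagonalisable is left implicit in the paper's computation but is the same idea, and your exponent bookkeeping for $n\in T_1,\ldots,T_4$ matches the statement exactly.
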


\begin{proof}
Notice that the transpose of the matrix $P_1^\dagger$ has the following eigenvalues and eigenvectors
\begin{align*}
\lambda_1&=1-2p \quad v_1=\left(\begin{array}{c}-1\\1\end{array}\right)\\[0.5em]
\lambda_2&=1 \quad \quad \quad \, \, v_2=\left(\begin{array}{c}1\\1\end{array}\right). 
\end{align*}
These eigenvectors span the space, which means any vectors can be expressed as a linear combination of them 
\begin{align*}
\left(
\begin{array}{c}
x \\ y
\end{array}
\right)
=
\frac{1}{2}
\left\{
(y-x)
\left(
\begin{array}{c}
-1\\1
\end{array}
\right)
+(x+y)
\left(
\begin{array}{c}
1\\1
\end{array}
\right)
\right\}. 
\end{align*}
This means the initial values of the state probability $\nu$ can be expressed in terms of the eigenvectors of $P_1^\dagger$. 
Denote the matrix 
\begin{align*}
P_{tot}=(P_2^\dagger)^m(P_1^\dagger)^m(P_2^\dagger)^m(P_1^\dagger)^m
\end{align*}
which is the total transition matrix in one period. 
Proceeding we have 
\begin{align*}
P_{tot}^N\,\nu(0)=
\frac{1}{2}
\left\{
\left(
\begin{array}{c}
1\\1
\end{array}
\right)
+(1-2p)^{2mN}
(1-2q)^{2mN}
[\nu_+(0)-\nu_-(0)]
\left(
\begin{array}{c}
-1\\1
\end{array}
\right)
\right\}
\end{align*}
with the added condition $\nu_-(0)+\nu_+(0)=1$. 
Now note the following
\begin{align*}
\text{For} \quad n\in T_1 \quad \nu(t)&=(P_1^\dagger)^nP_{tot}^N\,\nu(0)\\[0.5em]
\text{For} \quad n\in T_2 \quad \nu(t)&=(P_2^\dagger)^{n-m}(P_1^\dagger)^mP_{tot}^N\,\nu(0)\\[0.5em]
\text{For} \quad n\in T_3 \quad \nu(t)&=(P_1^\dagger)^{n-2m}(P_2^\dagger)^{m}(P_1^\dagger)^mP_{tot}^N\nu(0)\,\\[0.5em]
\text{For} \quad n\in T_4 \quad \nu(t)&=(P_2^\dagger)^{n-3m}(P_1^\dagger)^{m}(P_2^\dagger)^{m}(P_1^\dagger)^mP_{tot}^N\,\nu(0)
\end{align*}
This completes the proof.
\end{proof}

\subsection{Discrete Time Markov Chain - Invariant Measures, Relaxation Time and Fourier Transform}
We consider a discrete Markov Chain on $\left\{-1,+1\right\}$, that is 
\begin{align*}
Y^\epsilon_t=\pm1
\end{align*}
and the time is 
\begin{align*}
t\in\left\{0,1,2,\ldots\right\}
\end{align*}
and the probabilities for being in $Y^\epsilon_t=-1$ or $Y^\epsilon_t=+1$ at time $t$ are given by the state probabilities $\nu_\pm(t)$
\begin{align*}
P\left(Y^\epsilon_t=-1\right)=\nu_-(t)
\quad \text{and} \quad 
P\left(Y^\epsilon_t=+1\right)=\nu_+(t).
\end{align*}
The probabilities of transitions occurring as given in the transition matrices changes with period $T$. 
After a very long time the state probabilities $\nu_\pm(\cdot)$ should not depend on the initial state probabilities $\nu_\pm(0)$. 
At time infinity $\nu_\pm(\cdot)$ should also be cyclic on $[0,T]$. 
Let the time be given by $t=NT+n$ where $N$ is a discrete number of periods.
This leads us to define the invariant measure as the state probabilities in the limit as $N\longrightarrow\infty$
\begin{align*}
\overline{\nu}(n):=\lim_{N\longrightarrow\infty}\nu(NT+n)
\end{align*}
and since $\overline{\nu}(\cdot)$ is periodic on $[0,T]$ it should also satisfy
\begin{align*}
\overline{\nu}(t+T)
=
\prod_{i=t}^{i=t+T-1}P_i^\dagger
\overline{\nu}(t)
\end{align*}
where $P_i$ are the transition matrices, that is to say $\overline{\nu}$ is invariant over one period of application of the transition matrices. 
This brings us to the following.

\begin{Corollary}
For the state probabilities in Theorem \ref{chap_4_thm:equal} the invariant measures are 
\begin{align*}
\text{For} \quad t\in T_1 \quad 
\overline{\nu}(t)&=\frac{1}{p+q}\left(\begin{array}{c}q\\p\end{array}\right)
-\frac{p-q}{p+q}\times\frac{\lambda^t}{1+\lambda^m}\left(\begin{array}{c}-1\\1\end{array}\right)\\[0.5em]
\text{For} \quad t\in T_2 \quad 
\overline{\nu}(t)&=\frac{1}{p+q}\left(\begin{array}{c}p\\q\end{array}\right)
+\frac{p-q}{p+q}\times\frac{\lambda^{t-m}}{1+\lambda^m}\left(\begin{array}{c}-1\\1\end{array}\right)
\end{align*}
\end{Corollary}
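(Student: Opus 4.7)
The plan is to pass to the limit $N \to \infty$ directly in the two explicit formulas supplied by Theorem \ref{chap_4_thm:equal}. Since $\lambda = 1-p-q$ and the transition probabilities $p,q$ are non-degenerate (i.e.\ $0<p,q<1$ with $p+q\neq 1$), we have $|\lambda|<1$, so $\lambda^{2mN+n} \to 0$ as $N\to\infty$. This factor appears only in the third summand of each case in Theorem \ref{chap_4_thm:equal}, namely the one containing the coefficient $\nu_+(0)(p+q\lambda^m)-\nu_-(0)(q+p\lambda^m)$; so in the limit this summand vanishes and, crucially, the dependence on the initial data $\nu(0)$ disappears. The two remaining terms in each case are exactly the claimed $\overline{\nu}(t)$ for $t\in T_1$ and $t\in T_2$, which establishes the limit definition of the invariant measure.

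To justify calling this limit an invariant measure, I would then check the periodicity identity $\overline{\nu}(t+T) = \prod_{i=t}^{t+T-1} P_i^\dagger\,\overline{\nu}(t)$. This can be done in either of two ways: directly, by substituting the explicit formulas for $\overline{\nu}$ at $t$ and $t+T$ and applying the eigenvector decompositions of $P_1^\dagger$ and $P_2^\dagger$ derived in the proof of Theorem \ref{chap_4_thm:equal}; or, more conceptually, by noting that each $P_i^\dagger$ is a bounded linear map on $\mathbb{R}^2$, so that the limit commutes with the finite product, and applying $\prod_{i=t}^{t+T-1}P_i^\dagger$ to $\nu(NT+t)$ produces $\nu((N+1)T+t)$, which converges to the same $\overline{\nu}(t)$ by the periodicity of the transition matrices. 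The second approach has the virtue of making no further computation necessary.

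There is no genuine obstacle here; the argument is essentially a one-line limit based on $|\lambda|<1$. The only point requiring minor attention is the degenerate boundary case $\lambda = 0$ (equivalently $p+q=1$), where $\lambda^{2mN+n}$ could involve $0^0$ when $N=n=0$; this can be handled either by the convention $0^0=1$ or, more cleanly, by simply excluding this non-generic case from the hypotheses, since $\lambda=0$ corresponds to one-step memoryless transitions that are not physically meaningful in the modelling context of alternating shallow and deep wells.
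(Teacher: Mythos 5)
Your proposal is correct and takes precisely the approach the paper intends; the paper omits the proof as easy, having defined $\overline{\nu}(n):=\lim_{N\to\infty}\nu(NT+n)$, and your observation that $|\lambda|=|1-p-q|<1$ for $0<p,q<1$ kills the $\lambda^{2mN+n}$ term is the whole argument. The only superfluous worry is the $\lambda=0$ case: since $N\to\infty$ forces $2mN+n\geq 1$ eventually, the potential $0^0$ never occurs in the limit and needs no special treatment.
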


\begin{Corollary}\label{cor_half_discrete}
For the state probabilities in Theorem \ref{chap_4_thm:not_equal} the invariant measures are
\begin{align*}
\overline{\nu}(t)=\frac{1}{2}
\left(
\begin{array}{c}
1\\1
\end{array}
\right)
\end{align*} 
\end{Corollary}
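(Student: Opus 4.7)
The plan is to apply the definition $\overline{\nu}(n) := \lim_{N\to\infty} \nu(NT + n)$ directly to the four expressions derived in Theorem \ref{chap_4_thm:not_equal}. In each of the four time-blocks $T_1, T_2, T_3, T_4$, the formula for $\nu(NT+n)$ splits as the constant vector $\tfrac{1}{2}(1,1)^\dagger$ plus a scalar multiple of $(-1,1)^\dagger$ whose coefficient carries a factor $(1-2p)^{a_N}(1-2q)^{b_N}$, where the exponents $a_N$ and $b_N$ both grow linearly in $N$ (the minimum exponent in each case is $2mN$, possibly shifted by a constant depending on $n$).

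The key analytic observation is that under the standing assumption $0<p,q<1$ we have $|1-2p|<1$ and $|1-2q|<1$, so $|1-2p|^{a_N}|1-2q|^{b_N} \to 0$ as $N\to\infty$. The prefactor $\nu_+(0)-\nu_-(0)$ is bounded (in fact lies in $[-1,1]$ since $\nu_\pm(0)$ are probabilities), hence the entire $(-1,1)^\dagger$-component is squeezed to zero. What remains is $\overline{\nu}(n) = \tfrac{1}{2}(1,1)^\dagger$, independent of $n$ and of the initial distribution.

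The only thing to double-check is that this single vector satisfies the invariance condition $\overline{\nu}(t+T) = \prod_{i=t}^{t+T-1} P_i^\dagger \overline{\nu}(t)$; this is immediate because every transition matrix $P_i$ in Theorem \ref{chap_4_thm:not_equal} is doubly stochastic (its rows and columns each sum to $1$), so $P_i^\dagger \tfrac{1}{2}(1,1)^\dagger = \tfrac{1}{2}(1,1)^\dagger$ for each $i$, making $\tfrac{1}{2}(1,1)^\dagger$ a fixed point of every single step of the chain. This also confirms consistency with the four case-by-case limits.

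There is no serious obstacle here; the content of the corollary is essentially that the symmetric case $p=q$ (at each instant of time) forces the chain to be doubly stochastic, which makes the uniform measure invariant at every time-step, and the contraction factor $\lambda = 1-2p$ (or $1-2q$) guarantees convergence to it. The mildest subtlety is handling the boundary cases $p\in\{0,1\}$ or $q\in\{0,1\}$, which I would simply exclude as degenerate (no noise, or deterministic flipping), since otherwise $|1-2p|=1$ and convergence fails; under the physical interpretation of $p,q$ as escape probabilities from a well this restriction is automatic.
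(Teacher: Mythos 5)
Your proposal is correct and is essentially the argument the paper has in mind: the paper explicitly says the proof is ``easy and omitted,'' and the intended argument is precisely to take $N \to \infty$ in the four formulas of Theorem~\ref{chap_4_thm:not_equal}, using $|1-2p|<1$ and $|1-2q|<1$ to kill the $(-1,1)^\dagger$ component. Your extra observation that $\tfrac{1}{2}(1,1)^\dagger$ is fixed by each doubly stochastic $P_i^\dagger$ is a clean cross-check that matches the paper's remark that invariance over a period ``follows from the proof of the Theorems,'' and your caveat about excluding $p,q\in\{0,1\}$ is a reasonable tidying-up of an implicit standing assumption.
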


\noindent The proof is easy and omitted. 
The fact that the $\overline{\nu}$ are invariant over one period of application of the transition matrices follow from the proof of the Theorems. 

The rate of convergence to the invariant measure would depend on the value of $p$ and $q$ themselves. Define the relaxation time $T_{relax}$ as the first time $t=T_{relax}$ such that
\begin{align*}
\left|
\overline{\nu}\left(T_{relax}\right)
-\nu\left(T_{relax}\right)
\right|
\leq
e^{-1}
\end{align*}
which is a measure of the rate of convergence to the invariant measure. 

Consider the averaged Markov Chain over many realisations. 
This is related to the invariant measure by 
\begin{align*}
\langle Y^\epsilon_t \rangle=\overline{\nu}_-(t)(-1)+\overline{\nu}_+(t)(+1)=\overline{\nu}_+(t)-\overline{\nu}_-(t). 
\end{align*}
The Fourier Transform of the averaged Markov Chain $\langle\tilde{Y}_\omega^\epsilon\rangle$ is often studied (see Chapter \ref{chap_analysis_theory}), that is 
\begin{align*}
\langle\tilde{Y}_\omega^\epsilon\rangle
&=\mathcal{F}\left(\langle Y^\epsilon_t \rangle\right). 
\end{align*}
We can Fourier Transform both the alternating saddle $p\neq q $ case and the synchronised saddle $p=q$ case. 
This brings us to the following. 

\begin{Corollary}
For the Markov Chain in Theorem \ref{chap_4_thm:equal} the Fourier Transform of the averaged trajectory is 
\begin{align*}
\langle\tilde{Y}_\omega^\epsilon\rangle
=\frac{1}{T}
\frac{p-q}{p+q}
\left(
1-e^{-i\pi\omega}
\right)
\left\{
\frac{1-e^{-i\pi\omega}}{1-e^{-i\pi\omega/m}}
-\frac{2}{1+\lambda^m}\frac{1-\lambda^me^{-i\pi\omega}}{1-\lambda e^{-i\pi \omega /m}}
\right\}. 
\end{align*}
\end{Corollary}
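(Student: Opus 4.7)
The plan is to substitute the invariant measure from the preceding corollary into $\langle Y^\epsilon_t\rangle = \overline{\nu}_+(t)-\overline{\nu}_-(t)$ and evaluate the resulting discrete Fourier sum over one period directly. Setting $c = (p-q)/(p+q)$ and reading off the components of the invariant-measure vectors from Theorem \ref{chap_4_thm:equal} (the transient $\lambda^{2mN+n}$ contribution drops out), one obtains for $t=n\in T_1$
\begin{align*}
\langle Y^\epsilon_t\rangle \;=\; c\left(1 - \frac{2\lambda^n}{1+\lambda^m}\right),
\end{align*}
and for $t=n\in T_2$, by a symmetric calculation,
\begin{align*}
\langle Y^\epsilon_t\rangle \;=\; -c\left(1 - \frac{2\lambda^{n-m}}{1+\lambda^m}\right).
\end{align*}
The key structural observation is that $\langle Y^\epsilon_{n+m}\rangle = -\langle Y^\epsilon_n\rangle$ for $n\in T_1$: the averaged signal is antisymmetric across the half-period.

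The form of the stated answer fixes the Fourier convention as the discrete Fourier transform over one period $T=2m$,
\begin{align*}
\langle\tilde{Y}^\epsilon_\omega\rangle \;=\; \frac{1}{T}\sum_{t=0}^{2m-1}\langle Y^\epsilon_t\rangle\,\zeta^t, \qquad \zeta := e^{-i\pi\omega/m},
\end{align*}
so that $\zeta^m = e^{-i\pi\omega}$. Splitting the sum into its $T_1$ and $T_2$ halves and shifting $t\mapsto t-m$ on the second half, the half-period antisymmetry lets one pull out a common factor and reduces everything to a single sum over $T_1$:
\begin{align*}
\langle\tilde{Y}^\epsilon_\omega\rangle \;=\; \frac{1-\zeta^m}{T}\sum_{n=0}^{m-1}\left[c\,\zeta^n \;-\; \frac{2c}{1+\lambda^m}(\lambda\zeta)^n\right].
\end{align*}
This already explains the overall prefactor $(1-e^{-i\pi\omega})$ appearing in the statement.

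The remaining step is to evaluate the two geometric series $\sum_{n=0}^{m-1}\zeta^n = (1-\zeta^m)/(1-\zeta)$ and $\sum_{n=0}^{m-1}(\lambda\zeta)^n = (1-(\lambda\zeta)^m)/(1-\lambda\zeta)$, substitute $\zeta^m = e^{-i\pi\omega}$ and $(\lambda\zeta)^m = \lambda^m e^{-i\pi\omega}$, and restore $c = (p-q)/(p+q)$; this produces the bracketed expression in the statement verbatim. The main obstacle, such as it is, is simply the bookkeeping: one has to verify that both the constant piece $\pm c$ and the $\lambda$-exponential piece share the same antisymmetry across the half-period, which is what makes the common factor $(1-\zeta^m)$ extractable and the final answer so compact. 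Beyond that the proof uses nothing more than the elementary geometric sum identity together with the explicit formula for $\overline{\nu}(t)$ already established.
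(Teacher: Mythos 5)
Your proposal is correct and follows essentially the same route as the paper's proof: both substitute the invariant measure from Theorem \ref{chap_4_thm:equal}, exploit the half-period antisymmetry $\langle Y^\epsilon_{t+m}\rangle=-\langle Y^\epsilon_t\rangle$ (the paper states it as $\overline{\nu}_\mp(t+m)=\overline{\nu}_\pm(t)$, which is equivalent) to pull out the prefactor $(1-e^{-i\pi\omega})$, and then evaluate the two geometric series over $T_1$. The bookkeeping checks out.
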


\begin{proof}
Notice that
\begin{align*}
\overline{\nu}_-(t+m)=\overline{\nu}_+(t)
\quad \text{and} \quad
\overline{\nu}_+(t+m)=\overline{\nu}_-(t)
\end{align*}
so we have 
\begin{align*}
\langle\tilde{Y}_\omega^\epsilon\rangle
&=\mathcal{F}\left(\langle Y^\epsilon_t \rangle\right)\\
&=\frac{1}{T}\sum_{t=0}^{2m-1}\langle Y^\epsilon_t\rangle e^{-2\pi i\omega t/2m}\\
&=\frac{1}{T}\sum_{t=0}^{m-1}\langle Y^\epsilon_t\rangle e^{-2\pi i\omega t/2m}+\frac{1}{T}\sum_{t=m}^{2m-1}\langle Y^\epsilon_t\rangle e^{-2\pi i\omega t/2m}\\
&=\frac{1}{T}\sum_{t=0}^{m-1}\left[\overline{\nu}_+(t)-\overline{\nu}_-(t)\right]e^{-2\pi i\omega t/2m}+\frac{1}{T}\sum_{t=0}^{m-1}\left[\overline{\nu}_-(t)-\overline{\nu}_+(t)\right]e^{-2\pi i\omega (t+m)/2m}\\
&=\frac{1}{T}\sum_{t=0}^{m-1}\left[\overline{\nu}_+(t)-\overline{\nu}_-(t)\right]
\left(
e^{-2\pi i \omega t /2m}-e^{-2\pi i \omega (t+m)/2m}
\right)\\
&=\frac{1}{T}
\left(
1-e^{-i\pi\omega}
\right)
\sum_{t=0}^{m-1}\left[\overline{\nu}_+(t)-\overline{\nu}_-(t)\right]e^{-i\pi\omega t /m}\\
&=\frac{1}{T}
\left(
1-e^{-i\pi\omega}
\right)
\frac{p-q}{p+q}
\sum_{t=0}^{m-1}
\left(
1-2\frac{\lambda^t}{1+\lambda^m}
\right)
e^{-i\pi\omega t /m}\\
&=\frac{1}{T}
\frac{p-q}{p+q}
\left(
1-e^{-i\pi\omega}
\right)
\left\{
\frac{1-e^{-i\pi\omega}}{1-e^{-i\pi\omega/m}}
-\frac{2}{1+\lambda^m}\frac{1-\lambda^me^{-i\pi\omega}}{1-\lambda e^{-i\pi \omega /m}}
\right\}. 
\end{align*}
This completes the proof. 
\end{proof}

\begin{Corollary}\label{transform_discrete_chain}
For the Markov Chain in Theorem \ref{chap_4_thm:not_equal} the Fourier Transform of the averaged trajectory is 
\begin{align*}
\langle\tilde{Y}_\omega^\epsilon\rangle
=0. 
\end{align*}
\end{Corollary}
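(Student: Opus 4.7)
The plan is to observe that this corollary is essentially an immediate consequence of Corollary \ref{cor_half_discrete}, which already gives the invariant measure in the synchronised case $p=q$ as the constant vector $\overline{\nu}(t)=(1/2,1/2)^\dagger$ for every $t$.

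First I would write the averaged trajectory in terms of the invariant measure using the same identity used just above the statement of Corollary \ref{transform_discrete_chain}, namely
\begin{align*}
\langle Y^\epsilon_t\rangle = \overline{\nu}_+(t)-\overline{\nu}_-(t).
\end{align*}
Substituting the invariant measure from Corollary \ref{cor_half_discrete} gives $\langle Y^\epsilon_t\rangle = \tfrac{1}{2}-\tfrac{1}{2}=0$ for every $t\in\{0,1,\dots,T-1\}$.

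Then I would apply the discrete Fourier transform to this identically zero sequence, which gives
\begin{align*}
\langle\tilde{Y}^\epsilon_\omega\rangle = \frac{1}{T}\sum_{t=0}^{T-1}\langle Y^\epsilon_t\rangle\, e^{-2\pi i\omega t/T} = 0
\end{align*}
for every frequency $\omega$, completing the proof.

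There is no real obstacle here: the heavy lifting was already done in Theorem \ref{chap_4_thm:not_equal} and its Corollary \ref{cor_half_discrete}. The only thing worth noting in the write-up is the conceptual point that symmetry of the forcing ($p=q$) forces the invariant measure to weigh the two states equally, so the mean trajectory vanishes identically, and consequently so does every Fourier mode. This contrasts with the alternating case $p\neq q$, where the nontrivial asymmetry of the invariant measure produces a nonzero Fourier spectrum.
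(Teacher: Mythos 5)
Your proof is correct and is exactly the argument the paper intends — the paper states ``Again the proof is trivial and omitted,'' and your fill-in uses precisely the two ingredients the paper has just set up: Corollary \ref{cor_half_discrete} giving $\overline{\nu}_\pm(t)\equiv\tfrac{1}{2}$, and the displayed identity $\langle Y^\epsilon_t\rangle=\overline{\nu}_+(t)-\overline{\nu}_-(t)$, so that $\langle Y^\epsilon_t\rangle\equiv 0$ and its discrete Fourier transform vanishes. Nothing further is required.
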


\noindent Again the proof is trivial and omitted. 
If we study the Fourier Transform at $\omega=1$, this would be the same as studying the driving frequency, which is the frequency at which the transition matrices are changing. 
The  physical intuition is that one has the most significant response at this frequency.

\section{Continuous Time Markov Chain}
Let the time be continuous. 
This is to say time $t$ belongs to 
\begin{align*}
t\in\mathbb{R}.
\end{align*}
The Markov Chain is a time dependent stochastic process with values $-1$ or $+1$,
\begin{align*}
Y_t=\pm1.
\end{align*}
Let $p$ and $q$ be real functions 
\begin{align*}
p:\mathbb{R}\longrightarrow \mathbb{R}
\quad \text{and} \quad 
q:\mathbb{R}\longrightarrow \mathbb{R}
\end{align*}
and $p$ and $q$ are periodic on $[0,T]$
\begin{align*}
p(t+T)=p(t) 
\quad \text{and} \quad 
q(t+T)=q(t). 
\end{align*}
Let $A\subseteq[0,T]$ be a subset of the interval $[0,T]$. 
The probability of $Y_t$ transiting from $Y_t=-1$ to $Y_t=+1$ for the times in $A$, $t\in A$, is denoted by 
\begin{align*}
p_{-1+1}(A).
\end{align*}
Similarly the probability of $Y_t$ transiting from $Y_t=+1$ to $Y_t=-1$ for the times in $A$, $t\in A$, is denoted by 
\begin{align*}
p_{+1-1}(A). 
\end{align*}
The probability of $Y_t$ staying at $-1$ in the time $t\in A$ is given by 
\begin{align*}
p_{-1-1}(A)=1-p_{-1+1}(A).
\end{align*}
Similarly the probability of $Y_t$ staying at $+1$ in the time $t\in A$ is given by 
\begin{align*}
p_{+1+1}(A)=1-p_{+1-1}(A).
\end{align*}
If $A$ is a small time interval $A=[t,t+\delta t]$ then the following infinitesimal representation can be made 
\begin{align*}
p_{-1+1}([t,t+\delta t])&=p(t)\delta t \\
p_{+1-1}([t,t+\delta t])&=q(t)\delta t.
\end{align*}
Now we consider a small change in the state probabilities at times $t$ and $t+\delta t$. 
\begin{align*}
\left(
\begin{array}{c}
\nu_-(t+\delta t)\\[0.5em] \nu_+(t+\delta t) 
\end{array}
\right)
&=
\left(
\begin{array}{cc}
p_{-1-1}([t,t+\delta t]) & p_{-1+1}([t,t+\delta t])\\[0.5em]
p_{+1-1}([t,t+\delta t]) & p_{+1+1}([t,t+\delta t]) 
\end{array}
\right)^\dagger
\left(
\begin{array}{c}
\nu_-(t)\\[0.5em] \nu_+(t) 
\end{array}
\right)\\[0.5em]
&=
\left(
\begin{array}{cc}
1-p(t)\delta t & p(t)\delta t\\[0.5em]
q(t)\delta t & 1-q(t)\delta t
\end{array}
\right)^\dagger
\left(
\begin{array}{c}
\nu_-(t)\\[0.5em] \nu_+(t) 
\end{array}
\right)
\end{align*}
then 
\begin{align*}
\nu(t+\delta t)-\nu(t)
&=
\left(
\begin{array}{c}
\nu_-(t+\delta t)\\[0.5em] \nu_+(t+\delta t) 
\end{array}
\right)
-
\left(
\begin{array}{c}
\nu_-(t)\\[0.5em] \nu_+(t) 
\end{array}
\right)\\[0.5em]
&=
\left(
\begin{array}{cc}
-p(t)\delta t & p(t)\delta t\\[0.5em]
q(t)\delta t & -q(t)\delta t
\end{array}
\right)^\dagger
\left(
\begin{array}{c}
\nu_-(t)\\[0.5em] \nu_+(t) 
\end{array}
\right)\\[0.5em]
\frac{\nu(t+\delta t)-\nu(t)}{\delta t}
&=
\left(
\begin{array}{cc}
-p(t) & p(t)\\[0.5em]
q(t) & -q(t)
\end{array}
\right)^\dagger
\left(
\begin{array}{c}
\nu_-(t)\\[0.5em] \nu_+(t) 
\end{array}
\right)
\end{align*}
which in the limit of small $\delta t$ leads to a differential equation describing the behaviour of $\nu(t)$ 
\begin{align}
\frac{d\nu}{dt}\label{chap_4:diff:eqn}
&=
Q^\dagger
\nu
\end{align}
where the infinitesimal generator $Q$ is defined as 
\begin{align*}
Q=
\left(
\begin{array}{cc}
-p(t) & p(t)\\[0.5em]
q(t) & -q(t)
\end{array}
\right). 
\end{align*}
Note that the transpose of $Q$ is taken in Equation \ref{chap_4:diff:eqn}. 
The aim now is to derive the state probability by solving this differential equation for various forms of $p$ and $q$. 
The extra conditions we use are 
\begin{align*}
\nu_-(t)+\nu_+(t)&=1\\
\nu'_-(t)+\nu'_+(t)&=0
\end{align*}
for all times $t$ and the initial conditions at $t=0$ are $\nu_-(0)$ and $\nu_+(0)$. 

\subsection{Continuous Time Markov Chain - Alternating Saddles $p\neq q$}
Notice that $p$ may be interpreted as  the probability of escape from the left well and $q$ as  the probability of escape from the right well. 
If $p$ and $q$ are cyclic over $[0,T]$, then this can be interpreted as modelling a potential with periodic forcing in continuous time. 

\begin{Theorem}\label{chap_4_thm:equal_contin}
Let $p\neq q$ and $t\geq0$.
The state probabilities are given by 
\begin{align*}
\nu_-(t)&=\frac{\nu_-(0)+\int_0^tq(s)\exp\left\{\int_0^sp(u)+q(u)\,du\right\}\,ds}{\exp\left\{\int_0^tp(u)+q(u)\,du\right\}}\\[0.5em]
\nu_+(t)&=\frac{\nu_+(0)+\int_0^tp(s)\exp\left\{\int_0^sp(u)+q(u)\,du\right\}\,ds}{\exp\left\{\int_0^tp(u)+q(u)\,du\right\}}
\end{align*}
\end{Theorem}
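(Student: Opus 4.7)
The plan is to reduce the vector ODE $\tfrac{d\nu}{dt}=Q^\dagger\nu$ to a pair of decoupled scalar first-order linear ODEs, and then solve each by an integrating factor. Writing the system componentwise gives
\begin{align*}
\nu_-'(t) &= -p(t)\nu_-(t) + q(t)\nu_+(t),\\
\nu_+'(t) &= \phantom{-}p(t)\nu_-(t) - q(t)\nu_+(t).
\end{align*}
First I would exploit the conservation laws $\nu_-(t)+\nu_+(t)=1$ and $\nu_-'(t)+\nu_+'(t)=0$ which are stated as standing assumptions. Substituting $\nu_+(t)=1-\nu_-(t)$ into the first equation decouples it entirely:
\begin{align*}
\nu_-'(t) + \bigl(p(t)+q(t)\bigr)\nu_-(t) = q(t),
\end{align*}
and symmetrically, substituting $\nu_-(t)=1-\nu_+(t)$ into the second equation yields
\begin{align*}
\nu_+'(t) + \bigl(p(t)+q(t)\bigr)\nu_+(t) = p(t).
\end{align*}

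Next I would apply the integrating factor $\mu(t)=\exp\!\bigl\{\int_0^t (p(u)+q(u))\,du\bigr\}$ to each scalar equation, so that the left-hand side becomes $(\mu\nu_\pm)'$. Integrating from $0$ to $t$ and using the initial values $\nu_-(0)$ and $\nu_+(0)$ produces
\begin{align*}
\mu(t)\nu_-(t) - \nu_-(0) &= \int_0^t \mu(s)q(s)\,ds,\\
\mu(t)\nu_+(t) - \nu_+(0) &= \int_0^t \mu(s)p(s)\,ds,
\end{align*}
and dividing through by $\mu(t)$ gives the formulae in the statement.

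Finally, as a consistency check — rather than an obstacle — I would verify that the two expressions satisfy $\nu_-(t)+\nu_+(t)=1$, which reduces to the identity $\int_0^t (p(s)+q(s))\mu(s)\,ds = \mu(t)-1$ together with $\nu_-(0)+\nu_+(0)=1$. There is no real technical difficulty here: since the coefficients $p,q$ are only required to be integrable on $[0,T]$ (they are bounded periodic functions in the applications of interest), the integrating factor method applies directly, and the only subtlety worth flagging is that the two representations for $\nu_-$ and $\nu_+$ are not independent but are tied together by the constraint $\nu_-+\nu_+=1$.
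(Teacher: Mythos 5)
Your proposal is correct and follows essentially the same route as the paper's proof: decouple the system via the conservation law $\nu_-+\nu_+=1$, reduce to the scalar linear ODEs $\nu_\pm'+(p+q)\nu_\pm = q$ (resp.\ $p$), and solve by the integrating factor $\exp\{\int_0^t(p+q)\,du\}$. The only addition is your closing consistency check, which is a sensible sanity verification but not a new idea.
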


\begin{proof}
The differential equations we want to solve are given by 
\begin{align*}
\frac{d}{dt}
\left(
\begin{array}{c}
\nu_-(t)\\[0.5em]\nu_+(t)
\end{array}
\right)
&=
\left(
\begin{array}{cc}
-p(t)&p(t)\\[0.5em]q(t)&-q(t)
\end{array}
\right)^\dagger
\left(
\begin{array}{c}
\nu_-(t)\\[0.5em]\nu_+(t)
\end{array}
\right)\\[0.5em]
&=
\left(
\begin{array}{cc}
-p(t)&q(t)\\[0.5em]p(t)&-q(t)
\end{array}
\right)
\left(
\begin{array}{c}
\nu_-(t)\\[0.5em]\nu_+(t)
\end{array}
\right)
\end{align*}
which gives 
\begin{align*}
\frac{d\nu_-}{dt}&=-p\nu_-+q\nu_+\\
\frac{d\nu_+}{dt}&=p\nu_--q\nu_+
\end{align*}
and by using $\nu_-+\nu_+=1$ we get 
\begin{align}
\frac{d\nu_-}{dt}+(p+q)\nu_-&=q \label{chap_4_eqn_neg}\\
\frac{d\nu_+}{dt}+(p+q)\nu_+&=p. \label{chap_4_eqn_pos}
\end{align}
We will only solve for $\nu_-(t)$. 
The case for  $\nu_+(t)$ is similar. 
Equation \ref{chap_4_eqn_neg} can easily be solved with an integrating factor
\begin{align*}
\frac{d}{dt}
\left\{
\nu_-(t)
\exp
\left\{
\int^t_0p(u)+q(u)\,du
\right\}
\right\}
&=
q(t)
\exp
\left\{
\int^t_0p(u)+q(u)\,du
\right\}
\end{align*}
and proceeding we have 
\begin{align*}
\nu_-(t)\exp
\left\{
\int^t_0p(u)+q(u)\,du
\right\}
-\nu_-(0)
&=
\int^t_0
q(s)
\exp
\left\{
\int^s_0p(u)+q(u)\,du
\right\}
\,ds
\end{align*}
which rearranges to give
\begin{align*}
\nu_-(t)&=\frac{\nu_-(0)+\int_0^tq(s)\exp\left\{\int_0^sp(u)+q(u)\,du\right\}\,ds}{\exp\left\{\int_0^tp(u)+q(u)\,du\right\}}.
\end{align*}
This completes the proof. 
\end{proof}

\subsection{Continuous Time Markov Chain - Synchronised Saddles $p=q$}
If $p=q$ for continuous time, then  this can be modelled as both wells of the potential always being at the same height but moving together. 

\begin{Theorem}\label{chap_4_thm:not:equal_contin}
Let $p=q$ and $t\geq0$.
The state probabilities are given by 
\begin{align*}
\nu_-(t)&=\frac{1}{2}-\frac{\nu_+(0)-\nu_-(0)}{2}\exp\left\{-2\int_0^tp(s)\,ds\right\}\\[0.5em]
\nu_+(t)&=\frac{1}{2}+\frac{\nu_+(0)-\nu_-(0)}{2}\exp\left\{-2\int_0^tp(s)\,ds\right\}
\end{align*}
\end{Theorem}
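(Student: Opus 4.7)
The plan is to mirror the argument used in Theorem \ref{chap_4_thm:equal_contin}, specialising to $p=q$, and exploit the simplification that occurs when the generator becomes symmetric. Specifically, with $p(t)=q(t)$ the system $\frac{d\nu}{dt}=Q^\dagger\nu$ reduces to
\begin{align*}
\frac{d\nu_-}{dt}&=-p\,\nu_-+p\,\nu_+, \\
\frac{d\nu_+}{dt}&=p\,\nu_--p\,\nu_+,
\end{align*}
and after substituting $\nu_+=1-\nu_-$ (which is preserved by the dynamics, as the two equations sum to zero) I obtain the single scalar linear ODE
\begin{align*}
\frac{d\nu_-}{dt}+2p(t)\nu_-=p(t).
\end{align*}

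Next I would solve this by an integrating factor $\mu(t)=\exp\bigl\{2\int_0^t p(s)\,ds\bigr\}$, exactly as in the proof of Theorem \ref{chap_4_thm:equal_contin}. The key simplification — and the one thing that makes this case qualitatively different from the $p\neq q$ case — is that the inhomogeneous term satisfies
\begin{align*}
p(t)\,\mu(t)=\tfrac{1}{2}\,\mu'(t),
\end{align*}
so the integral on the right becomes elementary:
\begin{align*}
\nu_-(t)\,\mu(t)-\nu_-(0)=\tfrac{1}{2}\bigl(\mu(t)-1\bigr).
\end{align*}
Dividing through by $\mu(t)$ and rearranging gives
\begin{align*}
\nu_-(t)=\tfrac{1}{2}+\bigl(\nu_-(0)-\tfrac{1}{2}\bigr)\exp\!\left\{-2\int_0^t p(s)\,ds\right\}.
\end{align*}

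Finally, I would use the normalisation $\nu_-(0)+\nu_+(0)=1$ to replace $\nu_-(0)-\tfrac{1}{2}$ by $-\tfrac{1}{2}(\nu_+(0)-\nu_-(0))$, yielding the stated formula for $\nu_-(t)$. The formula for $\nu_+(t)$ then follows immediately from $\nu_+(t)=1-\nu_-(t)$ (or by symmetry, rerunning the same argument starting from the equation for $\nu_+$). There is no real obstacle in this proof; the only substantive observation is the identity $p\,\mu=\tfrac{1}{2}\mu'$, which is what collapses the generally messy integral appearing in Theorem \ref{chap_4_thm:equal_contin} into the closed-form exponential above, and which is ultimately responsible for the constant invariant measure $\tfrac{1}{2}(1,1)^\dagger$ seen later in Corollary \ref{cor_half_discrete}'s continuous-time analogue.
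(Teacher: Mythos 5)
Your proof is correct, but it takes a genuinely different route from the paper's. The paper does \emph{not} specialise the integrating-factor computation of Theorem \ref{chap_4_thm:equal_contin}; instead it subtracts the two component equations to obtain the \emph{homogeneous} scalar ODE $\frac{d}{dt}(\nu_+-\nu_-)=-2p(\nu_+-\nu_-)$ for the difference $z=\nu_+-\nu_-$, solves that directly by separation of variables, and then recovers $\nu_\pm$ from $z$ and the normalisation $\nu_-+\nu_+=1$. You keep the unknown $\nu_-$ itself, work with the inhomogeneous equation $\nu_-'+2p\nu_-=p$, and observe that the inhomogeneous term obeys $p\mu=\tfrac12\mu'$ so the integrating-factor integral collapses. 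Both arguments are elementary and short. The paper's choice of variable removes the inhomogeneity altogether, so there is nothing to integrate; yours has the advantage of being a literal specialisation of the $p\neq q$ proof, and it isolates precisely why the general formula simplifies in the symmetric case — the identity $p\mu=\tfrac12\mu'$ — which also transparently explains the constant invariant measure of Corollary \ref{cor_half_continuous}.
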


\begin{proof}
The differential equations we have to solve are given by 
\begin{align*}
\frac{d}{dt}
\left(
\begin{array}{c}
\nu_-(t)\\[0.5em]\nu_+(t)
\end{array}
\right)
&=
p(t)
\left(
\begin{array}{cc}
-1&1\\[0.5em]1&-1
\end{array}
\right)
\left(
\begin{array}{c}
\nu_-(t)\\[0.5em]\nu_+(t)
\end{array}
\right)
\end{align*}
which gives 
\begin{align}
\frac{d\nu_-}{dt}&=p\left(\nu_+-\nu_-\right)\label{chap_4_eqn_half_neg}\\
\frac{d\nu_+}{dt}&=p\left(\nu_--\nu_+\right)\label{chap_4_eqn_half_pos}
\end{align}
and subtracting Equation \ref{chap_4_eqn_half_neg} away from Equation \ref{chap_4_eqn_half_pos} leads to 
\begin{align*}
\frac{d}{dt}\left(\nu_+-\nu_-\right)
=-2p\left(\nu_+-\nu_-\right).
\end{align*}
Denote the difference by 
\begin{align*}
z(t)=\nu_+(t)-\nu_-(t)
\end{align*}
which gives the differential equation we need to solve to 
\begin{align*}
\frac{dz}{dt}
&=-2pz\\
\int^{z(t)}_{z(0)}
\frac{dz}{z}
&=
-2\int^t_0p(s)\,ds\\
\ln(z(t))-\ln(z(0))
&=-2\int^t_0p(s)\,ds\\
z(t)&=
z(0)\exp\left\{-2\int^t_0p(s)\,ds\right\}
\end{align*}
and using $z=\nu_+-\nu_-$ and $\nu_-+\nu_+=1$ rearranges the solution to 
\begin{align*}
\nu_-(t)&=\frac{1}{2}-\frac{\nu_+(0)-\nu_-(0)}{2}\exp\left\{-2\int_0^tp(s)\,ds\right\}\\[0.5em]
\nu_+(t)&=\frac{1}{2}+\frac{\nu_+(0)-\nu_-(0)}{2}\exp\left\{-2\int_0^tp(s)\,ds\right\}.
\end{align*}
This completes the proof. 
\end{proof}

\subsection{Continuous Time Markov Chain - Invariant Measures and Fourier Transform}
As in the discrete time case we can compute the corresponding invariant measures.

\begin{Corollary}
\label{corollary_invariant_measure_contin}
For the state probabilities in Theorem \ref{chap_4_thm:equal_contin}
the invariant measures are
\begin{align*}
\overline{\nu}_-(t)
&=
\frac{\int^t_0p(s)g(s)\,ds}{g(t)}
+\frac{\int^T_0p(s)g(s)\,ds}{g(t)\left(g(T)-1\right)}\\[0.5em]
\overline{\nu}_+(t)
&=
\frac{\int^t_0q(s)g(s)\,ds}{g(t)}
+\frac{\int^T_0q(s)g(s)\,ds}{g(t)\left(g(T)-1\right)}
\end{align*}
where
\begin{align*}
g(t)=\exp\left\{\int^t_0p(u)+q(u)\,du\right\}.
\end{align*}
\end{Corollary}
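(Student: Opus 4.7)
The plan is to invoke the formulas for $\nu_\pm(t)$ provided by Theorem \ref{chap_4_thm:equal_contin} and pin down the initial condition that makes the resulting trajectory periodic; by the definition of $\overline{\nu}$ as the $N\to\infty$ limit along $t=NT+n$, this is exactly the invariant measure. The key observation that makes this approach work is that $p$ and $q$ are $T$-periodic, so the integrand $p(u)+q(u)$ in the exponent defining $g$ satisfies $\int_0^{t+T}(p+q)\,du = \int_0^t(p+q)\,du + \int_0^T(p+q)\,du$, which immediately gives the multiplicative identity $g(t+T)=g(t)\,g(T)$. A parallel computation, using the substitution $s\mapsto s-T$ on the interval $[T,t+T]$, yields $\int_0^{t+T} q(s)g(s)\,ds = \int_0^T q(s)g(s)\,ds + g(T)\int_0^t q(s)g(s)\,ds$, and similarly for the integral with $p$.

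First I would fix a periodic initial datum $\overline{\nu}_-(0)$ characterised by $\overline{\nu}_-(T)=\overline{\nu}_-(0)$, which by Theorem \ref{chap_4_thm:equal_contin} reads
\begin{equation*}
\overline{\nu}_-(0) \;=\; \frac{\overline{\nu}_-(0) + \int_0^T q(s)\,g(s)\,ds}{g(T)}.
\end{equation*}
Solving this linear equation gives $\overline{\nu}_-(0) = \int_0^T q(s)g(s)\,ds / (g(T)-1)$, which is well defined since $p,q\geq 0$ and are not identically zero, forcing $g(T)>1$. Substituting this initial value back into the formula of Theorem \ref{chap_4_thm:equal_contin} and splitting the numerator yields the stated expression for $\overline{\nu}_-(t)$, and the analogous computation with the roles of $p$ and $q$ interchanged delivers $\overline{\nu}_+(t)$.

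To conclude that this periodic solution really is $\lim_{N\to\infty}\nu(NT+n)$, I would substitute $t=NT+n$ into Theorem \ref{chap_4_thm:equal_contin}, use the two identities above recursively to get $g(NT+n)=g(n)g(T)^N$ and
\begin{equation*}
\int_0^{NT+n} q(s)g(s)\,ds \;=\; \int_0^n q(s)g(s)\,ds \cdot g(T)^N + \int_0^T q(s)g(s)\,ds \cdot \frac{g(T)^N-1}{g(T)-1},
\end{equation*}
then divide through by $g(n)g(T)^N$ and let $N\to\infty$. The $\nu_-(0)$ contribution decays like $g(T)^{-N}$, confirming that the periodic solution is also the unique attractor and therefore equals $\overline{\nu}_-$; the parallel argument settles $\overline{\nu}_+$. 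The only mild obstacle is bookkeeping the periodicity of $g$ and keeping the $p$-integral and $q$-integral in their correct rows; no genuinely hard step is involved, since Theorem \ref{chap_4_thm:equal_contin} has already done the analytic work.
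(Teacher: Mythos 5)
Your proof is correct and follows essentially the same route as the paper's: both hinge on the $T$-periodicity of $p$ and $q$ to obtain $g(t+T)=g(t)g(T)$ and the recursive integral decomposition
\begin{equation*}
\int_0^{NT+n} q(s)g(s)\,ds = g(T)^N\int_0^n q(s)g(s)\,ds + \int_0^T q(s)g(s)\,ds\cdot\frac{g(T)^N-1}{g(T)-1},
\end{equation*}
followed by substitution into Theorem \ref{chap_4_thm:equal_contin} and the limit $N\to\infty$. The only difference is that you prepend a fixed-point computation, solving $\overline{\nu}_-(T)=\overline{\nu}_-(0)$ to guess the periodic initial datum before verifying attraction, whereas the paper skips that step and takes the limit directly for arbitrary $\nu_-(0)$. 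Your preliminary step is logically redundant (the limit computation already absorbs it) but it does make transparent why the result is independent of the initial condition, and it surfaces the needed condition $g(T)>1$ explicitly; the paper's route is shorter but less motivated.

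One caveat worth flagging: following Theorem \ref{chap_4_thm:equal_contin} faithfully, as you do, $\overline{\nu}_-$ inherits the $q$-integral and $\overline{\nu}_+$ the $p$-integral, whereas the Corollary as printed has them the other way around. Your claim that the substitution ``yields the stated expression'' therefore holds only up to this $p\leftrightarrow q$ swap. This is a typo carried from the paper's own proof, which silently replaces $q$ by $p$ mid-computation when passing from the Theorem's formula to the periodicity identity; your derivation is the internally consistent one.
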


\begin{proof}
We derive the invariant measure for $\overline{\nu}_-(t)$. 
The case for $\overline{\nu}_+(t)$ is similar. 
Consider the fact that $p(\cdot)$ and $q(\cdot)$ are cyclic on $[0,T]$ and let $i$ be an integer, then the following integral can be rewritten as 
\begin{align*}
\int^{(i+1)T}_{iT}
p(s)g(s)\,ds
&=
\int^T_0p(s)g(iT+s)\,ds\\
&=
\int^T_0p(s)g(iT)g(s)\,ds\\
&=
g(iT)\int^T_0p(s)g(s)\,ds\\
&=
g(T)^i\int^T_0p(s)g(s)\,ds.
\end{align*}
Let the time be given by $NT+t$ where $N$ is an integer number of periods. 
This means the following integral can be written as 
\begin{align*}
\int^{NT+t}_0
p(s)g(s)\,ds
&=
\int^{NT+t}_{NT}
p(s)g(s)\,ds
+
\sum_{i=0}^{N-1}\int^{(i+1)T}_{iT}
p(s)g(s)\,ds\\[0.5em]
&=
\int^t_0
p(s)g(NT+s)\,ds
+
\int^T_0p(s)g(s)\,ds
\sum_{i=0}^{N-1}
g(T)^i\\[0.5em]
&=
g(T)^N
\int^t_0
p(s)g(s)\,ds
+
\int^T_0p(s)g(s)\,ds
\sum_{i=0}^{N-1}
g(T)^i. 
\end{align*}
So the state probability is equal to 
\begin{align*}
\nu_-(NT+t)
&=\frac{\nu_-(0)+\int_0^{NT+t}q(s)\exp\left\{\int_0^sp(u)+q(u)\,du\right\}\,ds}{\exp\left\{\int_0^{NT+t}p(u)+q(u)\,du\right\}}\\[0.5em]
&=
\frac{
\nu_-(0)+
g(T)^N
\int^t_0
p(s)g(s)\,ds
+
\int^T_0p(s)g(s)\,ds
\sum_{i=0}^{N-1}
g(T)^i
}
{
g(T)^Ng(t)
}\\[0.5em]
&=
\frac{\nu_-(0)}{g(T)^Ng(t)}
+
\frac{\int^t_0p(s)g(s)\,ds}{g(t)}
+
\frac{\int^T_0p(s)g(s)\,ds}{g(t)}
\frac{1}{g(T)^N}
\sum_{i=0}^{N-1}
g(T)^i\\[0.5em]
&=
\frac{\nu_-(0)}{g(T)^Ng(t)}
+
\frac{\int^t_0p(s)g(s)\,ds}{g(t)}
+
\frac{\int^T_0p(s)g(s)\,ds}{g(t)}
\frac{1}{g(T)-1}
\left(
1-\frac{1}{g(T)^N}
\right). 
\end{align*}
Letting $N\longrightarrow\infty$ gives the required result. 
\end{proof}

\begin{Corollary}\label{cor_half_continuous}
For the state probabilities in Theorem \ref{chap_4_thm:not:equal_contin}
the invariant measures are
\begin{align*}
\overline{\nu}(t)=\frac{1}{2}
\left(
\begin{array}{c}
1\\1
\end{array}
\right). 
\end{align*}
\end{Corollary}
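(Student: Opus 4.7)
The plan is to deduce the result directly from the explicit formula in Theorem \ref{chap_4_thm:not:equal_contin} by taking the long-time limit in the same style as the proof of Corollary \ref{corollary_invariant_measure_contin}. Write the time as $t = NT + \tau$ with $\tau \in [0,T)$ and $N$ a non-negative integer, so that the invariant measure is $\overline{\nu}(\tau) = \lim_{N\to\infty} \nu(NT+\tau)$.

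First I would exploit the periodicity $p(s+T)=p(s)$ to split the exponent as
\begin{align*}
\int_0^{NT+\tau} p(s)\,ds = N\int_0^T p(s)\,ds + \int_0^\tau p(s)\,ds,
\end{align*}
so that the exponential factor appearing in both $\nu_-$ and $\nu_+$ factorises into $\exp\{-2N\int_0^T p(s)\,ds\}\exp\{-2\int_0^\tau p(s)\,ds\}$. Under the natural assumption that $\int_0^T p(s)\,ds > 0$ (otherwise $p\equiv 0$ almost everywhere and no transitions occur, so the Markov Chain is degenerate), the first factor tends to zero as $N\to\infty$, while the second factor is bounded uniformly in $\tau \in [0,T]$. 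Hence the whole decay term vanishes in the limit, leaving $\overline{\nu}_-(\tau) = \overline{\nu}_+(\tau) = \tfrac{1}{2}$, which proves the claim.

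The only mild subtlety is the need to check that $\int_0^T p(s)\,ds > 0$; I would note this as a standing assumption consistent with the model (since $p$ represents an escape rate from one of the wells). No periodicity argument analogous to the $g(T)-1$ denominator of Corollary \ref{corollary_invariant_measure_contin} is required here, because the dependence on the initial conditions enters only through the single exponentially decaying term, not through a sum of such terms; this is why the invariant measure collapses to the constant vector and matches the discrete analogue in Corollary \ref{cor_half_discrete}. There is no substantive obstacle: the corollary is a direct limit of an already-derived closed form.
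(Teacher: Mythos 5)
Your proof is correct, and since the paper simply states ``the proof is trivial and omitted,'' there is nothing to contrast it with: you supply exactly the short limit argument the paper has in mind. Concretely, from Theorem \ref{chap_4_thm:not:equal_contin} the only $t$-dependence beyond the constant $\tfrac{1}{2}$ sits in the single factor $\exp\{-2\int_0^t p(s)\,ds\}$, and your decomposition $\int_0^{NT+\tau}p = N\int_0^T p + \int_0^\tau p$ together with periodicity and $\int_0^T p > 0$ shows this factor vanishes uniformly in $\tau$ as $N\to\infty$, yielding $\overline{\nu}(\tau) = \tfrac{1}{2}(1,1)^\dagger$ as required by the definition $\overline{\nu}(\tau) = \lim_{N\to\infty}\nu(NT+\tau)$. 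Your remark about why no $g(T)-1$ denominator appears (a single decaying term rather than a geometric accumulation of them) is a nice observation that correctly explains why this case collapses to a constant, unlike Corollary \ref{corollary_invariant_measure_contin}.
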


\noindent The proof is trivial and omitted. 
Similar to the discrete time case we can also study the Fourier Transform of the averaged Markov Chain.

\begin{Corollary}
For the Markov Chain in Theorem \ref{chap_4_thm:equal_contin}
the Fourier Transform of the averaged Markov Chain is 
\begin{align*}
\mathcal{F}\left(\left\langle Y^\epsilon_t \right\rangle\right)
=
\int^{+\infty}_{-\infty}
\left(
\frac{\int^t_0\left[q(s)-p(s)\right]g(s)\,ds}{g(t)}
+\frac{\int^T_0\left[q(s)-p(s)\right]g(s)\,ds}{g(t)\left(g(T)-1\right)}
\right)
e^{-i2\pi \omega t}\,dt.
\end{align*}
\end{Corollary}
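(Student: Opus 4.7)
The plan is to derive the averaged trajectory $\langle Y^\epsilon_t\rangle$ directly from the invariant measures in Corollary \ref{corollary_invariant_measure_contin}, then feed the result into the definition of the Fourier transform. Since $Y^\epsilon_t$ takes only the values $\pm1$, by the same identity used in the discrete case we have
\begin{align*}
\langle Y^\epsilon_t\rangle = \overline{\nu}_+(t) - \overline{\nu}_-(t).
\end{align*}

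First I would substitute the two expressions from Corollary \ref{corollary_invariant_measure_contin} and subtract, noting that both terms share the common denominator $g(t)$ and the common factor $1/(g(T)-1)$ in the second summand. The $q$-integral from $\overline{\nu}_+$ and the $p$-integral from $\overline{\nu}_-$ combine linearly, yielding
\begin{align*}
\overline{\nu}_+(t) - \overline{\nu}_-(t)
= \frac{\int^t_0 [q(s)-p(s)]\,g(s)\,ds}{g(t)}
+\frac{\int^T_0 [q(s)-p(s)]\,g(s)\,ds}{g(t)\bigl(g(T)-1\bigr)}.
\end{align*}
This is a purely algebraic step since the corollary has already done the hard work of taking the $N\to\infty$ limit and identifying the periodic part.

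Next I would apply the Fourier transform operator $\mathcal{F}$, defined in the standard convention as $\mathcal{F}(f)(\omega)=\int_{-\infty}^{+\infty} f(t)\,e^{-i2\pi\omega t}\,dt$ (matching the convention used in the preceding corollaries), to the expression above. Since the Fourier transform is linear and no further simplification is performed in the statement, the conclusion is immediate by inserting the explicit formula for $\overline{\nu}_+(t)-\overline{\nu}_-(t)$ inside the integral.

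The main obstacle is not the algebra but the interpretation: as written, $\overline{\nu}_\pm$ are $T$-periodic, so the improper integral on $\mathbb{R}$ does not converge classically and must be understood in the distributional sense (equivalently as Fourier coefficients supported on the discrete set $\omega\in\tfrac{1}{T}\mathbb{Z}$), in parallel to the discrete-time Corollary \ref{transform_discrete_chain}. Once that interpretation is fixed, the proof reduces to the two bookkeeping steps above and no further computation is required.
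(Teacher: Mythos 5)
Your proposal is correct and follows the only sensible route, which is also what the paper intends: write $\langle Y^\epsilon_t\rangle=\overline{\nu}_+(t)-\overline{\nu}_-(t)$, subtract the two formulas from Corollary \ref{corollary_invariant_measure_contin} over the common denominator $g(t)$, and insert the result into the Fourier transform definition; the paper omits the proof precisely because it is this mechanical substitution. Your remark that the integral over $\mathbb{R}$ of a $T$-periodic function must be read distributionally is a valid caveat that the paper leaves implicit.
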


\begin{Corollary}\label{transform_continuous_chain}
For the Markov Chain in Theorem \ref{chap_4_thm:not:equal_contin}
the Fourier Transform of the averaged Markov Chain is 
\begin{align*}
\mathcal{F}\left(\left\langle Y^\epsilon_t \right\rangle\right)
=0.
\end{align*}
\end{Corollary}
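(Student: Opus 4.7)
The plan is to simply unravel the definitions and apply Corollary \ref{cor_half_continuous}. The averaged Markov Chain at equilibrium is expressed through the invariant measure by
\begin{align*}
\left\langle Y^\epsilon_t\right\rangle
=(-1)\,\overline{\nu}_-(t)+(+1)\,\overline{\nu}_+(t)
=\overline{\nu}_+(t)-\overline{\nu}_-(t),
\end{align*}
exactly as used in the discrete-time case (Corollary \ref{transform_discrete_chain}). First I would recall from Corollary \ref{cor_half_continuous} that in the synchronised case $p=q$ the invariant measure is constant and symmetric, $\overline{\nu}_-(t)=\overline{\nu}_+(t)=\tfrac{1}{2}$ for every $t$.

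Substituting this into the expression above gives $\left\langle Y^\epsilon_t\right\rangle=\tfrac{1}{2}-\tfrac{1}{2}=0$ identically in $t$. Since the Fourier transform is a linear operator, $\mathcal{F}(0)=0$, which is the claim. No further estimates or convergence arguments are needed because the averaged trajectory is literally the zero function, not just something that vanishes in some asymptotic or distributional sense.

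There is no real obstacle here; the only thing to be mindful of is that $\left\langle Y^\epsilon_t\right\rangle$ must be interpreted with respect to the invariant measure (otherwise one would pick up a transient piece from the initial condition via the factor $\exp\{-2\int_0^t p(s)\,ds\}$ appearing in Theorem \ref{chap_4_thm:not:equal_contin}). Once that convention is fixed, as it was in the discrete analogue, the result is immediate. The conceptual content of the corollary is therefore that the synchronised configuration washes out any response at the driving frequency (or indeed at any frequency) at the level of the averaged two-state process, which is precisely the phenomenon discussed in the introduction and motivates looking beyond the six classical measures toward the distribution of escape times.
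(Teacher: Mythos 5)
Your argument is correct and is exactly the intended one: by Corollary \ref{cor_half_continuous} the invariant measures are identically $\tfrac{1}{2}$, so $\left\langle Y^\epsilon_t\right\rangle=\overline{\nu}_+(t)-\overline{\nu}_-(t)\equiv 0$ and its Fourier transform vanishes by linearity. The paper omits this proof as trivial, and your note that one must work with the invariant measure (discarding the transient factor $\exp\{-2\int_0^t p\,ds\}$) is the same convention the paper adopts throughout its definition of the six measures.
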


\section{Probability Density Function of Escape Times}
The  escape rates from the left to right  are denoted by $R_{-1+1}(\cdot)$
and right to left escape rates are denoted by
$R_{+1-1}(\cdot)$. 
The PDFs for the escape times are given by the Theorem below. 

\begin{Theorem}\label{chap_4_pdf_thm}
Let $u$ be the time of entry into a well, 
then the PDFs for the escape occurring at time $t>u$ are 
\begin{align*}
p_{-}(t,u)&=R_{-1+1}(t)\exp\left\{-\int^t_uR_{-1+1}(s)\,ds\right\}\\[0.5em] 
p_{+}(t,u)&=R_{+1-1}(t)\exp\left\{-\int^t_uR_{+1-1}(s)\,ds\right\}
\end{align*}
where $p_{-}(t,u)$ is for left to right and $p_{+}(t,u)$ is for right to left. 
\end{Theorem}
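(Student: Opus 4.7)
The plan is to derive the PDF via the survival function, using the fact that $R_{-1+1}(t)$ acts as an instantaneous (hazard) rate of escape. I will give the argument only for $p_{-}(t,u)$ since the case of $p_{+}(t,u)$ is symmetric.

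First I would introduce the survival function
\begin{align*}
S(t,u) = \mathbb{P}\bigl(\tau_{-1+1} > t \,\bigm|\, \text{particle entered the left well at time } u\bigr),
\end{align*}
so that by definition $p_{-}(t,u) = -\partial_t S(t,u)$ and $S(u,u)=1$. The plan then reduces to deriving an ODE for $S(t,u)$ in the variable $t$, solving it, and differentiating.

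Next, using the infinitesimal description of the escape rate (the analogue in this setting of the Markov-chain generator entries $p(t)\delta t$ from Equation \ref{chap_4:markov} onwards), I would write, for small $\delta t > 0$,
\begin{align*}
S(t+\delta t, u) = S(t,u)\bigl(1 - R_{-1+1}(t)\,\delta t + o(\delta t)\bigr),
\end{align*}
which uses the Markov property: conditional on not having escaped by time $t$, the probability of escape in $[t, t+\delta t]$ depends only on the current time and state and equals $R_{-1+1}(t)\delta t$ to leading order. Rearranging and letting $\delta t \downarrow 0$ yields
\begin{align*}
\frac{\partial S}{\partial t}(t,u) = -R_{-1+1}(t)\, S(t,u), \qquad S(u,u)=1.
\end{align*}

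This is a linear first-order ODE in $t$ that can be solved by separation of variables (or integrating factor), giving
\begin{align*}
S(t,u) = \exp\!\left\{-\int_u^t R_{-1+1}(s)\,ds\right\}.
\end{align*}
Differentiating $-\partial_t S(t,u)$ then immediately produces the claimed expression for $p_{-}(t,u)$, and an identical argument with $R_{+1-1}$ yields $p_{+}(t,u)$.

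The main obstacle is the rigorous justification of the infinitesimal step, which requires that the process truly has the Markov property (so that the remaining lifetime depends only on the current time, not on how long the particle has already been in the well) and that the time-dependent rate $R_{-1+1}$ is regular enough for the $o(\delta t)$ remainder to be uniform on bounded intervals. For the continuous-time two-state Markov chains studied in the previous sections this follows directly from the infinitesimal generator framework developed around Equation \ref{chap_4:diff:eqn}; for the diffusion case $X^\epsilon_t$ a reduction to this picture would be needed, in the same vein as the caveat already flagged in the footnote after the Markov-property discussion.
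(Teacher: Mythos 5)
Your proof is correct and follows essentially the same route as the paper's: both rest on identifying $R_{-1+1}(t)\,\delta t$ as the instantaneous (hazard) rate of escape and aggregating the no-escape probabilities over small intervals to obtain the survival function $\exp\{-\int_u^t R_{-1+1}(s)\,ds\}$. The only cosmetic difference is that the paper builds the survival probability as a finite product $\prod_i e^{-R_{-1+1}(u+(i-1)\delta t)\delta t}$ and passes to the Riemann-sum limit, whereas you derive and solve the equivalent ODE $\partial_t S = -R_{-1+1}(t)S$ with $S(u,u)=1$.
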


\begin{proof}
We consider escaping from the left well.
The right well is similar.
Divide the time interval $[u,t]$ into many small time intervals 
\begin{align*}
\delta t =\frac{t-u}{N}.
\end{align*}
Similar to how we derived the invariant measures we want to derive the probability of escape in a very small time interval $[t,t+\delta t]$. 
This is given by 
\begin{align*}
p_{-1+1}([t,t+\delta t])
&=p(t)\delta t \\
&=1-e^{-R_{-1+1}(t)\delta t}\\
&\approx R_{-1+1}(t)\delta t 
\end{align*}
which is valid for small $\delta t$. 
Large deviations allow us to say even more about the escape time $\tau_{-1+1}$ and $\tau_{+1-1}$. 
Theorem 1 in \cite{oliveri_expoen} shows that it is an exponentially distributed random variable. 
The probability of staying in the left well is given by 
\begin{align*}
p_{-1-1}([t,t+\delta t])
&=1-p_{-1+1}([t,t+\delta t])\\
&=1-p(t)\delta t \\
&=1-\left(1-e^{-R_{-1+1}(t)\delta t}\right)\\
&=e^{-R_{-1+1}(t)\delta t}.
\end{align*}
We want to know the probability of escaping in the time interval $[t,t+\delta t]$ given that the particle has entered at $u$ and stayed up to time $t$.
This is given by 
\begin{align*}
p_{-1-1}([u,t])p_{-1+1}([t,t+\delta t])
&=\prod_{i=1}^{N}p_{-1-1}\left([u+(i-1)\delta t,u+i\delta t]\right)
p_{-1+1}([t,t+\delta t])\\
&=\prod_{i=1}^{N}\exp\left\{-R_{-1+1}\left(u+(i-1)\delta t\right)\delta t\right\}
p_{-1+1}([t,t+\delta t])\\
&=\exp\left\{\sum_{i=1}^{N}-R_{-1+1}\left(u+(i-1)\delta t\right)\delta t\right\}
p_{-1+1}([t,t+\delta t])\\
&=\exp\left\{-\int^t_uR_{-1+1}(s)\,ds\right\}
R_{-1+1}(t)\delta t.
\end{align*}
This completes the proof. 
\end{proof}

\subsection{Normalised Time Probability Density Function of Escape Times}

\noindent The period of the forcing is $T$ and we can make a change of variables to normalised time
\begin{align*}
t^{norm}&=\frac{t^{real}}{T}
\end{align*}
which measures time in how many periods have elapsed. 
This rearranges the PDFs to 
\begin{align*}
p_{-}(t,u)&=TR_{-1+1}(Tt)\exp\left\{-T\int^t_uR_{-1+1}(Ts)\,ds\right\}\\
p_{+}(t,u)&=TR_{+1-1}(Tt)\exp\left\{-T\int^t_uR_{+1-1}(Ts)\,ds\right\}
\end{align*}
where $u$, $t$ and $s$ are in normalised time. 
Note that $R_{-1+1}(\cdot)$ and $R_{+1-1}(\cdot)$ always have their arguments in real time and $R_{-1+1}(\cdot)$ and $R_{+1-1}(\cdot)$ always give the averaged number of transitions per real unit time.  
Expressing the PDF in normalised time can be found in \cite{tran2014}.

\subsection{Perfect Phase Approximation of  Probability Density Function of Escape Times}
\label{chap_approx_pdf}
The PDF for the escape times derived in Theorem \ref{chap_4_pdf_thm} had to differentiate between left and right escapes and are conditioned on the time $u$ of entrance into the well. 
Suppose now that $t$ is the escape time from any well, which does not differentiate between left and right escape. 
Note that $t$ is the actual time it takes to escape from a well and is not a time coordinate. 
The PDF for $t$ is given by 
\begin{align*}
p_{tot}(t)=
\frac{1}{2}
\int_{0}^{T}p_-(t+u,u)m_-(u)+p_+(t+u,u)m_+(u)\,du.
\end{align*}
This is because  after a long time has elapsed
we would expect that many transitions would have occurred between left and right. 
The number of transitions escaping from the left and right should be roughly the same. 
The $m_-(u)$ is a PDF for the time of entrance into the left well and 
the $m_+(u)$ is a PDF for the time of entrance into the right well. 
We may not have explicit expressions for $m_-(u)$ and $m_+(u)$. 
We derive an approximate expression for $p_{tot}$ without an explicit expressions for $m_-(u)$ and $m_+(u)$. 
Let  $m_-(u)$ and $m_+(u)$ be approximated by 
\begin{align*}
m_-(u)&\approx\delta\left(u-T/2\right)\\[0.5em]
m_+(u)&\approx
\frac{1}{2}\delta\left(u\right)
+
\frac{1}{2}\delta\left(u-T\right)
\end{align*}
where $\delta(\cdot)$ is the Dirac delta function. 
This approximation is used because in the SDEs which we will simulate, 
the times when transition into the left well is greatest is at half the period $u=\frac{T}{2}$
and the times when transition into the right well is greatest is at $u=0$ and $u=T$. 
Due to the fact that $m_-(u)$ and $m_+(u)$ are probabilities a factor of $\frac{1}{2}$ is used in $m_+(u)$. 
Progressing we have 
\begin{align*}
p_{tot}(t)&=
\frac{1}{2}
\int_{0}^{T}p_-(t+u,u)m_-(u)+p_+(t+u,u)m_+(u)\,du\\
&\approx
\frac{1}{2}
\int_{0}^{T}
p_-(t+u,u)\delta\left(u-T/2\right)
+p_+(t+u,u)
\left(
\frac{1}{2}\delta\left(u\right)
+
\frac{1}{2}\delta\left(u-T\right)
\right)
\,du\\[0.5em]
&=
\frac{1}{2}
\left\{
p_-(t+T/2,T/2)
+\frac{1}{2}
p_+(t,0)
+\frac{1}{2}
p_+(t+T,T)
\right\}\\[0.5em]
&=
\frac{1}{2}
\left\{
p_-(t+T/2,T/2)
+\frac{1}{2}
p_+(t,0)
+\frac{1}{2}
p_+(t+0,0)
\right\}\\[0.5em]
&=
\frac{1}{2}
\left\{
p_-(t+T/2,T/2)
+
p_+(t,0)
\right\}\\[0.5em]
&=p_+(t,0).
\end{align*}
This is because for the simulations which we are going to do, the Kramers' rate satisfy $R_{-1+1}(t)=R_{+1-1}(t+T/2)$ (see later in Chapter \ref{chap_mexican_hat_toy_model} for the geometry of the Mexican Hat Toy Model which justifies this). 
Thus the following approximation 
\begin{align*}
p_{tot}\approx p_+(t,0)
\end{align*}
is only valid for the simulations we do, and not for a general potential. 
We call this way of approximating $m_-(u)$ and $m_+(u)$ the perfect phase approximation.

\section{Adiabatic Large Deviation}
We have to stress that this thesis is built on three approximations, which form the backbone of all the research presented. These are small noise approximation, adiabatic approximation and perfect phase approximation.

Perfect phase approximation only works for small noise. This is because the noise is so small the particle will only escape when the maximum probability to escape has arrived. When the minimum probability to escape is present it will almost never escape. This is the idea behind the perfect phase approximation. 

Notice one subtlety behind all the theory presented in this Chapter. 
The derivations involved probabilities of escape $p$ and $q$ and the escape rates $R_{-1+1}$ and $R_{+1-1}$.
But it was assumed that $p$, $q$, $R_{-1+1}$ and $R_{+1-1}$ are accurately known no matter how large or small the noise level $\epsilon$ is and no matter how fast or slow the driving frequency $\Omega$ is. 
But such ideal expressions for $p$, $q$, $R_{-1+1}$ and $R_{+1-1}$ are not known. 

When we come to do the analysis in Chapter \ref{sum_chap_results}, the $p_{tot}$ is calculated with the approximation $p_{tot}\approx p_+(t,0)$. When the rates $R_{-1+1}$ and $R_{+1-1}$ are needed they are calculated using Kramers' formula as though it is escape from a static potential in the small noise limit. 
This means an oscillatory potential is being approximated by a static potential which is the adiabatic approximation.

In the paper
\cite{adiabatic_large_deviation_herrmann2006}
the adiabatic approximation was justified in the small noise, slow forcing limit using time dependent large deviation theory, that is, it was shown asymptotically the escape times are given by the adiabatic approximation. This result is only for the leading term, whether the analogue result holds for the Kramers' rate is unknown.

\chapter{Theory of Analysis of Stochastic Resonance}\label{chap_analysis_theory}
\label{sum_chap_analysis_theory}

We present different criteria that have been used to define stochastic resonance. This includes the six measures, which are   linear response, signal-to-noise ratio, energy, out-of-phase measures, relative entropy and entropy.  A new statistical test called the conditional Kolmogorov-Smirnov test is introduced. 

\section{Six Measures of Stochastic Resonance}
We introduce six possible criteria of measuring how close a process is to exhibiting stochastic resonance \cite{pav_thesis,tran2014}.
These six criteria are closely related to 
linear response 
\cite{lin_1_TUS:TUS1787,
lin_2_doi:10.1137/0143037}, 
signal-to-noise ratio
\cite{sign_1_PhysRevA.39.4668,
sign_2_escape_2_PhysRevA.41.4255}
and distribution of escape times
\cite{escape_1_PhysRevA.42.3161,
sign_2_escape_2_PhysRevA.41.4255,
escape_3_PhysRevE.49.4821}. 
We call them the six measures denoted by $M_1$, $M_2$, $M_3$, $M_4$, $M_5$ and $M_6$.  
Recall that the SDE we want to study is 
\begin{align*}
\dot{X}^\epsilon_t=-\nabla V_0 + F\cos\Omega t + \epsilon\, \dot{W}_t
\end{align*}
where $V_0:\mathbb{R}^2\longrightarrow\mathbb{R}$ is the unperturbed potential, $F$ is the forcing, $\Omega$ is the forcing frequency, $\epsilon$ is the noise level and $W_t$ is a Wiener process in two dimensions, which when rewritten into separate components are
\begin{align*}
dx&=\left[-\frac{\partial V_0}{\partial x}+F_x\cos \Omega t \ \right]dt+\epsilon \ dw_x\\
dy&=\left[-\frac{\partial V_0}{\partial y}+F_y\cos \Omega t \ \right]dt+\epsilon \ dw_y
\end{align*}
where $w_x$ and $w_y$ are two independent Wiener processes. 
The solution to these equations is the trajectory in two dimensions
\begin{align*}
X^\epsilon_t=\left(x_t,y_t\right).
\end{align*}
This diffusion can be reduced to a Markov Chain on $\left\{-1,+1\right\}$ denoted by
\begin{align*}
Y^\epsilon_t=\pm1
\end{align*}
where by definition of the Markov Chain the escape times  are the same as the diffusion case (see Chapter \ref{chapter_oscil_times}).
The probability of the Markov Chain being in one state at time $t$ is given by the state probabilities
\begin{align*}
P\left(Y^\epsilon_t=-1\right)=\nu_-(t)
\quad \text{and} \quad
P\left(Y^\epsilon_t=+1\right)=\nu_+(t).
\end{align*}
In what follows we will consider so large times,  that the relaxation time has  effectively elapsed for both the diffusion and Markov Chain, in other words
the state probability would have effectively converged to the invariant measure $\overline{\nu}$.
This means that over one period $T=2\pi/\Omega$ of the forcing, the invariant measures will have the properties
\begin{align*}
\overline{\nu}_\pm(t)=\overline{\nu}_\pm(t+T)
\quad \text{and} \quad 
\overline{\nu}_\pm(t)=\overline{\nu}_\mp(t+T/2).
\end{align*}
We obtain the averaged trajectories given by 
\begin{align*}
\left\langle X^\epsilon_t \right\rangle
=E\left(X^\epsilon_t\right)
\quad \text{and} \quad
\left\langle Y^\epsilon_t \right\rangle
=E\left(Y^\epsilon_t\right)
\end{align*}
which are the trajectories obtained after averaging over many realisations. 
Notice that $\left\langle Y^\epsilon_t \right\rangle$ is related to the invariant measures by 
\begin{align*}
\left\langle Y^\epsilon_t \right\rangle=\overline{\nu}_+(t)-\overline{\nu}_-(t).
\end{align*}
We introduce the Out-of-Phase Markov Chain defined by 
\begin{align*}
\overline{Y}^\epsilon_t=
\left\{
\begin{array}{l}
0 \quad \text{if} \quad Y^\epsilon_t=-1 \quad \text{and} \quad mod(t,T)\leq T/2\\
1 \quad \text{if} \quad Y^\epsilon_t=-1 \quad \text{and} \quad mod(t,T)> T/2\\
1 \quad \text{if} \quad Y^\epsilon_t=+1 \quad \text{and} \quad mod(t,T)\leq T/2\\
0 \quad \text{if} \quad Y^\epsilon_t=+1 \quad \text{and} \quad mod(t,T)> T/2
\end{array}
\right.
\end{align*}
and similarly the averaged Out-of-Phase Markov Chain is defined by 
\begin{align*}
\left\langle \overline{Y}^\epsilon_t \right\rangle=E\left(\overline{Y}^\epsilon_t\right).
\end{align*}
Define two new functions by 
\begin{align*}
\phi^-(t)&=
\left\{
\begin{array}{c}
1 \quad \text{if} \quad mod(t,T)\leq T/2\\
0 \quad \text{if} \quad mod(t,T)> T/2
\end{array}
\right.\\[0.5em]
\phi^+(t)&=
\left\{
\begin{array}{c}
0 \quad \text{if} \quad mod(t,T)\leq T/2\\
1 \quad \text{if} \quad mod(t,T)> T/2.
\end{array}
\right.
\end{align*}
The following trajectories are Fourier transformed
\begin{align*}
\tilde{x}(\omega)&=\mathcal{F}\left(
\langle x_t  \rangle
\right)
=
\langle \mathcal{F} \left(x_t\right) \rangle
\\[0.5em]
\tilde{Y}(\omega)&=\mathcal{F}\left(
\left\langle Y^\epsilon_t \right\rangle
\right)
=
\langle \mathcal{F} \left(x_t\right) \rangle.
\end{align*}
The linear response is defined as the intensity of the Fourier Transform at the driving frequency $\Omega$
\footnote{See Appendix \ref{appendix_linear_response} for how the linear response is calculated numerically.}
\begin{align*}
X_{lin}=\left|\tilde{x}\left(\frac{\Omega}{2\pi}\right)\right|
\quad \text{and} \quad 
Y_{lin}=\left|\tilde{Y}\left(\frac{\Omega}{2\pi}\right)\right|.
\end{align*}
Now we can define the six measures.
For the diffusion case only $M_1$ and $M_2$ are defined. 
\begin{align*}
M_1&=\frac{1}{F}X_{lin}\\
M_2&=\frac{1}{\epsilon F}X_{lin}
\end{align*}
where $F$ is the magnitude of the forcing. 
For the Markov Chain $M_1$, $M_2$, $M_3$, $M_4$, $M_5$ and $M_6$ are all defined as 
\begin{align*}
M_1&=\frac{1}{F}Y_{lin}\\
M_2&=\frac{1}{\epsilon F}Y_{lin}\\
M_3&=\int_0^T \left\langle Y^\epsilon_t \right\rangle^2 dt\\
M_4&=\int_0^T \left\langle \overline{Y}^\epsilon_t \right\rangle dt\\
M_5&=\int_0^T
\phi^-(t)\ln\left(\frac{\phi^-(t)}{\overline{\nu}_-(t)}\right)+
\phi^+(t)\ln\left(\frac{\phi^+(t)}{\overline{\nu}_+(t)}\right)
dt\\
M_6&=\int^T_0
-\overline{\nu}_-(t)\ln\overline{\nu}_-(t)
-\overline{\nu}_+(t)\ln\overline{\nu}_+(t)\,
dt.
\end{align*}
Note that in the definition of the six measures it is assumed that the process has relaxed to equilibrium. 
We give a few physical interpretation of the six measures 
$M_1$, $M_2$, $M_3$, $M_4$, $M_5$ and $M_6$. 
The $M_1$ is the intensity of the driving frequency $\Omega$ in the spectrum of the Fourier transform. 
The $M_2$ is sometimes called signal-to-noise ratio as it compares this intensity to the noise level $\epsilon$. 
The $M_3$ is sometimes called the energy. 
The $M_4$ is sometimes called the out-of-phase measure since it measures the amount of time the Markov Chain spends in the ``wrong'' well. 
The $M_5$ and $M_6$ are sometimes called relative entropy and entropy respectively, since they measure how far away the invariant measures are from being constant. 
If the invariant measures are constant then these six measures will also be constant. 
Thus it can be understood that these six measures is a measure of how far away the invariant measures are from being constant. 
$M_6$ measures how non-constant the invariant measure is.
$M_5$ is extremal if the invariant measure is constant.

\section{Statistical Tests}
We will measure the escape time for many consecutive transitions. 
This will result in a collection of measurements of escape times
\begin{align*}
\tau_1,\tau_2,\ldots,\tau_n. 
\end{align*}
A new method for analysing such a collection of measurements is presented. 

\subsection{Kolmogorov-Smirnov Test}
First we recall results about the Kolmogorov-Smirnov statistic and the Kolmogorov-Smirnov test \cite{ks_test_1}. 
Let
\begin{align*}
\xi_1, \xi_2, \ldots, \xi_n
\end{align*}
be $n$ independently and identically distributed real random variables. 
Each $\xi_i$ is distributed with PDF $f(\cdot)$ as in 
\begin{align*}
P(\xi_i\in A)=\int_A f(s)\,ds
\end{align*}
and distributed with CDF $F(\cdot)$ as in 
\begin{align*}
P(\xi_i\leq x)=F(x)=\int^x_{-\infty} f(s)\,ds. 
\end{align*}
Define a function by $F_n(\cdot)$ by 
\begin{align*}
F_n(x)=
\frac{1}{n}
\sum_{i=1}^n
\mathbf{1}_{(-\infty,x]}
(\xi_i)
\end{align*}
where $\mathbf{1}_{A}$ is the indicator function for a set $A$. 
We may think of $\xi_1,\xi_2,\ldots,\xi_n$ as $n$ empirical or numerical realisations of the same random variable $\xi$. 
The $F_n(x)$ is therefore an approximation to the CDF of $\xi$ that is empirically found using $\xi_1,\xi_2,\ldots,\xi_n$,
therefore $F_n(\cdot)$ is called the empirical CDF. 
Consider the supremum  metric on the space of real continuous functions. 
Consider the distance between the real and the empirical CDF in this metric. 
\begin{align*}
D_n&=
\left\Vert
F_n-F
\right\Vert_\infty\\
&=\sup_{x\in\mathbb{R}}
\left|
\frac{1}{n}
\sum_{i=1}^n
\mathbf{1}_{(-\infty,x]}
(\xi_i)
-F(x)
\right|
\end{align*}
where $D_n$ is called the Kolmogorov-Smirnov statistic or KS statistic. 
Intuitively we would expect $D_n$ to tend to zero as $n$ increases, that is 
\begin{align*}
\lim_{n\longrightarrow\infty}D_n=0
\end{align*}
if the $\xi_i$ are distributed by $F(\cdot)$. 
There are times when we experimentally obtain $n$ values of a random variable $\xi_1,\xi_2,\ldots,\xi_n$, and want to test whether they are distributed by a CDF $F(\cdot)$. 
We define what we mean by the null hypothesis. 

\begin{Definition}
Let $\xi_1,\xi_2,\ldots,\xi_n$ be $n$ real random variables.
The null hypothesis is 
that each $\xi_i$ is independently distributed with CDF $F(x)$.
\end{Definition}

\noindent We want to know how large or small $D_n$ needs to be before deciding whether to reject the null hypothesis. 
The following Theorem offers a remarkable answer to this problem.

\begin{Theorem}\label{chap_4_thm_ks_test}
Suppose the null hypothesis is true,
then the distribution of $D_n$ depends only on $n$. 
\end{Theorem}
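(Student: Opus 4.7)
The plan is to establish distribution-freeness by reducing the general case to the uniform case via the probability integral transform. Throughout I will assume (as is standard and implicit in the formulation) that $F$ is continuous; without this assumption the statement fails (ties among the $\xi_i$ carry positive probability and the distribution of $D_n$ then depends on the location of the jumps of $F$).

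First, I would introduce the transformed variables $U_i = F(\xi_i)$ for $i=1,\ldots,n$, and verify that under the null hypothesis they are i.i.d.\ with the Uniform$[0,1]$ distribution. This is the probability integral transform: by monotonicity of $F$ one has
\begin{align*}
P(U_i \leq u) = P(F(\xi_i) \leq u) = P(\xi_i \leq F^{-1}(u)) = F(F^{-1}(u)) = u
\end{align*}
for $u \in [0,1]$, where $F^{-1}$ denotes the generalised inverse and the last equality uses continuity of $F$. Independence of the $U_i$ is inherited from independence of the $\xi_i$.

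Next, I would rewrite $D_n$ in terms of the $U_i$ alone. For any $x \in \mathbb{R}$, continuity of $F$ makes the event $\{\xi_i \in \{t : F(t) = F(x),\ t > x\}\}$ a null event, so almost surely $\mathbf{1}_{(-\infty,x]}(\xi_i) = \mathbf{1}_{[0,F(x)]}(U_i)$. Substituting and parametrising by $y = F(x)$ gives
\begin{align*}
D_n = \sup_{x \in \mathbb{R}} \left| \frac{1}{n} \sum_{i=1}^n \mathbf{1}_{[0,F(x)]}(U_i) - F(x) \right| = \sup_{y \in \mathrm{Range}(F)} \left| \frac{1}{n} \sum_{i=1}^n \mathbf{1}_{[0,y]}(U_i) - y \right|.
\end{align*}
Because $F$ is continuous, $\mathrm{Range}(F)$ is dense in $[0,1]$, and the function $y \mapsto \frac{1}{n}\sum_i \mathbf{1}_{[0,y]}(U_i) - y$ is right-continuous with at most $n$ jumps, so the supremum over the dense set $\mathrm{Range}(F)$ agrees with the supremum over $[0,1]$. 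Thus, almost surely,
\begin{align*}
D_n = \sup_{y \in [0,1]} \left| \frac{1}{n} \sum_{i=1}^n \mathbf{1}_{[0,y]}(U_i) - y \right|.
\end{align*}

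Finally I would conclude: the right-hand side is a deterministic functional of the sample $(U_1,\ldots,U_n)$, whose joint distribution (i.i.d.\ Uniform$[0,1]$) depends only on $n$. Therefore the law of $D_n$ under the null hypothesis depends only on $n$ and not on the underlying CDF $F$. The main subtlety is the careful handling of the step that replaces $\mathbf{1}_{(-\infty,x]}(\xi_i)$ by $\mathbf{1}_{[0,F(x)]}(U_i)$ on flat portions of $F$, and the extension of the supremum from $\mathrm{Range}(F)$ to $[0,1]$; both are consequences of the continuity of $F$, which is the tacit hypothesis that makes the result clean.
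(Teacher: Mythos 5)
Your proof is correct and uses the standard probability integral transform argument. The paper states Theorem \ref{chap_4_thm_ks_test} without proof, deferring to the Kolmogorov--Smirnov literature, but the same transform is the engine of its later proof of the conditional KS test (where $Y_i = F_{\zeta_i}(\xi_i)$ is shown to be uniform on $[0,1]$), so your argument is very much in the spirit of the paper. If anything, your treatment is more careful: you explicitly flag the tacit assumption that $F$ be continuous, without which the statement fails, and you address the two almost-sure identifications that the paper's conditional version glides over, namely replacing $\mathbf{1}_{(-\infty,x]}(\xi_i)$ by $\mathbf{1}_{[0,F(x)]}(U_i)$ across flat portions of $F$, and extending the supremum from $\mathrm{Range}(F)$ to all of $[0,1]$ via right-continuity. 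One small remark for completeness: in the chain $P(F(\xi_i)\leq u)=P(\xi_i\leq F^{-1}(u))=F(F^{-1}(u))=u$, the middle equality of events also holds only up to a null set when $F$ has flat pieces, by the same reasoning you invoke later; a one-line acknowledgement there would close the loop entirely.
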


\noindent Notice that $D_n$ is in itself a real random variable. 
The PDF and CDF of $D_n$ is a function of $n$ only, and will be the same whatever $F(\cdot)$ is.
This distribution is called the KS distribution and tables are available upto $n=100$. 
There is a Theorem which describes the asymptotic behaviour of the KS distribution \cite{q2_smirnov1948,q2_feller1948}.\footnote{There appears to be topographical errors in the literature for the limiting function.
Some sources cite $Q_1=1-2\sum_{k=1}^\infty (-1)^{k-1}e^{-2k^2x^2}$
(see \cite{q1_monahan1989evaluating,q1_JSSv008i18,q1_JSSv039i11})
and some cite $Q_2=1-2\sum_{k=1}^\infty (-1)^{k-1}e^{-k^2x^2}$ (see \cite{q2_smirnov1948,q2_feller1948}).  
But the proof of Theorem 1 in \cite{q2_feller1948} shows $Q=Q_1$. Nevertheless in this thesis we use $Q=Q_1$ which actually gives smaller and more conservative values of the metric $D_n$ that are needed. 
} 

\begin{Theorem}
In the limit $n \longrightarrow \infty$, $\sqrt{n}D_n$ is asymptotically Kolmogorov distributed with the CDF 
\begin{equation*}
Q(x)=1-2\sum_{k=1}^\infty (-1)^{k-1}e^{-2k^2x^2}
\end{equation*}
that is to say 
\begin{equation*}
\lim_{n\longrightarrow \infty} P(\sqrt{n} D_n \leq x)=Q(x). 
\end{equation*}
\end{Theorem}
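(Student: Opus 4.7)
The plan is to reduce the problem to the asymptotic analysis of an empirical process and then to invoke the well-known convergence to a Brownian bridge, whose supremum has the stated distribution. By Theorem \ref{chap_4_thm_ks_test} the distribution of $D_n$ depends only on $n$ and not on $F$, so, via the probability integral transform $\xi_i \mapsto F(\xi_i)$, I would first assume without loss of generality that $F$ is the CDF of the uniform distribution on $[0,1]$. This reduces $D_n$ to $\sup_{t\in[0,1]}|G_n(t)|$, where
\begin{equation*}
G_n(t) = \sqrt{n}\bigl(F_n(t)-t\bigr) = \frac{1}{\sqrt{n}}\sum_{i=1}^n \bigl(\mathbf{1}_{[0,t]}(\xi_i)-t\bigr).
\end{equation*}

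Next I would establish convergence of the empirical process $G_n$ to a Brownian bridge $B$ on $[0,1]$. This is Donsker's theorem and has two ingredients. First, finite-dimensional convergence: for any $0\le t_1 < \cdots < t_k \le 1$, the random vector $(G_n(t_1),\ldots,G_n(t_k))$ is a centred sum of i.i.d.\ bounded vectors, so by the multivariate central limit theorem it converges in distribution to a centred Gaussian vector with covariance $\operatorname{Cov}(B(t_i),B(t_j))=\min(t_i,t_j)-t_it_j$, the covariance of a Brownian bridge. Second, tightness of $\{G_n\}$ in the Skorokhod space $D[0,1]$, which can be proved via the standard chaining estimate on increments $E|G_n(t)-G_n(s)|^4 \le C(t-s)^2$ and Billingsley's tightness criterion for càdlàg processes. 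With finite-dimensional convergence and tightness, $G_n \Rightarrow B$ in $D[0,1]$.

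Since $\phi \mapsto \sup_{t\in[0,1]}|\phi(t)|$ is continuous on $D[0,1]$ with the uniform topology (and the Brownian bridge has almost surely continuous paths, so the points of discontinuity of this functional have probability zero under the limit law), the continuous mapping theorem yields
\begin{equation*}
\sqrt{n}\,D_n \;=\; \sup_{t\in[0,1]}|G_n(t)| \;\xrightarrow{d}\; \sup_{t\in[0,1]}|B(t)|.
\end{equation*}

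The final step is to identify the law of $K:=\sup_{t\in[0,1]}|B(t)|$. The standard derivation represents $B$ as $W(t)-tW(1)$ for a standard Brownian motion $W$, then uses the reflection principle together with the conditioning $W(1)=0$ to compute, for $x>0$,
\begin{equation*}
P(K \le x) \;=\; 1 - 2\sum_{k=1}^\infty (-1)^{k-1}\,e^{-2k^2 x^2} \;=\; Q(x),
\end{equation*}
either via the Jacobi theta-function identity or, equivalently, via Doob's transformation relating the bridge supremum to first-passage probabilities for Brownian motion on a finite interval. Combining the last two displays gives $\lim_{n\to\infty}P(\sqrt{n}D_n \le x)=Q(x)$, as required. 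The hard part is undoubtedly the tightness estimate for $G_n$, since finite-dimensional convergence is a routine application of the CLT and the reflection-principle computation of $P(K\le x)$ is a classical calculation; the subtlety is controlling the oscillations of the empirical process uniformly in $n$.
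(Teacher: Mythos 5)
The paper itself does not prove this theorem; it is stated and cited from Smirnov and Feller, with only a footnote discussing a discrepancy in the literature over whether the exponent in $Q$ should be $-2k^2x^2$ or $-k^2x^2$. So there is no in-paper proof to compare against. Your sketch is the now-standard modern route: probability integral transform to reduce to the uniform case, Donsker's theorem for weak convergence of the empirical process to the Brownian bridge in $D[0,1]$, the continuous mapping theorem for the supremum functional, and the reflection-principle computation of the bridge's maximum modulus. This is a genuinely different route from the original proofs in the references you would need to compare against: Kolmogorov's and Smirnov's arguments were direct combinatorial and analytic computations with the exact finite-$n$ distribution (Feller's paper cleans this up), predating the functional-CLT viewpoint. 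The Donsker approach is cleaner conceptually and generalises effortlessly, at the cost of importing the heavy machinery of tightness in $D[0,1]$.

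There is one concrete technical slip in the sketch that you should fix: the claimed moment bound $E|G_n(t)-G_n(s)|^4 \le C(t-s)^2$ is false uniformly in $n$. Writing $Y_i=\mathbf{1}_{(s,t]}(\xi_i)-(t-s)$ one gets $E|G_n(t)-G_n(s)|^4 = \frac{1}{n}EY_1^4 + 3\frac{n-1}{n}(EY_1^2)^2$, and the first term is of order $(t-s)/n$, which dominates $(t-s)^2$ when $|t-s|<1/n$. This is precisely the reason the empirical process is not tight in $C[0,1]$ via the Kolmogorov–Chentsov criterion. The correct ingredient for Billingsley's tightness criterion for càdlàg processes is a bound on the product of increments over adjacent intervals, of the form $E\bigl[|G_n(t)-G_n(s)|^2\,|G_n(u)-G_n(t)|^2\bigr] \le C(u-s)^2$ for $s\le t\le u$, which does hold uniformly. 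With that replacement the argument goes through. The rest — reduction to uniform, finite-dimensional CLT with bridge covariance $\min(s,t)-st$, continuity of the sup functional, and the classical series $P\bigl(\sup_t|B(t)|\le x\bigr)=1-2\sum_{k\ge1}(-1)^{k-1}e^{-2k^2x^2}$ — is all correct and consistent with the form of $Q$ used in the paper.
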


\subsection{Conditional Kolmogorov-Smirnov Test}
Let $\zeta_1,\zeta_2,\ldots,\zeta_n$ be $n$ iid real random variables. 
They are $n$ empirical observations of a random variable $\zeta$. 
Now suppose that each of the $\xi_1,\xi_2,\ldots,\xi_n$ is conditioned and dependent on the corresponding $\zeta_1,\zeta_2,\ldots,\zeta_n$.
This means a conditional PDF $f(\cdot,\cdot)$ gives the probability
\begin{align*}
P(\xi_i\in A\,|\,\zeta_i)=\int_Af(s,\zeta_i)\,ds
\end{align*}
and the conditional CDF $F(\cdot,\cdot)$ is
\begin{align*}
P(\xi_i\leq x\,|\,\zeta_i)=F_{\zeta_i}(x)=\int^x_{-\infty}f(s,\zeta_i)\,ds.
\end{align*}
But $\xi_1,\xi_2,\ldots,\xi_n$ are empirical measurements of the same random variable $\xi$. 
The PDF for $\xi$ is given by 
\begin{align*}
P(\xi\in A)=\int_A\int^{+\infty}_{-\infty}f(s,u)m(u)\,ds\,du
\end{align*}
and the CDF for $\xi$ is 
\begin{align*}
P(\xi\leq x)=F(x)=\int^x_{-\infty}\int^{+\infty}_{-\infty}f(s,u)m(u)\,ds\,du
\end{align*}
where $m(\cdot)$ is the PDF for $\zeta$, that is 
\begin{align*}
P(\zeta\in A)=\int_Am(s)\,ds.
\end{align*}
In our context we have the problem that the random variables are not identically distributed under the null hypothesis. 
The  $\xi_1,\xi_2,\ldots,\xi_n$ and $\zeta_1,\zeta_2,\ldots,\zeta_n$ are obtained experimentally and $F_{\zeta_i}(\xi_i)$ can be  calculated but a PDF for $\zeta_i$, that is $m(\cdot)$, has no easy expression. 
We still want to perform a statistics test that is similar to the KS test even in such situations where the distribution $m(\cdot)$ of $\zeta$ is unknown. 
First we define what we call the total null hypothesis and the conditional null hypothesis. 

\begin{Definition}
Let $\xi_1, \xi_2, \ldots, \xi_n$ be $n$ empirical observations of a random variable $\xi$. 
The total null hypothesis is that
$\xi$ is distributed with the CDF $F(\cdot)$.
The conditional null hypothesis is that
 each $\xi_i$ is distributed with the conditional CDF $F_{\zeta_i}(\cdot)$.
\end{Definition}

\noindent A new statistical test is developed, which is similar to the KS test. 

\begin{Theorem}
Suppose the conditional null hypothesis is true. 
Let $F_{\zeta_i}(\cdot)$ be continuous. 
Let $S_n$ be the statistic given by 
\begin{align*}
S_n=\sup_{x\in[0,1]}
\left|
\frac{1}{n}
\sum^n_{i=1}
\mathbf{1}_{[0,x]}
\left(
F_{\zeta_i}(\xi_i)
\right)
-x
\right|
\end{align*}
then $S_n$ is KS distributed. 
\end{Theorem}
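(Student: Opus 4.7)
The plan is to reduce the problem to the ordinary Kolmogorov--Smirnov setting by applying the probability integral transform to convert the conditionally distributed $\xi_i$ into unconditionally iid uniform random variables on $[0,1]$.

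First I would define $U_i := F_{\zeta_i}(\xi_i)$ for each $i=1,\ldots,n$. The key observation is that, under the conditional null hypothesis and the assumed continuity of $F_{\zeta_i}(\cdot)$, the probability integral transform gives
\begin{align*}
P(U_i \leq x \mid \zeta_i) = P(F_{\zeta_i}(\xi_i) \leq x \mid \zeta_i) = x
\end{align*}
for every $x \in [0,1]$, because $F_{\zeta_i}$ is continuous and therefore admits (up to a null set) a well-defined inverse on its range. Since this conditional CDF does not depend on $\zeta_i$, integrating against the distribution $m(\cdot)$ of $\zeta_i$ shows that $U_i$ is unconditionally uniformly distributed on $[0,1]$.

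Next I would establish that $U_1,\ldots,U_n$ are independent. This follows from the standing assumption that $\zeta_1,\ldots,\zeta_n$ are iid and that, under the conditional null, each $\xi_i$ depends only on its own $\zeta_i$, so the pairs $(\xi_i,\zeta_i)$ are independent across $i$. Since $U_i$ is a measurable function of $(\xi_i,\zeta_i)$, independence of the pairs transfers to independence of the $U_i$. Combining this with the previous step, $U_1,\ldots,U_n$ are iid uniform on $[0,1]$.

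Finally, I would observe that with $F(x) = x$ the standard CDF of the uniform distribution on $[0,1]$, the statistic
\begin{align*}
S_n = \sup_{x\in[0,1]}\left|\frac{1}{n}\sum_{i=1}^n \mathbf{1}_{[0,x]}(U_i) - x\right|
\end{align*}
is exactly the Kolmogorov--Smirnov statistic $D_n$ for the sample $U_1,\ldots,U_n$ against the uniform CDF. By Theorem \ref{chap_4_thm_ks_test}, its distribution depends only on $n$ and is precisely the KS distribution, completing the proof. The main subtlety will be the independence step: one must argue carefully that conditioning on $\zeta_i$ and then integrating out does not introduce any dependence between the $U_i$; the cleanest way is to note that the joint law of $(U_1,\ldots,U_n)$ factorises because the joint law of $(\xi_i,\zeta_i)_{i=1}^n$ does, a fact that should be made explicit in the statement of the conditional null hypothesis.
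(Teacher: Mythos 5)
Your proof takes essentially the same route as the paper: both apply the probability integral transform to show that the quantities $F_{\zeta_i}(\xi_i)$ are uniform on $[0,1]$ and then invoke the distribution-free nature of the KS statistic. You are in fact somewhat more careful than the paper's argument, which computes $P(Y_i \leq x)$ without explicitly conditioning on $\zeta_i$ and integrating out, and which does not address the joint independence of the $U_i$ at all --- a point you correctly identify as the main subtlety and fill in.
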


\begin{proof}
Denote
\begin{align*}
Y_i=F_{\zeta_i}(\xi_i)
\end{align*}
which means 
\begin{align*}
P\left(Y_i\leq x\right)&=P\left(F_{\zeta_i}(\xi_i)\leq x\right)\\
&=P\left(\xi_i\leq F^{-1}_{\zeta_i}(x)\right)\\
&=F_{\zeta_i}\left(F^{-1}_{\zeta_i}(x)\right)\\
&=x
\end{align*}
and $0\leq Y_i\leq 1$, so $Y_i$ is uniformly distributed on $[0,1]$. 
Note that $F_{\zeta_i}(\cdot)$ is a function of one variable only. 
Let 
\begin{align*}
F_n(x)=
\frac{1}{n}
\sum_{i=1}^n
\mathbf{1}_{[0,x]}
(Y_i)
=
\frac{1}{n}
\sum_{i=1}^n
\mathbf{1}_{[0,x]}
\left(F_{\zeta_i}(\xi)\right)
\end{align*}
where $F_n(\cdot)$ is the empirical CDF of a uniformly distributed random variable, computed using $n$ observations. 
The statistic $S_n$ is the suprenum metric
\begin{align*}
S_n&=\left\Vert F_n-x   \right\Vert_\infty\\
&=\sup_{x\in[0,1]}
\left|
\frac{1}{n}
\sum^n_{i=1}
\mathbf{1}_{[0,x]}
\left(
F_{\zeta_i}(\xi_i)
\right)
-x
\right|.
\end{align*}
So clearly $S_n$ is KS distributed. 
\end{proof}

\noindent We call $S_n$ the conditional KS statistic. 
Compare this to the original KS statistic, which under the assumption of the total null hypothesis can be rewritten as 
\begin{align*}
D_n=\sup_{x\in\mathbb{R}}
\left|
\frac{1}{n}
\sum_{i=1}^n
\mathbf{1}_{(-\infty,x]}
(\xi_i)
-F(x)
\right|
=\sup_{x\in[0,1]}
\left|
\frac{1}{n}
\sum^n_{i=1}
\mathbf{1}_{[0,x]}
\left(
F(\xi_i)
\right)
-x
\right|.
\end{align*}
When both the total and conditional hypothesis are true $D_n$ and $S_n$ are KS distributed, that is 
\begin{align*}
P\left(D_n\in A \right)=P\left(S_n\in A \right)
\quad \text{and} \quad 
P\left(D_n \leq x\right)=P\left(S_n\leq x\right).
\end{align*}
The subtlety here is that
$D_n$ and $S_n$ are different objects, yet they have the same distribution. 
$D_n$ is KS distributed under the total null hypothesis, 
whereas $S_n$ is KS distributed under the conditional null hypothesis. 
This can be explained in another way.
We have $n$ experimental observations of a random variable $\xi$ denoted by $\xi_1,\xi_2, \ldots, \xi_n$ and each are conditioned on observations of another random variable $\zeta$ denoted by $\zeta_1, \zeta_2, \ldots, \zeta_n$. 
The $D_n$ is KS distributed if the random variable $\xi$ is distributed by CDF $F(\cdot)$, 
but the $S_n$ is KS distributed if each $\xi_i$ is conditionally distributed by the CDF $F_{\zeta_i}(\cdot)$.

\chapter{Mexican Hat Toy Model}\label{chap_mexican_hat_toy_model}
\label{sum_chap_mexican_hat_toy_model}

The main object of consideration of this project, which is called the Mexican Hat Toy Model, is now introduced. 
Let $a>0$, $b>0$ and $V_0:\mathbb{R}^2\longrightarrow \mathbb{R}$ be a real function from the plane to the line.
The unperturbed potential is defined as
\begin{align*}
V_0(x,y)=\frac{1}{4}r^4-\frac{1}{2}r^2-ax^2+by^2
\quad \text{where} \quad r=\sqrt{x^2+y^2}. 
\end{align*}
Let $F_x, F_y \in  \mathbb{R}$ be the forcing. The potential with forcing $V_F$ is defined as 
\begin{align*}
V_F(x,y)
&=\frac{1}{4}r^4-\frac{1}{2}r^2-ax^2+by^2+F_xx+F_yy\\
&=\frac{1}{4}r^4-\frac{1}{2}r^2-ax^2+by^2+\mathbf{F}\cdot\mathbf{x}\\
&=V_0+\mathbf{F}\cdot\mathbf{x}
\end{align*}
written more compactly in vector notation.
When $V_F$ is defined using $+\mathbf{F}\cdot\mathbf{x}$ we say positive forcing. Alternatively if $V_F$ is defined using $-\mathbf{F}\cdot\mathbf{x}$ we say negative forcing.
The $V_F$ is defined with a positive forcing because for the rest of this Chapter we will study the critical points which are solutions to the simultaneous equations  
\begin{align*}
\frac{\partial V_F}{\partial x}=0
\quad \text{and} \quad 
\frac{\partial V_F}{\partial y}=0. 
\end{align*}
The properties of the critical points will change as $F$ is increased from zero, therefore it is convenient to define $V_F$ with a positive forcing. 
The behaviour of the critical points are studied for different cases. 
The main aim is to  find the positions and nature of the critical points for a range of parameter values. 
This is complex due to several cases to be considered and previewed in the following Theorem, which is one of the main conclusions of this Chapter. 
Although this Theorem only considers the case for non-negative forcing $F_x\geq0$ and $F_y\geq0$, the case for $F_x<0$ and $F_y<0$ is similar by considering the symmetry of the potential. 

\begin{Theorem}\label{chap_5_case_0}
Let $F_x\geq 0$, $F_y\geq 0$, $a>0$, $b>0$. 
Note that definitions of constants are at the end. 
The positions and nature of the critical points of the Mexican Hat Toy Model $V_F(\cdot)$ are for the following 
range of parameters. \newline

\noindent For $F_x=0$, $F_y=0$ and $b<\frac{1}{2}$
\begin{equation*}
\begin{array}{cc}
 (0,0) & hill \\[0.5em]
 (\pm \sqrt{1+2a},0) & well \\[0.5em]
 (0,\pm\sqrt{1-2b}) & saddle 
\end{array}
\end{equation*}
\newline 

\noindent For $F_x=0$, $F_y=0$ and $b\geq\frac{1}{2}$
\begin{equation*}
\begin{array}{cc}
(0,0) & saddle\\[0.5em]
(\pm \sqrt{1+2a},0) & well 
\end{array}
\end{equation*}
\newline 

\noindent For $F_x>0$, $F_y=0$, $b<\frac{1}{2}$ and the following values of $F_x$
\begin{align*}
\text{For any}\quad F_x>0\relphantom{F_x}&\left\{
\begin{array}{ll}
(x_0,0) & \text{well}
\end{array}
\right.\\[0.7em]
\text{and if}\quad F_x< F_x^{sad}&\left\{
\begin{array}{lll}
(x_1,0) &\text{well}\\
(x_2,0)&\text{hill} \\
(x_{saddle},\pm y_{saddle})&\text{saddle}
\end{array}
\right.\\[0.7em]
\text{or}\quad F_x^{sad} < F_x < F_x^{crit}&\left\{
\begin{array}{llll}
(x_1,0)&\text{saddle} &\text{for} & \sqrt{1-2b}\in R_1\\
(x_1,0)&\text{well} &\text{for}&\sqrt{1-2b}\in R_2\\
(x_2,0)&\text{hill} &\text{for}&\sqrt{1-2b}\in R_1\\
(x_2,0) &\text{saddle} &\text{for}&\sqrt{1-2b}\in R_2\\
(x_{saddle},\pm y_{saddle})&\text{nonexistent}&&
\end{array}
\right.\\[0.7em]
\text{or}\quad F_x= F_x^{sad}<F_x^{crit}&\left\{
\begin{array}{llll}
(x_1,0)&\text{unidentified} &\text{for} & \sqrt{1-2b}\in R_1\\
(x_1,0)&\text{well} &\text{for}&\sqrt{1-2b}\in R_2\\
(x_2,0)&\text{hill} &\text{for}&\sqrt{1-2b}\in R_1\\
(x_2,0) &\text{unidentified} &\text{for}&\sqrt{1-2b}\in R_2\\
(x_{saddle}, \pm y_{saddle})&\text{unidentified}&&
\end{array}
\right.
\\[0.7em]
\text{or}\quad F_x= F_x^{crit}&\left\{
\begin{array}{ll}
(x_{1},0) &\text{unidentified}\\
(x_{2},0)&\text{unidentified}
\end{array}
\right.\\[0.7em]
\text{or}\quad F_x>F_x^{crit}&\left\{
\begin{array}{ll}
(x_{1},0) &\text{nonexistent}\\
(x_{2},0)&\text{nonexistent}
\end{array}
\right.
\end{align*}
\newline

\noindent For $F_x>0$, $F_y=0$, $b\geq\frac{1}{2}$ and the following values of $F_x$
\begin{align*}
F_x< F_x^{crit}&\left\{
\begin{array}{lll}
(x_{0},0)&\text{well}\\
(x_1,0) &\text{well}\\
(x_2,0)&\text{saddle}
\end{array}
\right.\\[0.7em]
F_x> F_x^{crit}&\left\{
\begin{array}{lll}
(x_{0},0)&\text{well}\\
(x_1,0) &\text{nonexistent}\\
(x_2,0)&\text{nonexistent}
\end{array}
\right.\\[0.7em]
F_x= F_x^{crit}&\left\{
\begin{array}{lll}
(x_{0},0)&\text{well}\\
(x_1,0) &\text{unidentified}\\
(x_2,0)&\text{unidentified}
\end{array}
\right.
\end{align*}
\newline 

\noindent For $F_x=0$, $F_y>0$, $b\geq\frac{1}{2}$ and the following values of $F_y$
\begin{align*}
F_y<F_y^{crit}&\quad\left\{
\begin{array}{ll}
(0,y_1)&\text{saddle}\\
(0,y_2)&\text{hill}
\end{array}
\right.\\[0.3em]
F_y<F_y^{sad}&\quad\left\{
\begin{array}{ll}
(0,y_0)&\text{saddle}\\
(\pm x_{well},y_{well})&\text{well}
\end{array}
\right.\\[0.3em]
F_y>F_y^{crit}&\quad\left\{
\begin{array}{ll}
(0,y_1)&\text{nonexistent}\\
(0,y_2)&\text{nonexistent}\\
\end{array}
\right.\\[0.3em]
F_y>F_y^{sad}&\quad\left\{
\begin{array}{ll}
(0,y_0)&\text{well}\\
(\pm x_{well},y_{well})&\text{nonexistent}
\end{array}
\right.\\[0.3em]
F_y=F_y^{crit}&\quad\left\{
\begin{array}{ll}
(0,y_1)&\text{unidentified}\\
(0,y_2)&\text{unidentified}
\end{array}
\right.\\[0.3em]
F_y=F_y^{sad}&\quad\left\{
\begin{array}{ll}
(0,y_0)&\text{unidentified}\\
(\pm x_{well},y_{well})&\text{unidentified}
\end{array}
\right.\\[0.3em]
\end{align*}

\noindent For $F_x=0$, $F_y>0$, $b>\frac{1}{2}$ and the following values of $F_y$
\begin{align*}
F_y<F_y^{sad}&\quad\left\{
\begin{array}{ll}
(0,y_0)&\text{saddle}\\
(\pm x_{well}, y_{well})&\text{well}
\end{array}
\right.\\[0.3em]
F_y=F_y^{sad}&\quad\left\{
\begin{array}{ll}
(0,y_0)=(\pm x_{well}, y_{well})&\text{unidentified}
\end{array}
\right.\\[0.3em]
F_y>F_y^{sad}&\quad\left\{
\begin{array}{ll}
(0,y_0)&\text{well}\\
(\pm x_{well}, y_{well})&\text{nonexistent}
\end{array}
\right.
\end{align*}
where
\begin{align*}
x_k&=-\frac{2}{\sqrt{3}}\sqrt{1+2a}\cos \left\{    
\frac{1}{3}\tan^{-1}\left(   
\frac{\sqrt{4(1+2a)^3-27F_x^2}}{F_x\sqrt{27}}
\right)+\frac{2\pi}{3}k
\right\}\\
y_k&=-\frac{2}{\sqrt{3}}\sqrt{1-2b}\,\cos
\left\{
\frac{1}{3}\tan^{-1}
\left(
\frac{\sqrt{4(1-2b)^3-27F_y^2}}{F_y\sqrt{27}}\right)
+\frac{2\pi}{3}k
\right\}\\
x_{saddle}&=\frac{F_x}{2(a+b)}\\
y_{saddle}&=\sqrt{(1-2b)-\left(\frac{F_x}{2(a+b)}\right)^2}\\
x_{well}&=\sqrt{(1+2a)-\left(\frac{F_y}{2(a+b)}\right)^2}\\
y_{well}&=\frac{-F_y}{2(a+b)}\\
F_x^{crit}&=\sqrt{\frac{4(1+2a)^3}{27}}\\
F_x^{sad}&=2(a+b)\sqrt{1-2b}\\
F_y^{crit}&=\sqrt{\frac{4(1-2b)^3}{27}}\\
F_y^{sad}&=2(a+b)\sqrt{1+2a}\\
R_1&=\left(\frac{1}{\sqrt{3}}\sqrt{1+2a}, \sqrt{1+2a}\right)\\
R_2&=\left(0,\frac{1}{\sqrt{3}}\sqrt{1+2a}\right).
\end{align*}

\end{Theorem}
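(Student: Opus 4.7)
The plan is to find all critical points by solving $\nabla V_F = 0$ directly and then classify each using the Hessian. Computing the partial derivatives gives
\begin{align*}
\frac{\partial V_F}{\partial x} &= x(x^2+y^2-1-2a)+F_x,\\
\frac{\partial V_F}{\partial y} &= y(x^2+y^2-1+2b)+F_y,
\end{align*}
and the Hessian entries are $V_{xx}=3x^2+y^2-1-2a$, $V_{yy}=x^2+3y^2-1+2b$, $V_{xy}=2xy$; note $H$ is diagonal on either coordinate axis.

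First I would treat the unforced case $F_x=F_y=0$: the equations factor as $x(r^2-1-2a)=0$ and $y(r^2-1+2b)=0$, so the solutions lie on the two axes. A direct Hessian evaluation at $(0,0)$, $(\pm\sqrt{1+2a},0)$ and (when $b<1/2$) $(0,\pm\sqrt{1-2b})$ identifies them as hill (or saddle when $b\geq 1/2$), wells, and saddles respectively.

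For $F_x>0,\,F_y=0$ the second equation forces $y=0$ or $r^2=1-2b$. The on-axis branch reduces to the depressed cubic
\begin{equation*}
x^3-(1+2a)x+F_x=0,
\end{equation*}
whose discriminant $4(1+2a)^3-27F_x^2$ vanishes precisely at $F_x=F_x^{crit}$; for $F_x<F_x^{crit}$ the trigonometric form of Cardano's formula (\emph{casus irreducibilis}) yields the three roots $x_0,x_1,x_2$ as stated, while for $F_x>F_x^{crit}$ only one real root survives and for $F_x=F_x^{crit}$ a double root occurs at which $V_{xx}$ vanishes (hence \emph{unidentified}). The off-axis branch $r^2=1-2b$ requires $b<1/2$ and collapses to the linear equation $x=F_x/[2(a+b)]$, producing the symmetric pair $(x_{saddle},\pm y_{saddle})$ exactly when $F_x\leq F_x^{sad}$. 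The symmetric case $F_x=0,\,F_y>0$ is handled by interchanging the two coordinate directions (with $-b$ playing the role of $a$).

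Classification then reduces to evaluating $\det H$ and $\operatorname{tr} H$ at each candidate. On the $x$-axis $H$ is diagonal, so the type is determined by the signs of $3x_k^2-1-2a$ and $x_k^2-1+2b$; the first is positive iff $|x_k|>\sqrt{(1+2a)/3}$ (distinguishing the inner root from the two outer roots), while the second is positive iff $|x_k|>\sqrt{1-2b}$, and it is this latter threshold that produces the subdivision into $R_1$ and $R_2$, since $x_1$ may lie on either side of $\sqrt{1-2b}$ depending on the precise parameter regime. For the off-axis saddles, substituting $r^2=1-2b$ and $x=F_x/[2(a+b)]$ into $H$ yields $\det H<0$ throughout the existence range. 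The boundary thresholds $F_x^{sad}$ and $F_x^{crit}$ correspond to collisions of critical points at which $H$ is singular; they require higher-order analysis and are therefore flagged \emph{unidentified}.

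The main obstacle is the bookkeeping. One must carefully track how, as $F_x$ grows from $0$ through $F_x^{sad}$ to $F_x^{crit}$, the three on-axis roots $x_0,x_1,x_2$ trade roles (deep well, shallow well, hill, possibly saddle once the off-axis saddles are absorbed) and verify in each parameter region which combination of signs of $V_{xx}$ and $V_{yy}$ occurs; the geometric picture is that tilting the potential merges the off-axis saddles with the on-axis critical point at $F_x=F_x^{sad}$ and then merges the shallow well with the hill at $F_x=F_x^{crit}$. Once the cubic analysis and the linear off-axis analysis are in hand, every case in the theorem is a direct Hessian check.
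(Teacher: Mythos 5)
Your overall strategy matches the paper's: factor the gradient equations, solve the on-axis cubic via Cardano's trigonometric form, identify the two thresholds $F_x^{sad}$ and $F_x^{crit}$, and classify each candidate by the sign pattern of the (diagonal) Hessian. However, you have punted on the step you yourself flag as the main obstacle. Saying that one must ``carefully track how the roots trade roles'' is not a proof; without an argument pinning down the roots' positions the Hessian sign analysis collapses into guesswork. The paper's proof supplies exactly the missing tool: a chain of monotonicity statements. It first shows that $l=\frac{\sqrt{4(1+2a)^3-27F_x^2}}{F_x\sqrt{27}}$ is monotone in $F_x$, hence so is $\phi=\tfrac13\tan^{-1}(l)$, which stays inside $[0,\pi/6]$; since each $\cos(\phi+2\pi k/3)$ is then restricted to an interval on which cosine is monotone, each root $x_k$ is itself a monotone function of $F_x$. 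This is what lets one write tight two-sided bounds $x_0\in[\tfrac{-2}{\sqrt3}\sqrt{1+2a},-\sqrt{1+2a})$, $x_1\in[\tfrac{1}{\sqrt3}\sqrt{1+2a},\sqrt{1+2a})$, $x_2\in(0,\tfrac{1}{\sqrt3}\sqrt{1+2a}]$ and to assert that the threshold $\sqrt{1-2b}$ is crossed exactly once, at $F_x=F_x^{sad}$; this is also what makes the $R_1$ versus $R_2$ dichotomy work, because which of $x_1$ or $x_2$ gets hit by the incoming saddles depends entirely on whether $\sqrt{1-2b}$ lies above or below the fixed separator $\tfrac{1}{\sqrt3}\sqrt{1+2a}$. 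You also implicitly assume $F_x^{sad}\leq F_x^{crit}$ when you speak of $F_x$ growing ``from $0$ through $F_x^{sad}$ to $F_x^{crit}$''; the paper proves this ordering (Lemma~\ref{chap_5:th:great}) by contradiction using the monotonicity and root-count facts, and it is not obvious.

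A second, smaller gap is the claim that the $F_x=0,\,F_y>0$ case follows ``by interchanging the two coordinate directions.'' The $y$-direction analysis is structurally different because $1-2b$ may be small or even nonpositive, and, unlike the $x$-case, the ordering between $F_y^{sad}=2(a+b)\sqrt{1+2a}$ and $F_y^{crit}=\sqrt{4(1-2b)^3/27}$ is \emph{not} fixed: all three of $F_y^{sad}<F_y^{crit}$, $F_y^{sad}=F_y^{crit}$, and $F_y^{sad}>F_y^{crit}$ occur for admissible $(a,b)$. The paper handles this via a separate lemma (Lemma~\ref{chap_5:th:bound:y}) establishing the corresponding inequality between $-\sqrt{1+2a}$ and $\tfrac{-2}{\sqrt3}\sqrt{1-2b}$ in each sub-case. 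Your proposal would need to notice and address this asymmetry rather than invoke a direct symmetry substitution.
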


\noindent The proof is given in a series of Lemmas for each of the six different cases. 
Theorem \ref{chap_5_case_1} proves the case for $F_x=0$ and $F_y=0$,
Theorem \ref{chap_5_case_2} proves the case for $F_x>0$, $F_y=0$ and $b<\frac{1}{2}$,
Theorem \ref{chap_5_case_3} proves the case for $F_x>0$, $F_y=0$ and $b\geq \frac{1}{2}$,
Theorem \ref{chap_5_case_4} proves the case for $F_x=0$, $F_y>0$ and $b<\frac{1}{2}$
and Theorem \ref{chap_5_case_5} proves the case for $F_x=0$, $F_y>0$ and $b\geq\frac{1}{2}$.
All the notation used will be consistent with this current Theorem \ref{chap_5_case_0}. 
The following standard result is used.

\begin{Theorem}\label{chap_5_thm_1}
Let $V:\mathbb{R}^2\longrightarrow \mathbb{R}$ be twice differentiable everywhere. Let $H$ be the Hessian at a critical point $(x_0, y_0)$. The nature of the critical point can be determined by 
\begin{equation*}
\begin{array}{lc}
\det H<0 \ \  \Rightarrow & \text{saddle} \\
\det H>0 \ \  \text{then} & \left\{  
\begin{array}{lc}  
\text{if}\quad\frac{\partial ^2 V}{\partial x^2}>0  \ \ \Rightarrow \ \ & \text{well}\\[0.5em]
\text{if}\quad\frac{\partial ^2 V}{\partial x^2}<0 \ \  \Rightarrow \ \ & \text{hill}
\end{array}\right.
\end{array}
\end{equation*}
\end{Theorem}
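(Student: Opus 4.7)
The plan is to apply Taylor's Remainder Theorem (already stated earlier in the excerpt) to $V$ at the critical point $(x_0, y_0)$. Since $\nabla V(x_0, y_0) = 0$, the second order expansion in a neighbourhood of $(x_0,y_0)$ takes the form
\begin{align*}
V(x_0 + h, y_0 + k) - V(x_0, y_0) = \tfrac{1}{2}\,(h,k)\,H\,(h,k)^\dagger + R(h,k),
\end{align*}
with $R(h,k) = o(h^2 + k^2)$ as $(h,k)\to(0,0)$. The nature of the critical point is then dictated by the sign behaviour of the quadratic form $Q(h,k) = (h,k)\,H\,(h,k)^\dagger$ in small punctured neighbourhoods of $(0,0)$. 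Because $H$ is a real symmetric $2\times2$ matrix, I would invoke the spectral theorem to obtain real eigenvalues $\lambda_1, \lambda_2$ with orthonormal eigenvectors $v_1, v_2$, so that in the rotated coordinates $(h,k) = \alpha v_1 + \beta v_2$ one has $Q = \lambda_1 \alpha^2 + \lambda_2 \beta^2$.

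First I would handle the case $\det H < 0$. Since $\lambda_1 \lambda_2 = \det H < 0$, the eigenvalues have opposite signs, so taking $(h,k)$ along $v_1$ gives $Q > 0$ while taking it along $v_2$ gives $Q < 0$. For $(h,k)$ small enough for the Hessian contribution to dominate the remainder, $V$ strictly increases along one direction through $(x_0,y_0)$ and strictly decreases along another, which is the defining behaviour of a saddle.

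Next I would handle $\det H > 0$. The inequality $\det H = V_{xx} V_{yy} - V_{xy}^2 > 0$ forces $V_{xx} V_{yy} > V_{xy}^2 \geq 0$, so $V_{xx}$ and $V_{yy}$ must share a common sign, and therefore the trace $\lambda_1 + \lambda_2 = V_{xx} + V_{yy}$ shares the sign of $V_{xx}$. Combined with $\lambda_1 \lambda_2 > 0$, this gives both eigenvalues the same sign as $V_{xx}$. If $V_{xx} > 0$ then $Q \geq \min(\lambda_1,\lambda_2)(\alpha^2+\beta^2) > 0$ for $(h,k)\neq(0,0)$, yielding a strict local minimum (well); if $V_{xx} < 0$ the analogous upper bound gives a strict local maximum (hill).

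The only delicate point is making precise that $Q$ dominates $R$ in a small neighbourhood. I would shrink the neighbourhood so that $|R(h,k)| \leq \tfrac{1}{4}\min(|\lambda_1|,|\lambda_2|)(h^2+k^2)$, which is possible because $R = o(h^2+k^2)$ and $\min(|\lambda_1|,|\lambda_2|) > 0$ whenever $\det H \neq 0$; then the sign of $V(x_0+h, y_0+k) - V(x_0,y_0)$ coincides with the sign of $Q$ throughout that neighbourhood in the extremum cases, and both signs remain attainable in the saddle case. The degenerate boundary $\det H = 0$ is deliberately excluded from the statement, so no further sub-case analysis is needed.
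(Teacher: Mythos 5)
Your proof is correct: it is the standard argument for the two-dimensional second-derivative test via the Peano-form Taylor expansion, diagonalisation of the symmetric Hessian, and explicit domination of the remainder by the quadratic form. The paper itself states this theorem as a standard result and gives no proof at all, so there is nothing to compare against; your route (spectral decomposition plus the observation that $\det H>0$ forces $V_{xx}$ and $V_{yy}$, and hence both eigenvalues, to share a sign) is the natural one and all the estimates check out, including the $\tfrac{1}{4}\min(|\lambda_1|,|\lambda_2|)$ bound on $R$ against the $\tfrac{1}{2}\min(|\lambda_1|,|\lambda_2|)(h^2+k^2)$ lower bound coming from $\tfrac{1}{2}Q$.
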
 

\noindent We recall results about the cubic equation. 

\begin{Theorem}\label{chap_5_thm_cubic_theory}
Let $a_3,a_2,a_1,a_0 \in\mathbb{R}$  where $a_3\neq 0$. Consider the cubic equation
\begin{align*}
a_3x^3+a_2x^2+a_1x+a_0=0
\end{align*}
and its discriminant
\begin{align*}
\Delta&=18a_3a_2a_1a_0
-4a_2^3a_0
+a_2^2a_1^2
-4a_3a_1^3
-27a_3^2a_0^2
\end{align*}
then following statements hold
\begin{align*}
\text{if} \quad \Delta&>0 \quad \text{then the equation has 3 distinct real roots.}\\
\text{if} \quad \Delta&=0 \quad \text{then the equation has a multiple real root and all its roots are real.}\\
\text{if} \quad \Delta&<0 \quad \text{then the equation has 1 real root and 2 complex conjugate roots.}
\end{align*}
and the three roots of the equations are
\begin{equation*}
x_k=-\frac{1}{3a_3}\left(a_2+e^{i\psi_k}C+e^{-i\psi_k}\frac{\Delta_0}{C}   \right)
\end{equation*}
where 
\begin{align*}
\psi_0&=0, \ \ \psi_1=2\pi/3, \ \ \psi_2=4\pi/3\\
C&=\sqrt[3]{\frac{\Delta_1+\sqrt{-27\Delta}}{2}}\\
\Delta_0&=a_2^2-3a_3a_1\\
\Delta_1&=2a_2^3-9a_3a_2a_1+27a_3^2a_0.
\end{align*}
\end{Theorem}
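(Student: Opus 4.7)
The plan is to reduce the general cubic to depressed form via a Tschirnhaus substitution, apply Cardano's method, and translate the resulting Cardano discriminant into the claimed discriminant $\Delta$ in order to read off the trichotomy. First I would divide by $a_3$ and substitute $x = y - a_2/(3 a_3)$ to eliminate the quadratic term, producing the depressed cubic $y^3 + p y + q = 0$ with $p = -\Delta_0/(3 a_3^2)$ and $q = \Delta_1/(27 a_3^3)$. An explicit polynomial expansion then gives the key algebraic identity
\begin{equation*}
\Delta_1^2 - 4\Delta_0^3 = -27 a_3^2 \Delta,
\end{equation*}
equivalently $q^2/4 + p^3/27 = -\Delta/(108 a_3^4)$, which is what links the Cardano discriminant of the depressed cubic to the discriminant $\Delta$ of the original polynomial.

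Next I would apply Cardano's substitution $y = u + v$ subject to $3 u v + p = 0$. The constraints $u^3 + v^3 = -q$ and $u^3 v^3 = -p^3/27$ force $u^3$ and $v^3$ to be the two roots of the quadratic $t^2 + q t - p^3/27 = 0$, which after multiplying through by $(54 a_3^3)^2$ reads $(54 a_3^3 t)^2 + 2\Delta_1(54 a_3^3 t) + 4\Delta_0^3 = 0$, so $54 a_3^3 t = -\Delta_1 \pm \sqrt{\Delta_1^2 - 4\Delta_0^3}$. Choosing the plus sign and naming the cube root of the resulting value $C$ as in the statement, the three cube roots of $u^3$ are $u_k = -e^{i\psi_k} C/(3 a_3)$, and the paired values $v_k$ are determined by the constraint $u_k v_k = -p/3 = \Delta_0/(9 a_3^2)$, giving $v_k = -e^{-i\psi_k}\Delta_0/(3 a_3 C)$. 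Summing $y_k = u_k + v_k$ and unwinding the Tschirnhaus shift then produces precisely
\begin{equation*}
x_k = -\tfrac{1}{3 a_3}\bigl(a_2 + e^{i\psi_k} C + e^{-i\psi_k}\Delta_0/C\bigr).
\end{equation*}

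The trichotomy then follows from case analysis on the sign of $\Delta$ through the identity above. When $\Delta < 0$, $\sqrt{-27\Delta}$ is real, so $C^3$ is real and $C$ may be chosen real; $x_0$ is then manifestly real, while $x_1$ and $x_2$ are complex conjugates of one another because $\psi_2 = -\psi_1$ modulo $2\pi$ and $\overline{C} = C$. When $\Delta > 0$, $\sqrt{-27\Delta}$ is purely imaginary and $C^3$ has nonzero imaginary part; writing $C = |C|\, e^{i\theta/3}$ for an appropriate argument $\theta$, the sum $e^{i\psi_k} C + e^{-i\psi_k}\Delta_0/C$ simplifies to a real cosine expression in $(\theta + 2\pi k)/3$, which gives three distinct real values because the argument varies over a full period as $k$ ranges over $\{0,1,2\}$. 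Finally $\Delta = 0$ is the boundary case in which the two square-root branches coincide, pinning $u^3 = v^3$ and thereby forcing a repeated root; the classical algebraic criterion that $\Delta = 0$ iff the cubic has a multiple root confirms in addition that all roots are real in this case.

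The main obstacle is the algebraic identity $\Delta_1^2 - 4\Delta_0^3 = -27 a_3^2 \Delta$: it is the arithmetic heart of the proof, and being a polynomial identity in four variables it requires expanding $(2 a_2^3 - 9 a_3 a_2 a_1 + 27 a_3^2 a_0)^2 - 4(a_2^2 - 3 a_3 a_1)^3$ and matching it monomial-by-monomial against $-27 a_3^2 \Delta$. A subsidiary subtlety is branch consistency: once the square root and cube root in $C$ are fixed, the three rotated values $e^{i\psi_k} C$ must be paired with their partners $e^{-i\psi_k}\Delta_0/C$ in a consistent way for $k = 0, 1, 2$, which is automatic because the Cardano constraint $u_k v_k = \Delta_0/(9 a_3^2)$ is preserved under multiplication of $u$ by any cube root of unity.
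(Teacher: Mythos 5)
Your proposal is correct, but note that the paper does not prove this statement at all: Theorem \ref{chap_5_thm_cubic_theory} is introduced with ``We recall results about the cubic equation'' and is used as a citation-free classical fact, so there is no in-paper proof to compare against. Your Cardano derivation is the standard argument and the computations check out: $p=-\Delta_0/(3a_3^2)$, $q=\Delta_1/(27a_3^3)$, the resolvent quadratic rescales to $s^2+2\Delta_1 s+4\Delta_0^3=0$, and the identity $\Delta_1^2-4\Delta_0^3=-27a_3^2\Delta$ together with $C^3\cdot(\Delta_0^3/C^3)=\Delta_0^3$ makes $u_k^3+v_k^3=-q$ and $u_kv_k=-p/3$ come out right, yielding the stated $x_k$. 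Two small points you should tighten. First, a labeling slip: the cube root of the \emph{plus}-sign root of the resolvent is $-\Delta_0/(3a_3C)$, not $-C/(3a_3)$; your $u_k=-e^{i\psi_k}C/(3a_3)$ actually corresponds to the minus-sign root (this is harmless since $u$ and $v$ enter symmetrically, but as written the sentence is inconsistent with the formula for $C$ in the statement). Second, in the case $\Delta<0$ you should add one line showing that $x_1,x_2$ are genuinely non-real, i.e.\ that $\operatorname{Im}(e^{i\psi_1}C+e^{-i\psi_1}\Delta_0/C)=\sin(\psi_1)(C-\Delta_0/C)\neq0$; this holds because $C^2=\Delta_0$ would force $C^6=\Delta_0^3=C^3\cdot\Delta_0^3/C^3$ and hence $\sqrt{-27\Delta}=-\sqrt{-27\Delta}$, contradicting $\Delta\neq0$. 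One could also remark that the formula degenerates when $C=0$ (the triple-root case $\Delta_0=\Delta_1=0$), which the theorem statement itself glosses over.
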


\section{Case $F_x=0$ and $F_y=0$}
The case for no forcing $F=0$  is considered first. 
\begin{Theorem}\label{chap_5:thm:new_1}\label{chap_5_case_1}
When $F_x=F_y=0$ the critical points of the potential have the following properties. 
For $b<\frac{1}{2}$ the critical points and their nature are
\begin{equation*}
\begin{array}{ccc}
b<\frac{1}{2} & (0,0) & hill \\[0.5em]
& (\pm \sqrt{1+2a},0) & well \\[0.5em]
& (0,\pm\sqrt{1-2b}) & saddle 
\end{array}
\end{equation*}
For $b\geq \frac{1}{2}$ the critical points and their nature are
\begin{equation*}
\begin{array}{ccc}
b\geq\frac{1}{2} & (0,0) & saddle\\[0.5em]
& (\pm \sqrt{1+2a},0) & well 
\end{array}
\end{equation*}
\end{Theorem}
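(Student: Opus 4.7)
The plan is to find the critical points by setting $\nabla V_0 = 0$ and then classify each via the Hessian, invoking Theorem \ref{chap_5_thm_1}. Computing the partial derivatives I get
\begin{align*}
\frac{\partial V_0}{\partial x} &= x\bigl(x^2+y^2 - 1 - 2a\bigr), \\
\frac{\partial V_0}{\partial y} &= y\bigl(x^2+y^2 - 1 + 2b\bigr).
\end{align*}
So a critical point must satisfy $x=0$ or $x^2+y^2 = 1+2a$, and simultaneously $y=0$ or $x^2+y^2 = 1-2b$. Since $a,b>0$, the two circle equations are incompatible. This leaves three cases: (i) $x=y=0$, giving the origin; (ii) $y=0$ and $x^2 = 1+2a$, giving $(\pm\sqrt{1+2a},0)$ which always exist; (iii) $x=0$ and $y^2 = 1-2b$, giving $(0,\pm\sqrt{1-2b})$ which exist as nontrivial critical points exactly when $b<\tfrac12$ and coincide with the origin when $b=\tfrac12$.

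Next I would compute the Hessian components
\begin{align*}
V_{xx} = 3x^2+y^2 - 1 - 2a, \quad V_{yy} = x^2+3y^2 - 1 + 2b, \quad V_{xy} = 2xy,
\end{align*}
and evaluate at each critical point. At $(\pm\sqrt{1+2a},0)$ one gets $V_{xx}=2(1+2a)>0$, $V_{yy}=2(a+b)>0$, $V_{xy}=0$, so the Hessian is positive definite and these are wells. At $(0,\pm\sqrt{1-2b})$ (with $b<\tfrac12$) one has $V_{xx}=-2(a+b)<0$ while $V_{yy}=2(1-2b)>0$, so $\det H<0$ and these are saddles. At the origin, $V_{xx}=-(1+2a)$, $V_{yy}=2b-1$, $V_{xy}=0$, so $\det H = -(1+2a)(2b-1)$: for $b<\tfrac12$ the determinant is positive with $V_{xx}<0$, yielding a hill, and for $b>\tfrac12$ it is negative, yielding a saddle.

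The one step that Theorem \ref{chap_5_thm_1} does not cover is the degenerate case $b=\tfrac12$ at the origin, where $V_{yy}=0$ so the second-derivative test is inconclusive. I would handle this directly by evaluating $V_0$ on the coordinate axes near the origin. Along $y=0$ one gets $V_0(x,0)=\tfrac14 x^4-(\tfrac12+a)x^2$, which is strictly negative for small $x\neq0$; along $x=0$ one gets $V_0(0,y)=\tfrac14 y^4$, which is strictly positive for small $y\neq0$. Hence the origin is neither a local minimum nor a local maximum and is therefore a saddle, consistent with the claim for $b\geq\tfrac12$. This degeneracy at $b=\tfrac12$ is the only real obstacle; everything else is a routine Hessian computation.
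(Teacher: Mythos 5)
Your proof is correct, and the paper itself simply states "The proof is trivial and omitted," so there is no competing argument to compare against; you have supplied precisely the routine critical-point/Hessian computation the authors had in mind. You go one useful step further by explicitly handling the degenerate boundary case $b=\tfrac12$ at the origin, where $V_{yy}=0$ and the second-derivative test of Theorem \ref{chap_5_thm_1} is silent, resolving it by evaluating $V_0$ along the coordinate axes to show the origin is neither a local maximum nor a local minimum; this is a genuine gap in the "trivial" claim as stated (since the theorem asserts "saddle" for all $b\geq\tfrac12$, not just $b>\tfrac12$), and your treatment closes it cleanly.
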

\noindent The proof is trivial and omitted.

\section{Case $F_x>0$ and $F_y=0$}
When forcing is only in the $x$ direction two cases are considered separately, that is for $b<\frac{1}{2}$ and $b\geq \frac{1}{2}$. The case for $F_x\leq 0$ is similar.

\subsection{Case $F_x>0$, $F_y=0$ and $b<\frac{1}{2}$}
When there is no forcing there are five critical points. 
Intuitively as forcing is increased the system could gradually start to deviate away from having five critical points. 
The critical points may collide and coincide. The structure of the following proofs are first determining the bounds on the critical points and then determining their nature. 
We have the following consequence which uses the solution and theory of the cubic equation with three real roots. 

\begin{Theorem}\label{chap_5_thm_five_roots}
Let $F_x>0$, $F_y=0$ and $b<\frac{1}{2}$. Let $F_x$ be bounded by 
\begin{align*}
F_x\leq F_x^{sad}
\quad \text{and} \quad 
F_x\leq F_x^{crit}
\end{align*}
where 
\begin{align*}
F_x^{sad}=2(a+b)\sqrt{1-2b}
\quad \text{and} \quad 
F_x^{crit}=\sqrt{\frac{4(1+2a)^3}{27}}
\end{align*}
then there are five critical points given by 
\begin{align*}
&(x_k, 0) \quad k=1,2,3\\
&(x_{saddle},\pm y_{saddle})
\end{align*}
where 
\begin{align*}
x_{saddle}&=\frac{F_x}{2(a+b)}\\
y_{saddle}&=\sqrt{(1-2b)-\left(\frac{F_x}{2(a+b)}\right)^2}\\
x_k&=-\frac{2}{\sqrt{3}}\sqrt{1+2a}\, \cos\left(\phi+k\frac{2\pi}{3}\right)\\
\phi&=\frac{1}{3}\tan^{-1}(l)\\
l&=\frac{\sqrt{4(1+2a)^3-27F_x^2}}{F_x\sqrt{27}}\\
k&=0, \quad k=1, \quad k=2.
\end{align*}
\end{Theorem}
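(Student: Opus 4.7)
The plan is to compute $\nabla V_F = 0$ explicitly and exploit the factorization permitted by $F_y = 0$. A direct differentiation gives
\begin{align*}
\partial_x V_F &= x(x^2+y^2) - (1+2a)x + F_x, \\
\partial_y V_F &= y\bigl(x^2 + y^2 - (1-2b)\bigr),
\end{align*}
so the second equation forces either $y = 0$ or $x^2 + y^2 = 1-2b$. I would handle these two branches separately; the first yields the three axial critical points $(x_k,0)$, the second yields the pair $(x_{saddle},\pm y_{saddle})$, and together they exhaust the five critical points.

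On the axial branch $y=0$, the first equation reduces to the depressed cubic
\begin{equation*}
x^3 - (1+2a)x + F_x = 0.
\end{equation*}
Applying Theorem~\ref{chap_5_thm_cubic_theory} with $(a_3,a_2,a_1,a_0) = (1,0,-(1+2a),F_x)$, the discriminant collapses to $\Delta = 4(1+2a)^3 - 27 F_x^2$, and the hypothesis $F_x \leq F_x^{crit}$ is precisely $\Delta \geq 0$. Hence the cubic has three real roots (with multiplicity at equality). Converting the closed-form expression of Theorem~\ref{chap_5_thm_cubic_theory} into the real-trigonometric form appropriate to the casus irreducibilis then produces
\begin{equation*}
x_k = -\tfrac{2}{\sqrt{3}}\sqrt{1+2a}\,\cos\!\bigl(\phi + \tfrac{2\pi}{3}k\bigr), \qquad \phi = \tfrac{1}{3}\tan^{-1}(l).
\end{equation*}

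On the off-axis branch $x^2 + y^2 = 1-2b$, substituting into $\partial_x V_F = 0$ gives $x\bigl[(1-2b) - (1+2a)\bigr] + F_x = 0$, which solves uniquely as $x_{saddle} = F_x/(2(a+b))$. Then $y^2 = (1-2b) - x_{saddle}^2$, and the bound $F_x \leq F_x^{sad} = 2(a+b)\sqrt{1-2b}$ is exactly the statement $y^2 \geq 0$; this branch therefore contributes the pair $(x_{saddle},\pm y_{saddle})$. Added to the three axial roots this accounts for five critical points, as claimed.

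The main obstacle is the algebraic translation in the axial branch: matching the complex-exponential root expression of Theorem~\ref{chap_5_thm_cubic_theory} with the real cosine-of-arctangent form stated here. In the three-real-roots regime, $C$ and $\Delta_0/C$ form a complex-conjugate pair, so that $e^{i\psi_k}C + e^{-i\psi_k}(\Delta_0/C) = 2\operatorname{Re}(e^{i\psi_k}C)$ collapses to a single cosine; writing $C = \rho e^{i\theta}$ in polar form identifies $3\theta$ with $\arccos$ of $-F_x\sqrt{27}/\bigl(2(1+2a)^{3/2}\bigr)$, after which a Pythagorean identity converts the argument to $\tan^{-1}(l)$ with $l$ as defined in the statement. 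Some care is also needed at the boundary $F_x = F_x^{sad}$ or $F_x = F_x^{crit}$, where the five points cease to be distinct, but at these values they are still accounted for with multiplicity.
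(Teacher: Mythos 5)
Your proposal follows essentially the same route as the paper's proof: split $\partial_y V_F=0$ into the branches $y=0$ and $x^2+y^2=1-2b$, solve the off-axis branch directly for $(x_{saddle},\pm y_{saddle})$ with the constraint $F_x\leq F_x^{sad}$ coming from $y_{saddle}^2\geq 0$, reduce the axial branch to the depressed cubic $x^3-(1+2a)x+F_x=0$, and convert the complex-exponential root formula of Theorem~\ref{chap_5_thm_cubic_theory} into the real trigonometric form, with $F_x\leq F_x^{crit}$ being exactly $\Delta\geq 0$. One small slip in the sketch: the argument of $C^3=\tfrac{1}{2}\bigl(27F_x+i\sqrt{27\Delta}\bigr)$ is $\tan^{-1}(l)$, which for $F_x>0$ lies in $[0,\pi/2]$ and equals $\arccos\bigl(+F_x\sqrt{27}/(2(1+2a)^{3/2})\bigr)$, not the $\arccos$ of the negative of that quantity --- the overall minus sign in $x_k=-\tfrac{2}{\sqrt{3}}\sqrt{1+2a}\cos(\phi+\psi_k)$ already comes from the prefactor $-\tfrac{1}{3a_3}$ in the cubic root formula, not from a sign inside the $\arccos$.
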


\begin{proof}
The simultaneous equations to be solved are 
\begin{align}
\frac{\partial V_F}{\partial x}&=x(x^2+y^2)-(1+2a)x+F_x=0 \label{chap_5_new_eqn1}\\
\frac{\partial V_F}{\partial y}&=y(x^2+y^2)-(1-2b)y=0.  \label{chap_5_new_eqn2}
\end{align}
Equation \ref{chap_5_new_eqn2} holds if either $(x^2+y^2)-(1-2b)=0$ or $y=0$. 
The $(x^2+y^2)-(1-2b)=0$ case is considered first, which gives 
$(x^2+y^2)=(1-2b)$. 
Substituting this into Equation \ref{chap_5_new_eqn1} gives $x$ as 
\begin{equation*}
x_{saddle}=\frac{F_x}{2(a+b)}
\end{equation*}
which when substituted back into $(x^2+y^2)=(1-2b)$ gives $y$ as
\begin{equation*}
y_{saddle}=\pm\sqrt{(1-2b)-\left(  \frac{F_x}{2(a+b)}\right)^2}. 
\end{equation*}
For the $y=0$ case, Equation \ref{chap_5_new_eqn1} becomes 
\begin{equation*}
x^3-(1+2a)x+F_x=0
\end{equation*}
which is a cubic equation. 
Solving this cubic equation using the notation in Theorem \ref{chap_5_thm_cubic_theory} gives
\begin{align*}
\Delta_0&=3(1+2a), \ \ \Delta_1=27F_x, \ \ \Delta=4(1+2a)^3-27F_x^2\\
\end{align*}
which gives\footnote{After noting that $(x+iy)^{1/3}=(x^2+y^2)^{1/6}\exp\left\{i(1/3)\tan^{-1}(y/x)\right\}$.}
\begin{align*}
C&=\sqrt[3]{\frac{27F_x+i\sqrt{27\Delta}}{2}}\\
&=\left[ \left(\frac{27F_x}{2}\right)^2 +\left( 
\frac{\sqrt{27\Delta}}{2}
 \right)^2\right]^{1/6}\exp \left\{i(1/3)\tan^{-1}\left( \frac{\sqrt{27\Delta}}{27F_x}\right)\right\}
\end{align*}
which simplifies to 
\begin{align*}
\left(\frac{27F_x}{2}\right)^2 +\left( 
\frac{\sqrt{27\Delta}}{2}
 \right)^2&=\frac{1}{4}\left(27 \times 27 F_x^2+27\Delta \right)\\
&=27(1+2a)^3,
\end{align*}
after letting 
\begin{align*}
\phi=(1/3)\tan^{-1}\left( \frac{\sqrt{27\Delta}}{27F_x}\right)
\end{align*}
we have 
\begin{align*}
C&=\sqrt{3(1+2a)}\ e^{i\phi}\\
\frac{\Delta_0}{C}&=\frac{3(1+2a)}{\sqrt{3(1+2a)}} \ e^{-i\phi}\\
&=\sqrt{3(1+2a)}\ e^{-i\phi}
\end{align*}
which gives the 3 solution as 
\begin{align*}
x_k&=-\frac{1}{3}\sqrt{3(1+2a)} \left(e^{i(\phi+\psi_k)} +e^{-i(\phi+\psi_k)}\right)\\
&=-\frac{2}{3}\sqrt{3(1+2a)}\cos(\phi+\psi_k)\\
&=-\frac{2}{3}\sqrt{3(1+2a)}\cos \left\{    
\frac{1}{3}\tan^{-1}\left(   
\frac{\sqrt{4(1+2a)^3-27F_x^2}}{F_x\sqrt{27}}
\right)+\psi_k
\right\}
\end{align*}
where $k=0,1,2$, $\psi_0=0$, $\psi_1=\frac{2\pi}{3}$ and $\psi_2=\frac{4\pi}{3}$. 
Now notice that $y_{saddle}$ requires taking the square root of a real number.
This means $y_{saddle}$ will be real if and only if the argument under the square root is positive
\begin{align*}
y_{saddle}&=\pm\sqrt{(1-2b)-\left(  \frac{F_x}{2(a+b)}\right)^2}\\
 &\in \mathbb{R}\\
\Leftrightarrow 0&\leq(1-2b)-\left(  \frac{F_x}{2(a+b)}\right)^2\\
\Leftrightarrow F_x&\leq F_x^{sad}\\
\text{where} \quad F_x^{sad}&=2(a+b)\sqrt{1-2b}.
\end{align*}
Notice also how the argument inside the $\tan^{-1}(\cdot)$ function contains a square root as well, which is actually the square root of the discriminant.
The three cubic roots $x_k$ would be real if and only if the discriminant is positive
\begin{align*}
\sqrt{\Delta}&=\sqrt{4(1+2a)^3-27F_x^2}\\
&\in \mathbb{R}\\
\Leftrightarrow 0&\leq 4(1+2a)^3-27F_x^2\\
\Leftrightarrow F_x&\leq F_x^{crit}\\
\text{where}\quad F_x^{crit}&=\sqrt{\frac{4(1+2a)^3}{27}}
\end{align*}
which clearly puts bounds on the forces. This completes the proof. 
\end{proof}

\noindent Next there is a simple but useful Lemma. 

\begin{Lemma}\label{chap_5:th:mono}
The function $l$ (as in Theorem \ref{chap_5_thm_five_roots}) is monotone in $F_x$ for $F_x<F_x^{crit}$. 
\end{Lemma}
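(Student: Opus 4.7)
The plan is very short because the key algebraic manipulation collapses the expression to something manifestly monotone. I would proceed as follows.

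First I would square $l$ to remove the square root, obtaining
\begin{align*}
l(F_x)^2 = \frac{4(1+2a)^3 - 27 F_x^2}{27\, F_x^2} = \frac{4(1+2a)^3}{27\, F_x^2} - 1.
\end{align*}
On the open interval $F_x \in (0, F_x^{\mathrm{crit}})$ the numerator $4(1+2a)^3 - 27 F_x^2$ is strictly positive by the definition of $F_x^{\mathrm{crit}}$, so $l(F_x)$ is well-defined, real and strictly positive, and $l(F_x)^2$ faithfully encodes its monotonicity.

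Next I would observe that $l^2$ is, as a function of $F_x > 0$, the sum of a constant $-1$ and the term $\frac{4(1+2a)^3}{27 F_x^2}$. Since $4(1+2a)^3 > 0$, the map $F_x \mapsto F_x^{-2}$ is strictly decreasing on $(0,\infty)$, hence so is $l^2$. Differentiating to confirm,
\begin{align*}
\frac{d}{dF_x}\bigl(l(F_x)^2\bigr) = -\frac{8(1+2a)^3}{27\, F_x^3} < 0 \quad \text{for all } F_x > 0.
\end{align*}

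Finally, because $l(F_x) > 0$ on $(0, F_x^{\mathrm{crit}})$, the square root function is strictly increasing on the range of $l^2$, so the strict monotonicity of $l^2$ transfers directly to $l$: $l$ is strictly decreasing on $(0, F_x^{\mathrm{crit}})$, which is the assertion of the Lemma. There is no real obstacle here; the only thing to be careful about is the domain, namely to restrict attention to $F_x > 0$ (which is given by the ambient case $F_x > 0$, $F_y = 0$) and to stay strictly below $F_x^{\mathrm{crit}}$ so that $l$ is real and the square root is injective on the values attained.
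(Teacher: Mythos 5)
Your proof is correct, and it takes a cleaner route than the paper's. The paper computes $dl/dF_x$ directly by the product and chain rules on the quotient-with-square-root form, arriving at a two-term expression and then arguing each term is nonpositive (using $F_x>0$ and $F_x < F_x^{\mathrm{crit}}$). You instead square $l$ first, which collapses the expression to $\frac{4(1+2a)^3}{27 F_x^2} - 1$; the monotonicity of $F_x \mapsto F_x^{-2}$ on $(0,\infty)$ is then immediate, and you transfer it back to $l$ via positivity and the fact that $\sqrt{\cdot}$ is increasing. What your approach buys is transparency: there is essentially no calculus to check, and strict monotonicity (rather than the weak $\leq 0$ the paper states) falls out for free. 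What it costs is the extra small step of justifying that monotonicity of $l^2$ implies monotonicity of $l$, which you handle correctly by noting $l>0$ on the interval. Both proofs rely on the same domain observation ($F_x>0$ and $F_x<F_x^{\mathrm{crit}}$ so the radicand is positive), so neither is more general than the other; yours is simply the tidier computation.
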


\begin{proof}
We differentiate $l$ with respect to $F_x$. 
\begin{align*}
\frac{dl}{dF_x}&=\frac{d}{dF_x}\left(\frac{\sqrt{4(1+2a)^3-27F_x^2}}{F_x\sqrt{27}}\right)\\
&=\frac{1}{F_x\sqrt{27}}\left(4(1+2a)^3-27F_x^2\right)^{-\frac{1}{2}}(-2F_x)\ \frac{1}{2}\ 27\  \\
&\relphantom{=}+\frac{\sqrt{4(1+2a)^3-27F_x^2}}{F_x\sqrt{27}} \ \left(\frac{-1}{F_x^2}\right)\\
&\leq0
\end{align*}
which is always negative since we assumed $F_x<F_x^{crit}$ for the square roots to be real and forcing is assumed to be in the positive direction. 
\end{proof}

\noindent Although the monotonicity of $l$ is trivial, it would prove essential for the next series of reasoning. 
It is also easy to see that 
\begin{equation*}
\begin{array}{ll}
F_x=0 & \text{then} \quad l=+\infty\\[0.3em]
F_x=F_x^{crit} &\text{then}  \quad l=0.
\end{array}
\end{equation*}
Since the derivative of $l$ is negative this means that $l$ would decrease from $+\infty$ to $0$ as $F_x$ increase from $0$ to $F_x^{crit}$. This function being monotone means it would decrease to $0$ without any oscillations. 
For short this means 
\begin{equation*}
l = \infty \downarrow 0 \quad \text{as} \quad F_x = 0 \uparrow F_x^{crit}.
\end{equation*}
But $\phi=\frac{1}{3}\tan^{-1}(l)$, with $\tan^{-1}$ is also monotone over $[-\infty,+\infty]$. 
So similarly  we can also say 
\begin{align*}
\phi=\frac{\pi}{6}\downarrow 0  \quad \text{as} \quad F_x = 0 \uparrow F_x^{crit}
\end{align*}
monotonically for increasing $F_x$. 
This means that for $0\leq F_x \leq F_x^{crit}$ we would have 
\begin{equation*}
0\leq  \phi \leq \frac{\pi}{6}. 
\end{equation*}
We note the following values of the $\cos(\cdot)$ function. 
\begin{equation*}
\begin{array}{ll}
\cos(0)=1 & \cos\left(\frac{\pi}{6}\right)=\frac{\sqrt{3}}{2}\\[0.5em]
\cos\left(\frac{2\pi}{3}\right)=\frac{-1}{2}&\cos\left(\frac{2\pi}{3}+\frac{\pi}{6}\right)=\frac{-\sqrt{3}}{2}\\[0.5em]
\cos\left(\frac{4\pi}{3}\right)=\frac{-1}{2}&\cos\left(\frac{4\pi}{3}+\frac{\pi}{6}\right)=0.
\end{array}
\end{equation*}
From this we can bound  $\cos(\cdot)$ for the three values of $k$ for $0\leq F_x\leq F_x^{crit}$. 
\begin{align*}
\begin{array}{l r c l l l}
k=0 & \frac{\sqrt{3}}{2}&\leq&\cos\left(\phi+k\frac{2\pi}{3}\right)&\leq&1\\[0.5em]
k=1 & \frac{-\sqrt{3}}{2}&\leq&\cos\left(\phi+k\frac{2\pi}{3}\right)&\leq&\frac{-1}{2}\\[0.5em]
k=2 & \frac{-1}{2}&\leq&\cos\left(\phi+k\frac{2\pi}{3}\right)&\leq&0. 
\end{array}
\end{align*}
We also note that the $\cos(\cdot)$ function is monotone on 
$\left[0, \pi\right]$ and $\left[\pi, 2\pi\right]$.
But $\left(\phi+k\frac{2\pi}{3}\right)$
is in the intervals where $\cos(\cdot)$ is monotone,
therefore we can say that the three critical points on the $x$-axis $x_k$ are also monotone in $F_x$. 
We can now have bounds on the $x_k$ for $0\leq F_x\leq F_x^{crit}$. 
\begin{align*}
\frac{-2}{\sqrt{3}}\sqrt{1+2a}&\leq x_0 \leq -\sqrt{1+2a}\\
\frac{1}{\sqrt{3}}\sqrt{1+2a}&\leq x_1 \leq \sqrt{1+2a}\\
0&\leq x_2 \leq \frac{1}{\sqrt{3}}\sqrt{1+2a}. 
\end{align*}
Using the monotonicity of $x_k$ we get that
\begin{equation*}
\begin{array}{lrrll}
x_0: & -\sqrt{1+2a}&\longrightarrow& \frac{-2}{\sqrt{3}}\sqrt{1+2a}& \text{as}\quad F_x\rightarrow F_x^{crit}\\[0.5em]
x_1:& \sqrt{1+2a}&\longrightarrow& \frac{1}{\sqrt{3}}\sqrt{1+2a}& \text{as}\quad F_x\rightarrow F_x^{crit}\\[0.5em]
x_2: & 0&\longrightarrow &\frac{1}{\sqrt{3}}\sqrt{1+2a}& \text{as}\quad F_x\rightarrow F_x^{crit}. 
\end{array}
\end{equation*}
The monotonicity of $x_k$ means the movements of the $x_k$ are always in one direction and they will never oscillate. 
We obtain the following
 
\begin{Lemma}\label{chap_5:th:det}
Let $F_x>0$, $F_y=0$ and $b<\frac{1}{2}$. 
These three scenarios hold.

If $F_x<F_x^{crit}$ we have 3 critical points on the $x$-axis: $x_0<x_2<x_1$. 

If $F_x=F_x^{crit}$ we have 2 critical points on the $x$-axis: $x_0<x_2=x_1$. 

If $F_x>F_x^{crit}$ we have 1 critical point on the $x$-axis: $x_0<\frac{-2}{\sqrt{3}}\sqrt{1+2a}$ and $x_0 \rightarrow -\infty $ monotonically with increasing $F_x$. 
\end{Lemma}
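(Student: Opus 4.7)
The plan is to treat the three cases in the order suggested by the discriminant of the reduced cubic $p(x) = x^3-(1+2a)x+F_x$, which by the computation in Theorem \ref{chap_5_thm_five_roots} is $\Delta = 4(1+2a)^3-27F_x^2$, so that $\Delta>0$, $\Delta=0$, $\Delta<0$ correspond precisely to $F_x<F_x^{crit}$, $F_x=F_x^{crit}$, $F_x>F_x^{crit}$. Theorem \ref{chap_5_thm_cubic_theory} then tells us that these three regimes produce three distinct real roots, a repeated real root, and a single real root respectively, so in each case the number of critical points on the $x$-axis is immediate; only the ordering and the stated bounds remain to be verified.

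For $F_x<F_x^{crit}$, I would simply collect what has already been proved between Theorem \ref{chap_5_thm_five_roots} and Lemma \ref{chap_5:th:mono}. The bounds
\begin{align*}
x_0\in\left[-\tfrac{2}{\sqrt{3}}\sqrt{1+2a},\,-\sqrt{1+2a}\right],\quad
x_1\in\left[\tfrac{1}{\sqrt{3}}\sqrt{1+2a},\,\sqrt{1+2a}\right],\quad
x_2\in\left[0,\,\tfrac{1}{\sqrt{3}}\sqrt{1+2a}\right]
\end{align*}
derived from $\phi\in[0,\pi/6)$ give $x_0<0\le x_2\le\tfrac{1}{\sqrt{3}}\sqrt{1+2a}\le x_1$. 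Strictness of $x_2<x_1$ for $F_x<F_x^{crit}$ follows from the strict monotonicity of $\cos$ on each of $[2\pi/3,5\pi/6]$ and $[4\pi/3,3\pi/2]$: since $\phi<\pi/6$, the quantities $\cos(\phi+2\pi/3)$ and $\cos(\phi+4\pi/3)$ sit strictly in the interior of the stated intervals, forcing $x_1>\tfrac{1}{\sqrt{3}}\sqrt{1+2a}>x_2$. The case $F_x=F_x^{crit}$ is then the limiting substitution $\phi=0$, which yields $x_0=-\tfrac{2}{\sqrt{3}}\sqrt{1+2a}$ and $x_1=x_2=\tfrac{1}{\sqrt{3}}\sqrt{1+2a}$ directly from the cosine formulas.

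The third case $F_x>F_x^{crit}$ is where the cosine parameterisation breaks down and where the main work lies. I would abandon the formula from Theorem \ref{chap_5_thm_five_roots} and instead study $p$ directly. Its derivative $p'(x)=3x^2-(1+2a)$ has roots $\pm\tfrac{1}{\sqrt{3}}\sqrt{1+2a}$, so $p$ has a local max at $-\tfrac{1}{\sqrt{3}}\sqrt{1+2a}$ and a local min at $+\tfrac{1}{\sqrt{3}}\sqrt{1+2a}$; the local min value $-\tfrac{2}{3\sqrt{3}}(1+2a)^{3/2}+F_x$ equals $F_x-F_x^{crit}$ and is strictly positive in this regime, so $p$ has exactly one real root. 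A short computation gives $p(-\tfrac{2}{\sqrt{3}}\sqrt{1+2a})=F_x-F_x^{crit}>0$, and since $p$ is strictly increasing on $(-\infty,-\tfrac{1}{\sqrt{3}}\sqrt{1+2a})$ with $p(x)\to-\infty$ as $x\to-\infty$, its unique real root $x_0$ must lie strictly below $-\tfrac{2}{\sqrt{3}}\sqrt{1+2a}$.

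To obtain the monotonicity and divergence of $x_0$ as $F_x$ increases, I would apply the implicit function theorem to $p(x_0,F_x)=0$:
\begin{align*}
\frac{dx_0}{dF_x}=-\frac{1}{3x_0^2-(1+2a)},
\end{align*}
and since $x_0^2>\tfrac{4}{3}(1+2a)$ gives $3x_0^2-(1+2a)>3(1+2a)>0$, we conclude $dx_0/dF_x<0$, so $x_0$ decreases monotonically with $F_x$. Divergence $x_0\to-\infty$ follows from the defining equation $x_0^3=(1+2a)x_0-F_x$, since otherwise a bounded subsequence would force the right-hand side to $-\infty$ while the left-hand side remained bounded, a contradiction. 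The hardest step, conceptually, is this last case: one must recognise that the cosine formulas supplied by Theorem \ref{chap_5_thm_five_roots} are no longer valid once the discriminant becomes negative, and replace them by a direct qualitative analysis of the cubic.
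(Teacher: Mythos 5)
Your proof is correct, and for the first two cases it follows essentially the paper's route: read off the root count from the discriminant and then use the bounds $x_0\in\bigl[-\tfrac{2}{\sqrt{3}}\sqrt{1+2a},-\sqrt{1+2a}\bigr)$, $x_1\in\bigl[\tfrac{1}{\sqrt{3}}\sqrt{1+2a},\sqrt{1+2a}\bigr)$, $x_2\in\bigl(0,\tfrac{1}{\sqrt{3}}\sqrt{1+2a}\bigr]$ that come from $\phi\in(0,\pi/6)$ (you wrote $\phi\in[0,\pi/6)$, which has the endpoints backwards since $\phi$ decreases from $\pi/6$ at $F_x=0$ to $0$ at $F_x=F_x^{crit}$, but your actual inequalities are consistent with the correct interval).

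For the third case your argument diverges genuinely from the paper. The paper persists with the trigonometric root formula: it notes that for $F_x>F_x^{crit}$ the quantity $l$ becomes purely imaginary, computes $\tan^{-1}$ for complex arguments, shows $\phi=i\gamma$ with $\gamma$ real, and rewrites $x_0=-\tfrac{2}{\sqrt{3}}\sqrt{1+2a}\,\cosh(\gamma)$, from which the bound $x_0<-\tfrac{2}{\sqrt{3}}\sqrt{1+2a}$ and the monotone divergence follow once one checks $\gamma$ is monotone in $F_x$. You instead abandon the root formula once the discriminant changes sign and analyse $p(x)=x^3-(1+2a)x+F_x$ by elementary calculus: locate the critical points of $p$, verify the local minimum value $p\bigl(\tfrac{1}{\sqrt{3}}\sqrt{1+2a}\bigr)=F_x-F_x^{crit}>0$, evaluate $p\bigl(-\tfrac{2}{\sqrt{3}}\sqrt{1+2a}\bigr)=F_x-F_x^{crit}>0$ to locate the unique root strictly below $-\tfrac{2}{\sqrt{3}}\sqrt{1+2a}$, and use the implicit function theorem plus the defining cubic to get monotone decrease and divergence. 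Both arguments are correct, but they buy different things: the paper's computation delivers the explicit representation $x_0=-\tfrac{2}{\sqrt{3}}\sqrt{1+2a}\,\cosh(\gamma)$, which it reuses (e.g.\ in the footnote to Lemma \ref{chap_5:th:bound:y} when treating $y_0$ for $F_y>F_y^{crit}$); your real-variable argument is shorter, avoids passing to complex $\tan^{-1}$ and $\cosh$, and sidesteps the sign bookkeeping in the paper's derivative of $\ln(1+\epsilon)-\ln(1-\epsilon)$, which as written has a sign error that the $\cosh$ rewrite masks. As a self-contained proof of the Lemma as stated, yours is cleaner; as a building block for the later $y$-axis analysis, the paper's form is the one the text actually needs.
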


\begin{proof}
The three $x_k$ are solutions to a cubic equation. 
This cubic equation was derived assuming $y=0$. The other critical points $(x_{saddle},\pm y_{saddle})$ were derived assuming $y\neq0$. 

If $F_x<F_x^{crit}$ the discriminant of this cubic equation dictates that there should be three distinct real solution. 
The bounds on $x_0$, $x_1$ and $x_2$ show that $x_0<x_2<x_1$. 
If $F_x=F_x^{crit}$ the discriminant of this cubic equation dictates that there should be at least two repeated solution. 
It was shown that $x_1=x_2=\frac{1}{\sqrt{3}}\sqrt{1+2a}$ when $F_x=F_x^{crit}$. 
The bound on $x_0$, $x_1$ and $x_2$ shows that $x_0<x_2=x_1$. 
If $F_x>F_x^{crit}$ the discriminant of this cubic equation dictates that there should only be one real solution. 
If  we can show that $x_0<0$ is real  then we are done. 
For  $F_x>F_x^{crit}$ the $l$ function becomes
\begin{align*}
l=i\frac{\sqrt{27F_x^2-4(1+2a)^3}}{F_x\sqrt{27}}.
\end{align*}
Differentiating the imaginary part gives 
\begin{align*}
\frac{d}{dF_x}(-il)&=\frac{d}{dF_x}\left(\frac{\sqrt{27F_x^2-4(1+2a)^3}}{F_x\sqrt{27}}\right)\\
&=\frac{F_x\sqrt{27}}{\sqrt{27F_x^2-4(1+2a)^3}}\left(1-\frac{27F_x^2-4(1+2a)^3}{27F_x}\right)\\
&>0,
\end{align*}
since we have assumed $F_x>F_x^{crit}$. 
This also shows that the imaginary part of $l$ is monotonically increasing as $F_x$ increases. We also note that 
\begin{align*}
0\leq (-il)\leq1
\end{align*}
because $(-il)=0$ when $F_x=F_x^{crit}$ and $(-il)=1$ when $F_x=\infty$
and the increase in $(-il)$ is monotone.  
Using $\tan^{-1}(\cdot)$ defined for complex arguments gives 
\begin{align*}
\phi&=\frac{1}{3}\tan^{-1}(l)\\
&=\frac{1}{3}\times\frac{1}{2}i\{\ln(1-il)-\ln(1+il)\}\\
&=\frac{1}{6}i\{\ln(1+\epsilon)-\ln(1-\epsilon)\}
\quad\text{where}\quad0\leq\epsilon\leq1
\end{align*}
after letting $\epsilon=-il$. 
Notice that $\phi$ now has zero real part, which means we can denote $\phi$ with a real $\gamma$ by writing
\begin{align*}
\phi=i\gamma.
\end{align*}
We also note that $\gamma$ will be monotonically decreasing as $F_x$ increases because 
\begin{align*}
\frac{d}{d\epsilon}\left[\ln(1+\epsilon)-\ln(1-\epsilon)\right]&=\frac{1}{1+\epsilon}-\frac{1}{1-\epsilon}\\
&=\frac{-2\epsilon}{(1+\epsilon)(1-\epsilon)}\\
&<0\quad \text{for} \quad F_x>F_x^{crit}.
\end{align*}
Because it was shown that as $F_x$ increases from $F_x=F_x^{crit}$ to $F_x=\infty$,
$\epsilon$ would increase from $\epsilon=0$ to $\epsilon=1$, 
which means $\gamma$ would monotonically decrease. 
So the critical point may now be written as 
\begin{align*}
x_0&=-\frac{2}{\sqrt{3}}\sqrt{1+2a}\cos\left(\phi\right)\\
&=-\frac{2}{\sqrt{3}}\sqrt{1+2a}\ \frac{e^{i(i\gamma)}+e^{-i(i\gamma)}}{2}\\
&=-\frac{2}{\sqrt{3}}\sqrt{1+2a}\ \cosh(\gamma)
\end{align*}
which means $x_0<\frac{-2}{\sqrt{3}}\sqrt{1+2a}$ and monotonically decreasing. Note that $\gamma=0$ when $F_x=F_x^{crit}$. 
\end{proof}

\noindent Now we consider a special case of the forcing when $F_x=F_x^{sad}$. 
This gives
\begin{align*}
(x_{saddle},\pm y_{saddle})
&=(x_{saddle},0)\\
&=(\sqrt{1-2b},0)\\
\Rightarrow
x_{saddle}
&=\sqrt{1-2b}
\end{align*} 
which seemingly adds a fourth critical point onto the $x$-axis. 
This brings us to the next Lemma. 

\begin{Lemma}\label{chap_5:th:great}
$F_x^{sad}\leq F_x^{crit}$ holds. 
\end{Lemma}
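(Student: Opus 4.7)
The plan is to square both sides and reduce the inequality to a purely algebraic statement about polynomials in $a$ and $b$, then apply AM--GM to obtain the bound cleanly. Since $F_x^{sad}$ and $F_x^{crit}$ are both non-negative (we are in the regime $a>0$, $0<b<\tfrac12$, so $1-2b>0$), the desired inequality $F_x^{sad}\leq F_x^{crit}$ is equivalent to
\begin{equation*}
4(a+b)^2(1-2b)\;\leq\;\frac{4(1+2a)^3}{27},
\end{equation*}
i.e.\ $27(a+b)^2(1-2b)\leq(1+2a)^3$. I would first record this reduction, noting that all quantities being squared are indeed non-negative under the standing hypotheses $a>0$ and $b<\tfrac12$.

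The key observation driving the proof is the algebraic identity
\begin{equation*}
(a+b)+(a+b)+(1-2b)\;=\;1+2a,
\end{equation*}
which expresses $1+2a$ as the sum of three non-negative quantities $a+b,\ a+b,\ 1-2b$. First I would verify explicitly that each term is strictly positive under the hypotheses ($a+b>0$ since both are positive; $1-2b>0$ from $b<\tfrac12$). Then applying the AM--GM inequality to these three positive numbers gives
\begin{equation*}
\frac{1+2a}{3}\;\geq\;\sqrt[3]{(a+b)^2(1-2b)},
\end{equation*}
and cubing yields $(1+2a)^3\geq 27(a+b)^2(1-2b)$, which is exactly the squared inequality needed.

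Finally I would reverse the reduction: taking square roots (legitimate since both sides are non-negative) and multiplying by $2$ recovers $F_x^{crit}\geq F_x^{sad}$. I do not expect a serious obstacle here; the only subtlety is making sure the AM--GM step is applied to non-negative quantities, which is exactly why the hypothesis $b<\tfrac12$ (already part of the case assumption of the surrounding subsection) is essential. If one wanted equality conditions, they would be $a+b=1-2b$, i.e.\ $a=1-3b$, but this is not required for the lemma.
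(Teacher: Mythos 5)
Your proof is correct, and it takes a genuinely different route from the paper's. The paper argues by contradiction using the geometric structure already built up: it assumes $F_x^{sad}>F_x^{crit}$, sets $F_x=F_x^{sad}$ so that $(x_{saddle},\pm y_{saddle})$ collapses to a point $(x_{saddle},0)$ on the $x$-axis with $x_{saddle}>0$, and then invokes Lemma~\ref{chap_5:th:det} to conclude that for $F_x>F_x^{crit}$ the only critical point on the $x$-axis is $x_0<0$, a contradiction. That proof leans on the earlier cubic-root/monotonicity machinery and tells you \emph{why} the inequality must hold in terms of critical-point counting. Your proof instead squares the inequality, notices the exact identity $(a+b)+(a+b)+(1-2b)=1+2a$, and applies AM--GM to the three positive terms to get $27(a+b)^2(1-2b)\leq(1+2a)^3$ directly, with the hypotheses $a>0$, $0<b<\tfrac12$ guaranteeing positivity. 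The computation is sound: squaring is legitimate since both sides are non-negative, the identity checks out, and AM--GM applies. Your argument is more elementary and self-contained (it needs none of the preceding lemmas), and it even gives the equality case $a=1-3b$, i.e.\ $F_x^{sad}=F_x^{crit}$ exactly when $\sqrt{1-2b}=\tfrac{1}{\sqrt{3}}\sqrt{1+2a}$, which matches the boundary between the regions $R_1$ and $R_2$ appearing later in the chapter. The paper's proof, by contrast, fits the narrative of the section by recycling Lemma~\ref{chap_5:th:det}, but it is less transparent as a standalone inequality. Both are valid; yours is arguably the cleaner derivation of this particular fact.
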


\begin{proof}
Proof by contradiction.  
Assume that $F_x^{sad}>F_x^{crit}$.  Let $F_x=F_x^{sad}$ which means $(x_{saddle},\pm y_{saddle})=(x_{saddle},0)$ as a new critical point on the $x$-axis. 
Now we have to show that $(x_{saddle},0)$ is not one of 
$(x_0,0)$, $(x_1,0)$ or $(x_2,0)$. 
The expressions for the critical points mean we would always have $x_{saddle}>0$. 
But $F_x=F_x^{sad}$ also implies $F_x>F_x^{crit}$ which by Lemma \ref{chap_5:th:det} means the only critical point is $x_0<0$ which is a contradiction. 
\end{proof}

\noindent The next Lemma will be useful in avoiding complicated  manipulation of trigonometric identities when it comes to proving properties about the critical points. 

\begin{Lemma}\label{chap_5:th:bond}
Let $F_x>0$, $F_y=0$ and $b<\frac{1}{2}$. 
If $F_x=F_x^{sad}$ we must have either $x_{saddle}=x_1$ or $x_{saddle}=x_2$. If $F_x=F_x^{sad}=F_x^{crit}$ then $x_{saddle}=x_1=x_2$. 
\end{Lemma}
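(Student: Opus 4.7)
The plan is to exploit the fact that at $F_x = F_x^{sad}$ the off-axis saddle collapses onto the $x$-axis, and then show that it must coincide with one of the on-axis roots by a direct algebraic check plus a sign argument.

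First, I would observe that at $F_x = F_x^{sad}$ we have $y_{saddle} = 0$, so the point $(x_{saddle}, 0)$ is a critical point with $y$-coordinate zero. Plugging $y=0$ into Equation \ref{chap_5_new_eqn1} shows that any critical point lying on the $x$-axis must satisfy
\begin{align*}
x^3 - (1+2a)x + F_x = 0,
\end{align*}
which is exactly the cubic whose three roots are $x_0, x_1, x_2$ (in the regime $F_x \leq F_x^{crit}$, by Lemma \ref{chap_5:th:great}). Therefore $x_{saddle} \in \{x_0, x_1, x_2\}$. No delicate trigonometric manipulation is needed: the cubic's root set is automatically the candidate set.

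Next I would rule out $x_{saddle} = x_0$ by a sign comparison. Since $x_{saddle} = F_x^{sad}/[2(a+b)] = \sqrt{1-2b} > 0$, while the bounds on $x_0$ established in the discussion preceding Lemma \ref{chap_5:th:det} give $x_0 \leq -\sqrt{1+2a} < 0$ (this holds both for $F_x < F_x^{crit}$ and $F_x = F_x^{crit}$, which are the only cases allowed by Lemma \ref{chap_5:th:great}), the two cannot coincide. This forces $x_{saddle} \in \{x_1, x_2\}$, which is the first claim.

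For the second claim, assume also $F_x^{sad} = F_x^{crit}$. Then Lemma \ref{chap_5:th:det} immediately gives $x_1 = x_2 = \tfrac{1}{\sqrt{3}}\sqrt{1+2a}$, so whichever of the two $x_{saddle}$ equals, it equals both, yielding $x_{saddle} = x_1 = x_2$. The main (and only non-trivial) obstacle is the first step — recognising that when $y_{saddle} = 0$ the two-dimensional branch merges into the one-dimensional cubic — after which the rest is essentially a sign check, which is why the Lemma bypasses explicit trigonometric identities.
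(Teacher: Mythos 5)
Your proposal is correct and takes essentially the same approach as the paper: both observe that at $F_x = F_x^{sad}$ the off-axis saddle drops onto the $x$-axis and must therefore be one of the roots $x_0, x_1, x_2$ of the cubic, rule out $x_0$ by a sign comparison ($x_{saddle} > 0$ while $x_0 < 0$), and invoke Lemma \ref{chap_5:th:det} to conclude $x_1 = x_2$ in the degenerate case $F_x^{sad} = F_x^{crit}$. The only cosmetic difference is that you identify $x_{saddle}$ with a root of the cubic by direct substitution into Equation \ref{chap_5_new_eqn1}, whereas the paper routes the same conclusion through the root-counting statement of Lemma \ref{chap_5:th:det}.
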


\begin{proof}
For strictly positive $F_x>0$ some of the bounds on the critical points would have to be made strict inequalities. 
This means $x_0<0$, $x_1>0$, $x_2>0$ and $x_{saddle}>0$. 
By the time $F_x=F_x^{sad}$, $x_{saddle}$ would be a critical point on the $x$-axis. 
By Lemma \ref{chap_5:th:great} we must always have $F_x^{sad}\leq F_x^{crit}$. 
If $F_x^{sad}<F_x^{crit}$ then by Lemma \ref{chap_5:th:det} there must be three distinct critical points on the $x$-axis, so we must have either $x_{saddle}=x_1$ or $x_{saddle}=x_2$ (as $x_0<0$).   
If $F_x^{sad}=F_x^{crit}$ then again by Lemma \ref{chap_5:th:det} $x_1=x_2$ and there can only be two critical points on the $x$-axis, therefore $x_{saddle}=x_1=x_2$. 
\end{proof}

\noindent Now we are ready for one of the main Theorems of this Chapter. 
The ultimate aim is to find the nature and position of all the critical points under different values of the forcing $F_x$. 

\begin{Theorem}\label{chap_5_case_2}
Let $F_x>0$, $F_y=0$ and $b<\frac{1}{2}$. The positions and nature of the critical points are as follows
\begin{align*}
\text{For any}\quad F_x>0\relphantom{F_x}&\left\{
\begin{array}{ll}
(x_0,0) & \text{well}
\end{array}
\right.\\[0.7em]
\text{and if}\quad F_x< F_x^{sad}&\left\{
\begin{array}{lll}
(x_1,0) &\text{well}\\
(x_2,0)&\text{hill} \\
(x_{saddle},\pm y_{saddle})&\text{saddle}
\end{array}
\right.\\[0.7em]
\text{or}\quad F_x^{sad} < F_x < F_x^{crit}&\left\{
\begin{array}{llll}
(x_1,0)&\text{saddle} &\text{for} & \sqrt{1-2b}\in R_1\\
(x_1,0)&\text{well} &\text{for}&\sqrt{1-2b}\in R_2\\
(x_2,0)&\text{hill} &\text{for}&\sqrt{1-2b}\in R_1\\
(x_2,0) &\text{saddle} &\text{for}&\sqrt{1-2b}\in R_2\\
(x_{saddle},\pm y_{saddle})&\text{nonexistent}&&
\end{array}
\right.\\[0.7em]
\text{or}\quad F_x= F_x^{sad}<F_x^{crit}&\left\{
\begin{array}{llll}
(x_1,0)&\text{unidentified} &\text{for} & \sqrt{1-2b}\in R_1\\
(x_1,0)&\text{well} &\text{for}&\sqrt{1-2b}\in R_2\\
(x_2,0)&\text{hill} &\text{for}&\sqrt{1-2b}\in R_1\\
(x_2,0) &\text{neither} &\text{for}&\sqrt{1-2b}\in R_2\\
(x_{saddle}, \pm y_{saddle})&\text{unidentified}&&
\end{array}
\right.
\\[0.7em]
\text{or}\quad F_x= F_x^{crit}&\left\{
\begin{array}{ll}
(x_{1},0) &\text{unidentified}\\
(x_{2},0)&\text{unidentified}
\end{array}
\right.\\[0.7em]
\text{or}\quad F_x>F_x^{crit}&\left\{
\begin{array}{ll}
(x_{1},0) &\text{nonexistent}\\
(x_{2},0)&\text{nonexistent}
\end{array}
\right.
\end{align*}
\end{Theorem}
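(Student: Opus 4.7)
The plan is to apply the second-derivative test of Theorem \ref{chap_5_thm_1} at each critical point located by Theorem \ref{chap_5_thm_five_roots}, using the monotonicity and range bounds on the on-axis roots from Lemmas \ref{chap_5:th:mono}, \ref{chap_5:th:det}, and \ref{chap_5:th:bond} to keep track of signs. A direct differentiation of $V_F$ gives the Hessian entries $V_{xx}=3x^{2}+y^{2}-(1+2a)$, $V_{yy}=x^{2}+3y^{2}-(1-2b)$, and $V_{xy}=2xy$.

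For the off-axis critical points $(x_{\mathrm{saddle}},\pm y_{\mathrm{saddle}})$ I would substitute the defining relation $x_{\mathrm{saddle}}^{2}+y_{\mathrm{saddle}}^{2}=1-2b$ into the Hessian; after cancellation the determinant collapses to $\det H=-4(a+b)y_{\mathrm{saddle}}^{2}$, which is strictly negative whenever these points exist (i.e.\ $F_x<F_x^{\mathrm{sad}}$, so $y_{\mathrm{saddle}}>0$), giving saddles. For $F_x>F_x^{\mathrm{sad}}$ they are nonexistent because $y_{\mathrm{saddle}}$ becomes imaginary. For $(x_0,0)$, the bound $|x_0|\ge\sqrt{1+2a}$ from Lemma \ref{chap_5:th:det} combined with $\sqrt{1+2a}>\sqrt{1-2b}$ and $\sqrt{1+2a}>\tfrac{1}{\sqrt{3}}\sqrt{1+2a}$ makes both $V_{xx}$ and $V_{yy}$ strictly positive for every $F_x>0$, so $(x_0,0)$ is always a well.

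For $(x_1,0)$ and $(x_2,0)$ the mixed partial vanishes, so classification reduces to the signs of $V_{xx}(x_k,0)=3x_k^{2}-(1+2a)$ and $V_{yy}(x_k,0)=x_k^{2}-(1-2b)$. Lemma \ref{chap_5:th:det} pins down the first sign when $F_x<F_x^{\mathrm{crit}}$: $V_{xx}>0$ at $x_1$ and $V_{xx}<0$ at $x_2$. The sign of $V_{yy}$ is exactly what the $R_1/R_2$ split records. In Case $R_2$, $x_1>\tfrac{1}{\sqrt{3}}\sqrt{1+2a}>\sqrt{1-2b}$, so $V_{yy}(x_1,0)>0$ uniformly and $(x_1,0)$ is a well on the whole range; symmetrically, in Case $R_1$, $x_2<\tfrac{1}{\sqrt{3}}\sqrt{1+2a}<\sqrt{1-2b}$, so $(x_2,0)$ is a hill uniformly. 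In the two remaining sub-cases ($x_1$ in $R_1$ and $x_2$ in $R_2$), Lemma \ref{chap_5:th:bond} locates the crossing $x_k=\sqrt{1-2b}$ precisely at $F_x=F_x^{\mathrm{sad}}$, and the monotonicity of $x_k$ in $F_x$ makes this the only crossing; below it $V_{yy}$ has the sign producing a well (for $x_1$) or hill (for $x_2$), and above it $V_{yy}$ flips sign so that $\det H<0$ and the point becomes a saddle.

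The boundary values $F_x=F_x^{\mathrm{sad}}$ or $F_x=F_x^{\mathrm{crit}}$ force $V_{yy}=0$ or $V_{xx}=0$ respectively, so $\det H=0$ and Theorem \ref{chap_5_thm_1} is inconclusive, yielding the ``unidentified'' entries; and for $F_x>F_x^{\mathrm{crit}}$, Lemma \ref{chap_5:th:det} removes $x_1,x_2$ from the real line altogether. The hard part is purely combinatorial: the two geometric regimes $R_1,R_2$ cross with three force regimes (below $F_x^{\mathrm{sad}}$, between $F_x^{\mathrm{sad}}$ and $F_x^{\mathrm{crit}}$, and at or above $F_x^{\mathrm{crit}}$), and one must argue that Lemma \ref{chap_5:th:bond} truly identifies the \emph{only} value of $F_x$ at which $x_k$ can equal $\sqrt{1-2b}$, so that the sign of $V_{yy}$ is unambiguously determined by which side of $F_x^{\mathrm{sad}}$ one is on.
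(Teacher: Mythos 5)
Your proof is correct and follows the same route as the paper: compute the Hessian (diagonal on the $x$-axis), reduce the off-axis determinant to $-4(a+b)y_{\mathrm{saddle}}^2$, and classify the on-axis points by tracking the signs of $V_{xx}(x_k,0)=3x_k^2-(1+2a)$ and $V_{yy}(x_k,0)=x_k^2-(1-2b)$ via the monotone bounds on $x_k$ from Lemmas~\ref{chap_5:th:mono}, \ref{chap_5:th:det}, \ref{chap_5:th:great} and \ref{chap_5:th:bond}, with the $R_1/R_2$ dichotomy recording which root crosses $\sqrt{1-2b}$ at $F_x=F_x^{\mathrm{sad}}$. The key observation that monotonicity makes this crossing unique is exactly what the paper relies on, so the arguments match.
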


\begin{proof}
For the three $x_k$ we note that for $0<F_x\leq F_x^{crit}$ they are elements of the intervals
\begin{align*}
x_0&\in \left[\frac{-2}{\sqrt{3}}\sqrt{1+2a}, -\sqrt{1+2a}\right)\\
x_1&\in\left[\frac{1}{\sqrt{3}}\sqrt{1+2a}, \sqrt{1+2a}\right)\\
x_2&\in\left(0,\frac{1}{\sqrt{3}}\sqrt{1+2a}\right].
\end{align*}
Note also that at $F_x=F_x^{sad}$ the associated value of $x_{saddle}(a,b,F_x^{sad})=\sqrt{1-2b}$ can only ever be elements of certain intervals. 
\begin{align*}
\text{For} \quad F_x^{sad}<F_x^{crit} \quad \text{either} \quad 
\sqrt{1-2b}&\in R_1:=\left(\frac{1}{\sqrt{3}}\sqrt{1+2a}, \sqrt{1+2a}\right)\\[0.5em]
\quad \text{or} \quad
\sqrt{1-2b}&\in R_2:=\left(0,\frac{1}{\sqrt{3}}\sqrt{1+2a}\right)\\[0.5em]
\text{and if} \quad F_x^{sad}=F_x^{crit} \quad
\text{then} \quad 
\sqrt{1-2b}&=\frac{1}{\sqrt{3}}\sqrt{1+2a}
\end{align*}
where $R_1$ and $R_2$ are defined as above. 
These statements above can be justified as follows. 
Lemma \ref{chap_5:th:great} says $F_x^{sad}\leq F_x^{crit}$. If $F_x=F_x^{sad}=F_x^{crit}$ then  Lemma \ref{chap_5:th:bond} says $x_{saddle}=x_1=x_2=\frac{1}{\sqrt{3}}\sqrt{1+2a}$. 
Also, $\sqrt{1-2b}$ must always live in the regions specified, because we must always have $0<\sqrt{1-2b}<\sqrt{1+2a}$ for $0<b<\frac{1}{2}$. 
Or, to justify it in another way, if $\sqrt{1-2b}>0$ live beyond the regions $R_{1}$ or $R_2$ then we would have four critical points on the $x$-axis which is not possible. 

Now we see how the critical points collide. 
If $\sqrt{1-2b}\in R_1$ then we got to have $x_{saddle}=x_1$. 
If  $\sqrt{1-2b}\in R_2$ then we got to have $x_{saddle}=x_2$. 
This is because for $F_x=F_x^{sad}<F_x^{crit}$ there have to be three distinct critical points on the $x$-axis as argued by Lemma \ref{chap_5:th:det} and Lemma \ref{chap_5:th:bond}. 
From this information new bounds on the $x_k$ critical points may be derived. 
The bounds for the critical points on the $x$-axis written  compactly, concisely and definitively  for $F_x>0$ are
\begin{align*}
F_x\leq F_x^{crit}&\quad\left\{
\begin{array}{rrrrr}
\frac{-2}{\sqrt{3}}\sqrt{1+2a}&\leq & x_0& <&-\sqrt{1+2a}\\
\frac{1}{\sqrt{3}}\sqrt{1+2a}&\leq &x_1& <& \sqrt{1+2a}\\
0&<& x_2 &\leq &\frac{1}{\sqrt{3}}\sqrt{1+2a}
\end{array}
\right.\\[0.3em]
F_x< F_x^{sad}&\quad\left\{
\begin{array}{l}
x_1>\sqrt{1-2b}\quad \text{for} \quad \sqrt{1-2b}\in R_1\\
x_2<\sqrt{1-2b}\quad \text{for} \quad \sqrt{1-2b}\in R_2
\end{array}
\right.\\[0.3em]
F_x> F_x^{sad}&\quad\left\{
\begin{array}{l}
x_1<\sqrt{1-2b}\quad \text{for} \quad \sqrt{1-2b}\in R_1\\
x_2>\sqrt{1-2b}\quad \text{for} \quad \sqrt{1-2b}\in R_2
\end{array}
\right.\\[0.3em]
F_x= F_x^{sad}&\quad\left\{
\begin{array}{l}
x_{1}=x_{saddle}=\sqrt{1-2b}\\
x_{2}=x_{saddle}=\sqrt{1-2b}
\end{array}
\right.
\begin{array}{l}
\text{not necessarily $x_1=x_2$ }
\end{array}
\\[0.3em]
F_x= F_x^{crit}&\quad\left\{
\begin{array}{l}
x_0=\frac{-2}{\sqrt{3}}\sqrt{1+2a}\\
x_1=x_2=\frac{1}{\sqrt{3}}\sqrt{1+2a} 
\end{array}
\right.\\[0.3em]
F_x>F_x^{crit}&\quad\left\{
\begin{array}{l}
x_0<\frac{-2}{\sqrt{3}}\sqrt{1+2a}
\end{array}
\right.\\[0.3em]
F_x=F_x^{sad}=F_x^{crit}&\quad\left\{
\begin{array}{l}
x_1=x_2=x_{saddle}=\sqrt{1-2b}=\frac{1}{\sqrt{3}}\sqrt{1+2a} 
\end{array}
\right.
\end{align*}

\noindent Now that the bounds on the critical points for various forces are known, we can deduce their nature. 
The $(x_{saddle},\pm y_{saddle})$ is the easiest to prove, taking into account of $x_{saddle}^2+y_{saddle}^2=(1-2b)$ we have for the determinant of the Hessian at $(x_{saddle},\pm y_{saddle})$
\begin{align*}
\det H =-4y_{saddle}^2(a+b)<0
\end{align*}
which is definitely a saddle.
The Hessian for the critical points $(x_k,0)$  is already diagonal even with forcing.  
It is 
\begin{align*}
H(x_k,0)&=
\left(
\begin{array}{cc}
\frac{\partial ^2V_F}{\partial x^2}&0\\
0&\frac{\partial^2V_F}{\partial y^2}\\
\end{array}
\right)=
\left(
\begin{array}{cc}
3x_k^2-(1+2a)&0\\
0&x_k^2-(1-2b)\\
\end{array}
\right)
\end{align*}
and so by using the bounds we derived, and by considering whether the eigenvalues are both positive (well), both negative (hill) or opposite signs (saddle) we can finally deduce the nature of all five critical points. 
\end{proof}

\noindent The situation can be represented graphically as 

\begin{figure}[H]
\centerline{\includegraphics[scale=0.75]{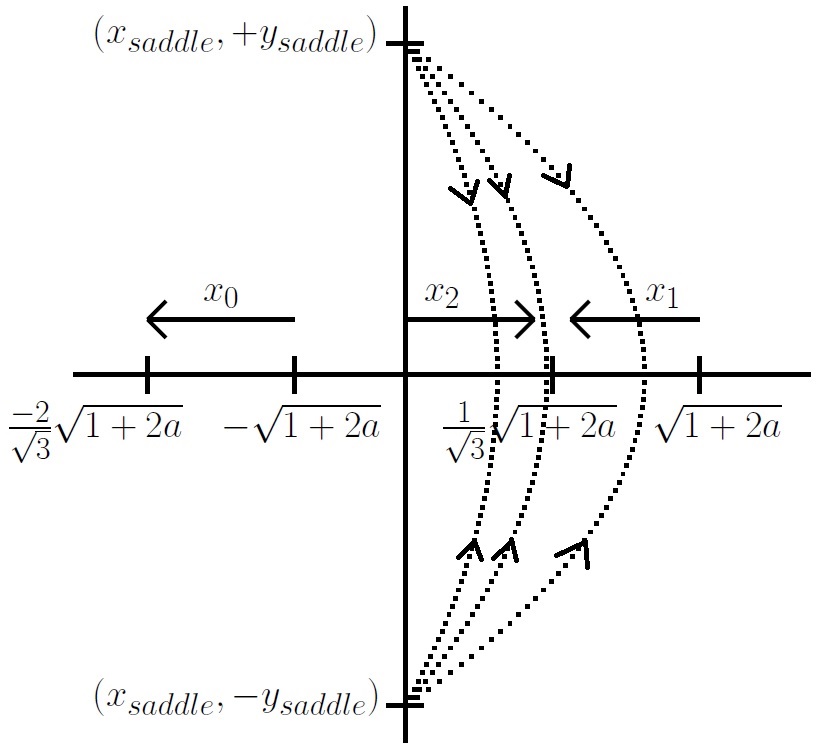}}
\caption{
As $F_x$ increases from $0$ to $F^{crit}_x$, the $x_{1,2,3}$ move as shown in the diagram. 
As $F_x$ increases from $0$ to $F^{sad}_x$ the $(x_{saddle},\pm y_{saddle})$ meet each other on the $x$-axis. 
There are three possible paths for $(x_{saddle},\pm y_{saddle})$. 
If $\sqrt{1-2b}\in R_1$, then the two $(x_{saddle},\pm y_{saddle})$ would meet in the interval $\left(\frac{1}{\sqrt{3}}\sqrt{1+2a},\sqrt{1+2a}\right)$ and collide into $(x_2,0)$. 
If $\sqrt{1-2b} \in R_2$, then the two $(x_{saddle},\pm y_{saddle})$ would meet in the interval $\left(0,\frac{1}{\sqrt{3}}\sqrt{1+2a}\right)$ and collide into $(x_1,0)$.  
If $\sqrt{1-2b}=\frac{1}{\sqrt{3}}\sqrt{1+2a}$, which is also when $F_x^{sad}=F^{crit}_x$, then the two $(x_{saddle},\pm y_{saddle})$ would meet at $x=\frac{1}{\sqrt{3}}\sqrt{1+2a}$ and collide simultaneously into $(x_1,0)$ and $(x_2,0)$. 
}
\end{figure}

\begin{figure}[H]
\centerline{\includegraphics[scale=0.75]{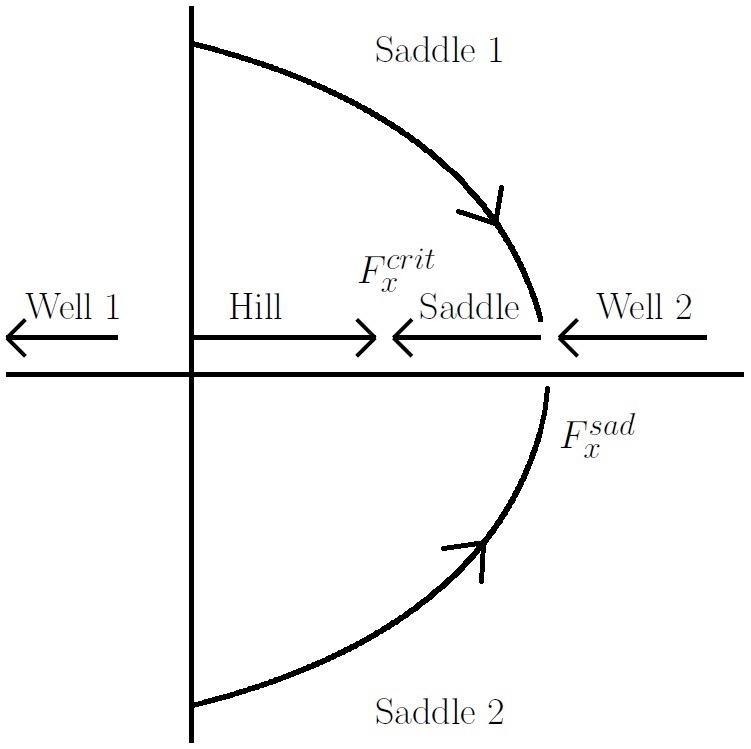}}
\caption{
This is the case for when $\sqrt{1-2b}\in R_1$. When $F=F^{sad}_x$ the two saddles collide into the right well and turns into a new saddle. 
At $F^{crit}_x$, this newly created saddle collides into the hill and both disappears. When Well 2 turns into a Saddle here, it is like creating a new path for the particle to transit to Well 1. }
\end{figure}

\begin{figure}[H]
\centerline{\includegraphics[scale=0.75]{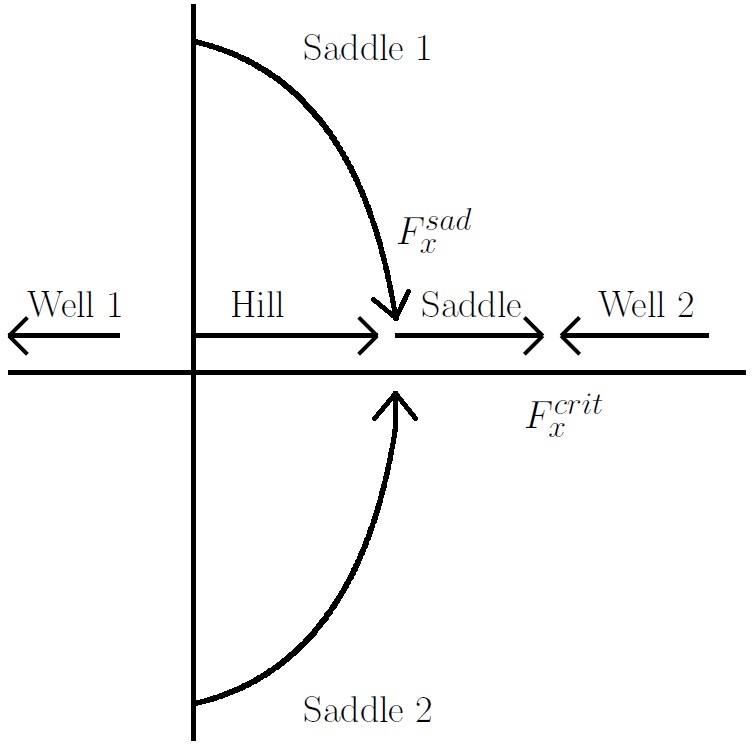}}
\caption{
This is the case for $\sqrt{1-2b}\in R_2$.  
When $F=F^{sad}_x$ the two saddles collide into the hill and turns into a new saddle. 
At $F^{crit}_x$, this newly created saddle collides into the right well and both disappears.
This system behaves in a similar way to a One Dimensional Potential.
}
\end{figure}

\begin{figure}[H]
\centerline{\includegraphics[scale=0.75]{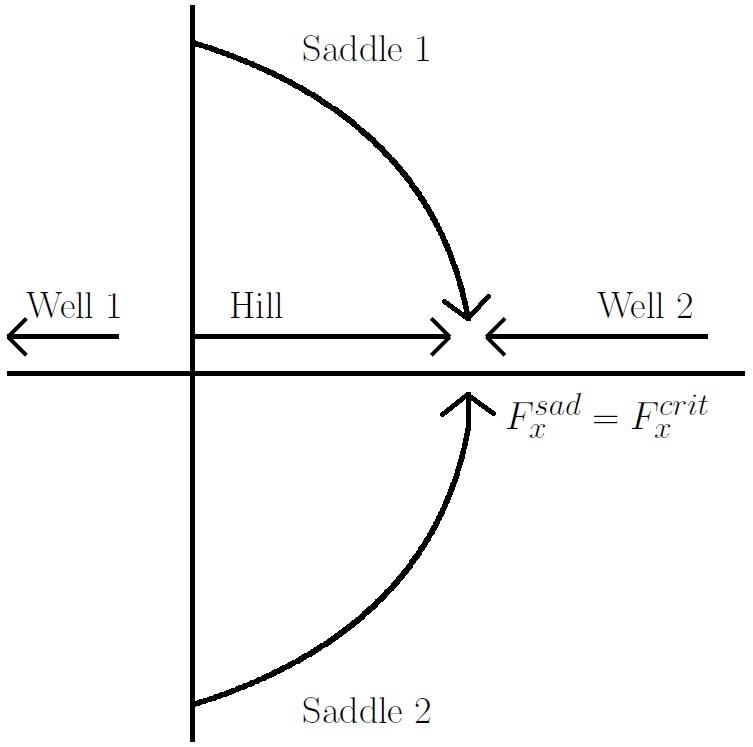}}
\caption{
This is the case for when $\sqrt{1-2b}=\frac{1}{\sqrt{3}}\sqrt{1+2a}$, which is also when $F_x^{sad}=F^{crit}_x$. 
At $F=F^{sad}_x=F^{crit}_x$ the two saddles, hill and right well mutually collide at the same place and disappears. 
}
\end{figure}

\subsection{Case $F_x>0$, $F_y=0$ and $b\geq \frac{1}{2}$ }
For $b\geq\frac{1}{2}$ the reasoning is similar to the $b<\frac{1}{2}$ case, but $(x_{saddle},\pm y_{saddle})$ does not exist. 
We have the following Theorem. 

\begin{Theorem}\label{chap_5_case_3}
Let $F_x>0$, $F_y=0$ and $b\geq \frac{1}{2}$. 
The positions and nature of the critical points are as follows
\begin{align*}
F_x< F_x^{crit}&\left\{
\begin{array}{lll}
(x_{0},0)&\text{well}\\
(x_1,0) &\text{well}\\
(x_2,0)&\text{saddle}
\end{array}
\right.\\[0.7em]
F_x> F_x^{crit}&\left\{
\begin{array}{lll}
(x_{0},0)&\text{well}\\
(x_1,0) &\text{nonexistent}\\
(x_2,0)&\text{nonexistent}
\end{array}
\right.\\[0.7em]
F_x= F_x^{crit}&\left\{
\begin{array}{lll}
(x_{0},0)&\text{well}\\
(x_1,0) &\text{neither}\\
(x_2,0)&\text{neither}
\end{array}
\right.
\end{align*}
\end{Theorem}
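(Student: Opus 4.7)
The plan is to reduce the analysis to the one already carried out for $b<\frac12$, exploiting the fact that the off-axis saddle pair $(x_{saddle},\pm y_{saddle})$ cannot occur when $b\geq \frac12$. The critical point equations are
\begin{align*}
\frac{\partial V_F}{\partial x}&=x(x^2+y^2)-(1+2a)x+F_x=0,\\
\frac{\partial V_F}{\partial y}&=y\bigl[(x^2+y^2)-(1-2b)\bigr]=0.
\end{align*}
First I would show that every critical point lies on the $x$-axis. From the second equation either $y=0$ or $x^2+y^2=1-2b$. Since $b\geq\frac12$ gives $1-2b\leq 0$, the latter forces $x=y=0$, and substitution into the first equation then yields $F_x=0$, contradicting $F_x>0$. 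Hence $y=0$.

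Next, with $y=0$ the first equation collapses to the same cubic $x^3-(1+2a)x+F_x=0$ studied in Theorem \ref{chap_5_thm_five_roots}. The derivation of the explicit roots $x_0,x_1,x_2$, their monotonicity in $F_x$ (Lemma \ref{chap_5:th:mono}), and their collision behaviour at $F_x=F_x^{crit}$ (Lemma \ref{chap_5:th:det}) carry over unchanged, because none of those arguments used the sign of $1-2b$. So for $F_x<F_x^{crit}$ we have three distinct real roots with $x_0<0<x_2<x_1$ and the bounds
\begin{equation*}
x_0\in\Bigl[\tfrac{-2}{\sqrt3}\sqrt{1+2a},-\sqrt{1+2a}\Bigr),\quad x_1\in\Bigl[\tfrac{1}{\sqrt3}\sqrt{1+2a},\sqrt{1+2a}\Bigr),\quad x_2\in\Bigl(0,\tfrac{1}{\sqrt3}\sqrt{1+2a}\Bigr];
\end{equation*}
at $F_x=F_x^{crit}$ we have $x_1=x_2=\frac{1}{\sqrt3}\sqrt{1+2a}$, and for $F_x>F_x^{crit}$ only $x_0$ survives as a real root.

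For the nature I would invoke Theorem \ref{chap_5_thm_1} using the diagonal Hessian
\begin{equation*}
H(x_k,0)=\begin{pmatrix}3x_k^2-(1+2a) & 0\\ 0 & x_k^2+(2b-1)\end{pmatrix}.
\end{equation*}
The lower entry $x_k^2+(2b-1)$ is strictly positive for every $k$ whenever $F_x>0$, because either $b>\frac12$ (so $2b-1>0$) or $b=\frac12$ and $x_k\neq 0$ (forced by $F_x>0$ and the explicit bounds). For the upper entry: on $x_0$ the bound $|x_0|\geq\sqrt{1+2a}$ gives $3x_0^2-(1+2a)\geq 2(1+2a)>0$, so $(x_0,0)$ is a well. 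On $x_1$ the bound $x_1>\tfrac{1}{\sqrt3}\sqrt{1+2a}$ (strict for $F_x<F_x^{crit}$) gives $3x_1^2-(1+2a)>0$, so $(x_1,0)$ is a well. On $x_2$ the bound $x_2<\tfrac{1}{\sqrt3}\sqrt{1+2a}$ gives $3x_2^2-(1+2a)<0$, so $(x_2,0)$ is a saddle. Finally, at $F_x=F_x^{crit}$ both $x_1$ and $x_2$ collide at $\tfrac{1}{\sqrt3}\sqrt{1+2a}$, where $3x^2-(1+2a)=0$ and $\det H=0$, so the second-derivative test does not classify them.

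There is no real obstacle here; this case is strictly simpler than $b<\tfrac12$ because the pair $(x_{saddle},\pm y_{saddle})$ is excluded from the outset, so the dichotomy from Lemma \ref{chap_5:th:bond} and the sub-cases $\sqrt{1-2b}\in R_1$ versus $R_2$ never arise. The only minor subtlety is the borderline sub-case $b=\tfrac12$ at $x=0$, which is ruled out by $F_x>0$ rather than by the Hessian.
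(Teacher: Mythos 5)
Your proof is correct and follows exactly the approach the paper intends: the paper itself gives no explicit proof of this theorem, merely remarking that ``the reasoning is similar to the $b<\frac{1}{2}$ case, but $(x_{saddle},\pm y_{saddle})$ does not exist.'' You fill in the details faithfully, including the observation that the off-axis branch $x^2+y^2=1-2b\le 0$ leads to $x=y=0$ and then a contradiction with $F_x>0$, the reuse of the cubic-root bounds from Theorem \ref{chap_5_thm_five_roots} and Lemma \ref{chap_5:th:det}, and the correct sign analysis of the diagonal Hessian, with the $b=\frac12$ corner case handled by noting $x_k\neq 0$ when $F_x>0$.
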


\noindent This can be graphically conveyed as

\begin{figure}[H]
\begin{center}
\centerline{\includegraphics[scale=0.35]{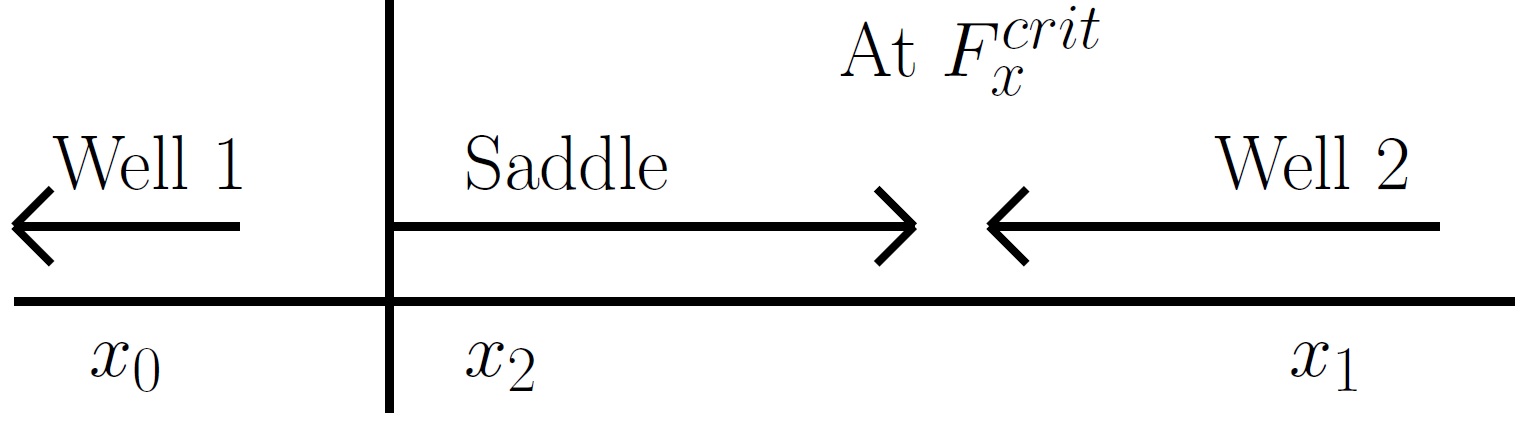}}
\caption{As $F_x$ increases from $0$ to $F^{crit}_x$, the saddle collides into the right well and disappears.}
\end{center}
\end{figure}

\section{Case $F_x=0$ and $F_y>0$}
Similarly when forcing is only in the $y$ direction, the cases for $b<\frac{1}{2}$ and $b\geq\frac{1}{2}$ have to be considered separately. 
The case for $F_y\leq0$ is similar. 

\subsection{Case $F_x=0$, $F_y>0$ and $b<\frac{1}{2}$}
We have some Lemmas and Theorems which are almost analogous to the case for $F_x>0$, $F_y=0$ and $b<\frac{1}{2}$. 
Their proofs are very similar and are omitted. 

\begin{Theorem}\label{chap_5_y_roots}
Let $F_x=0$, $F>0$ and $b<\frac{1}{2}$. 
Let $F_y$ be bounded by 
\begin{align*}
F_y\leq F_y^{sad}
\quad \text{and} \quad 
F_y\leq F_y^{crit}
\end{align*}
where 
\begin{align*}
F_y^{sad}=2(a+b)\sqrt{1+2a}
\quad \text{and} \quad 
F_y^{crit}=\sqrt{\frac{4(1-2b)^3}{27}}\\
\end{align*}
then there are five critical points given by 
\begin{align*}
&(0,y_k) \quad k=1,2,3\\
&(\pm x_{well}, y_{well})
\end{align*}
where 
\begin{align*}
 x_{well}&=\sqrt{(1+2a)-\left(\frac{F_y}{2(a+b)}\right)^2}\\
y_{well}&=\frac{-F_y}{2(a+b)}\\
y_k&=-\frac{2}{\sqrt{3}}\sqrt{1-2b}\,\cos\left(\psi+k\frac{2\pi}{3}\right)\\
\psi&=\frac{1}{3}\tan^{-1}(p)\\
p&=\frac{\sqrt{4(1-2b)^3-27F_y^2}}{F_y\sqrt{27}}\\
k&=0, \quad k=1, \quad k=2.
\end{align*}

\end{Theorem}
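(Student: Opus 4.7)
The plan is to mirror the argument used to prove Theorem \ref{chap_5_thm_five_roots}, exploiting the structural symmetry between forcing in the $x$-direction and forcing in the $y$-direction. First I would write down the simultaneous equations $\partial_x V_F = 0$ and $\partial_y V_F = 0$ with $F_x = 0$, namely
\begin{align*}
x\bigl[(x^2+y^2) - (1+2a)\bigr] &= 0,\\
y(x^2+y^2) - (1-2b)y + F_y &= 0.
\end{align*}
The first equation immediately forces the case split $x=0$ or $x^2+y^2 = 1+2a$, which is the analogue of the split $y=0$ or $x^2+y^2=1-2b$ encountered in Theorem \ref{chap_5_thm_five_roots}.

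In the branch $x^2+y^2 = 1+2a$, substituting into the second equation linearises it in $y$: the cubic term cancels, yielding $y_{well} = -F_y/[2(a+b)]$, and then $x_{well}^2 = (1+2a) - y_{well}^2$. Reality of $x_{well}$ imposes $F_y^2 \leq (1+2a)\cdot 4(a+b)^2$, i.e. $F_y \leq F_y^{sad} = 2(a+b)\sqrt{1+2a}$; both signs of the square root give the pair $(\pm x_{well}, y_{well})$.

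In the branch $x=0$, the second equation becomes the depressed cubic $y^3 - (1-2b)y + F_y = 0$, and I would apply Theorem \ref{chap_5_thm_cubic_theory} with $a_3=1$, $a_2=0$, $a_1=-(1-2b)$, $a_0=F_y$. This gives discriminant $\Delta = 4(1-2b)^3 - 27F_y^2$, together with $\Delta_0 = 3(1-2b)$ and $\Delta_1 = 27F_y$, so three real roots are guaranteed precisely when $F_y \leq F_y^{crit} = \sqrt{4(1-2b)^3/27}$. As in the proof of Theorem \ref{chap_5_thm_five_roots}, the modulus of $C = \sqrt[3]{\tfrac12(\Delta_1 + \sqrt{-27\Delta})}$ simplifies because $\tfrac14(27^2 F_y^2 + 27\Delta) = 27(1-2b)^3$, so $|C| = \sqrt{3(1-2b)}$ and $\arg C = \psi = \tfrac{1}{3}\tan^{-1}\!\bigl(\sqrt{27\Delta}/(27F_y)\bigr)$. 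Then $\Delta_0/C = \sqrt{3(1-2b)}\,e^{-i\psi}$ and the three roots collapse via $e^{i\theta}+e^{-i\theta}=2\cos\theta$ to $y_k = -\tfrac{2}{\sqrt{3}}\sqrt{1-2b}\,\cos(\psi + 2\pi k/3)$.

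The main obstacle is purely bookkeeping: the symmetry with the previous proof is not a naive $x\leftrightarrow y$ swap, because the parameter $(1+2a)$ appears in the \emph{well} branch of the $F_y$ problem while $(1-2b)$ appears in the \emph{cubic}, the opposite role to the $F_x$ problem, and correspondingly $F_y^{sad}$ involves $\sqrt{1+2a}$ whereas $F_x^{sad}$ involved $\sqrt{1-2b}$. Once the correct substitutions are tracked, the algebraic simplifications (the cube-root modulus, the reduction to a cosine) go through verbatim, so no new ideas are required beyond those already developed in Theorem \ref{chap_5_thm_five_roots} and Lemma \ref{chap_5:th:mono}.
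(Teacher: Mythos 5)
Your proposal is correct and follows precisely the approach the paper intends: the paper states that the proof is omitted because it is ``very similar'' to that of Theorem~\ref{chap_5_thm_five_roots}, and your derivation is exactly that analogue, including the correct case split on $\partial_x V_F=0$, the linearisation on the circle $x^2+y^2=1+2a$, and the reduction of the $x=0$ branch to the depressed cubic with discriminant $4(1-2b)^3-27F_y^2$. Your explicit caution about the swapped roles of $(1+2a)$ and $(1-2b)$ between the well branch and the cubic branch is a genuine subtlety that the paper glosses over, and you handle it correctly.
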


\begin{Lemma}\label{chap_5:th:mono:y}
The function $p$ (as in Theorem \ref{chap_5_y_roots}) is monotone in $F_y$ for $F_y<F_y^{crit}$. 
\end{Lemma}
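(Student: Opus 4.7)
The plan is to mimic almost verbatim the proof of Lemma \ref{chap_5:th:mono}, since $p$ here plays the same structural role for $F_y$ that $l$ did for $F_x$; the only difference is that the constant $(1+2a)$ is replaced by $(1-2b)$. So the first step is simply to write $p(F_y) = \sqrt{27}^{-1} F_y^{-1}\sqrt{4(1-2b)^3 - 27 F_y^2}$ and differentiate directly with respect to $F_y$.

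Next I would apply the product rule to the two factors $F_y^{-1}$ and $\sqrt{4(1-2b)^3 - 27 F_y^2}$. The derivative will split as
\begin{align*}
\frac{dp}{dF_y} = \frac{1}{F_y\sqrt{27}}\cdot \frac{-27 F_y}{\sqrt{4(1-2b)^3-27F_y^2}} - \frac{1}{F_y^2\sqrt{27}}\sqrt{4(1-2b)^3 - 27 F_y^2}.
\end{align*}
Both summands are manifestly negative on the interval $0 < F_y < F_y^{\text{crit}}$: the first because $F_y > 0$ and the radical is real and positive for $F_y < F_y^{\text{crit}}$, and the second for the same positivity reason combined with the overall minus sign. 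Hence $dp/dF_y < 0$ throughout $(0, F_y^{\text{crit}})$, which gives the claimed strict monotonicity (in fact, monotonic decrease).

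There is no real obstacle here; the only thing to be slightly careful about is the endpoint behaviour at $F_y = F_y^{\text{crit}}$, where the first summand of $dp/dF_y$ diverges while $p$ itself vanishes. Since the Lemma only asserts monotonicity on the open interval $F_y < F_y^{\text{crit}}$, this causes no trouble. One might also note, as in the $F_x$ case, the boundary values $p \to +\infty$ as $F_y \downarrow 0$ and $p \to 0$ as $F_y \uparrow F_y^{\text{crit}}$, which will be needed in the subsequent analysis of the $y_k$ critical points (by analogy with the discussion following Lemma \ref{chap_5:th:mono}), but these are not required for the Lemma itself.
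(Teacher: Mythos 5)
Your proof is correct and takes exactly the approach the paper intends: the paper explicitly omits this proof as ``very similar'' to Lemma \ref{chap_5:th:mono}, and your calculation is precisely that proof with $(1+2a)$ replaced by $(1-2b)$, using the standing assumption $b<\frac{1}{2}$ so that $(1-2b)^3>0$ and the radical is real on $(0,F_y^{\text{crit}})$. The observations about endpoint behaviour ($p\to+\infty$ as $F_y\downarrow 0$, $p\to 0$ as $F_y\uparrow F_y^{\text{crit}}$) likewise mirror the discussion following the $F_x$ lemma.
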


\begin{Lemma}\label{chap_5:th:det:y}
Let $F_x=0$, $F_y>0$ and $b<\frac{1}{2}$. 
These three scenarios hold.

If $F_y<F_y^{crit}$ we have 3 critical points on the $x$-axis: $y_0<y_2<y_1$. 

If $F_y=F_y^{crit}$ we have 2 critical points on the $y$-axis: $y_0<y_2=y_1$. 

If $F_y>F_y^{crit}$ we have 1 critical point on the $y$-axis: $y_0<\frac{-2}{\sqrt{3}}\sqrt{1-2b}$ and $y_0 \rightarrow -\infty $ monotonically with increasing $F_y$. 
\end{Lemma}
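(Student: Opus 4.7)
The plan is to mirror the proof of Lemma \ref{chap_5:th:det} almost verbatim, exploiting the symmetry between the $(F_x,b)$-analysis on the $x$-axis and the $(F_y,-a)$-analysis on the $y$-axis. Setting $x=0$ in the simultaneous equations $\partial V_F/\partial x=0$, $\partial V_F/\partial y=0$ (with $F_x=0$) reduces the search for critical points on the $y$-axis to the cubic
\begin{align*}
y^3-(1-2b)y+F_y=0,
\end{align*}
whose coefficients are structurally identical to the cubic for $x_k$ with $(1+2a)$ replaced by $(1-2b)$. Theorem \ref{chap_5_y_roots} already gives the explicit trigonometric formula for $y_0,y_1,y_2$ and identifies $F_y^{crit}$ as the value at which the discriminant $\Delta=4(1-2b)^3-27F_y^2$ vanishes.

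First I would use the discriminant classification in Theorem \ref{chap_5_thm_cubic_theory}: for $F_y<F_y^{crit}$ we have $\Delta>0$, yielding three distinct real roots; for $F_y=F_y^{crit}$ we have $\Delta=0$, yielding a repeated root; and for $F_y>F_y^{crit}$ we have $\Delta<0$, yielding exactly one real root and a complex conjugate pair. By Lemma \ref{chap_5:th:mono:y} the quantity $p=\sqrt{\Delta}/(F_y\sqrt{27})$ is monotone in $F_y$ on the interval $(0,F_y^{crit})$, so $\psi=\frac{1}{3}\tan^{-1}(p)$ decreases monotonically from $\pi/6$ to $0$. Evaluating the cosines at $\psi\in[0,\pi/6]$ and $\psi+2\pi/3$, $\psi+4\pi/3$ on intervals where $\cos$ is monotone yields
\begin{align*}
y_0\in\left[\tfrac{-2}{\sqrt{3}}\sqrt{1-2b},-\sqrt{1-2b}\right),\quad y_1\in\left[\tfrac{1}{\sqrt{3}}\sqrt{1-2b},\sqrt{1-2b}\right),\quad y_2\in\left(0,\tfrac{1}{\sqrt{3}}\sqrt{1-2b}\right],
\end{align*}
which together with monotonicity in $F_y$ delivers the strict ordering $y_0<y_2<y_1$ for $F_y<F_y^{crit}$ and the collision $y_0<y_2=y_1=\frac{1}{\sqrt{3}}\sqrt{1-2b}$ at $F_y=F_y^{crit}$.

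For the final regime $F_y>F_y^{crit}$ I would follow the same complex-analytic trick used at the end of Lemma \ref{chap_5:th:det}: now $p$ is purely imaginary, so writing $p=i\epsilon$ with $\epsilon=\sqrt{27F_y^2-4(1-2b)^3}/(F_y\sqrt{27})\in[0,1)$ and using $\tan^{-1}(i\epsilon)=\frac{i}{2}\left[\ln(1+\epsilon)-\ln(1-\epsilon)\right]$ gives $\psi=i\gamma$ with $\gamma$ real. Hence $y_0=-\frac{2}{\sqrt{3}}\sqrt{1-2b}\cosh(\gamma)$, which is manifestly the unique real root, lies strictly below $-\frac{2}{\sqrt{3}}\sqrt{1-2b}$, and tends monotonically to $-\infty$ as $F_y\uparrow\infty$ since $\gamma$ increases in $\epsilon$ and hence in $F_y$.

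The main obstacle is essentially bookkeeping rather than mathematics: one must verify that the sign conventions in the cubic identity (note the $y^3-(1-2b)y+F_y$ form requires $b<\frac{1}{2}$ so that $(1-2b)>0$, precisely the hypothesis of the lemma) make the translation from the $x$-axis argument genuine, and that the monotonicity inequality established in Lemma \ref{chap_5:th:mono:y} really permits reading off the collision pattern at $F_y=F_y^{crit}$. Once that correspondence is made explicit, the proof is identical in structure to Lemma \ref{chap_5:th:det} and can be written as a short reference to it.
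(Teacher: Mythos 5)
Your proposal is correct and matches the approach the paper intends: the paper explicitly omits this proof, stating it is ``very similar'' to Lemma~\ref{chap_5:th:det}, and your argument reproduces that proof with $(1+2a)$ replaced by $(1-2b)$, using the discriminant classification from Theorem~\ref{chap_5_thm_cubic_theory}, the monotonicity of $p$ from Lemma~\ref{chap_5:th:mono:y}, the cosine bounds for $\psi\in[0,\pi/6]$, and the $\cosh$ expression for the single real root when $F_y>F_y^{crit}$. One small point: you state that $\gamma$ \emph{increases} with $\epsilon$ (since $\tfrac{d}{d\epsilon}[\ln(1+\epsilon)-\ln(1-\epsilon)]=\tfrac{2}{1-\epsilon^2}>0$), which is actually correct and corrects a sign slip in the paper's parallel computation at the end of Lemma~\ref{chap_5:th:det}, though the monotonic divergence $y_0\to-\infty$ follows either way from $\cosh$ being even.
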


\noindent Using the same reasoning as for the $x$-direction case we have bounds on the three $y_k$ for $0\leq F_y\leq F_y^{crit}$. 
\begin{align*}
\frac{-2}{\sqrt{3}}\sqrt{1-2b}&\leq y_0 \leq -\sqrt{1-2b}\\
\frac{1}{\sqrt{3}}\sqrt{1-2b}&\leq y_1 \leq \sqrt{1-2b}\\
0&\leq y_2 \leq \frac{1}{\sqrt{3}}\sqrt{1-2b}.
\end{align*}
The monotonicity of $y_k$ means 
\begin{equation*}
\begin{array}{lrrll}
y_0: & -\sqrt{1-2b}&\longrightarrow& \frac{-2}{\sqrt{3}}\sqrt{1-2b}& \text{as}\quad F_y\rightarrow F_y^{crit}\\
y_1:& \sqrt{1-2b}&\longrightarrow& \frac{1}{\sqrt{3}}\sqrt{1-2b}& \text{as}\quad F_y\rightarrow F_y^{crit}\\
y_2: & 0&\longrightarrow &\frac{1}{\sqrt{3}}\sqrt{1-2b}& \text{as}\quad F_y\rightarrow F_y^{crit}
\end{array}
\end{equation*}
without any oscillations. 
Now consider the special case when $F_y=F_y^{sad}$. This gives 
\begin{align*}
(\pm x_{well}, y_{well})
&=(0,y_{well})\\
&=(0,-\sqrt{1+2a})\\
\Rightarrow
y_{well}&=-\sqrt{1+2a}
\end{align*}
which definitely satisfies $-\sqrt{1+2a}<-\sqrt{1-2b}$ for $0<b<\frac{1}{2}$. 
This seemingly adds a fourth  critical point onto the $y$-axis. 
We have a Lemma whose method of proof is similar to Lemma \ref{chap_5:th:great}.
But now it does not take long to find numerical examples such that $F_y^{sad}<F_x^{crit}$, $F_y^{crit}<F_y^{sad}$ and $F_y^{sad}=F_y^{crit}$. 
These would form the separate sub-cases we would have to consider.

\begin{Lemma}\label{chap_5:th:bound:y}
The following statements hold. 
\begin{enumerate}
\item If  $F_y^{sad}<F_y^{crit}$, then $-\sqrt{1+2a}>\frac{-2}{\sqrt{3}}\sqrt{1-2b}$
\item If  $F_y^{sad}>F_y^{crit}$, then $-\sqrt{1+2a}<\frac{-2}{\sqrt{3}}\sqrt{1-2b}$
\item If $F_y^{sad}=F_y^{crit}$, then $-\sqrt{1+2a}=\frac{-2}{\sqrt{3}}\sqrt{1-2b}$
\end{enumerate}
\end{Lemma}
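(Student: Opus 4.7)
The plan is to reduce all three clauses to a single algebraic comparison. First I would introduce the shorthand $u = 1 + 2a$ and $v = 1 - 2b$, so that $u > 1$, $0 < v < 1$, and $a + b = (u-v)/2$. In these variables
\[
F_y^{sad} = (u-v)\sqrt{u}, \qquad F_y^{crit} = \tfrac{2}{3\sqrt{3}}\, v^{3/2},
\]
and the target conclusion $-\sqrt{1+2a} \lessgtr -\tfrac{2}{\sqrt{3}}\sqrt{1-2b}$ is, after negation and squaring, simply $3u \gtrless 4v$. Since both $F_y^{sad}$ and $F_y^{crit}$ are strictly positive, comparing them is equivalent to comparing their squares, i.e.\ comparing $27(u-v)^2 u$ with $4v^3$.

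The crux is the algebraic identity
\[
27(u-v)^2 u - 4 v^3 \;=\; (3u - 4v)\,(3u - v)^2,
\]
which I would verify by direct expansion (both sides equal $27u^3 - 54u^2 v + 27 u v^2 - 4v^3$). Next I would note that $3u - v = 3(1+2a) - (1-2b) = 2 + 6a + 2b > 0$ under the standing hypothesis $a,b>0$, so $(3u-v)^2$ is strictly positive. Hence the sign of $27(u-v)^2 u - 4 v^3$ coincides with the sign of $3u - 4v$, with equality in one clause iff equality in the other.

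Translating back, the three dichotomies $F_y^{sad} \lessgtr F_y^{crit}$ transform into the three dichotomies $3u \gtrless 4v$, which in turn are $\sqrt{1+2a} \lessgtr \tfrac{2}{\sqrt{3}}\sqrt{1-2b}$, i.e.\ exactly the three conclusions in the Lemma. The main obstacle is spotting the factorisation; it is motivated by observing that $u = \tfrac{4v}{3}$ forces $F_y^{sad} = F_y^{crit}$, so $(3u - 4v)$ must divide the difference of squares, after which polynomial division in $u$ (treating $v$ as a parameter) produces the perfect square $(3u-v)^2$ as the cofactor. Once this identity is in hand the three cases follow immediately with no further case analysis.
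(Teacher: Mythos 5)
Your algebraic identity
\[
27(u-v)^2u - 4v^3 \;=\; (3u-4v)\,(3u-v)^2
\]
is correct (both sides expand to $27u^3-54u^2v+27uv^2-4v^3$), and since $3u-v = 2+6a+2b>0$ the sign of the left-hand side is indeed governed solely by $3u-4v$. Combined with $F_y^{sad},F_y^{crit}>0$ (so squaring is sign-preserving) this yields all three clauses at once. This is a genuinely different and, in fact, more elementary route than the paper's: the thesis argues by contradiction, freezing $F_y$ at $F_y^{sad}$, invoking Lemma \ref{chap_5:th:det:y} to count the number of critical points a cubic with a given discriminant can have on the $y$-axis, and then appealing to the monotonicity of $y_0$ developed earlier in the chapter. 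Your proof is self-contained: it never touches the critical-point machinery, needs no prior monotonicity lemma, and hands the equality case for free rather than treating it as a separate degenerate configuration. What the paper's route buys, by contrast, is geometric intuition — it literally shows you that a wrong inequality would force an impossible fourth critical point onto the $y$-axis — and it reuses lemmas already proved, whereas your factorisation is a one-off trick, albeit a well-motivated one (the root at $u=\tfrac{4}{3}v$ pins down the linear factor, and polynomial division delivers the perfect-square cofactor).

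One small caution on bookkeeping: in your closing sentence the middle link "$F_y^{sad}\lessgtr F_y^{crit}$ transform into $3u\gtrless 4v$" has the arrow reversed — in fact $F_y^{sad}<F_y^{crit}$ is equivalent to $3u<4v$, not $3u>4v$ — and the same flip recurs in the next link "$3u\gtrless 4v$ in turn are $\sqrt{1+2a}\lessgtr\cdots$". The two slips cancel, so the end-to-end equivalence you assert is correct and does match the Lemma after the final negation, but in a written-up version you should straighten the intermediate signs: $F_y^{sad}<F_y^{crit}\Leftrightarrow 3u<4v\Leftrightarrow\sqrt{1+2a}<\tfrac{2}{\sqrt3}\sqrt{1-2b}\Leftrightarrow-\sqrt{1+2a}>-\tfrac{2}{\sqrt3}\sqrt{1-2b}$.
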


\begin{proof}
If  $F_y^{sad}<F_y^{crit}$, assume that  $-\sqrt{1+2a}\leq\frac{-2}{\sqrt{3}}\sqrt{1-2b}$. Let $F_y=F_y^{sad}$. 
But this means $F_y<F_y^{crit}$ and by Lemma \ref{chap_5:th:det:y} there must be three critical points on the $y$-axis.  
But monotonicity implies $y_0>\frac{-2}{\sqrt{3}}\sqrt{1-2b}$ for $F_y<F_y^{crit}$
meaning there would be 4 critical points on the $y$-axis, which is a contradiction. 

If  $F_y^{sad}>F_y^{crit}$ assume, that  $-\sqrt{1+2a}\geq\frac{-2}{\sqrt{3}}\sqrt{1-2b}$.  Let $F_y=F_y^{sad}$. 
But this means $F_y>F_y^{crit}$ and Lemma \ref{chap_5:th:det:y} implies that there should only be one critical point on the $y$-axis. 
We know that $y_0=\frac{-2}{\sqrt{3}}\sqrt{1-2b}$ at $F_y=F_y^{crit}$ and yet the monotonicity of $y_0$ means $y_0<\frac{-2}{\sqrt{3}}\sqrt{1-2b}$ for $F_y>F_y^{crit}$.  
This would mean 2 critical points on the $y$-axis which is a contradiction.\footnote{Just like in the $x$-direction case it can be shown that for $F_y>F_y^{crit}$, $y_0=\frac{-2}{\sqrt{3}}\sqrt{1-2b}\ \cosh (z)$ where $z$ is a real number which monotonically decreases with increasing $F_y$, and yet $z=0$ when $F_y=F_y^{crit}$, which justifies the idea of this proof.}

If   $F_y^{sad}=F_y^{crit}$ then  let $F_y=F_y^{sad}=F_y^{crit}$. 
But Lemma \ref{chap_5:th:det:y} says there can only be 2 critical points on the $y$-axis. 
But $y_1=y_2>0$ and $y_0<0$. 
This means $y_{well}$ must collide into $y_0$, hence the statement of the Theorem. 
\end{proof}

\noindent Again we are ready for another main Theorem of this Chapter. 
It is finding the positions and nature of all the critical points for different values of $F_y$. 

\begin{Theorem}\label{chap_5_case_4}
Let $F_x=0$, $F_y>0$ and $b<\frac{1}{2}$. 
The positions and nature of the critical points are as follows 
\begin{align*}
F_y<F_y^{crit}&\quad\left\{
\begin{array}{ll}
(0,y_1)&\text{saddle}\\
(0,y_2)&\text{hill}
\end{array}
\right.\\[0.3em]
F_y<F_y^{sad}&\quad\left\{
\begin{array}{ll}
(0,y_0)&\text{saddle}\\
(\pm x_{well},y_{well})&\text{well}
\end{array}
\right.\\[0.3em]
F_y>F_y^{crit}&\quad\left\{
\begin{array}{ll}
(0,y_1)&\text{nonexistent}\\
(0,y_2)&\text{nonexistent}\\
\end{array}
\right.\\[0.3em]
F_y>F_y^{sad}&\quad\left\{
\begin{array}{ll}
(0,y_0)&\text{well}\\
(\pm x_{well},y_{well})&\text{nonexistent}
\end{array}
\right.\\[0.3em]
F_y=F_y^{crit}&\quad\left\{
\begin{array}{ll}
(0,y_1)&\text{unidentified}\\
(0,y_2)&\text{unidentified}
\end{array}
\right.\\[0.3em]
F_y=F_y^{sad}&\quad\left\{
\begin{array}{ll}
(0,y_0)&\text{unidentified}\\
(\pm x_{well},y_{well})&\text{unidentified}
\end{array}
\right.
\end{align*}
\end{Theorem}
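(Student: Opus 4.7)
The plan is to follow the same pattern used in Theorem \ref{chap_5_case_2}. First I would invoke Theorem \ref{chap_5_y_roots} to locate the candidate critical points: the three axial points $(0, y_k)$ for $k=0,1,2$, where the $y_k$ are the (possibly complex) roots of $y^3 - (1-2b)y + F_y = 0$, together with the two off-axis points $(\pm x_{well}, y_{well})$ which exist provided $F_y \leq F_y^{sad}$. Lemma \ref{chap_5:th:mono:y} and the argument mirroring Lemma \ref{chap_5:th:det:y} then yield, for $0 \leq F_y \leq F_y^{crit}$, the bounds
\begin{align*}
y_0 &\in \left[-\tfrac{2}{\sqrt{3}}\sqrt{1-2b},\, -\sqrt{1-2b}\right], \\
y_1 &\in \left[\tfrac{1}{\sqrt{3}}\sqrt{1-2b},\, \sqrt{1-2b}\right], \\
y_2 &\in \left[0,\, \tfrac{1}{\sqrt{3}}\sqrt{1-2b}\right],
\end{align*}
with $y_0 \to -\infty$ monotonically as $F_y$ grows beyond $F_y^{crit}$.

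Next I would compute the Hessian of $V_F$. Since $F_x = 0$, at any point on the $y$-axis it is diagonal with entries $y^2 - (1+2a)$ and $3y^2 - (1-2b)$, while at $(\pm x_{well}, y_{well})$, using the identity $x_{well}^2 + y_{well}^2 = 1+2a$ from Theorem \ref{chap_5_y_roots}, its determinant simplifies to $4x_{well}^2\bigl(y_{well}^2 + 2(a+b)\bigr) > 0$ with positive $(x,x)$-entry $2x_{well}^2 > 0$, so by Theorem \ref{chap_5_thm_1} the off-axis points are wells whenever they exist, settling the claim for $F_y < F_y^{sad}$.

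The nature of the three axial points is then read off by signing the two diagonal Hessian entries using the bounds just established. For $(0, y_1)$ one has $y_1^2 \leq 1-2b < 1+2a$ and $y_1^2 \geq (1-2b)/3$, giving a saddle when $F_y < F_y^{crit}$; for $(0, y_2)$ both entries are nonpositive by analogous estimates, giving a hill. The point $(0, y_0)$ requires one extra observation: substituting $y = -\sqrt{1+2a}$ into the cubic shows that $y_0(F_y^{sad}) = -\sqrt{1+2a}$, hence $y_0^2 < 1+2a$ exactly when $F_y < F_y^{sad}$, while the second diagonal entry is bounded below by $2(1-2b) > 0$ for all $F_y$ because $y_0^2 \geq 1-2b$. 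Thus $(0, y_0)$ is a saddle for $F_y < F_y^{sad}$ and a well for $F_y > F_y^{sad}$; nonexistence of the off-axis wells for $F_y > F_y^{sad}$ and of $(0, y_1), (0, y_2)$ for $F_y > F_y^{crit}$ follows directly from Theorem \ref{chap_5_y_roots} and Lemma \ref{chap_5:th:det:y}.

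Finally, at the degenerate thresholds $F_y = F_y^{crit}$ and $F_y = F_y^{sad}$ a direct computation shows that the Hessian at the relevant critical point acquires a zero eigenvalue — at $F_y^{crit}$ the coalescing pair $(0, y_1) = (0, y_2)$ has $3y^2 - (1-2b) = 0$, and at $F_y^{sad}$ both $(0, y_0)$ and the collapsed $(\pm x_{well}, y_{well})$ sit at $y^2 = 1+2a$ with $x = 0$, killing the $(x,x)$-entry — so Theorem \ref{chap_5_thm_1} is inapplicable and those points are labelled unidentified. The main obstacle I expect is the bookkeeping: Lemma \ref{chap_5:th:bound:y} allows $F_y^{sad}$ to lie below, equal to, or above $F_y^{crit}$, and one must check that in each of these subcases the uniform signing of the diagonal entries still goes through — in particular that when $F_y$ falls in the ``gap'' between $F_y^{crit}$ and $F_y^{sad}$, the actual value of $y_0^2$ is still on the correct side of $1+2a$ for the claimed signature.
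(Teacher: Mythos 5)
Your approach is essentially the paper's: bound the three axial roots via monotonicity of $p$ in $F_y$, use Lemma \ref{chap_5:th:bound:y} to locate $y_0$ relative to $-\sqrt{1+2a}$, and sign the diagonal Hessian entries, with the off-axis wells handled via $x_{well}^2 + y_{well}^2 = 1+2a$. One small slip: after simplification the determinant at $(\pm x_{well}, y_{well})$ is $4x_{well}^2(a+b)$, not $4x_{well}^2\bigl(y_{well}^2 + 2(a+b)\bigr)$ — the cross term $(2x_{well}y_{well})^2$ exactly cancels the $4x_{well}^2 y_{well}^2$ contribution — though both expressions are positive so the conclusion is unaffected. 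Your closing worry about the ``gap'' between $F_y^{crit}$ and $F_y^{sad}$ is already resolved by the ingredients you invoked: $y_0$ is monotone decreasing in $F_y$ and passes through $-\sqrt{1+2a}$ precisely at $F_y = F_y^{sad}$ (Lemma \ref{chap_5:th:bound:y}), so $y_0^2 \gtrless 1+2a$ according as $F_y \gtrless F_y^{sad}$ regardless of where $F_y^{crit}$ sits, and $3y_0^2 - (1-2b) > 2(1-2b) > 0$ holds uniformly since $y_0^2 > 1-2b$ for all $F_y > 0$.
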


\begin{proof}
\noindent Just like in the $x$-direction case, monotonicity of $p$ is  essential in justifying the following bounds on the critical points for $F_y>0$. 
Note that Lemma \ref{chap_5:th:bound:y} is used to determine the bounds on the $y_k$ critical points
\begin{align*}
F_y\leq F_y^{crit}&\quad\left\{
\begin{array}{rrrrr}
\frac{-2}{\sqrt{3}}\sqrt{1-2b}&\leq & y_0& <&-\sqrt{1-2b}\\
\frac{1}{\sqrt{3}}\sqrt{1-2b}&\leq &y_1& <& \sqrt{1-2b}\\
0&<& y_2 &\leq &\frac{1}{\sqrt{3}}\sqrt{1-2b}
\end{array}
\right.\\[0.3em]
F_y<F_y^{sad}&\quad\left\{
\begin{array}{l}
y_0>-\sqrt{1+2a}
\end{array}
\right.\\[0.3em]
F_y>F_y^{sad}&\quad\left\{
\begin{array}{l}
y_0<-\sqrt{1+2a}
\end{array}
\right.\\[0.3em]
F_y>F_y^{crit}&\quad\left\{
\begin{array}{l}
y_0<\frac{-2}{\sqrt{3}}\sqrt{1-2b}
\end{array}
\right.\\[0.3em]
F_y=F_y^{sad}&\quad\left\{
\begin{array}{l}
y_0=y_{well}=-\sqrt{1+2a}
\end{array}
\right.\\[0.3em]
F_y=F_y^{crit}&\quad\left\{
\begin{array}{l}
y_1=y_2=\frac{1}{\sqrt{3}}\sqrt{1-2b}
\end{array}
\right.\\[0.3em]
F_y=F_y^{sad}=F_y^{crit}&\quad\left\{
\begin{array}{l}
y_0=y_{well}=-\sqrt{1+2a}=\frac{-2}{\sqrt{3}}\sqrt{1-2b}
\end{array}\right.
\end{align*}
Now that the bounds on the critical points are found we can determine their nature. 
Similarly $(\pm x_{well}, y_{well})$ is the one whose nature is easiest to prove. 
After noting that $x_{well}^2+y_{well}^2=(1+2a)$, the determinant of the Hessian at $(\pm x_{well}, y_{well})$ gives 
\begin{equation*}
\det H=4x_{well}^2(a+b)>0
\end{equation*}
and the second partial derivative in $x$ gives 
\begin{equation*}
\frac{\partial^2V_F}{\partial x^2}(\pm x_{well},y_{well})=2x_{well}^2>0
\end{equation*}
which is a well by Theorem \ref{chap_5_thm_1}.  
The Hessian matrix for the three critical points on the $y$-axis $(0,y_k)$ is already diagonal even with forcing
\begin{equation*}
H(0,y_k)=\left(\begin{array}{cc}\frac{\partial^2V_F}{\partial x^2}&0\\ 0&\frac{\partial^2V_F}{\partial y^2}\end{array}\right)=\left(\begin{array}{cc}y_k^2-(1+2a)&0\\ 0&3y_k^2-(1-2b)\end{array}\right).
\end{equation*}
Lemma \ref{chap_5:th:bound:y} has to be used in conjunction with the bounds on $y_0$, $y_1$, $y_3$ and $y_{well}$ (as derived in the proof of this Theorem) to determine the nature of the critical points. 
\end{proof}

\noindent This can be shown graphically. 

\begin{figure}[H]
\begin{center}
\centerline{\includegraphics[scale=1]{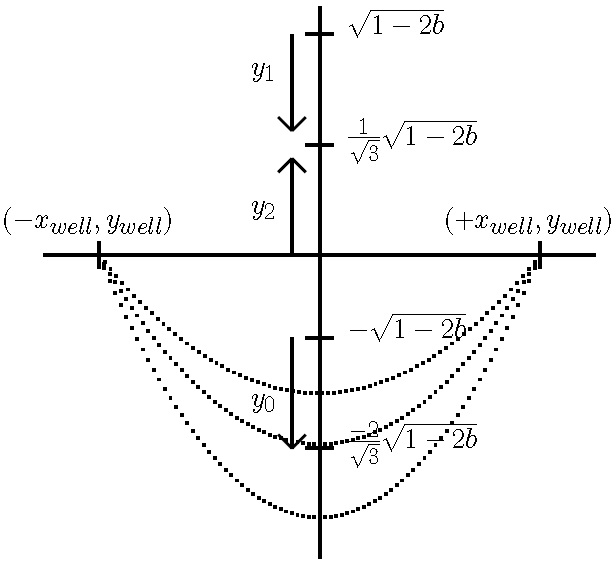}}
\caption{As $F_y$ increases from $0$ to $F^{crit}_y$ the $y_{0}$, $y_{1}$ and $y_{2}$ move as shown in the diagram. 
As $F_y$ increases from $0$ to $F^{sad}_y$, the two $(\pm x_{well}, y_{saddle})$ meet each other on the $y$-axis. There are three possible paths for $(\pm x_{well},  y_{well})$. 
If $F^{sad}_y<F^{crit}_y$, then the two $(\pm x_{well},  y_{well})$ meet between in the interval $\left(-\frac{2}{\sqrt{3}}\sqrt{1-2b},-\sqrt{1-2b}\right)$. 
If $F^{sad}_y=F^{crit}_y$, then the two $(\pm x_{well},  y_{well})$ meet at $y=-\frac{2}{\sqrt{3}}\sqrt{1-2b}$.  
If $F^{sad}_y>F^{crit}_y$ then the two $(\pm x_{well},  y_{well})$ meet in the interval $\left(-\infty,-\frac{2}{\sqrt{3}}\sqrt{1-2b}\right)$.
}
\end{center}
\end{figure}

\begin{figure}[H]
\begin{center}
\centerline{\includegraphics[scale=0.75]{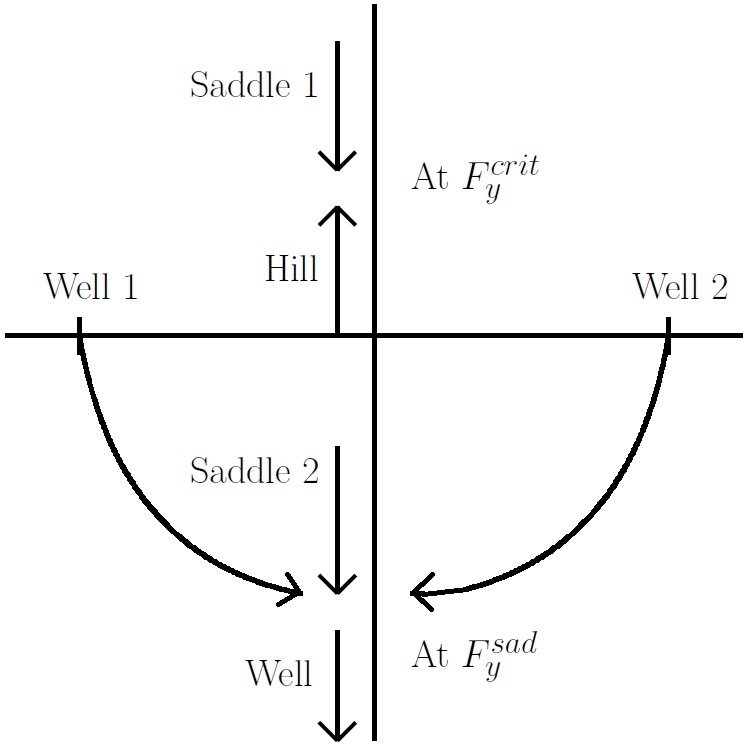}}
\caption{At $F=F^{crit}_y$ the top saddle collides into the hill and both then disappears. At $F=F^{sad}_y$ the bottom saddle collides with the two wells and turns into a new well. These two collisions can occur simultaneously or occur one after the other, depending on whether we have $F^{sad}_y<F^{crit}_y$, $F^{sad}_y=F^{crit}_y$ or $F^{sad}_y>F^{crit}_y$.}
\end{center}
\end{figure}

\subsection{Case $F_x=0$, $F_y>0$ and $b\geq\frac{1}{2}$}
The case for $F_y>0$, $b\geq\frac{1}{2}$ is slightly different in the sense that we have to consider the discriminant of the cubic equation with one real solution for the potential. 
We have the last main Theorem in this Chapter.

\begin{Theorem}\label{chap_5_case_5}
Let $F_x=0$, $F_y>0$ and $b\geq \frac{1}{2}$. 
The positions and nature of the critical points are as follows
\begin{align*}
F_y<F_y^{sad}&\quad\left\{
\begin{array}{ll}
(0,y_0)&\text{saddle}\\
(\pm x_{well}, y_{well})&\text{well}
\end{array}
\right.\\[0.3em]
F_y=F_y^{sad}&\quad\left\{
\begin{array}{ll}
(0,y_0)=(\pm x_{well}, y_{well})&\text{unidenitified}
\end{array}
\right.\\[0.3em]
F_y>F_y^{sad}&\quad\left\{
\begin{array}{ll}
(0,y_0)&\text{well}\\
(\pm x_{well}, y_{well})&\text{nonexistent}
\end{array}
\right.
\end{align*}
\end{Theorem}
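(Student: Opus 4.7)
The plan is to follow the same blueprint used in the earlier cases (Theorems \ref{chap_5_case_1}--\ref{chap_5_case_4}): first solve the critical-point equations, then locate and bound the solutions, and finally compute the Hessian at each point to classify its nature. The key simplification here is that $1-2b\leq 0$, which kills the cubic-with-three-roots structure that was central in Theorem \ref{chap_5_case_4}.

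First I would write down the critical-point equations, with $F_x=0$ and $F_y>0$:
\begin{align*}
\frac{\partial V_F}{\partial x} &= x\left[(x^2+y^2)-(1+2a)\right]=0,\\
\frac{\partial V_F}{\partial y} &= y\left[(x^2+y^2)-(1-2b)\right]+F_y=0.
\end{align*}
From the first equation either $x=0$ or $x^2+y^2=1+2a$. The off-axis branch $x^2+y^2=1+2a$ forces $y_{\mathrm{well}}=-F_y/[2(a+b)]$ and $x_{\mathrm{well}}^2=(1+2a)-y_{\mathrm{well}}^2$, so these two critical points exist precisely when $F_y\leq F_y^{sad}=2(a+b)\sqrt{1+2a}$, and at $F_y=F_y^{sad}$ they coincide on the $y$-axis at $(0,-\sqrt{1+2a})$. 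The on-axis branch $x=0$ reduces to the cubic $y^3-(1-2b)y+F_y=0$, whose derivative $3y^2-(1-2b)\geq 3y^2\geq 0$ (with equality only at $y=0$) is strictly positive for any $y\neq 0$ since $b\geq\frac12$. Hence the cubic is strictly monotone and has a unique real root $y_0$ for every $F_y$; monotonicity also gives that $y_0$ decreases strictly from $0$ to $-\infty$ as $F_y$ increases from $0$ to $+\infty$, with the distinguished value $y_0=-\sqrt{1+2a}$ attained exactly at $F_y=F_y^{sad}$ (a direct substitution verifies this). This is the analogue of Lemma \ref{chap_5:th:det:y} in the degenerate regime $b\geq\tfrac12$, and it is much simpler because only one real root exists.

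Second, I would classify the critical points via the Hessian. At $(0,y_0)$ the off-diagonal term vanishes and the eigenvalues are $y_0^2-(1+2a)$ and $3y_0^2-(1-2b)$. The second is nonnegative since $b\geq\tfrac12$ and strictly positive once $y_0\neq 0$, which is always the case for $F_y>0$. The sign of the first eigenvalue is governed entirely by $|y_0|$ vs.\ $\sqrt{1+2a}$, so the monotonicity picture above yields: saddle for $F_y<F_y^{sad}$ (because $|y_0|<\sqrt{1+2a}$) and well for $F_y>F_y^{sad}$ (because $|y_0|>\sqrt{1+2a}$). At the off-axis points $(\pm x_{\mathrm{well}},y_{\mathrm{well}})$, using $x_{\mathrm{well}}^2+y_{\mathrm{well}}^2=1+2a$ gives $\partial_{xx}V_F=2x_{\mathrm{well}}^2>0$ and
\[
\det H = 2x_{\mathrm{well}}^2\left[x_{\mathrm{well}}^2+y_{\mathrm{well}}^2-(1-2b)\right]-(2x_{\mathrm{well}}y_{\mathrm{well}})^2 = 4x_{\mathrm{well}}^2(a+b)>0,
\]
so these are wells by Theorem \ref{chap_5_thm_1}, in full analogy with the corresponding step of Theorem \ref{chap_5_case_4}.

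Finally, the degenerate value $F_y=F_y^{sad}$ is handled by merging the two branches: there $(0,y_0)=(\pm x_{\mathrm{well}},y_{\mathrm{well}})=(0,-\sqrt{1+2a})$, the off-diagonal Hessian entry is zero, and the $x$-eigenvalue $y_0^2-(1+2a)$ vanishes, so the critical point is a degenerate one and its nature is declared unidentified (consistent with the other boundary cases in the theorem). The main technical obstacle is a notational one rather than a mathematical one: verifying that the single real root $y_0$ of the cubic is really a strict monotone in $F_y$ even at $b=\tfrac12$ where the derivative of the cubic vanishes at $y=0$; this requires the small observation that $y=0$ corresponds only to $F_y=0$, so strict monotonicity on each side of $0$ still gives a globally bijective parametrisation $F_y\mapsto y_0$.
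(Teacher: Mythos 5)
Your proposal is correct and arrives at the same conclusion, but it takes a noticeably different and more elementary route through the key step, namely the monotonicity of the on-axis root $y_0$. The paper solves the cubic $y^3-(1-2b)y+F_y=0$ explicitly via Cardano's formula, keeps track of the quantities $\Delta_0$, $\Delta_1$, $C$, argues that $C$ is monotone in $F_y$, and then differentiates the explicit root formula to conclude $\frac{dy_0}{dF_y}\leq 0$. You instead observe that for $b\geq\tfrac12$ the polynomial $y\mapsto y^3-(1-2b)y$ is strictly increasing (its derivative $3y^2+(2b-1)\geq 0$, vanishing only at $y=0$ when $b=\tfrac12$), so the equation has exactly one real root for each $F_y$ and implicit differentiation gives $\frac{dy_0}{dF_y}=-1/\bigl(3y_0^2-(1-2b)\bigr)<0$ whenever $y_0\neq 0$. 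This buys you the same monotone bijection $F_y\mapsto y_0$ without ever writing down a root formula, and it makes the degenerate boundary case $b=\tfrac12$ transparent via the observation you added at the end. Your Hessian classification and treatment of the coalescence at $F_y=F_y^{sad}$ match the paper's. One cosmetic slip: in your determinant computation the bracketed term should be $\partial_{yy}V_F=x_{\mathrm{well}}^2+3y_{\mathrm{well}}^2-(1-2b)$, not $x_{\mathrm{well}}^2+y_{\mathrm{well}}^2-(1-2b)$; with the correct factor the cancellation of the $y_{\mathrm{well}}^2$ terms does produce $4x_{\mathrm{well}}^2(a+b)$, so the conclusion you state is right, only the intermediate display is off.
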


\begin{proof}
The simultaneous equations we have to solve are 
\begin{align}
\frac{\partial V_F}{\partial x}&=x(x^2+y^2)-(1+2a)x=0 \label{chap_5_sim_eqn_y1}\\
\frac{\partial V_F}{\partial y}&=y(x^2+y^2)-(1-2b)y+F_y=0. \label{chap_5_sim_eqn_y2}
\end{align}
Equation \ref{chap_5_sim_eqn_y1} holds if either $x=0$ or $(x^2+y^2)-(1+2a)=0$.
The case for $(x^2+y^2)-(1+2a)=0$ gives $(\pm x_{well}, y_{well})$ as a critical point in similar way as before. 
The case for $x=0$ reduces Equation \ref{chap_5_sim_eqn_y2} to 

\begin{equation*}
y^3-(1-2b)y+F_y=0.
\end{equation*}
It is this resulting cubic equation which forms the next series of discussions. 
The required expressions in solving this cubic equation are
\begin{equation*}
\Delta_0=3(1-2b),\quad\Delta_1=27F_y,\quad\Delta=4(1-2b)^3-27F_y^2. 
\end{equation*}
Since $b\geq\frac{1}{2}$ the discriminant of this cubic equation is strictly negative meaning $\Delta<0$, which means there can only be one real solution. 
All the solutions whether complex or real are given by 
\begin{align*}
y_k&=-\frac{1}{3}\left(e^{i\psi_k}C+e^{-i\psi_k}\frac{\Delta_0}{C}\right)\quad \text{where}\\
\psi_0&=0, \quad \psi_1=2\pi/3, \quad \psi_2=4\pi/3\\
C&=\sqrt[3]{\frac{\Delta_1+\sqrt{-27\Delta}}{2}}.
\end{align*}
Notice that the  $\Delta<0$, $C$ and $\Delta_0$ are all real numbers. 
Since there can only be one real $y_k$ solution this has to be $y_0$, as $y_0$ would just be a sum of real numbers. 
This then means $y_1$ and $y_2$ would be complex conjugate solutions. 
Now written explicitly we have 
\begin{align*}
C=\sqrt[3]{\frac{27F_y+\sqrt{(27F_y^2-4(1-2b)^3)\times27}}{2}}
\end{align*}
which for $b\geq\frac{1}{2}$ is clearly monotonically increasing in $F_y$ for $F_y>0$. 
This is because the $\sqrt{\cdot}$ and $\sqrt[3]{\cdot}$ are both monotone with respect to their own argument.  
This means we can say 
\begin{align*}
\frac{dC}{dF_y}&\geq0\\
\frac{dy_0}{dF_y}&=-\frac{1}{3}\left(1-\frac{\Delta_0}{C^2}\right)\frac{dC}{dF_y}\\
&\leq0
\end{align*}
because $(-\Delta_0)>0$ for $b\geq\frac{1}{2}$. 
This means $y_0$ would always monotonically decrease  with increasing $F_y$. 
We also know that at $F_y=F_y^{sad}$ the two wells on the sides become 
\begin{equation*}
(\pm x_{well}, y_{well})=(0,-\sqrt{1+2a}).
\end{equation*}
Note that earlier when $x=0$ was imposed in Equation \ref{chap_5_sim_eqn_y1} and \ref{chap_5_sim_eqn_y2} we were reduced with a cubic equation that only admits one real solution in $y$. 
This means there can only be one critical point on the $y$-axis so we must have 
\begin{equation*}
y_0=-\sqrt{1+2a}
\end{equation*}
at $F_y=F_y^{sad}$. 
This justifies the following bounds 
\begin{align*}
F_y<F_y^{sad}&\quad\left\{
\begin{array}{l}
y_0>-\sqrt{1+2a}
\end{array}
\right.\\[0.3em]
F_y=F_y^{sad}&\quad\left\{
\begin{array}{l}
y_0=-\sqrt{1+2a}
\end{array}
\right.\\[0.3em]
F_y>F_y^{sad}&\quad\left\{
\begin{array}{l}
y_0<-\sqrt{1+2a}
\end{array}
\right.
\end{align*}
which can be justified by the monotonicity of $y_0$. 
This together with the Hessian can allow us to identify the nature of the critical points.
\end{proof}

\noindent Again the situation can be represented graphically. 

\begin{figure}[H]
\begin{center}
\centerline{\includegraphics[scale=0.5]{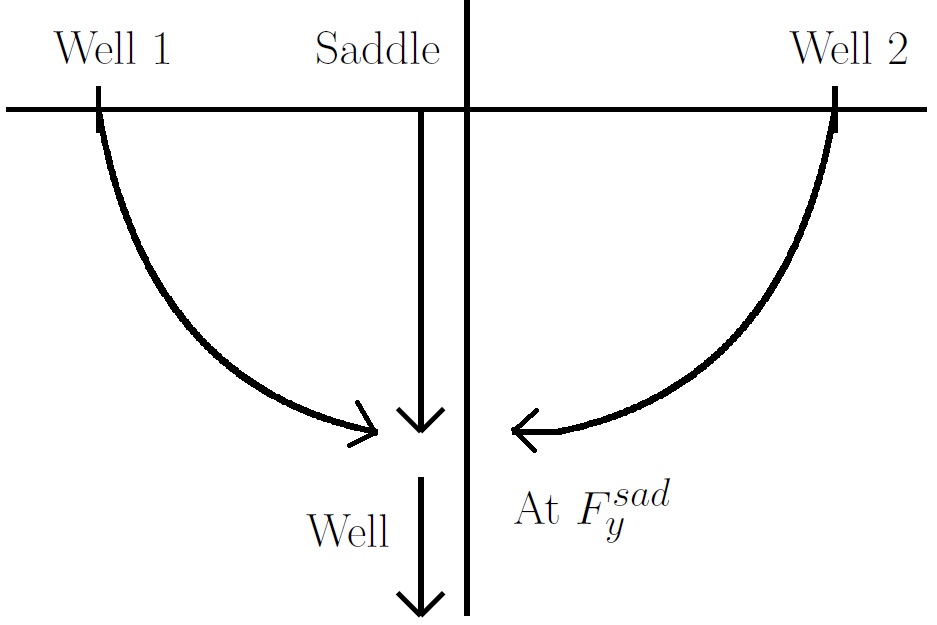}}
\caption{At $F_y=F^{sad}$ the saddle collides with the two wells and turns into a new well.}
\end{center}
\end{figure}

\section{Case $F_x\neq0$ and $F_y\neq0$}

Consider the forcing 
\begin{equation*}
\mathbf{F}=
\left(
\begin{array}{c}
F_x \\
F_y
\end{array}
\right)
=
\left(
\begin{array}{c}
F\cos \phi\\
F\sin \phi
\end{array}
\right)
\quad 
\text{where}
\quad 
F=\sqrt{F_x^2+F_y^2}
\end{equation*}
so far we have only studied the case when $\phi=0^\circ, 90^\circ, 180^\circ, 360^\circ$. 
Now we consider the case for forcing in a general direction. 
The critical points are given by solutions to 
\begin{align}
\frac{\partial V_F}{\partial x}=x(x^2+y^2)-(1+2a)x+F_x=0 \label{chap_5:sum:eqn:1}\\
\frac{\partial V_F}{\partial y}=y(x^2+y^2)-(1-2b)y+F_y=0.
\label{chap_5:sum:eqn:2}
\end{align}
The arguments presented next can actually apply for any $a$, $b$, $F_x$ and $F_y$ regardless of whether they are positive or negative.
Notice that if $\phi\notin\{0^\circ, 90^\circ, 180^\circ, 360^\circ\}$ then $F_x\neq0$ and $F_y\neq0$. This means $x=0$ or $y=0$ must not appear in any solution. This is the assumption  used. 
Solving Equation \ref{chap_5:sum:eqn:2} for $x^2$ gives 
\begin{align}
y(x^2+y^2)-(1-2b)y+F_y&=0\nonumber\\
(x^2+y^2)-(1-2b)+\frac{F_y}{y}&=0\nonumber\\
(x^2+y^2)&=(1-2b)-\frac{F_y}{y}\nonumber\\
x^2&=(1-2b)-\frac{F_y}{y}-y^2 \label{chap_5:sum:eqn:3}
\end{align}
and substituting Equation \ref{chap_5:sum:eqn:3} into Equation \ref{chap_5:sum:eqn:1} gives 
\begin{align*}
\sqrt{(1-2b)-\frac{F_y}{y}-y^2}
\left((1-2b)-\frac{F_y}{y}\right)
-(1+2a)
\sqrt{(1-2b)-\frac{F_y}{y}-y^2}
+F_x
&=0\\[0.5em]
\sqrt{(1-2b)-\frac{F_y}{y}-y^2}
\left((1-2b)-\frac{F_y}{y}-(1+2a)\right)
+F_x&=0\\[0.5em]
\left[(1-2b)-\frac{F_y}{y}-y^2\right]
\left[-2b-\frac{F_y}{y}-2a\right]^2
-F_x^2&=0\\[0.5em]
\left[(1-2b)-\frac{F_y}{y}-y^2\right]
\left[2(a+b)+\frac{F_y}{y}\right]^2
-F_x^2&=0\\[0.5em]
\left[y(1-2b)-F_y-y^3\right]
\left[2(a+b)+\frac{F_y}{y}\right]^2
-F_x^2y&=0\\[0.5em]
\left[y(1-2b)-F_y-y^3\right]
\left[4(a+b)^2+4(a+b)\frac{F_y}{y}+\frac{F_y^2}{y^2}\right]
-F_x^2y&=0\\[0.5em]
\left[y(1-2b)-F_y-y^3\right]
\left[4(a+b)^2y^2+4y(a+b)F_y+F_y^2\right]
-F_x^2y^3&=0
\end{align*}
which rearranges into a fifth degree polynomial in $y$
\begin{align}
&\relphantom{+}y^5[-4(a+b)^2]\nonumber\\[0.5em]
&+y^4[-4(a+b)F_y]\nonumber\\[0.5em]
&+y^3[(1-2b)4(a+b)^2-F_y^2-F_x^2]\nonumber\\[0.5em]
&+y^2[(1-2b)4(a+b)F_y-F_y4(a+b)^2]\nonumber\\[0.5em]
&+y^1[(1-2b)F_y^2-F_y4(a+b)F_y]\nonumber\\[0.5em]
&+y^0[-F_y^3]\nonumber\\[0.5em]
&=0\label{chap_5:sum:eqn:4}
\end{align}
which can only be solved numerically.
The five solutions for Equation \ref{chap_5:sum:eqn:4} are denoted by
\begin{equation*}
y_i \quad \text{where} \quad i=1,2,3,4,5.
\end{equation*}
Now consider Equation \ref{chap_5:sum:eqn:1}
\begin{align*}
x(x^2+y^2)-(1+2a)x+F_x&=0\\
x&=\frac{-F_x}{x^2+y^2-(1+2a)}\\
&=\frac{-F_x}{(1-2b)-\frac{F_y}{y}-(1+2a)}
\end{align*}
where we have used Equation \ref{chap_5:sum:eqn:3} for the last line. This means the $x$ part of the final set of solutions is 
\begin{equation*}
x_i=\frac{-F_x}{(1-2b)-\frac{F_y}{y_i}-(1+2a)}
\quad \text{where} \quad 
i=1,2,3,4,5.
\end{equation*}
Notice how the calculation is not straightforward if one  feeds $y_i$ into Equation \ref{chap_5:sum:eqn:3} by means of 
\begin{equation*}
x_i\neq\sqrt{(1-2b)-\frac{F_y}{y_i}-y_i^2}.
\end{equation*}
We do not know whether to take the positive or negative solution.
Any easy counting shows that one obtains too many solutions. 
These five solutions $(x_i, y_i)$ are sorted into wells, hills and saddles. 
Although  an explicit value for the critical forcing cannot be given analytically, an educated guess can be made 
\begin{align}
F^{crit}=\min\left\{F_x^{sad}, F_x^{crit}, F_y^{sad}, F_y^{crit}\right\}
\label{chap_5_critical_forcing}
\end{align}
that is because a critical force in a general direction must encompass all the other directions. 
Here are some real examples of the critical points, as the force is being changed during half a period of an oscillatory potential. 

\begin{figure}[H]
\centerline{\includegraphics[scale=0.35]{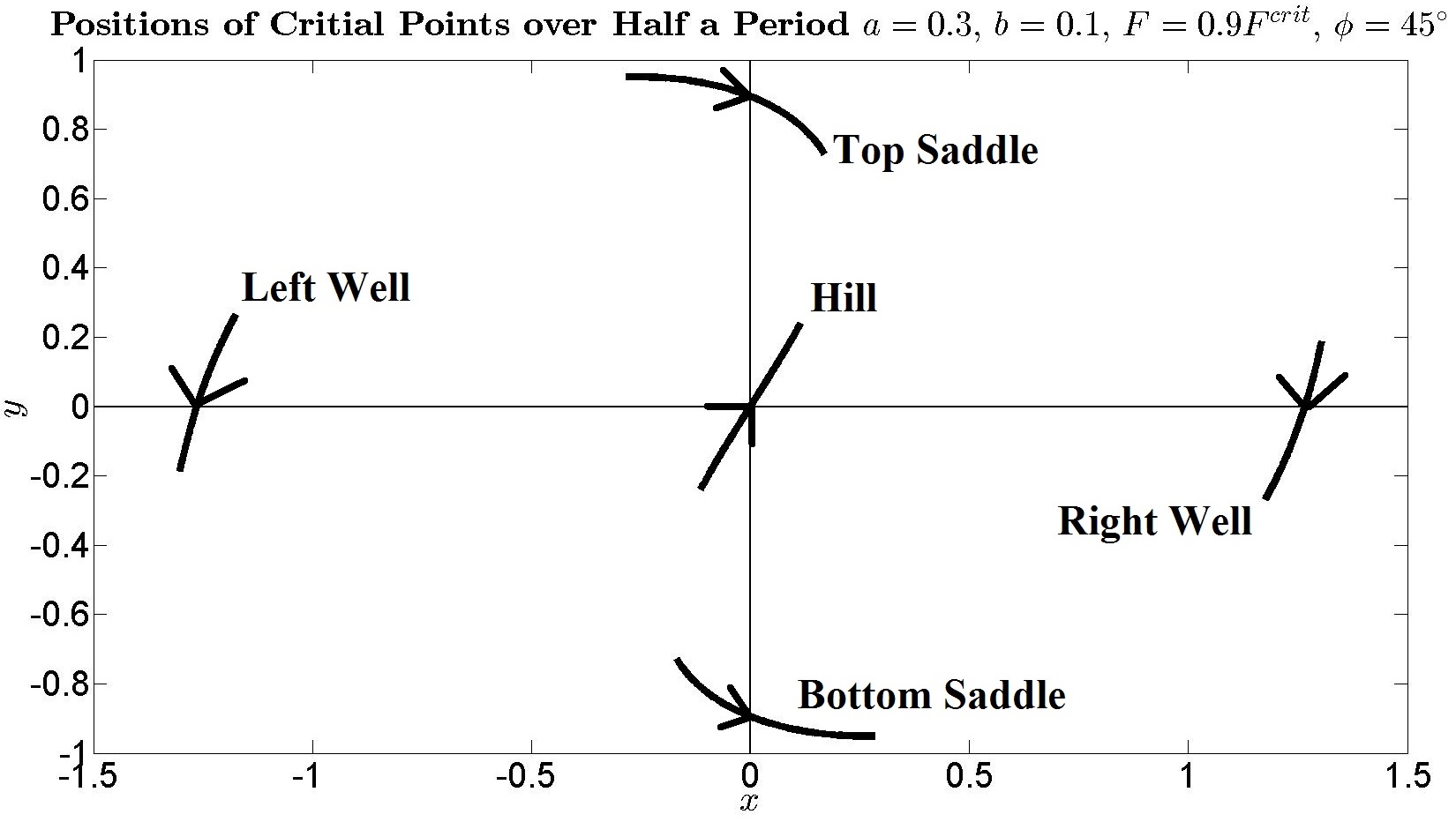}}
\caption{The critical points move very little here. }
\end{figure}

\begin{figure}[H]
\centerline{\includegraphics[scale=0.35]{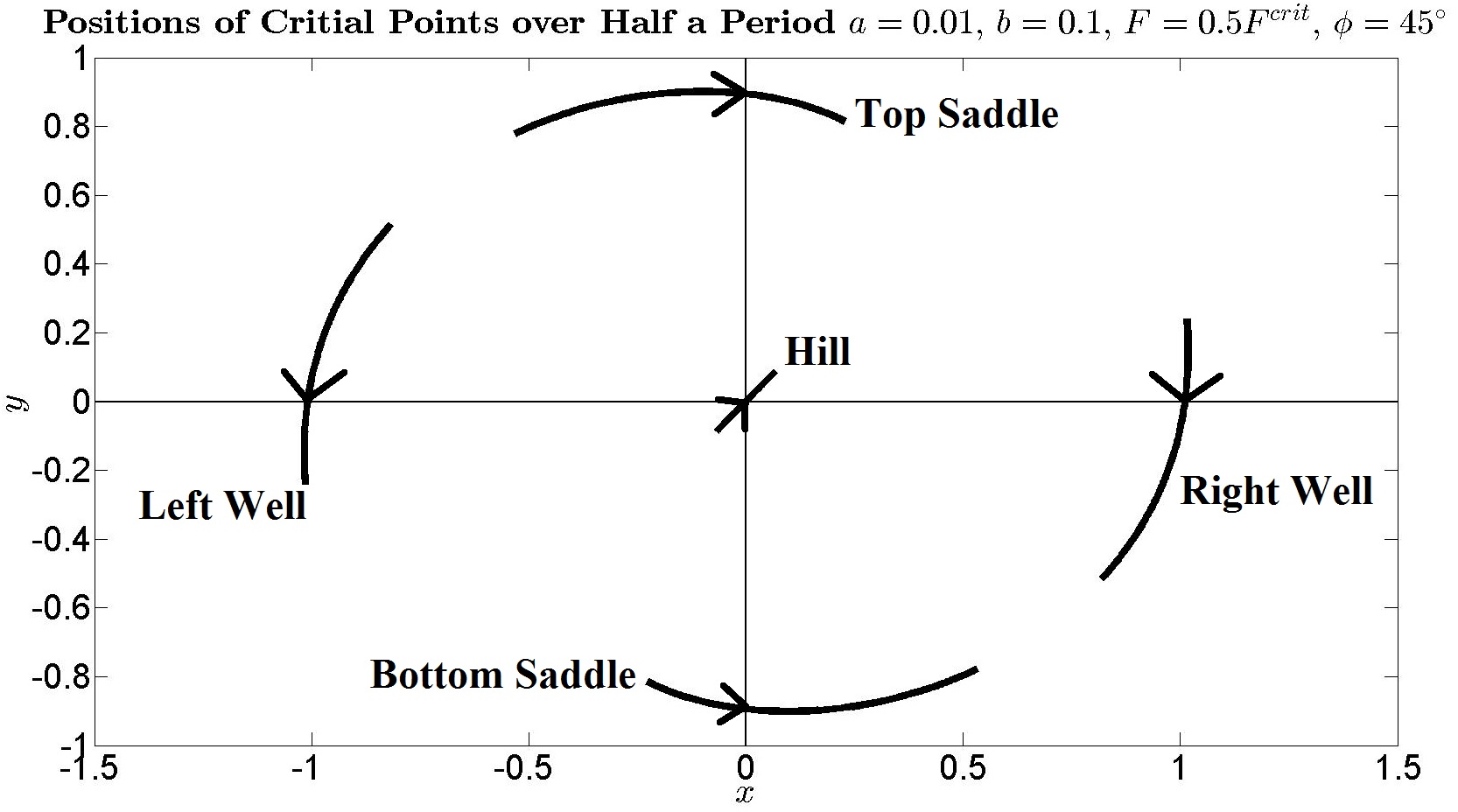}}
\caption{The critical points have a more extreme trajectory here.}
\end{figure}

\begin{figure}[H]
\centerline{\includegraphics[scale=0.35]{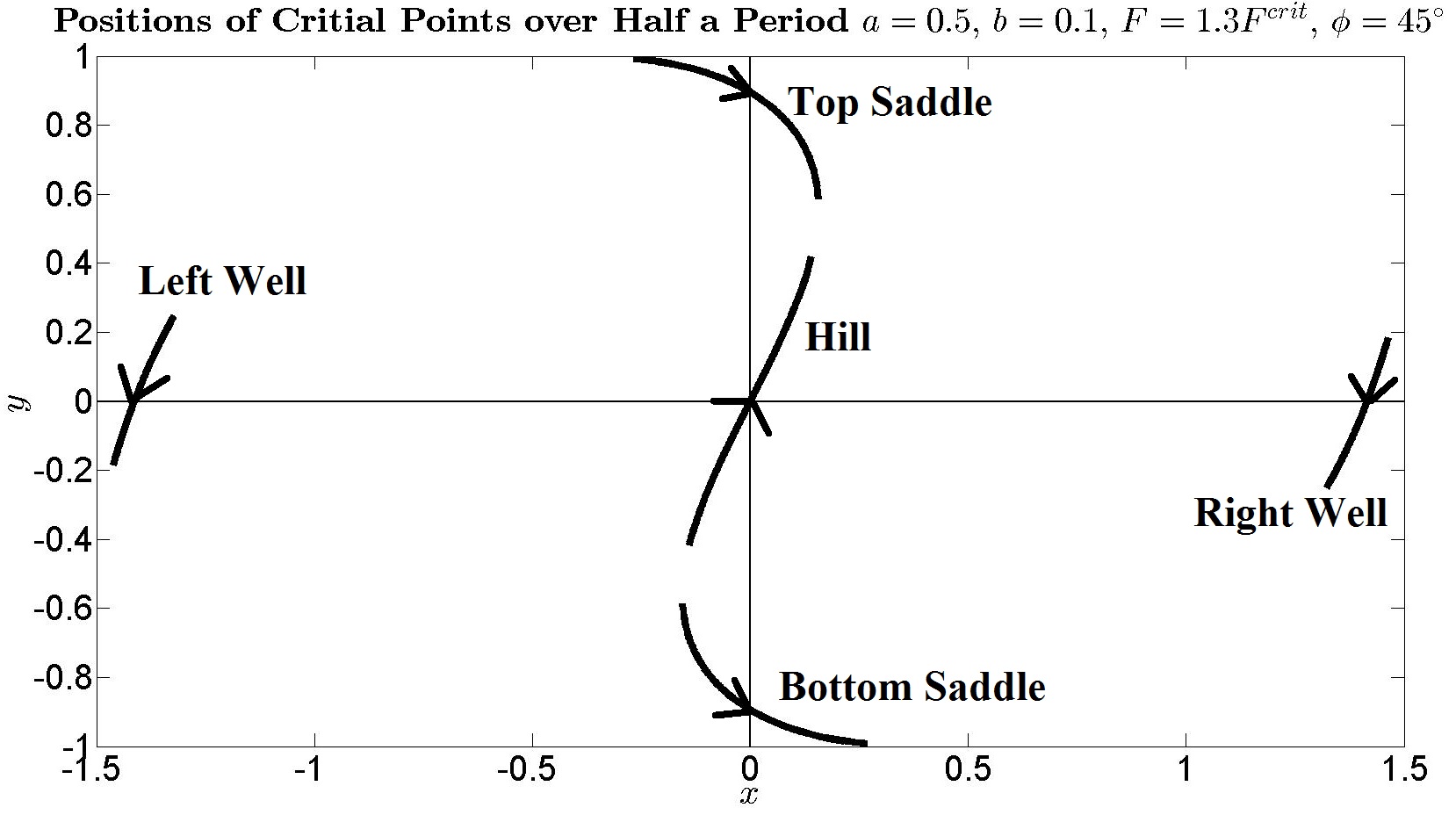}}
\caption{Notice that the use of $F^{crit}$ as a critical force is just an educated guess. Here the system is so close to criticality the saddle is almost colliding with the hill.}
\end{figure}

\section{Remarks on Mexican Hat}

\noindent We give an example of how the Mexican Hat Toy Model look like. 
\begin{figure}[H]
\centerline{\includegraphics[scale=0.36]{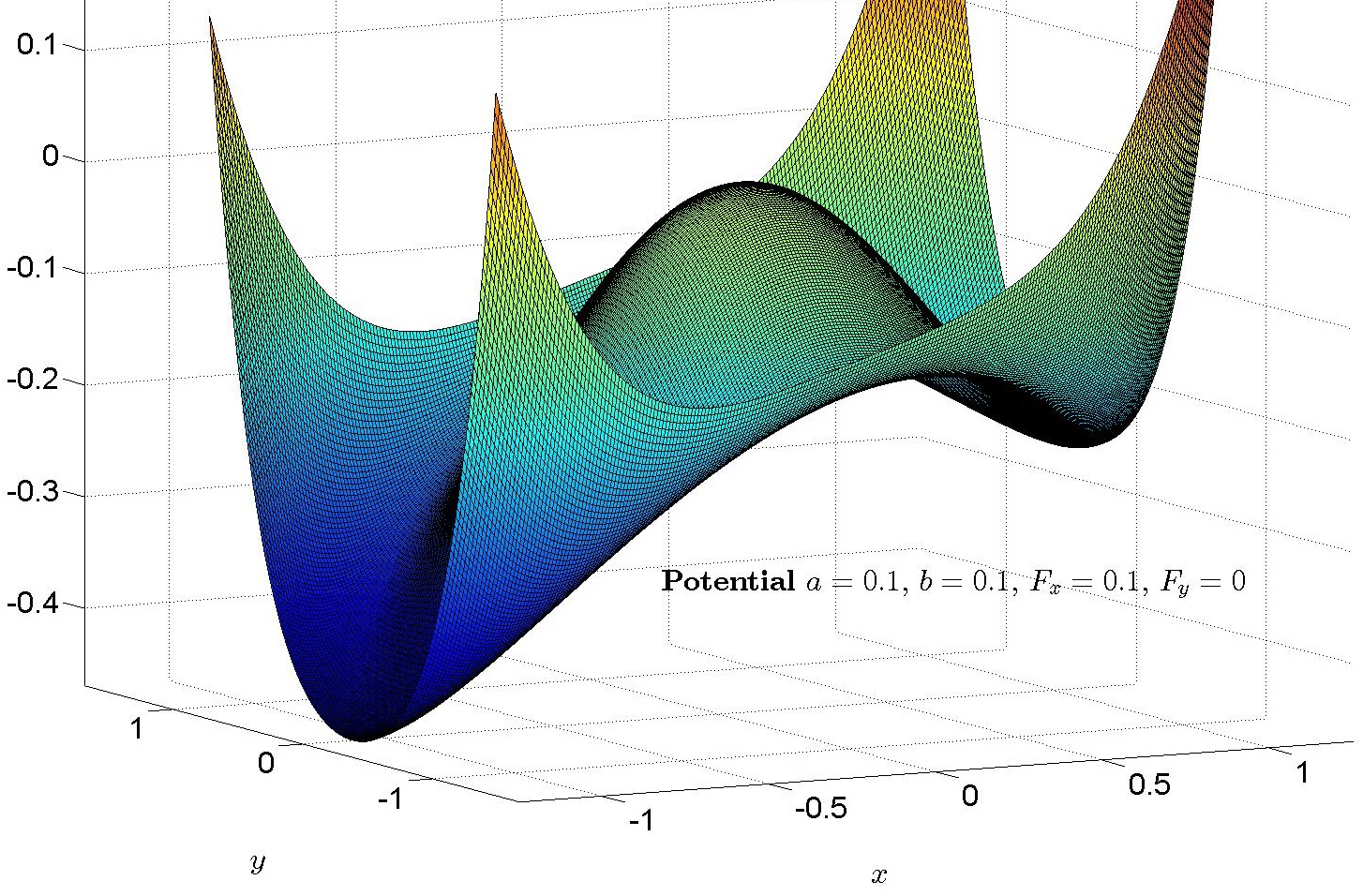}}
\caption{
An example of the potential $V_F(x,y)=\frac{1}{4}r^4-\frac{1}{2}r^2-ax^2+by^2+F_xx+F_yy$ where $r=\sqrt{x^2+y^2}$. 
Here $a=0.1$, $b=0.1$, $F_x=0.1$ and $F_y=0$. 
Notice there are two saddles just ahead of the hill.
The well on the right is higher than the well on the left.
}
\end{figure}

\subsection{Beyond Criticality}
At first glance since the critical points can all be found numerically, one may ask why studied the cubic formula. 
This actually provided exact analytic information about the system near and beyond criticality and in the extremal cases as well.  
Besides, stochastic resonance is studied when the forcing is small enough such that the topology of the potential does not change significantly.
This is because if the forcing is too large (beyond criticality) then transitions are almost certain, and there is little point to consider stochastic resonance in this case. 

\subsection{Numerical Problems}
\label{stable_critical_points}
When the critical points are numerically found, they were fed back into Equations \ref{chap_5:sum:eqn:1} and \ref{chap_5:sum:eqn:2} and were correct to $10^{-9}$. 
But a few problems remain. 
In simulations when the angle of the forcing 
\begin{align*}
\phi=\tan^{-1}\left(\frac{F_y}{F_x}\right)
\end{align*}
is changed from $\phi=0^\circ$ to $\phi=90^\circ$ the potential was continuously changing from a system needing to solve third order roots to a system needing to solve fifth order roots. 
This means the numerical algorithms for solving the quintic polynomial (Equation \ref{chap_5:sum:eqn:4}) became very unstable when the system is close 
to solving a cubic equation.\footnote{Algorithms used in the roots($\cdot$) function in MatLab.}
No further numerical investigation is necessary as analytic results are available to interpolate the correct solution. 

\subsection{Comparison with One Dimensional Case}
The one dimensional potential is 
\begin{align*}
V_F=\frac{x^4}{4}-a\frac{x^2}{2}+Fx
\end{align*}
and their critical points are given by solutions to the equation
\begin{align*}
\frac{\partial V_F}{\partial x}=x^3-ax+F=0
\end{align*}
which when compared to the solutions in the Mexican Hat yields the critical points as 
\begin{align*}
x_k=-\frac{2}{3}\sqrt{a}\cos
\left\{
\frac{1}{3}
\tan^{-1}
\left(
\frac{\sqrt{4a^3-27F^2}}{F\sqrt{27}}+\frac{2\pi}{3}k
\right)
\right\}.
\end{align*}
Similarly if we want three critical points, the $\tan^{-1}(\cdot)$ must take real arguments, which means 
\begin{align*}
F<F^{crit}=\sqrt{\frac{4a^3}{27}}
\end{align*}
and the nature of the critical points are
\begin{align*}
F< F^{crit}&\left\{
\begin{array}{lll}
x_{0}&\text{well}\\
x_1 &\text{well}\\
x_2&\text{hill}
\end{array}
\right.\\[0.7em]
F> F^{crit}&\left\{
\begin{array}{lll}
x_{0}&\text{well}\\
x_1 &\text{nonexistent}\\
x_2&\text{nonexistent}
\end{array}
\right.\\[0.7em]
F= F^{crit}&\left\{
\begin{array}{lll}
x_{0}&\text{well}\\
x_1 &\text{unidentified}\\
x_2&\text{unidentified}
\end{array}
\right.
\end{align*}
which is similar to the $F_x>0$, $F_y=0$ and $b\geq \frac{1}{2}$ case. These critical points are bounded by 
\begin{align*}
x_0<x_2<x_1.
\end{align*}
This was a calculation  not studied in the paper by Benzi et al \cite{benzi81} and other literature. 
In the paper \cite{benzi81} it was assumed that the forcing is so small the hill is very near to $x_2=0$. 
In \cite{benzi81}, the escape times were defined by Equations \ref{chap_1_benzi_escape_time_1} and \ref{chap_1_benzi_escape_time_2}, where having reached the hill (which is assumed to be at $x_2=0$) is sufficient for escape. 
Our calculations show that the hill actually moves as the potential oscillate.
Taking this into account may give a better approximation of the exit times than in \cite{benzi81}.

\chapter{Numerical Methods}
\label{sum_chap_numerical_methods}

Let us remind ourselves of the unperturbed potential of the Mexican Hat Toy Model. 
\begin{align*}
V_0(x,y)=\frac{1}{4}r^4-\frac{1}{2}r^2-ax^2+by^2
\quad \text{where} \quad r=\sqrt{x^2+y^2}.
\end{align*}
The SDE we want to study and simulate is 
\begin{align*}
\dot{X}^\epsilon_t=-\nabla V_0 + F\cos\Omega t + \epsilon\, \dot{W}_t
\end{align*}
where $W_t$ is a two dimensional Wiener process. 
When this SDE is expressed for the separate $x$ and $y$ components we have 
\begin{align*}
dx&=\left[-\frac{\partial V_0}{\partial x}+F_x\cos \Omega t \ \right]dt+\epsilon \ dw_x\\
dy&=\left[-\frac{\partial V_0}{\partial y}+F_y\cos \Omega t \ \right]dt+\epsilon \ dw_y
\end{align*}
where  $\epsilon$ is the noise level and $w_x$ and $w_y$ are two independent Wiener processes. 
When this SDE is numerically approximated by the Euler method we have
\begin{align*}
t_n&=t_{n-1}+t_{step}\\
x_n&=x_{n-1}+\left[-\frac{\partial V_0}{\partial x}(x_{n-1},y_{n-1})+F_x \cos (\Omega t_{n-1})\right]t_{step}+\epsilon \sqrt{t_{step}}\ \xi_x\\
y_n&=y_{n-1}+\left[-\frac{\partial V_0}{\partial y}(x_{n-1},y_{n-1})+F_y \cos (\Omega t_{n-1})\right]t_{step}+\epsilon \sqrt{t_{step}}\ \xi_y
\end{align*}
for the iterative scheme, where $\xi_x$ and $\xi_y$ are two independent normal random variables. 
The level of precision for the numerics are estimated assuming the Euler method is being used in simulations. 
More on numerical solutions to SDEs can be found in \cite{sde_mumeric_platen2010numerical}. 

When the escape times are measured for an oscillatory potential, a fixed radius $R$ is defined around the well which moves with the well. The two parameters we need to consider are 
\begin{align*}
t_{step} 
\quad \text{and} \quad 
R
\end{align*}
and derive appropriate values for them.
The particle is defined as having entered a well when it enters the area covered by the radius $R$ around the well. 
The time difference between entering the first well and the second is measured as the escape time from the first well. 
The idea behind the considerations is to identify possible sources of error for $t_{step}$ and $R$ and eliminate them. 
The consequence is that $t_{step}$ is bounded by six bounds on the time step
\begin{align*}
t_{step}
\leq 
\min
\left\{
t_1, t_2, t_3, t_4,t_5, t_6
\right\}
\end{align*}
where the time step $t_{step}$ has to meet six different conditions. 
Similarly 
the radius is bounded in the following way
\begin{align*}
R
\leq 
\min
\left\{
R_1, R_2
\right\}.
\end{align*}
Although these theories are not rigorous, it gives a fair idea of the level of precision that is needed. 
It is assumed in the following that the same precision needed for measuring escape times is also precise enough for studying the six measures of resonance. 
Although the Mexican Hat is a two dimensional system, part of the numerical theory is derived on a general number of dimensions $\mathbb{R}^r$.

\section{Basic Conditions - Estimating $t_{step}\leq t_1$, $t_{step}\leq t_2$ and $t_{step}\leq t_3$}
When the potential oscillates, one period $T$ is achieved when
\begin{align*}
\Omega T =2\pi
\quad 
\Rightarrow
\quad 
T=\frac{2\pi}{\Omega}
\end{align*}
and the time step $t_{step}$ has to be precise enough such that the potential is well presented.  
Thus it is reasonable to take as the first bound 
\begin{align*}
t_1=\frac{2\pi}{\Omega N_1}
\end{align*}
where $N_1$ is an appropriately large number say $N_1=1000$.
Denote by $t_{end}$ for the end time. 
This is the time the simulation is being run for. 
At least 1000 transitions need to be detected, which makes the following a reasonable choice
\begin{align*}
t_{end}=1000\times\left(\max_{\substack{w_j(t)\\0\leq t \leq T}}\tau+\min_{\substack{w_j(t)\\0\leq t \leq T}}\tau\right)
\end{align*}
where $\tau$ is the predicted escape time as given by Kramers' formula, $w_j(t)$ where $j=1,2,\ldots$ are the positions of the wells at time $t$. 
Thus the $t_{end}$ is 1000 times the minimum and maximum predicted escape times over all wells over one period. 
Trivially, the time step has to be smaller than the shortest predicted escape time so the second bound is 
\begin{align*}
t_2=\frac{1}{N_2}\min_{\substack{w_j(t)\\0\leq t \leq T}}\tau
\end{align*}
where  we use  $N_2=1000$. 
Most of the time, the particle is near the bottom of the well, then one can approximate the iteration scheme to 
\begin{align*}
t_n&=t_{n-1}+t_{step}\\
x_n&=x_{n-1}+\epsilon \sqrt{t_{step}}\ \xi_x\\
y_n&=y_{n-1}+\epsilon \sqrt{t_{step}}\ \xi_y
\end{align*}
which allows the distance travelled by the particle in one increment, in the time of one time step $t_{step}$ to be given as 
\begin{align*}
\Delta z=
&\sqrt{
\left(x_n-x_{n-1}\right)^2
+
\left(y_n-y_{n-1}\right)^2
}.
\end{align*}
Let the critical points be given by 
\begin{align*}
c_1(t), \, c_2(t), \, \ldots 
\end{align*}
If the particle starts at the well and ever reaches  a hill or saddle, then transition to the other well is almost certain. 
Thus travelling from the well to another critical point  should be almost impossible in a single time step $t_{step}$, that is one increment $\Delta z$. 
The length of this forbidden jump is 
\begin{align*}
l_1=\min_{\substack{w_i(t)\neq c_j(t) \\ 0\leq t \leq T}}\left|w_i(t)-c_j(t)\right|
\end{align*}
which is the minimal distance from the wells to any other critical points (which are not wells) over one period. 
But the normal random variables $\xi_i$ are normal distributed in $N(0,1)$. 
This means 
\begin{equation*}
P(\xi_x>r)=\int_r^\infty \frac{e^{-x^2/2}}{\sqrt{2\pi}}\ dx
\end{equation*} 
and the joint distribution is given by 
\begin{align*}
P
\left(
\xi_x>r_1, \,
\xi_y>r_2
\right)
=
\int_{x=r_1}^{\infty}
\int_{y=r_2}^{\infty}
n(x,y)\,
dx \, dy
\end{align*}
where 
\begin{align*}
n(x,y)=\frac{1}{2\pi}\exp\left\{-\frac{1}{2}(x^2+y^2)\right\}. 
\end{align*}
This gives 
\begin{align*}
P(\Delta z >l_1)
&=P\left(\sqrt{\xi_x^2+\xi_y^2}>\frac{l_1}{\epsilon \sqrt{t_{step}}}\right)\\
&=\iint_{\left\{(x,y):\sqrt{x^2+y^2}>\frac{l_1}{\epsilon \sqrt{t_{step}}}\right\}}n(x,y)\ dx \ dy\\
&=\int_{\theta=0}^{\theta=2\pi}\int_{\frac{l_1}{\epsilon \sqrt{t_{step}}}}^\infty \frac{1}{2\pi}e^{-r^2/2}\ r\ dr \ d\theta\\
&=\exp\left\{-\frac{1}{2}\left(\frac{l_1}{\epsilon \sqrt{t_{step}}}\right)^2\right\}
\end{align*}
and the number of increments achieving such a direct jump must be almost zero in one session of the simulation. So
\begin{align*}
\frac{t_{end}}{t_{step}}\,
P(\Delta z > l_1)
&<N_3
\end{align*}
where $N_3$ needs to be smaller than one for example $N_3=0.1$. 
Rearranging the expression to 
\begin{align*}
\frac{t_{end}}{t_{step}}\,
\exp\left\{-\frac{1}{2}\left(\frac{l_1}{\epsilon \sqrt{t_{step}}}\right)^2\right\}
&=N_3
\end{align*}
and solving for the time step $t_{step}$ we get 
\begin{align*}
t_{step}=J(\epsilon,l_1,N_3,t_{end})
\end{align*}
where $J$ is a function which numerically inverts the expressions to give the time step required. This gives the third bound as 
\begin{align*}
t_3=J(\epsilon, l_1, N_3, t_{end}).
\end{align*}

\section{Increment Conditions}

The bound $t_3$ on the time step hinges on finding bounds on the length of a single increment
\begin{align*}
\Delta z=
&\sqrt{
\left(x_n-x_{n-1}\right)^2
+
\left(y_n-y_{n-1}\right)^2
}
\end{align*}
that is the distance travelled in the time of one time step $t_{step}$. 
This  is now considered again but in a more general setting in $\mathbb{R}^r$ a general number of dimensions. 

\subsection{Increment Theory - Developing $W(\mathcal{S},l)$}
Let $V_0:\mathbb{R}^r\longrightarrow \mathbb{R}$ be a real function from $\mathbb{R}^r$ to $\mathbb{R}$. 
Its gradient with a periodic forcing and noise gives rise to the SDE 
\begin{equation*}
\dot{X}_t=-\nabla V_0+F\cos(\Omega t)+\epsilon \dot{w}_t
\end{equation*}
where $X_t$ is a trajectory in $\mathbb{R}^r$, $F$ is the force in $\mathbb{R}^r$ and $w_t$ is a vector of $r$ independent  Wiener processes. 
Thus 
\begin{align*}
X_t&=(x_1(t), x_2(t),  \ldots, x_r(t))\\
F&=(F_1, F_2, \ldots, F_r)\\
w_t&=(w_1(t), w_2(t),  \ldots, w_r(t)).
\end{align*}
This trajectory can be numerically approximated with the Euler scheme
\begin{align}
t_{n+1}&=t_{n}+t_{step}\nonumber\\
x^{n+1}_i&=x^n_i
+\left[-\frac{\partial V_0}{\partial x_i}
(x^n_1, x^n_2,  \ldots,  x^n_r)
+F_i\cos(\Omega t_{n})\right]t_{step}
+\epsilon\sqrt{t_{step}}\ \xi_i\label{chap_6:eqn:7}
\end{align}
where the partial derivative is evaluated at the previous iteration step $(x^n_1, x^n_2,  \ldots,  x^n_r)$ and $\xi_i$ is a normal random variable. 
Rearranging Equation \ref{chap_6:eqn:7} gives 
\begin{align}
\left(x^{n+1}_i-x^n_i\right)
&\leq
\left[
-\frac{\partial V_0}{\partial x_i}
(x^n_1, x^n_2,  \ldots,  x^n_r)
+F_i\cos(\Omega t_{n})
\right]
t_{step}
+\epsilon\sqrt{t_{step}}\ \xi_i.\label{chap_6:eqn:10}
\end{align}
Notice that we can make the following bound 
\begin{align}
\left[
-\frac{\partial V_0}{\partial x_i}
(x^n_1, x^n_2,  \ldots,  x^n_r)
+F_i\cos(\Omega t_{n})
\right]t_{step}
&\leq
\left|
-\frac{\partial V_0}{\partial x_i}
(x^n_1, x^n_2,  \ldots,  x^n_r)
+F_i\cos(\Omega t_{n})
\right|t_{step}\nonumber\\[0.5em]
&\leq
\left[\,
\left|-\frac{\partial V_0}{\partial x_i}
(x^n_1, x^n_2,  \ldots,  x^n_r)\right|
+\left|F_i\cos(\Omega t_{n})\right|
\,\right]t_{step}\nonumber\\[0.5em]
&\leq
\left[\,
\left|-\frac{\partial V_0}{\partial x_i}
(x^n_1, x^n_2,  \ldots,  x^n_r)\right|
+\left|F_i\right|
\,\right]t_{step}\nonumber\\[0.5em]
&\leq
\left[
\max_{\mathcal{S}}\left|\frac{\partial V_0}{\partial x_i}\right|+|F_i|
\right]t_{step} \label{chap_6:eqn:8}
\end{align}
where  $\mathcal{S}$ is a set that is large enough such that
\begin{align*}
(x_1^n, x_2^n, \ldots, x_r^n) \in \mathcal{S}\subset \mathbb{R}^n. 
\end{align*}
We define
\begin{align}
\Delta x_i &= \left[
\max_{ \mathcal{S}}\left|\frac{\partial V_0}{\partial x_i}\right|+|F_i|
\right]t_{step}+\epsilon\sqrt{t_{step}}\ \xi_i
=A_i+B\xi_i.\label{chap_6:eqn:11}
\end{align}
By using the triangle inequality we can bound Equation 
\ref{chap_6:eqn:10} and \ref{chap_6:eqn:11} in the following way
\begin{align*}
\left|x^{n+1}_i-x^n_i\right|
&\leq
\left|-\frac{\partial V_0}{\partial x_i}
(x^n_1, x^n_2,  \ldots,  x^n_r)
+F_i\cos(\Omega t_{n})\right|t_{step}
+\left|\epsilon\sqrt{t_{step}}\ \xi_i\right|\\
\left|\Delta x_i\right|
&\leq \left[
\max_{ \mathcal{S}}\left|\frac{\partial V_0}{\partial x_i}\right|+|F_i|
\right]t_{step}+\left|\epsilon\sqrt{t_{step}}\ \xi_i\right|
\end{align*}
and by using \ref{chap_6:eqn:8} we know that
\begin{align*}
\left|x^{n+1}_i-x^n_i\right|\leq \left|\Delta x_i\right|.
\end{align*}
This means the total increment in the time of one iteration is bounded by 
\begin{align*}
\sqrt{
\left|x^{n+1}_1-x^n_1\right|^2
+\left|x^{n+1}_2-x^n_2\right|^2
+\ldots
+\left|x^{n+1}_r-x^n_r\right|^2
}
&\leq 
\sqrt{\Delta x_1^2 + \Delta x_2^2 + \ldots + \Delta x^2_r}
\end{align*}
leading us to define 
\begin{equation*}
\Delta z =\sqrt{\Delta x_1^2 + \Delta x_2^2 + \ldots + \Delta x^2_r}.
\end{equation*}
Now introduce a new variable 
\begin{equation*}
\eta_i=\frac{\Delta x_i}{B}\sim N\left(\frac{A_i}{B},1 \right)
\end{equation*}
which is a normal random variable with mean $A_i/B$ and variance one. Let 
\begin{equation*}
\eta=\eta_1^2+\eta_2^2+\ldots+\eta_r^2
\quad \text{and} \quad 
\lambda=\frac{1}{B^2}\left(A_1^2+A_2^2+\ldots+A_r^2\right)
\end{equation*}
which means $\eta$ is a sum of the squares of $r$ normal random variables with variance one and $\lambda$ is the sum of their means. This means $\eta$ is noncentral chi-squared distributed with $r$ degrees of freedom. Its CDF is 
\begin{equation*}
P(\eta \leq x )=1-Q_{\frac{r}{2}}(\sqrt{\lambda}, \sqrt{x})
\end{equation*}
where $Q_m$ is the Marcum $Q$-function. Thus the total increment is distributed by 
\begin{align*}
P(\Delta z > l)&=P(\Delta z^2 >l^2)\\
&=P(\eta B^2 >l^2)\\
&=P\left(\eta >\frac{l^2}{B^2}\right)\\
&=Q_{\frac{r}{2}}
\left(
\sqrt{\lambda},\frac{l}{\epsilon \, \sqrt{t_{step}}}
\right)
\end{align*}
where $l$ would be the length of a forbidden increment. The number of such forbidden jumps must stay below an appropriate number $N_3$, that is 
\begin{equation}
\frac{t_{end}}{t_{step}}P(\Delta z > l)
=N_3
\label{chap_6:sum:eqn1}
\end{equation}
and Equation \ref{chap_6:sum:eqn1} has to be numerically inverted to give the required time step $t_{step}$
\begin{equation*}
t_{step}=W(\mathcal{S}, l)
\end{equation*}
where a different region $\mathcal{S}$ and length $l$ fulfilling different criteria is used to calculate a different bound on the time step. 
Note that we choose $N_3=0.1$. 

\subsection{Absence of Large Jumps - Estimating $t_{step}\leq t_4$ and $t_{step}\leq t_5$}

Let the set $\mathcal{S}_1$ be given by 
\begin{align*}
\mathcal{S}_1=
\left\{
\left(- \sqrt{1+2a},0\right),
\,
\left(+ \sqrt{1+2a},0\right),
\,
\left(0, - \sqrt{1-2b}\right),
\,
\left(0, + \sqrt{1-2b}\right),
\,
\left(0,0\right)
\right\}
\end{align*}
which is the positions of all the critical points as they would be when the forcing is zero $F=0$. 
This means 
\begin{align*}
\max_{\mathcal{S}_1}\left|\frac{\partial V_0}{\partial x}\right|=0
\quad \text{and} \quad 
\max_{\mathcal{S}_1}\left|\frac{\partial V_0}{\partial y}\right|=0.
\end{align*}
The set $\mathcal{S}_1$ can be used as an approximation for small forcing. 
Equation \ref{chap_6:eqn:11} now becomes 
\begin{align*}
\Delta x&=\left|F_x\right|t_{step}+\epsilon\sqrt{t_{step}}\,\xi_x\\
\Delta y&=\left|F_y\right|t_{step}+\epsilon\sqrt{t_{step}}\,\xi_y
\end{align*}
and we want a time step $t_{step}$ small enough such that almost every value of $\Delta z=\sqrt{\Delta x^2 + \Delta y^2}$ is bounded by 
\begin{align*}
\Delta z=\sqrt{\Delta x^2 + \Delta y^2} \leq l_1
\end{align*}
and this time step is given by $t_4$ below
\begin{align*}
t_4=W(\mathcal{S}_1,l_1).
\end{align*}
Now we remind ourselves that if $\zeta$ is an exponentially distributed random variable, its PDF, CDF, mean and variance are given by 
\begin{align*}
P(\zeta\in A)&=\int_A \lambda e^{-\lambda x}\,dx\\
P(\zeta \leq x)&=\int_{-\infty}^x \lambda e^{-\lambda x}\,dx
=1-e^{-\lambda x}=F(x)\\
\langle \zeta \rangle &=\frac{1}{\lambda}\\
\text{var}(\zeta)&=\frac{1}{\lambda^2}
\end{align*}
where $\lambda$ is the parameter associated with the exponential distribution. 
Now define the height
\begin{align*}
h_0=\max_{\substack{w_j(t)\\ 0\leq t \leq T}}V_t(w_j(t))
\end{align*}
which is the maximum height any well can ever reach. Now define the expression 
\begin{align*}
\Delta V_h= h_1-h_0
\end{align*}
where $h_1$ is chosen so high that the particle will probably never reach there. 
Even if it starts from the highest possible well the chances are still very slim. 
Now we try to estimate what this height $h_1$ may be. 
From Freidlin-Wentzell we know that the escape time from $V=h_0$ to $V=h_1$ is roughly
\begin{align*}
\tau_h\approx e^{2\Delta V_h/\epsilon^2}
\end{align*}
and we want the time it takes to reach $V=h_1$ to be significantly more than the duration of the simulation
\begin{align*}
\tau_h\gg t_{end}
\end{align*}
which means a reasonable estimate would be  
\begin{align*}
N_2t_{end}&=e^{2\Delta V_h/\epsilon^2}\\
2\Delta V_h&= \epsilon^2 \ln (N_2t_{end})\\
\Rightarrow h_1&= \frac{1}{2}\epsilon^2 \ln (N_2t_{end})+h_0
\end{align*} 
where as before we choose $N_2=1000$. 
The escape times leaving $V=h_0$ and arriving at $V=h_1$ is exponentially distributed. 
The average of them would be 
\begin{align*}
\left\langle 
\tau_h
\right\rangle
=
N_2t_{end}
=e^{2\Delta V_h /\epsilon^2}
\end{align*}
and so by using the CDF of the exponential distribution we can say 
\begin{align*}
P(\tau_h<t_{end})&=1-e^{-1/N_2}\\
&=0.632 \quad \text{for} \quad N_2=1\\
&=0.095 \quad \text{for} \quad N_2=10\\
&=0.01\,\ \quad \text{for} \quad N_2=100\\
&=0.001 \quad \text{for} \quad N_2=1000.
\end{align*}
So if we choose $N_2\geq1000$ the chances of reaching $V=h_1$ are less then one in a thousand. 
Define the set 
\begin{align*}
\mathcal{S}_2&=\left\{x \in \mathbb{R}^r : [V_0(x)-Fx\cos(\Omega t)]\leq h_1 : 0\leq t \leq T \right\}\\
&=\left\{x \in \mathbb{R}^r : V_t\leq h_1 : 0\leq t \leq T \right\}
\end{align*}
which is the set of all the points below $V_t\leq h_1$ over the time of one period. 
The forbidden increment is taken as the same as last time 
\begin{align*}
l_2=l_1
\end{align*}
so the second bound is 
\begin{align*}
t_5=W(\mathcal{S}_2, l_2).
\end{align*}

\section{Stability and Radius Conditions}
Stability of the trajectory in the context of this thesis is for the  time step $t_{step}$ to be small enough such that 
the simulated discrete trajectory is a good enough approximation of a physical continuous trajectory. 
For example consider the following trajectories for a particle falling down to the well of the Mexican Hat starting at $(x_{start},y_{start})=(-0.75,-0.75)$.
\begin{figure}[H]
\centerline{\includegraphics[scale=0.34]{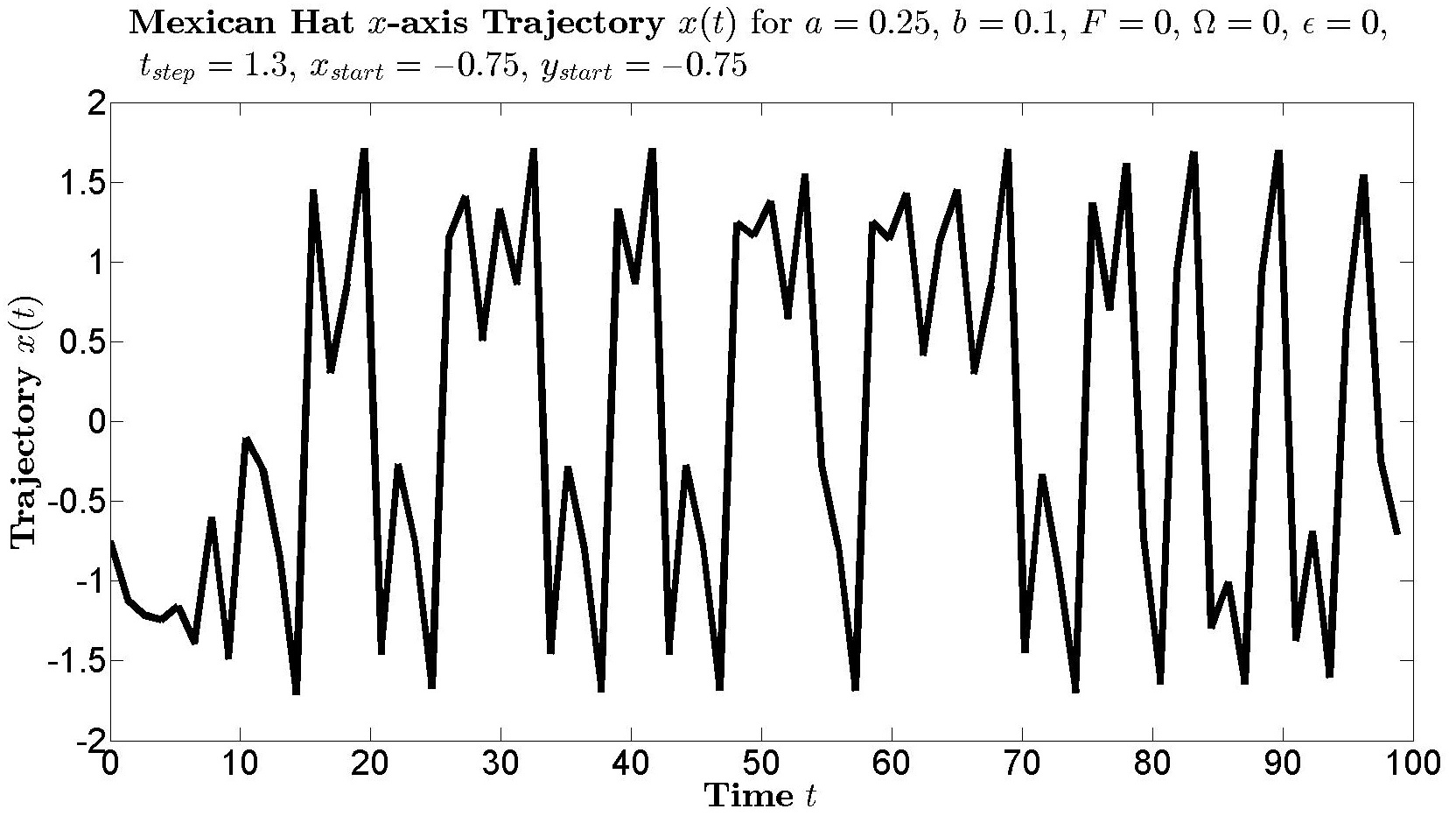}}
\caption{The trajectory is so unstable the particle even transits to the other well.}
\end{figure}
\begin{figure}[H]
\centerline{\includegraphics[scale=0.34]{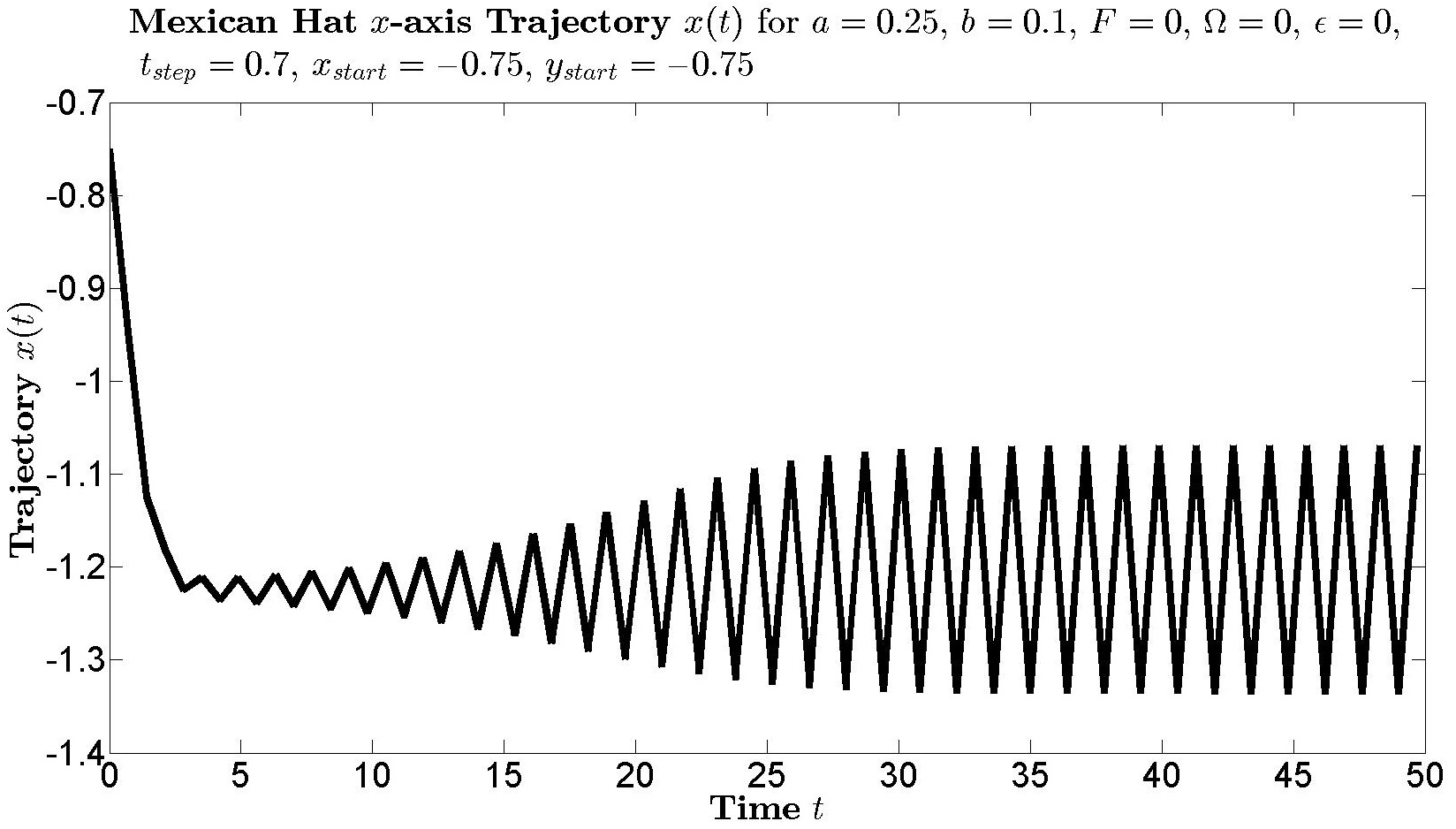}}
\caption{The trajectory is more stable but the particle now oscillates near the well.}
\end{figure}
\begin{figure}[H]
\centerline{\includegraphics[scale=0.34]{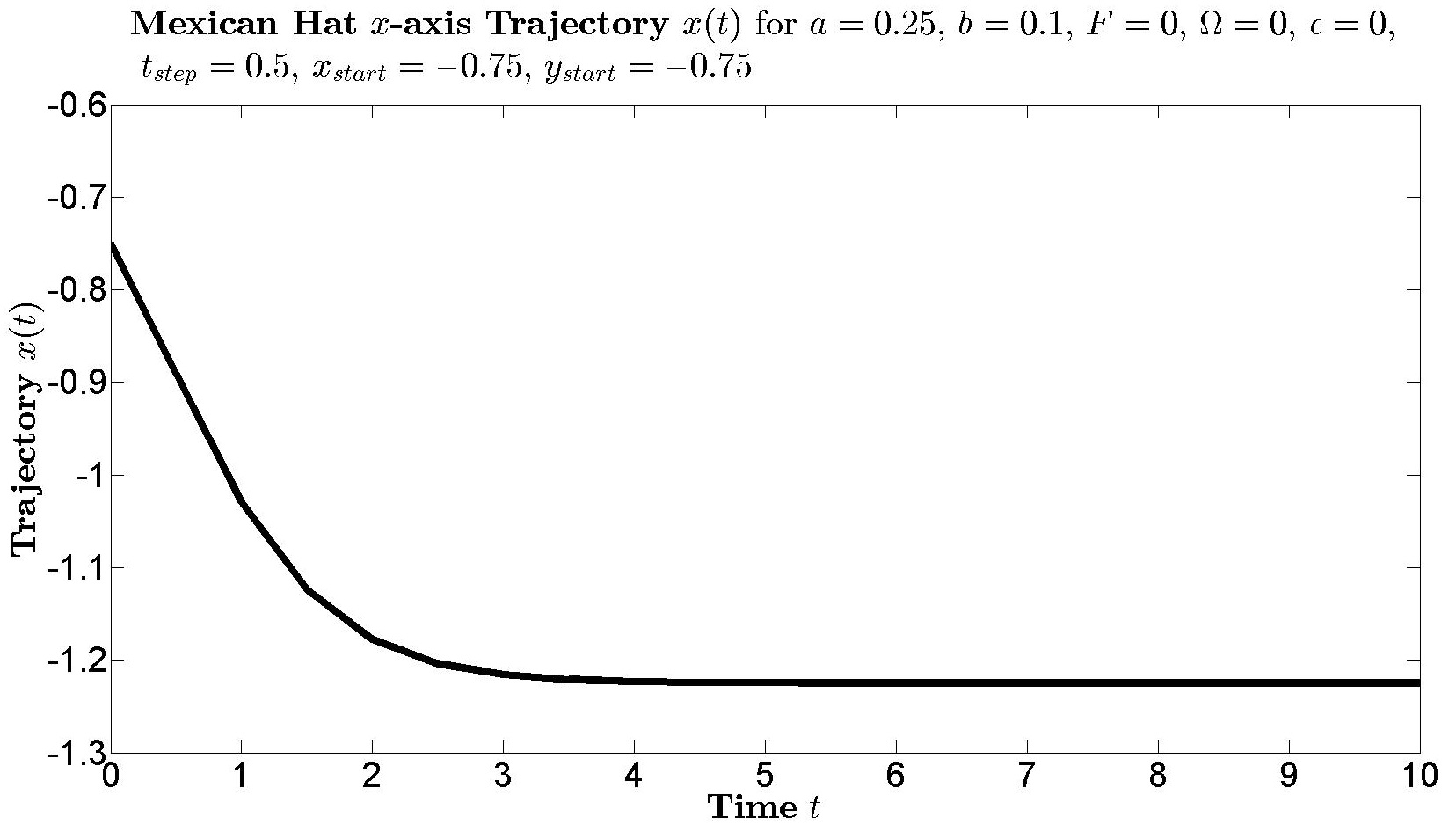}}
\caption{The trajectory is sufficiently stable here.}
\end{figure}

\noindent It is the aim of this section to study these stability problems. 
The Euler method is effectively a discrete iterative map with some operations. 
We have the following.

\begin{Lemma}\label{chap_6:thm:iter}
Let $(X, \Vert \cdot \Vert )$ be a vector space endowed with the norm $\Vert \cdot \Vert$ over the complex scalar field $\mathbb{F}$. 
Let $M$ be an operator $M:X\longrightarrow X$ with the property $\Vert Mx \Vert \leq \Vert M \Vert \Vert x \Vert$ and $\Vert M \Vert \geq 0$. 
Let $x\in X$ be part of an iterative scheme
\begin{equation*}
x_n=Mx_{n-1}+\epsilon\sqrt{t}\,\xi_{n-1}
\end{equation*}
where $\xi_{n-1}\in X$ is a term which depends on the iterative step $n$. The entire term $x_n$ is then bounded by 
\begin{equation*}
\left\Vert x_n \right\Vert \leq 
\Vert M \Vert ^n \Vert x_0 \Vert +
\epsilon\sqrt{t}\sum_{i=0}^{n-1}\Vert M \Vert ^i \
\Vert \xi_{n-1-i} \Vert 
\end{equation*}
where $x_0$ and $\xi_0$ are the starting (first) steps. 
\end{Lemma}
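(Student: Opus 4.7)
The plan is to proceed by straightforward induction on the iteration index $n$, first establishing an explicit closed-form expression for $x_n$ in terms of $x_0$ and the noise terms $\xi_0, \xi_1, \ldots, \xi_{n-1}$, and then passing to norms at the end via the triangle inequality and the submultiplicativity assumption.

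First I would verify the base case $n=1$: by definition $x_1 = M x_0 + \epsilon\sqrt{t}\,\xi_0$, which already matches the claimed form $M^1 x_0 + \epsilon\sqrt{t}\sum_{i=0}^{0} M^i \xi_{-i}$. For the inductive step, assuming the closed form holds at step $n-1$, I would substitute into $x_n = M x_{n-1} + \epsilon\sqrt{t}\,\xi_{n-1}$, distribute $M$ across the sum, and re-index to obtain
\begin{equation*}
x_n = M^n x_0 + \epsilon\sqrt{t} \sum_{i=0}^{n-1} M^i \xi_{n-1-i}.
\end{equation*}
This is purely algebraic manipulation and uses only linearity of $M$, which is implicit in the notation $M^i$ acting on elements of $X$.

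Next, I would take norms and apply the triangle inequality to obtain
\begin{equation*}
\|x_n\| \leq \|M^n x_0\| + \epsilon\sqrt{t} \sum_{i=0}^{n-1} \|M^i \xi_{n-1-i}\|.
\end{equation*}
To finish, I would use the assumed bound $\|My\| \leq \|M\|\,\|y\|$ iteratively: a quick secondary induction on $i$ shows $\|M^i y\| \leq \|M\|^i \|y\|$ for all $y \in X$, because $\|M^i y\| = \|M(M^{i-1}y)\| \leq \|M\| \|M^{i-1}y\| \leq \|M\|^i \|y\|$ by the inductive hypothesis. Applying this to each term in the sum (with $y = \xi_{n-1-i}$) and to the initial term (with $y = x_0$) produces exactly the claimed bound.

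There is no real obstacle here; the only thing to be careful about is the re-indexing in the inductive step so that the noise index $n-1-i$ appears in the right place, and the tacit use of linearity of $M$ which is needed to pull $M$ inside the finite sum. Since $M$ is stated as an operator on a vector space satisfying the operator-norm bound, linearity is the natural reading and the argument goes through without modification.
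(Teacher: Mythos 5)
Your argument is correct for a linear operator $M$, but it silently narrows the scope of the Lemma in a way the paper explicitly wants to avoid. The closed-form identity
\begin{equation*}
x_n = M^n x_0 + \epsilon\sqrt{t}\sum_{i=0}^{n-1} M^i \xi_{n-1-i}
\end{equation*}
only holds if $M$ distributes over sums, i.e.\ if $M$ is linear, and you acknowledge this yourself when you say linearity is ``the natural reading.'' But the statement of the Lemma nowhere assumes linearity, and the paper's subsequent Remark points this out explicitly: it was not assumed that $M$ is bounded, linear, or that $M(\lambda x) = \lambda M(x)$. This matters in the application: the operator actually used is $M(z) = z - t_{step}\nabla V_{t=t_{fix}}(z)$ for a quartic potential $V$, which is genuinely nonlinear, so the closed form you rely on simply fails there.

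The paper's proof sidesteps this by never expanding $x_n$ algebraically. It packages one step as $M'_n x := Mx + \epsilon\sqrt{t}\,\xi_{n-1}$, derives the one-step norm inequality $\Vert M'_n x\Vert \leq \Vert M\Vert\,\Vert x\Vert + \epsilon\sqrt{t}\,\Vert\xi_{n-1}\Vert$ using only the triangle inequality and the hypothesis $\Vert My\Vert\leq\Vert M\Vert\Vert y\Vert$, and then iterates that scalar inequality $n$ times. Since it bounds norms at each step rather than distributing $M$ across a sum, linearity is never needed. To repair your proposal, drop the closed form entirely and instead run the induction on the inequality $\Vert x_n\Vert \leq \Vert M\Vert\,\Vert x_{n-1}\Vert + \epsilon\sqrt{t}\,\Vert\xi_{n-1}\Vert$, which is exactly what the paper does.
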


\begin{proof}
Rewrite the iterative scheme with a new operator 
\begin{align*}
x_n
&=Mx_{n-1}+\epsilon\sqrt{t}\,\xi_{n-1}\\
&=M'_nx_{n-1}
\end{align*}
where $M'_n$ is the total operator which depends on the $n$th step. In general $M'_n$ is not commutative so we write 
\begin{align*}
x_n&=M'_{n}M'_{n-1}\ldots M'_1 x_0
\end{align*}
and consider just one operation on $M'_n$
\begin{align*}
M'_nx
&=Mx+\epsilon\sqrt{t}\,\xi_{n-1}\\
\left\Vert M'_nx \right\Vert
&=\left\Vert Mx+\epsilon\sqrt{t}\,\xi_{n-1} \right\Vert\\
&\leq \left \Vert Mx \right \Vert + \epsilon\sqrt{t}\,\left\Vert\xi_{n-1}\right\Vert\\
&\leq \left \Vert M \right \Vert \left\Vert x \right \Vert + \epsilon\sqrt{t}\,\left\Vert\xi_{n-1}\right\Vert.
\end{align*}
So we have an iterative expression
\begin{align}
\left\Vert M'_nx \right\Vert
&\leq \left \Vert M \right \Vert \left\Vert x \right \Vert + \epsilon\sqrt{t}\,\left\Vert\xi_{n-1}\right\Vert \label{chap_6:sum:eqn2}
\end{align}
and iterating Equation \ref{chap_6:sum:eqn2} gives 
\begin{align*}
\left\Vert x_n\right\Vert
&=\left\Vert M'_{n}M'_{n-1}\ldots M'_1 x_0\right\Vert\\
&\leq \Vert M \Vert ^n \Vert x_0 \Vert +
\epsilon\sqrt{t}\sum_{i=0}^{n-1}\Vert M \Vert ^i \
\Vert \xi_{n-1-i} \Vert 
\end{align*}
which completes the proof. 
\end{proof}

\begin{Remark}\label{chap_6:tru:remark}

There are a lot of remarks to say about this simple Lemma. 

\begin{enumerate}
\item Nowhere was it assumed $M$ is bounded, linear or commutative. 
\item Notice that $\Vert M \Vert$ was not defined. 
It could be the usual operator norm or something else. 
\item Notice that we are only truncating the bound as in Equation \ref{chap_6:sum:eqn2} and NOT truncating the original iterative Euler scheme. 
\item Notice that $M0=0$ for the zero vector. 
Notice also that $M(\lambda x )=\lambda M(x)$ where $\lambda$ is a scalar was not assumed. 
\item Notice that a norm on $\Vert M'_n \Vert$ was not needed. This is because such a  norm would have to satisfy $\left\Vert M'_nx \right \Vert \leq \left \Vert M'_n \right \Vert \Vert x \Vert$ which implies $M'_n 0=0$. But with the random vector $\xi_{n-1}$ in $M'_n$ this cannot be achieved. 
\end{enumerate}

\end{Remark}

\noindent Consider the two dimensional Mexican Hat system with $z=(x,y)\in \mathbb{R}^2$. 
Let the potential be stationary by freezing it as it would be at a certain fixed point in time $t=t_{fix}$. 
The gradient then becomes
\begin{align*}
\nabla V_{t=t_{fix}}(z)=
\left(
\begin{array}{c}
\frac{\partial V_0}{\partial x}-F_x\cos \Omega t_{fix}\\[0.5em]
\frac{\partial V_0}{\partial y}-F_y\cos \Omega t_{fix}
\end{array}
\right)
\end{align*}
where $t_{fix}$ is a constant point in time.  
The Euler method can now be rewritten as 
\begin{align}
t_{n}&=t_{n-1}+t_{step}\nonumber\\
x_{n}&=x_{n-1}+\left[-\frac{\partial V_0}{\partial x}+F_x\cos\Omega t_{fix}\right]t_{step}+\epsilon\sqrt{t_{step}}\, \xi_x \nonumber\\
y_{n}&=y_{n-1}+\left[-\frac{\partial V_0}{\partial y}+F_y\cos\Omega t_{fix}\right]t_{step}+\epsilon\sqrt{t_{step}}\, \xi_y \nonumber
\end{align}
and we recast it into vector notation by writing 
\begin{align}
t_{n}&=t_{n-1}+t_{step}\nonumber\\
z_{n}&=z_{n-1}-t_{step}\nabla V_{t=t_{fix}}(z_{n-1})+\epsilon\sqrt{t_{step}}\, \xi_{n-1} \label{chap_6:eqn:3}
\end{align}
where 
\begin{align*}
z_n=\left(
\begin{array}{c}
x_n\\y_n
\end{array}
\right). 
\end{align*}
Now we apply  Lemma \ref{chap_6:thm:iter} to Equation \ref{chap_6:eqn:3}. 
Clearly the operator as mentioned in Theorem \ref{chap_6:thm:iter} is 
\begin{align*}
M(z)=z-t_{step}\nabla V_{t=t_{fix}}(z)
\end{align*}
and this operator has to satisfy $\left\Vert M z \right\Vert\leq\left\Vert M \right\Vert \left\Vert x \right\Vert$. The usual operator norm would suffice with some restrictions
\begin{align*}
\left\Vert M \right\Vert_{\mathcal{S}}
&=\sup_{\substack{z \in \mathcal{S}\\ z\neq0}}
\frac{\left\Vert Mz \right\Vert}{\left\Vert z \right\Vert}
\end{align*} 
where $\mathcal{S}\subset \mathbb{R}^2$ is a strict and suitable subset of the whole space. 
This is because in general, the expression $\Vert M z \Vert$ is unbounded on $\mathbb{R}^2$. 
For example in our case $\nabla V$ is unbounded. 
This would give
\begin{align*}
\left\Vert Mz \right\Vert 
=\Vert z \Vert 
\frac{\left\Vert Mz \right\Vert}{\left\Vert z \right\Vert}
\leq \left\Vert z \right\Vert
\sup_{\substack{z \in \mathcal{S}\\ z\neq0}}
\frac{\left\Vert Mz \right\Vert}{\left\Vert z \right\Vert}
\leq \left\Vert M \right \Vert
_{\mathcal{S}}\, \left\Vert z \right\Vert.
\end{align*}
But we also need the condition $M0=0$ for the zero vector (see Remark \ref{chap_6:tru:remark}). 
This means we have to shift the coordinates to
\begin{align*}
z_{new}=z_{old}-z_{well}
\end{align*}
where the well is now the new origin. 
Now we can apply Lemma \ref{chap_6:thm:iter} to Equation \ref{chap_6:eqn:3} and get 
\begin{align}
\left\Vert z_n \right\Vert 
&\leq 
\left\Vert M \right\Vert ^n_{\mathcal{S}}\, \left\Vert z_0 \right\Vert +
\epsilon\sqrt{t_{step}}\sum_{i=0}^{n-1}\left\Vert M \right\Vert ^i_{\mathcal{S}} \
\left\Vert \xi_{n-1-i} \right\Vert \nonumber 
\end{align}
where the starting position is 
\begin{align*}
z_0&=\left(x_{start},y_{start}\right) 
-\left(x_{well}(t_{fix}),y_{well}(t_{fix})\right)
\end{align*}
where 
$\left(x_{start},y_{start}\right)$ is the starting position and 
$\left(x_{well}(t_{fix}),y_{well}(t_{fix})\right)$ is the position of one well at time $t=t_{fix}$

\subsection{Stability Problems}
Lemma \ref{chap_6:thm:iter} has allowed us to rewrite the Euler method, expressed in Equation \ref{chap_6:eqn:3}, as
\begin{align}
\left|z_{n}\right|&=\left|M(z_{n-1})+\epsilon\sqrt{t_{step}}\,\xi_{n-1}\right|\nonumber\\
&\leq 
\left\Vert M \right\Vert ^n_{\mathcal{S}}\, \left\Vert z_0 \right\Vert +
\epsilon\sqrt{t_{step}}\sum_{i=0}^{n-1}\left\Vert M \right\Vert ^i_{\mathcal{S}} \
\left\Vert \xi_{n-1-i} \right\Vert\label{chap_6:eqn:4}
\end{align}
where if $\epsilon=0$ we would reduce back to the deterministic system 
\begin{align*}
\left|z_{n}\right|\leq 
\left\Vert M \right\Vert ^n_{\mathcal{S}}\, \left\Vert z_0 \right\Vert. 
\end{align*}
It is known that if the time step $t_{step}$ is too large then even the deterministic trajectory is unstable. 
A stable time step $t_{step}$ is one which gives 
\begin{align}
\left\Vert M \right\Vert _{\mathcal{S}}
\leq (1-\delta)\label{chap_6:eqn:5}
\end{align}
where $0<\delta<1$ and the particle would settle at the bottom of the well as $n\longrightarrow \infty$. 
There are two approaches to the problem here.
\begin{enumerate}
\item Fix the set $\mathcal{S}$ and solve for the time step $t_{step}$ such that $\left\Vert M \right\Vert_{\mathcal{S}}\leq (1-\delta)$ holds. This is  solving for time. 
\item Fix the time step $t_{step}$ and solve for the set $\mathcal{S}$ such that $\left\Vert M \right\Vert_{\mathcal{S}}\leq (1-\delta)$ holds. This is  solving for space. 
\end{enumerate}
and each approach hinges on the assumption that the $\mathcal{S}$ and $t_{step}$ we fixed to begin with is a good and stable choice. 
We can only solve for time or space but not both. 
This is also complicated by the fact that 
the following estimate is too rough
\begin{align}
\frac{\left\Vert Mz \right\Vert}{\left\Vert z \right\Vert}
\leq \frac{\left\Vert z-t_{step}\nabla V_{t=t_{fix}}(z) \right\Vert}{\left\Vert z \right\Vert}
\leq \frac{\left\Vert z \right\Vert +\left\Vert t_{step}\nabla V_{t=t_{fix}}(z)\right\Vert}{\left\Vert z \right\Vert}
\leq 1+\delta'\label{chap_6:eqn:6}
\end{align}
where $\delta'>0$.

\subsection{Estimating $R\leq R_1$ and $R\leq R_2$}
Now two bounds on the radius $R$ is derived. The first bound is 
\begin{align*}
R_1=\min_{\substack{w_j(t),c_i(t)\\ w_j\neq c_i  \\ 0\leq t \leq T}}
\left\{
\frac{1}{2}
\left|
w_j(t)-c_i(t)
\right|
\right\}
\end{align*}
which is half the distance from the wells to all critical points, for all wells, over one period. 
This is such that there is always exactly one critical point inside the region covered by the radius $R$. 
The second bound comes from considering Equation \ref{chap_6:eqn:4}
\begin{align*}
z_{n}\leq 
\left\Vert M \right\Vert ^n_{\mathcal{S}}\, \left\Vert z_0 \right\Vert +
\epsilon\sqrt{t_{step}}\sum_{i=0}^{n-1}\left\Vert M \right\Vert ^i_{\mathcal{S}} \
\left\Vert \xi_{n-1-i} \right\Vert
\end{align*}
and seek a bound on the variance of the random part. 
The variance of the random part is 
\begin{align*}
\text{var}
\left(
\epsilon\sqrt{t_{step}}\sum_{i=0}^{n-1}\left\Vert M \right\Vert ^i_{\mathcal{S}} \
\left\Vert \xi_{n-1-i} \right\Vert
\right)
=\epsilon^2t_{step}
\sum_{i=0}^{n-1}\left\Vert M \right\Vert ^{2i}_{\mathcal{S}} \
\text{var} \left(\Vert \xi_{n-1-i} \Vert \right).
\end{align*}
Notice how each of the $\Vert \xi_{n-1-i}\Vert$ is $\chi$ distributed with $r=2$ degrees of freedom.
Its variance is
\footnote{
Let $\xi_x$ and $\xi_y$ be independently and normally distributed in $N(0,1)$.
Let $\zeta=\sqrt{\xi_x^2+\xi_y^2}$ and $\eta=\xi_x^2+\xi_y^2$,
then $\zeta$ is $\chi$ distributed and $\eta$ is $\chi^2$ distributed. 
Notice that we want the variance of $\zeta$ and NOT the variance of $\eta$, therefore only the $\chi$ distribution is needed.
} 
\begin{align*}
\sigma^2
&=\text{var} \left(\Vert \xi_{n-1-i}\Vert\right)\\
&=r-
\left(\sqrt{2}
\frac{\Gamma\left(\frac{r+1}{2}\right)}{\Gamma\left(\frac{r}{2}\right)}\right)^2\\
&=0.4292 \quad \text{for} \quad r=2
\end{align*}
which means the variance of the random part is bounded by 
\begin{align*}
\text{var}
\left(
\epsilon\sqrt{t_{step}}\sum_{i=0}^{n-1}\left\Vert M \right\Vert ^i_{\mathcal{S}} \
\left\Vert \xi_{n-1-i} \right\Vert
\right)
&=
\epsilon^2t_{step}
\sum_{i=0}^{n-1}\left\Vert M \right\Vert ^{2i}_{\mathcal{S}} \
\sigma^2\\
&\leq
\epsilon^2\sigma^2t_{step}
\sum_{i=0}^{\infty}\left\Vert M \right\Vert ^{2i}_{\mathcal{S}}\\
&=\epsilon^2\sigma^2t_{step}\,
\frac{1}{1-\left\Vert M \right \Vert_{\mathcal{S}}^2}
\end{align*}
which means the random part has bounded variance, even if one considers infinite time. 
The error is constantly cancelling out with itself, and the Euler method can run for a very long time and still be stable. 
The radius must be similar to the size of this variance.  
Define
\begin{align*}
t_{step}^{osc}
=
\min
\left\{
t_1, t_2, t_3, t_4, t_5
\right\}
\end{align*}
which is the smallest of all the time steps we have derived so far. 
Assume that the operator can indeed be bounded by $
\left\Vert M \right \Vert_{\mathcal{S}}
\leq (1-\delta)$, then we can define a second bound $R_2$ on the radius as 
\begin{align*}
R_2^2&=
\text{var}
\left(
\epsilon\sqrt{t_{step}}\sum_{i=0}^{n-1}\left\Vert M \right\Vert ^i_{\mathcal{S}} \
\left\Vert \xi_{n-1-i} \right\Vert
\right)\\
&\leq
\epsilon^2\sigma^2t_{step}\,
\frac{1}{1-\left\Vert M \right \Vert_{\mathcal{S}}^2}\\
&=
\epsilon^2\sigma^2t_{step}\,
\frac{1}{1-(1-\delta)^2}
\end{align*}
where we have used $
\left\Vert M \right \Vert_{\mathcal{S}}
\leq (1-\delta)$.
This gives $R_2$ as 
\begin{align*}
R_2=
\sqrt{
\frac{\epsilon^2\sigma^2t_{step}^{osc}}{2\delta-\delta^2}
}
\end{align*}
after choosing a suitable value for $\delta$ say $\delta=0.01$, then a reasonable choice on the radius $R$ could be 
\begin{align*}
R=\min\left\{R_1, R_2\right\}.
\end{align*}

\subsection{Estimating $t_{step}\leq t_6$}

Define the set 
\begin{equation*}
\mathcal{S}_3=\left\{x\in\mathbb{R}^r: \left|w_j(t)-x\right|\leq R:0\leq t \leq T, \; \forall w_j(t)\right\}
\end{equation*}
which is the set of points where the distance to a well is less than the radius over all times and all wells. 
The new jump size we do not want to see during our simulation is $l_3=R$
\begin{equation*}
l_3=R
\end{equation*}
which gives another condition on the time step as
\begin{equation*}
t_6=W(\mathcal{S}_3, l_3).
\end{equation*}
The idea behind this final condition is so that the time step $t_{step}$ is small and precise enough such that the region around the well can capture it. 

\section{Selection of Parameters}
\label{conclusion_parameter_selection}

Six conditions on the time step and two bounds on the radius were developed. 
These give the recommended values for $R$ and $t_{step}$ as 
\begin{align*}
R&=\min\left\{R_1, R_2\right\}\\
t_{step}
&=
\min
\left\{
t_1, t_2, t_3, t_4,t_5, t_6
\right\}. 
\end{align*}
These are not rigorous estimates and hence can only be used as a guideline. 
We performed several checks of consistency of our simulations at a different level of precision before comparing them with the theoretical result. 
Notice $t_{step}$ and $R$ are just some of the considerations we have to make when choosing a set of parameters to use for the simulations. 
More details are given below.

\subsection{Selection of Parameters - Simulation}
\label{select_parameter}

In Chapter \ref{sum_chap_results} we will introduce the simulations which we are going to do. 
Notice that the SDE we want to simulate can be rewritten as 
\begin{align*}
dx&=\left[-\frac{\partial V_0}{\partial x}+F\cos\phi\cos \Omega t \ \right]dt+\epsilon \ dw_x\\
dy&=\left[-\frac{\partial V_0}{\partial y}+F\sin\phi\cos \Omega t \ \right]dt+\epsilon \ dw_y.
\end{align*}
The following parameters will be fixed with the values 
\begin{align*}
a=0.15 \quad
b=0.1 \quad 
F=0.7F^{crit} \quad 
\Omega=0.001
\end{align*}
and $\epsilon$ and $\phi$ will systemically vary by going through all possible combinations of 
\begin{align*}
\epsilon=0.15, 0.16, \ldots, 0.30 
\quad \text{and} \quad 
\phi=0^\circ,75^\circ,78^\circ,81^\circ,84^\circ,87^\circ,90^\circ
\end{align*}
and the following value of the time step and radius is used
\begin{align*}
t_{step}=0.014 \quad R=0.19. 
\end{align*}
We give some reasons as to why these parameters were chosen.

\subsection{Selection of Parameters - Validity of Kramers' Formula}
There is the validity of Kramers' formula. 
Consider the graphs below. 

\begin{figure}[H]
\centerline{\includegraphics[scale=0.32]{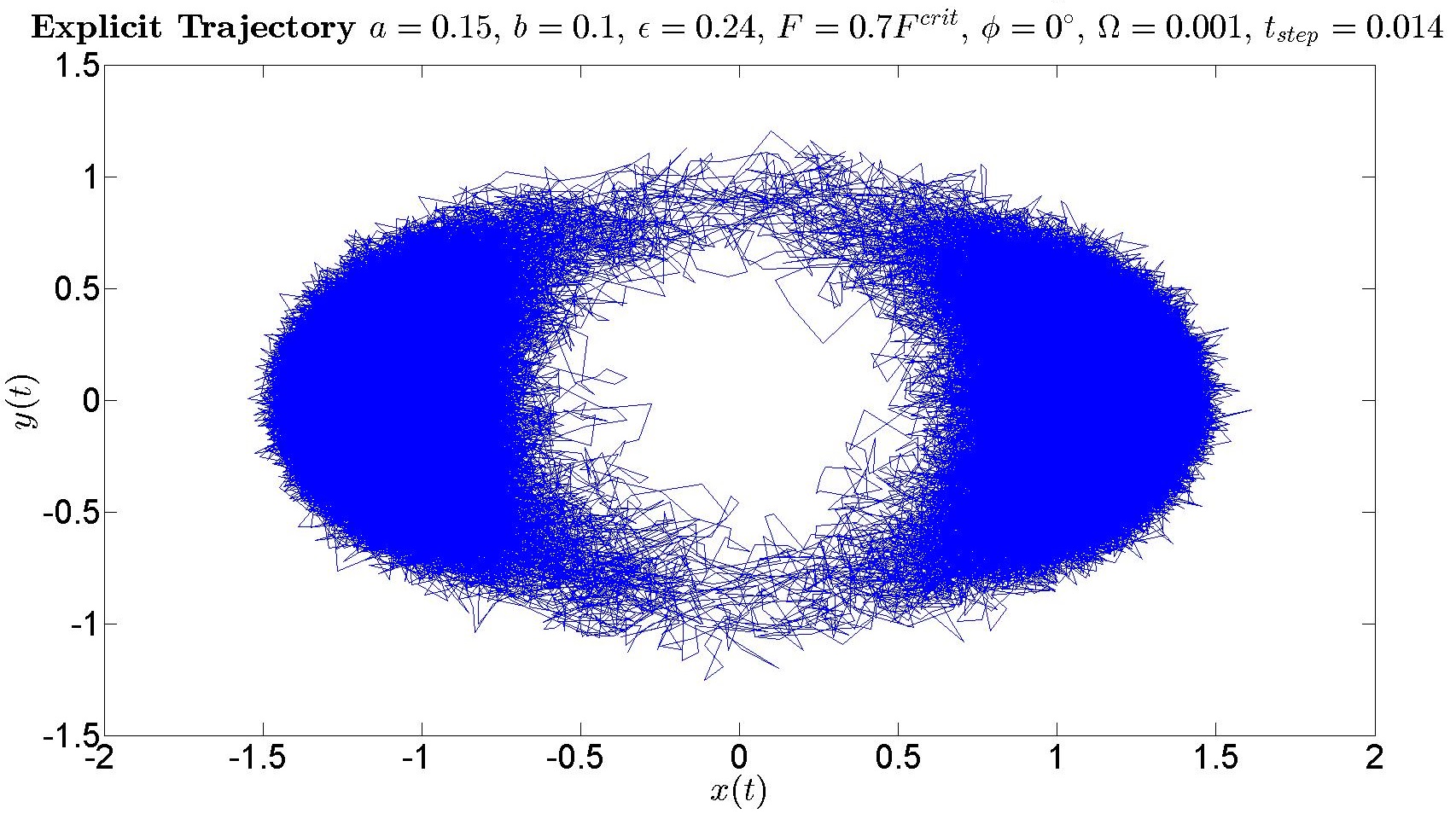}}
\caption{Notice that transitions tend to occur near the saddles. 
This is when Kramers' formula gives a good approximation for the escape rates and escape times.
}
\label{chap_8_path_explicit}
\end{figure}

\begin{figure}[H]
\centerline{\includegraphics[scale=0.32]{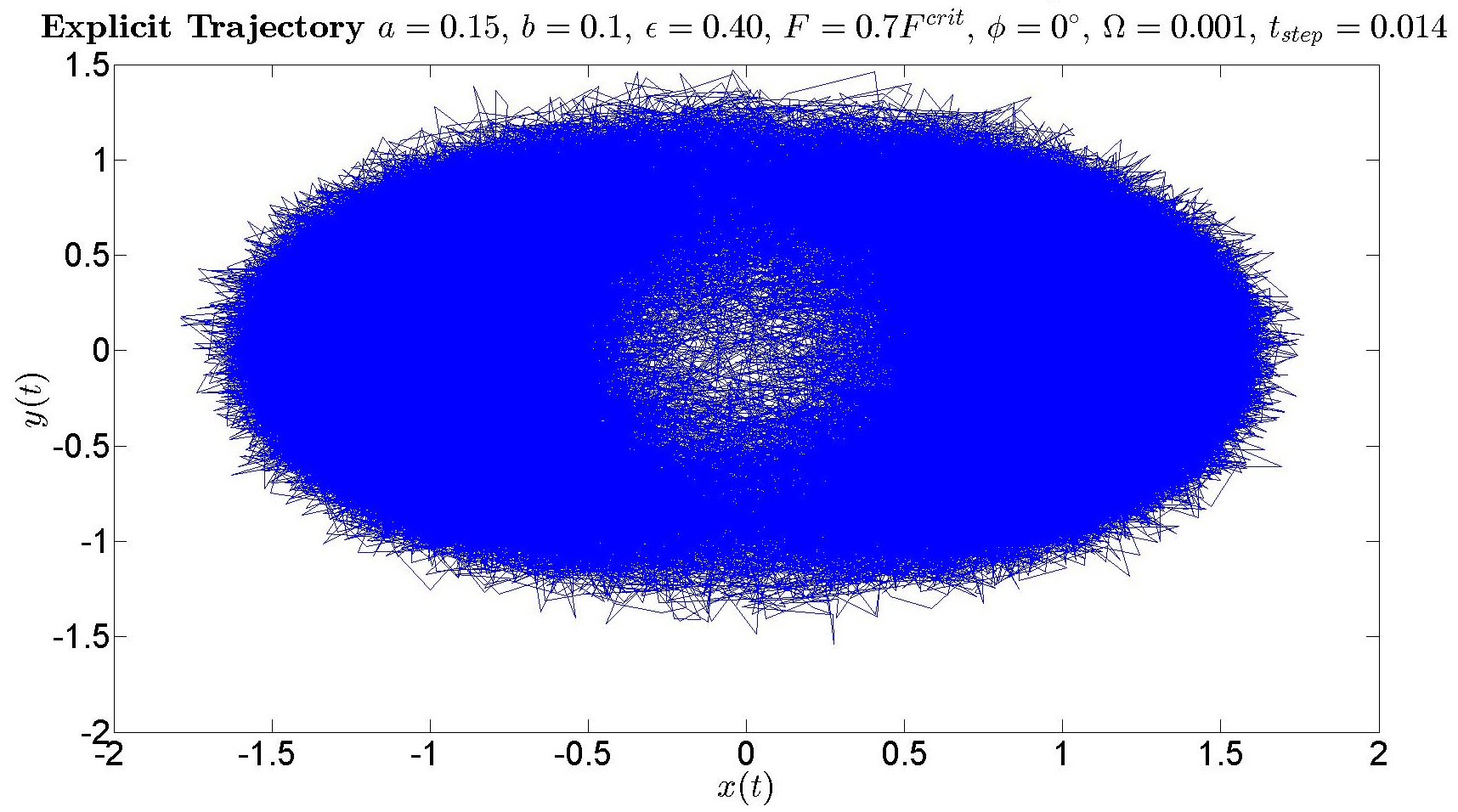}}
\caption{For higher noise levels transitions would occur near the hill, which is close to the origin.
Kramers' formula is not a good approximation here.
}
\label{chap_8_path_explicit_2}
\end{figure}

\noindent 
Note that in Figures \ref{chap_8_path_explicit} and \ref{chap_8_path_explicit_2} the position of the hill is near the origin and the positions of the saddles are near the $y$-axis. 
For very high noise levels Kramers' formula would start to fail as an approximation to the escape times and rates. This is when the particle tend to transit through both the saddles and the hill.
A level of subjective judgement is required to gauge how good an approximation Kramers' formula is. 
Nevertheless it has been checked that at $\epsilon=0.30$, that Kramers' formula is good enough an approximation for all angles. 
This checking was also done for an unperturbed static potential, a static potential with maximal forcing and an oscillating potential.\footnote{This footnote also applies for graphs later in the thesis. 
Note that Figures 
\ref{chap_8_path_explicit}, \ref{chap_8_path_explicit_2},
\ref{chap_8_path_p0},
\ref{chap_8_path_p84} and 
\ref{chap_8_path_p90}
have use the following parameters in the simulations. 
\begin{align*}
t_{start}=0 \quad t_{step}=0.014 \quad t_{end}=100000
\end{align*}
although a time step size of $t_{step}=0.014$ was used, only the data for every ten time steps were shown. 
This was done to avoid handling a very large graph in MatLab.}

\subsection{Selection of Parameters - Adiabatic Approximation}
\label{adiabatic_parameters}
There is a reason why we need Kramers' formula to be valid, that is a good approximation to the escape times. 
Recall that in Chapter \ref{sum_chap_escape_oscill} theories about escape times from an oscillatory potential was developed. 
One of the main results were the continuous time invariant measures for an oscillatory potential (see Corollary \ref{corollary_invariant_measure_contin}) and the PDF for the escape times (see Theorem \ref{chap_4_pdf_thm}). 
The invariant measures are
\begin{align*}
\overline{\nu}_-(t)
&=
\frac{\int^t_0p(s)g(s)\,ds}{g(t)}
+\frac{\int^T_0p(s)g(s)\,ds}{g(t)\left(g(T)-1\right)}
\\[0.5em]
\overline{\nu}_+(t)
&=
\frac{\int^t_0q(s)g(s)\,ds}{g(t)}
+\frac{\int^T_0q(s)g(s)\,ds}{g(t)\left(g(T)-1\right)}
\end{align*}
where
\begin{align*}
g(t)=\exp\left\{\int^t_0p(u)+q(u)\,du\right\}
\end{align*}
and the PDFs are
\begin{align*}
p_{-}(t,u)&=R_{-1+1}(t)\exp\left\{-\int^t_uR_{-1+1}(s)\,ds\right\}\\[0.5em] 
p_{+}(t,u)&=R_{+1-1}(t)\exp\left\{-\int^t_uR_{+1-1}(s)\,ds\right\}
\end{align*}
where we will make the approximation 
\begin{align*}
p_{tot}(t)&=
\frac{1}{2}
\int_{0}^{T}p_-(t+u,u)m_-(u)+p_+(t+u,u)m_+(u)\,du
\approx p_+(t,0)
\end{align*}
because we do not have expressions for $m_-(u)$ and $m_+(u)$. 
Notice one subtlety about all the theories developed in Chapter \ref{sum_chap_escape_oscill}. 
It was assumed that the probabilities for transit, that is $p(t)$ and  $q(t)$, were known for whatever the driving frequency $\Omega$ may be. 
In the PDFs it was also assumed that the escape rates 
$R_{-1+1}(t)$ and $R_{+1-1}(t)$ were known for however fast or slow the driving frequency $\Omega$ may be.

But such ideal expressions for $p(t)$, $q(t)$, $R_{-1+1}(t)$ and $R_{+1-1}(t)$ are not known.
Note that the escape rates as given by Kramers' formula in Chapter \ref{chap_sect_kram} are only valid for small noise for a static potential.  
When we do analysis in Chapter \ref{sum_chap_results} the rates $R_{-1+1}(t)$ and $R_{+1-1}(t)$ are calculated by Kramers' formula as though it is a static potential. 
This means an oscillatory potential is being approximated by a static potential. 
This is the adiabatic approximation. 
The following conditions are proposed to decide if the adiabatic approximation is valid 
\begin{align}
\min_{
\substack{
t\in[0,T]}
}
\left\{
\tau^{kram}_{-1+1}(t),\tau^{kram}_{+1-1}(t)
\right\}\leq \frac{2\pi}{\Omega}\label{chap_7_adia_1}\\[0.5em]
\max_{
\substack{
t\in[0,T]}
}
\left\{
\tau^{kram}_{-1+1}(t),\tau^{kram}_{+1-1}(t)
\right\}\leq \frac{2\pi}{\Omega}\label{chap_7_adia_2}
\end{align}
that is we consider the minimum and maximum escape times over one period  as given by Kramers' formula for a static potential. 
If this is less than the period of the driving frequency $T=2\pi/\Omega$, then the adiabatic approximation may be valid. 
This was checked for all the parameters and Equations 
\ref{chap_7_adia_1} and \ref{chap_7_adia_2} only hold for the following range of parameters
\begin{align*}
\phi\geq75^\circ 
\quad \text{and} \quad 
\epsilon\geq0.28. 
\end{align*}
This is a compromise we made. 
Nevertheless  Equations 
\ref{chap_7_adia_1} and \ref{chap_7_adia_2} do not define the adiabatic approximation, but give an idea of what range the parameters need to be in.

\subsection{Selection of Parameters - Stability of Deterministic Trajectory}
There is also the stability of the deterministic trajectory (when $\epsilon=0$) to be concerned about. 
The following starting positions were chosen for four  particles
\begin{align*}
x_{start}=+2 \quad y_{start}=+2\\
x_{start}=+2 \quad y_{start}=-2\\
x_{start}=-2 \quad y_{start}=+2\\
x_{start}=-2 \quad y_{start}=-2
\end{align*}
and their trajectories falling through an unperturbed static potential, a static potential with maximal forcing and an oscillating potential were all shown to be stable for all angles. 
Note that these values of $x_{start}$ and $y_{start}$ were chosen because they are at a place where the potential is so high the particle will probably never go there.
Notice how in Figures  \ref{chap_8_path_explicit} and \ref{chap_8_path_explicit_2} the trajectory almost never reaches any of the four corners $(2,2)$, $(2,-2)$, $(-2,2)$ and $(-2,-2)$. 
This is the reason for checking the trajectories there.

\subsection{Selection of Parameters - Random Number Generator}
A few words may also be said about the random number generator we are using. 
Note that these are pseudo random numbers. 
They are deterministic sequences of numbers with a very long period. 
We use the randn() function in MatLab. 
It is very random and will almost certainly not repeat itself for many years. 
This is because the period is $2^{1492}$. 
Even with the computer generating 60 million random numbers per second it would still take $10^{434}$ years to reach the end of the cycle \cite{matlab_random}. 
The function rng('shuffle') was also used, which picks a seed for the random number generator according to the time of the computer clock. 
When the Parallel ToolBox is used in MatLab, each worker randomly picks a seed for itself.

\subsection{Selection of Parameters - Calculating Positions of Critical Points}
As mentioned in Chapter \ref{stable_critical_points}, the numerical algorithms can be very unstable for calculating the positions of the critical points. 
In the simulations which we are going to conduct, a table of the positions of all the critical points within one period are calculated first, then stored in the temporary memory of the computer, and looked up every time the position of a critical point is needed. 
This table is calculated in the following way. 
Define what we call the pseudo parameters to be 
\begin{align*}
u_{start}=0 \quad u_{step}=0.001 \quad u_{end}=2\pi
\quad \Omega=1
\end{align*}
and then numerically find the positions of the critical points of the equation
\begin{align*}
V_t=V_0-F_x\cos\Omega t -F_y\cos\Omega t
\end{align*}
where 
\begin{align*}
t=0, \quad
t=u_{step},\quad 
2u_{step}, \quad 
3u_{step}, \quad 
\ldots \quad 
t\approx 2\pi.
\end{align*}
Due to the way a matrix is define in MatLab, the last value of $t$ is not exactly at $t=2\pi$. 
This table was checked for all angles, and the pseudo parameters we have chosen are stable.

\subsection{Selection of Parameters - Higher Precision Numerics}
We also have some remarks about the time step we have chosen. 
As we shall see in Chapter \ref{sum_chap_results}, one of the main effects which we have observed in this thesis is what we call the Single, Intermediate and Double Frequency in the histograms of escape times. 
These effects were first observed for the values of $a=0.15$, $b=0.1$, $F=0.7F^{crit}$ and $\Omega=0.001$  when 
\begin{align*}
t_{step}\geq 0.0286
\quad 
R\geq 0.3218
\end{align*} 
and was observed again when $t_{step}=0.014$ and $R=0.19$.
Note that  $t_{step}=0.014$ and $R=0.19$ was used for the results of this thesis. 
Thus we have confidence in believing that the data we have collected is reliable. 
Consider the graphs below. 
They are histograms of escape times from both the left and right wells combined.
They are also normalised to give an empirical PDF. 

\begin{figure}[H]
\centerline{\includegraphics[scale=0.35]{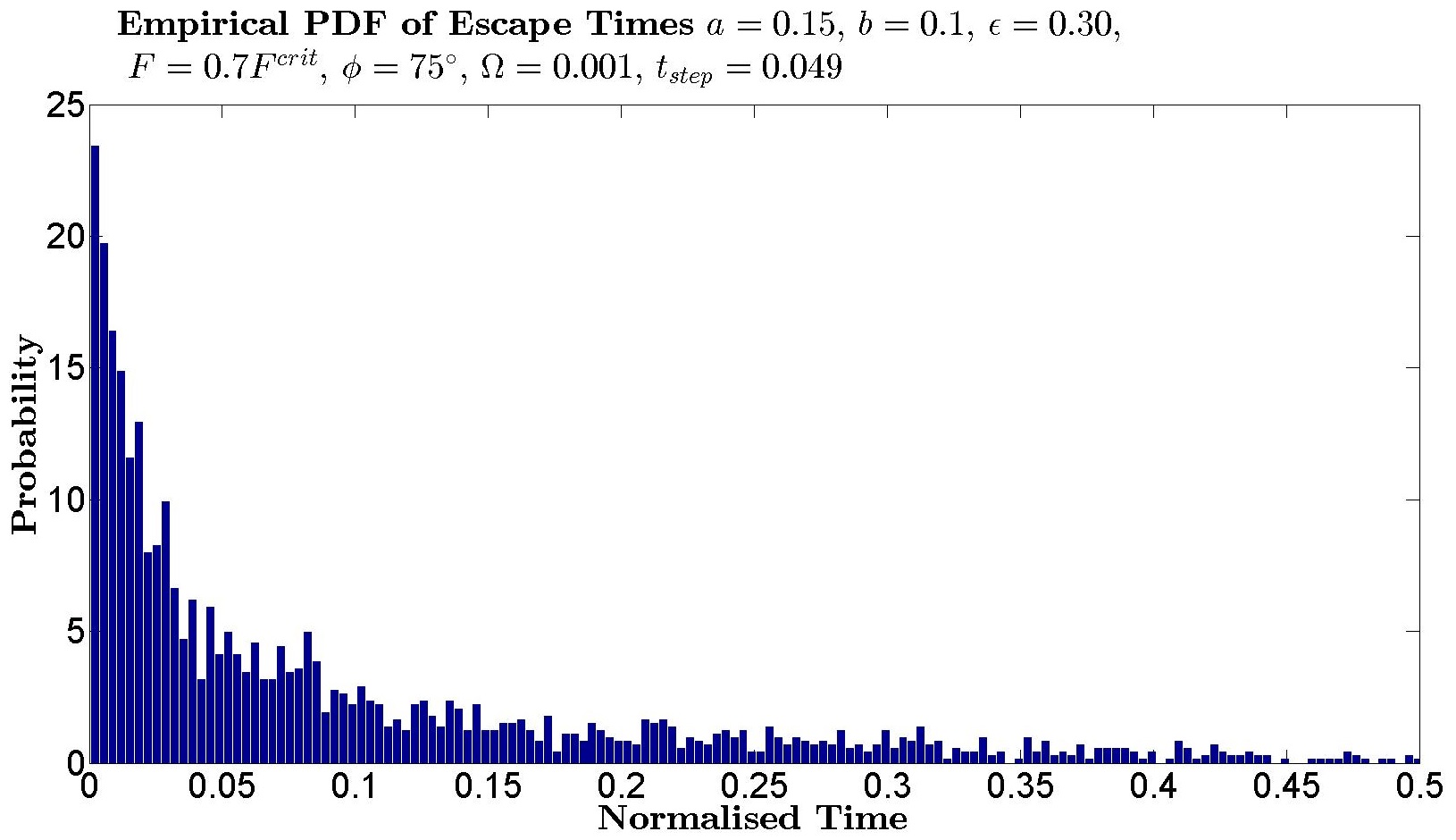}}
\caption{
Here 2239 transitions were used. 
The averaged measured escape time is $0.0977T$. 
The radius used was $R= 0.5386$. 
}
\label{chap_7_g56f_p75_e30_all_data}
\end{figure}

\begin{figure}[H]
\centerline{\includegraphics[scale=0.35]{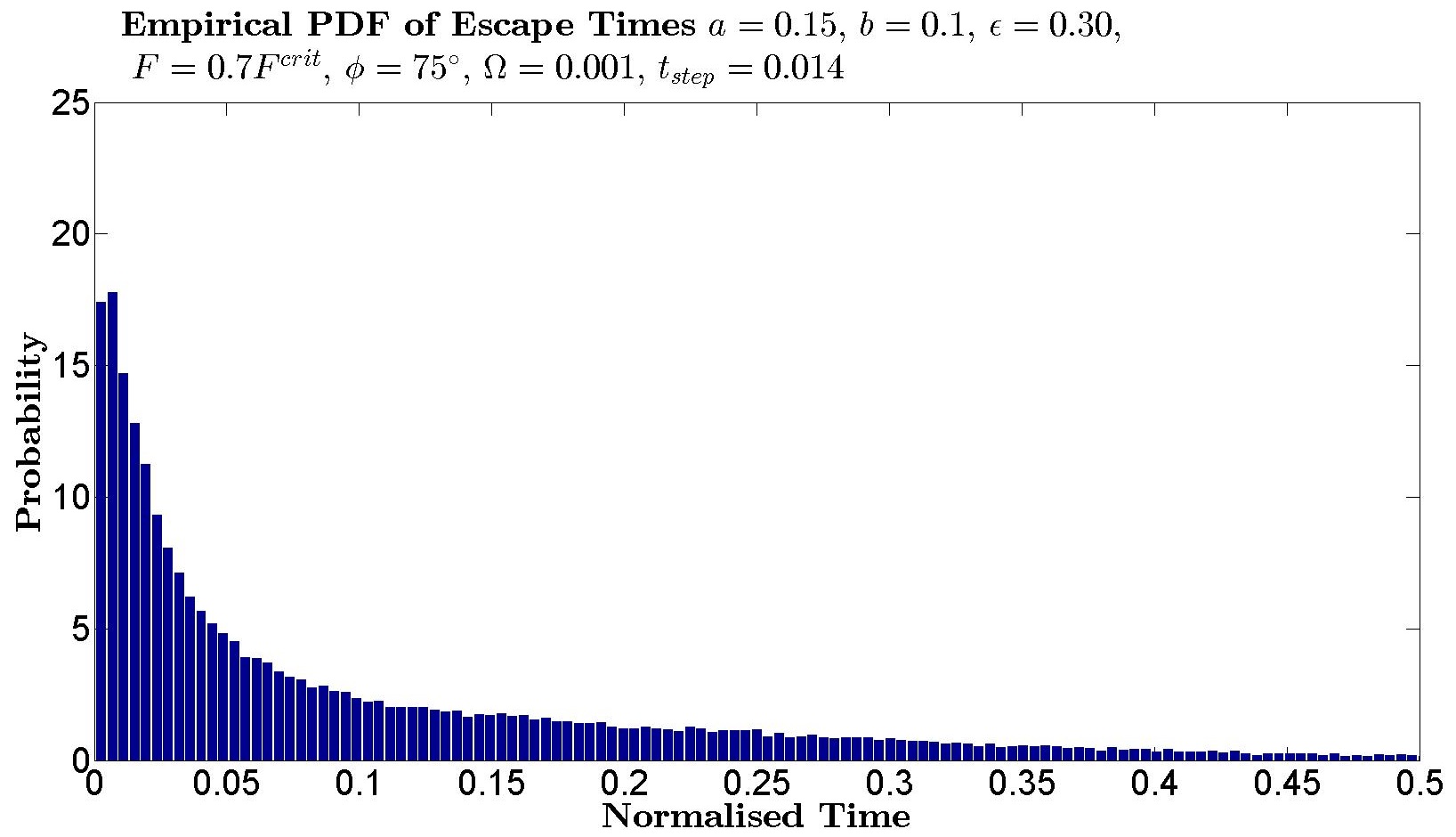}}
\caption{Here 56244 transitions were used. 
The averaged measured escape time is $ 0.1064T$.
The radius used was $R=0.19$.  
}
\label{chap_7_g75_p75_e30_pdf}
\end{figure}

\noindent The higher the noise level $\epsilon$ is the more susceptible to errors would the Euler method be. 
This is why the highest level of noise $\epsilon=0.30$ is chosen for these examples. 
One may argue that having more transitions would give a better measurement of the escape times in Figure \ref{chap_7_g75_p75_e30_pdf}. 
But the difference in real time between the measured averaged escape times is only 54 seconds out of a period of $T=2\pi/\Omega=6283$ seconds.

\chapter{Simulations, Results and Analysis}
\label{sum_chap_results}
This Chapter presents the main results of this thesis. 
The six measures $M_1$, $M_2$, $M_3$, $M_4$, $M_5$, $M_6$, the distributions of escape times and the newly developed conditional Kolmogorov-Smirnov test are used to analyse simulations of the SDE with the Mexican Hat Toy Model used as the potential. The six measures are shown to be insensitive to the saddles changing from alternating to synchronised. 
This is shown to be due to the fact that the invariant measure is constant for synchronised saddles. 
The distribution of escape times shows new signatures as the saddles change from alternating to synchronised and the conditional Kolmogorov-Smirnov test is demonstrated to be an appropriate way to analyse the escape times collected from many transitions.

We simulate a series of stochastic trajectories for the Mexican Hate Toy Model and analyse them. 
We remind ourselves of the unperturbed Mexican Hat potential
\begin{align*}
V_0(x,y)=\frac{1}{4}r^4-\frac{1}{2}r^2-ax^2+by^2
\quad \text{where} \quad r=\sqrt{x^2+y^2}
\end{align*}
and the SDE we want to simulate is 
\begin{align*}
dx&=\left[-\frac{\partial V_0}{\partial x}+F_x\cos \Omega t \ \right]dt+\epsilon \ dw_x\\
dy&=\left[-\frac{\partial V_0}{\partial y}+F_y\cos \Omega t \ \right]dt+\epsilon \ dw_y
\end{align*}
where $F_x$ and $F_y$ are the $x$ and $y$ components of the forcing, $\Omega$ is the forcing frequency, $\epsilon$ is the noise level and $w_x$ and $w_y$ are two independent Wiener processes. 
We can define the magnitude and angle of the forcing by 
\begin{align*}
F=\sqrt{F_x^2+F_y^2}
\quad \text{and} \quad 
\phi=\tan^{-1}\left(\frac{F_y}{F_x}\right). 
\end{align*}
This means the SDE can be written alternatively as 
\begin{align*}
dx&=\left[-\frac{\partial V_0}{\partial x}+F\cos\phi\cos \Omega t \ \right]dt+\epsilon \ dw_x\\
dy&=\left[-\frac{\partial V_0}{\partial y}+F\sin\phi\cos \Omega t \ \right]dt+\epsilon \ dw_y.
\end{align*}
The critical forcing is defined by (see Equation \ref{chap_5_critical_forcing})
\begin{align*}
F^{crit}=\min\left\{F_x^{sad}, F_x^{crit}, F_y^{sad}, F_y^{crit}\right\}. 
\end{align*}

\section{Details of the Simulations}
The Euler method was used to simulate this SDE with the following  parameters being fixed at the following values 
\begin{align*}
a=0.15 \quad
b=0.1 \quad 
F=0.7F^{crit} \quad 
\Omega=0.001.
\end{align*}
The angle of the forcing $\phi$ and the noise level $\epsilon$ were varied. 
The values used were 
\begin{align*}
\epsilon=0.15, 0.16, \ldots, 0.30 
\quad \text{and} \quad 
\phi=0^\circ,75^\circ,78^\circ,81^\circ,84^\circ,87^\circ,90^\circ.
\end{align*}
The averaged diffusion trajectories $\langle x_t \rangle$  and $\langle y_t \rangle$ were collected. 
The averaged Markov Chain $\langle Y_t^\epsilon \rangle$ and the averaged Out-of-Phase Markov Chain $\langle \overline{Y}_t^\epsilon \rangle$ were collected as well. 
This would allow for the calculation of the invariant measures $\overline{\nu}_-(\cdot)$ and $\overline{\nu}_+(\cdot)$. 
The time coordinates of the entrance and exit to and from the left and right wells were also collected. 
This would allow for the calculation of the escape times. 
We use the following values for the time step and the radius around the wells
\begin{align*}
t_{step}=0.014 
\quad \text{and} \quad 
R=0.19.
\end{align*}
Note that the period of the forcing is denoted by 
\begin{align*}
T=\frac{2\pi}{\Omega}.
\end{align*}
The averaged trajectories were simulated by taking the averaged of 200 realisations. 
Each realisation was 30 periods long, that is a trajectory over the interval $[0,30T]$. 
The initial value of the state probabilities were set at 
\begin{align*}
\nu_-(0)=\nu_+(0)=\frac{1}{2}
\end{align*}
which assists in giving a faster convergence to the invariant measures (see Theorems \ref{chap_4_thm:equal},
\ref{chap_4_thm:not_equal},  
and \ref{chap_4_thm:not:equal_contin}). 
We should also stress that a lot of the data and results presented in this Chapter is just a selection out of a much wider range of results. 
All 112 combinations of the parameters were simulated and analysed.

\section{Six Measures Analysis}
\label{conclusion_six_measures}
The six measures are calculated for the diffusion and Markov Chain case for all angles of the forcing $\phi$ and all noise levels $\epsilon$ used in the simulations. 
When $\phi=90^\circ$ the wells were moving up and down but they were always at the same height as each other. 
The distance from either wells to the saddles, which is a gateway for escape, is the same in both wells at all times. 
This means the $\phi=90^\circ$ can be modelled by a synchronised Markov Chain with $p=q$. 
The invariant measures for the $\phi=90^\circ$ case as predicted by Corollary \ref{cor_half_discrete} and \ref{cor_half_continuous} is $\overline{\nu}_-=\overline{\nu}_+=\frac{1}{2}$. 
The Fourier Transform of the averaged Markov Chain is predicted to be zero by Corollary \ref{transform_discrete_chain} and \ref{transform_continuous_chain}. 
This predicts the six measures at $\phi=90^\circ$ to be 
\begin{align*}
M_1&=0\\
M_2&=0\\
M_3&=\int_0^T \left\langle Y^\epsilon_t \right\rangle^2 dt
=\int_0^T\left(\nu_+(t)-\nu_-(t)\right)^2\,dt
=0\\
M_4&=\int_0^T \left\langle \overline{Y}^\epsilon_t \right\rangle dt\\
&=\int_0^{T/2}
0\times\nu_-(t)+1\times\nu_+(t)\,dt
+
\int_{T/2}^T
1\times\nu_-(t)+0\times\nu_+(t)\,dt\\
&=\frac{1}{2}T\\
M_5&=\int_0^T
\phi^-(t)\ln\left(\frac{\phi^-(t)}{\overline{\nu}_-(t)}\right)+
\phi^+(t)\ln\left(\frac{\phi^+(t)}{\overline{\nu}_+(t)}\right)
dt\\
&=\int_0^{T/2}
\ln\left(\frac{1}{\overline{\nu}_-(t)}\right)\,dt
+
\int_{T/2}^T
\ln\left(\frac{1}{\overline{\nu}_+(t)}\right)
\,dt\\
&=+T\ln(2)\\
M_6&=\int^T_0
-\overline{\nu}_-(t)\ln\overline{\nu}_-(t)
-\overline{\nu}_+(t)\ln\overline{\nu}_+(t)\,
dt\\
&=+T\ln(2).
\end{align*}
Note that $\ln(2)=0.6931\approx0.7$. 
Notice that for very low noise level $\epsilon\approx0$ the probabilities of escape from either well is so small it may be approximately modelled by a synchronised Markov Chain with $p\approx q$. 
The results below confirm the predictions for the case of $\phi=90^\circ$.

\begin{figure}[H]
\centerline{\includegraphics[scale=0.35]{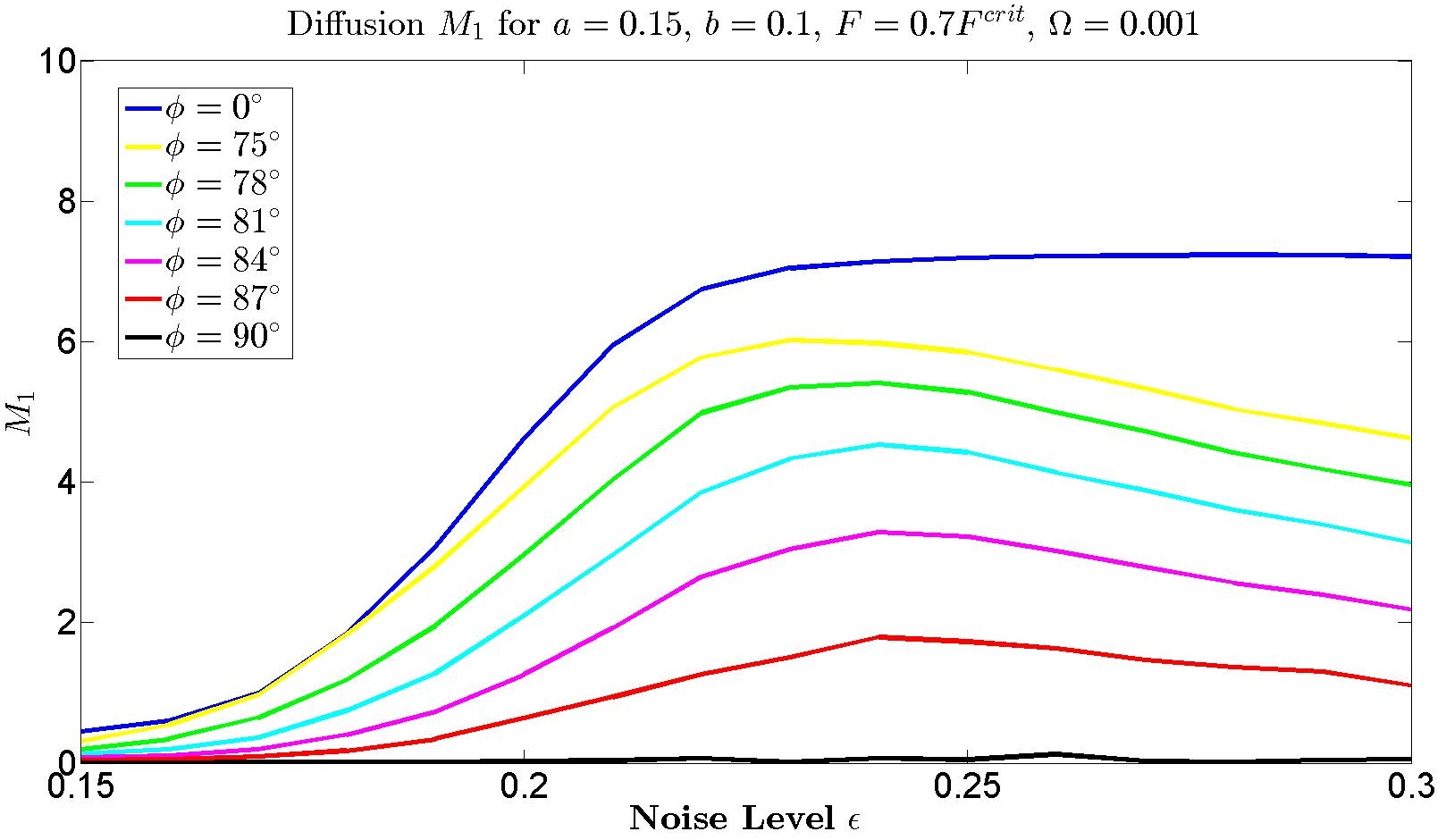}}
\caption{The measure $M_1$ for the diffusion case for various angles and noise levels.}
\label{chap_8_g75_diff_m_1_x_single_measures}
\end{figure}

\begin{figure}[H]
\centerline{\includegraphics[scale=0.35]{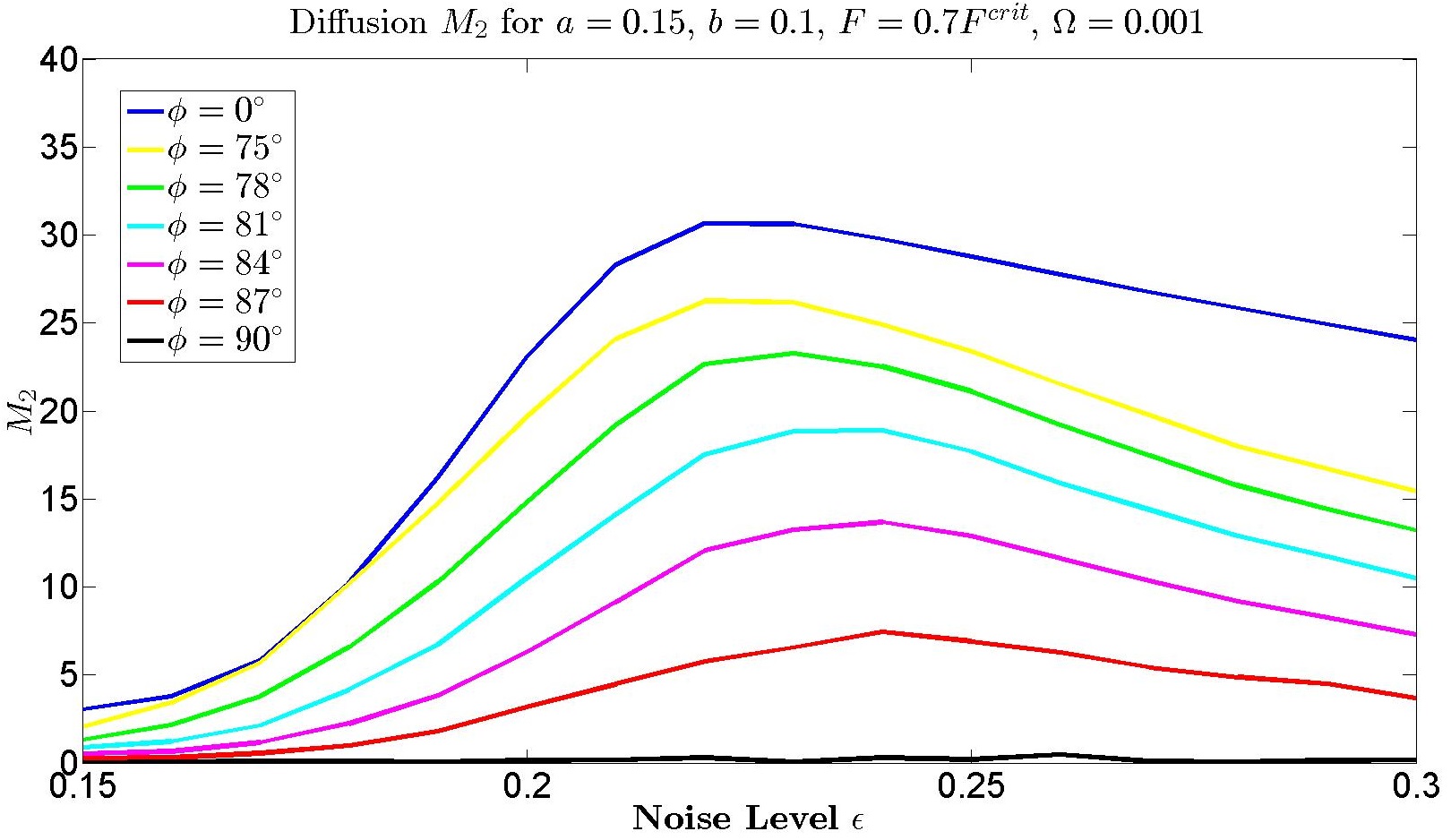}}
\caption{The measure $M_2$ for the diffusion case for various angles and noise levels.}
\label{chap_8_g75_diff_m_2_x_single_measures}
\end{figure}

\begin{figure}[H]
\centerline{\includegraphics[scale=0.35]{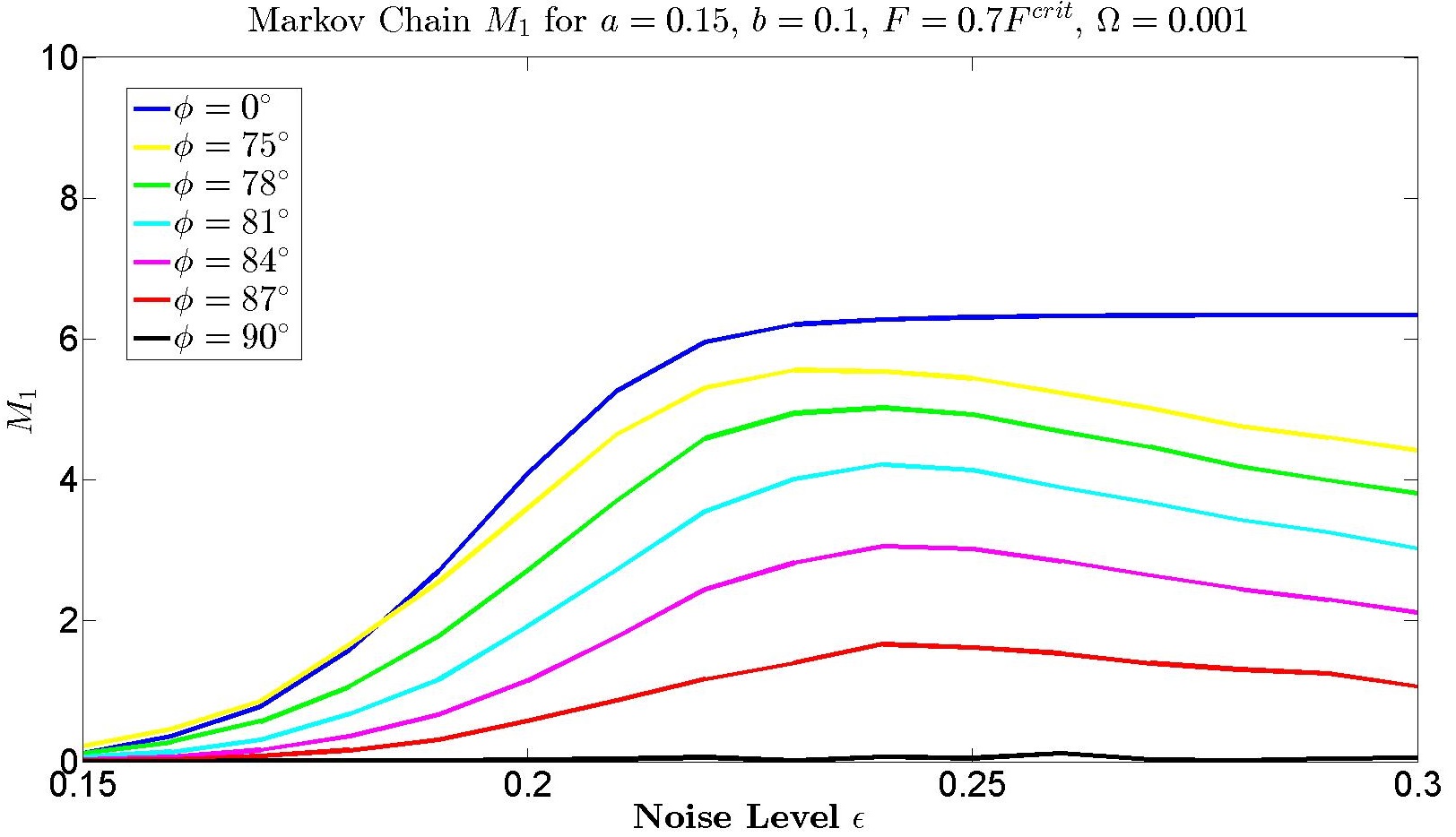}}
\caption{The measure $M_1$ for the Markov Chain for various angles and noise levels.}
\label{chap_8_g75_markov_m_1_single_measures}
\end{figure}

\begin{figure}[H]
\centerline{\includegraphics[scale=0.35]{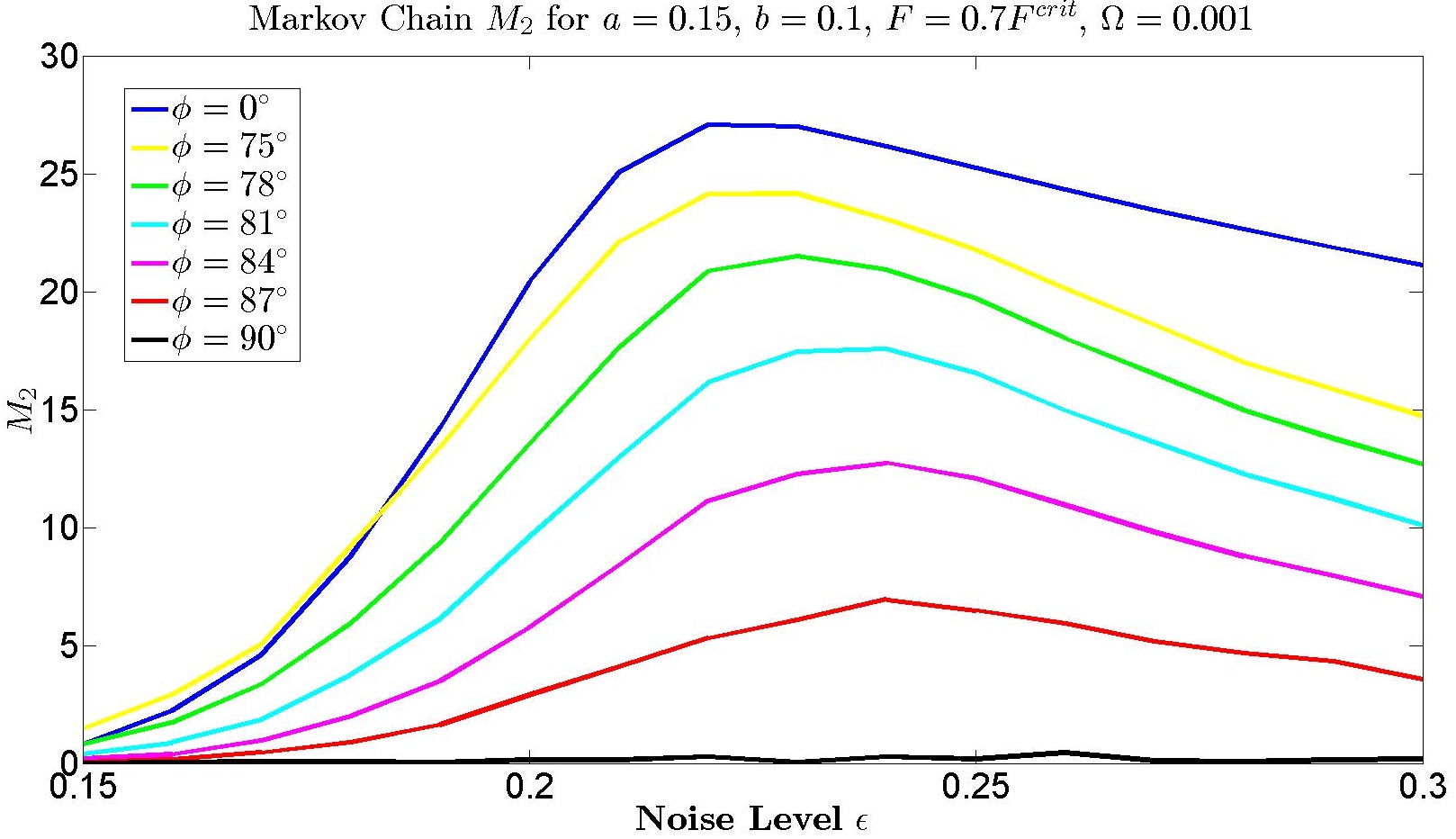}}
\caption{The measure $M_2$ for the Markov Chain for various angles and noise levels.}
\label{chap_8_g75_markov_m_2_single_measures}
\end{figure}

\begin{figure}[H]
\centerline{\includegraphics[scale=0.35]{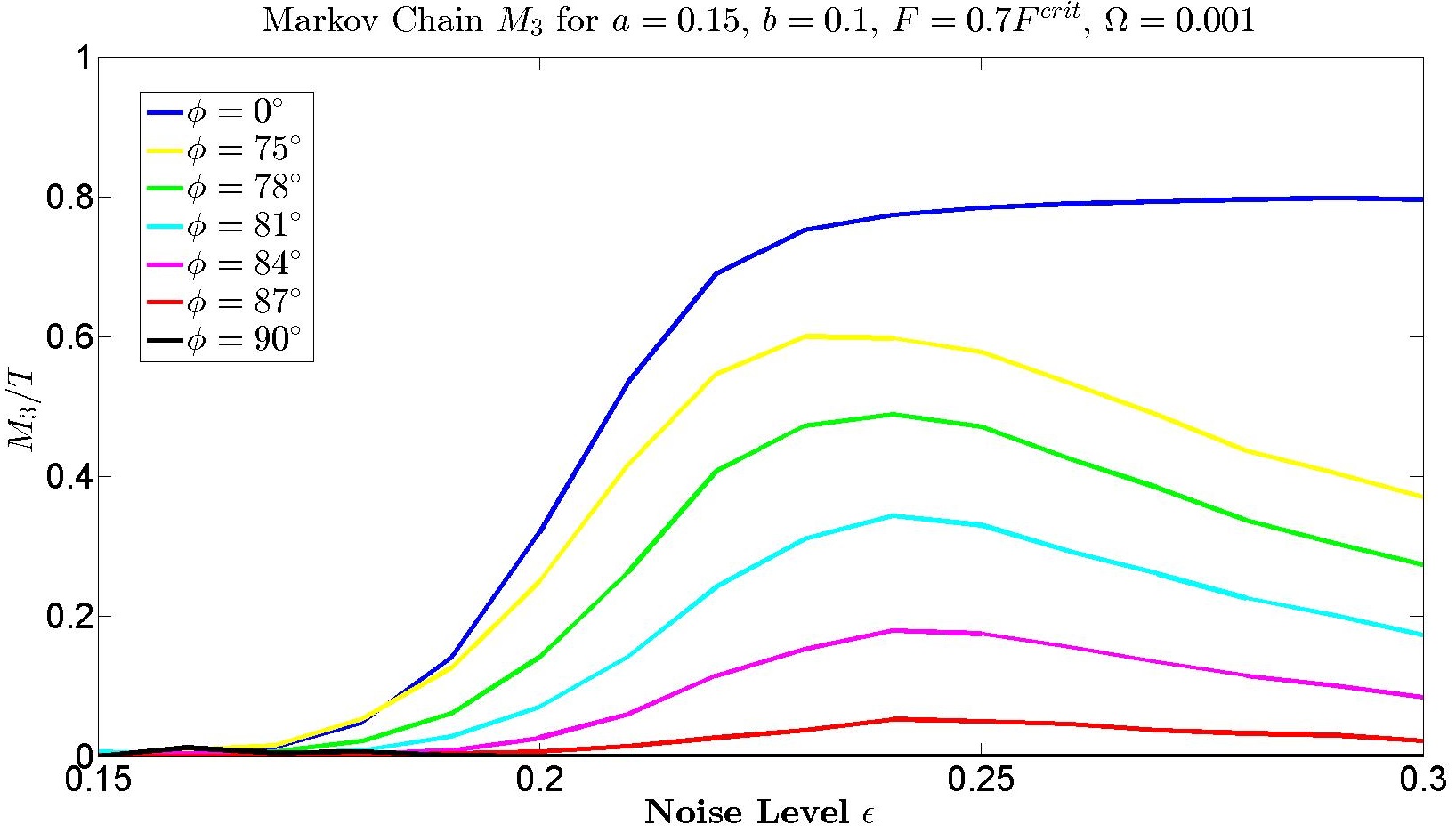}}
\caption{The measure $M_3$ for the Markov Chain for various angles and noise levels.}
\label{chap_8_g75_markov_m_3_measures}
\end{figure}

\begin{figure}[H]
\centerline{\includegraphics[scale=0.35]{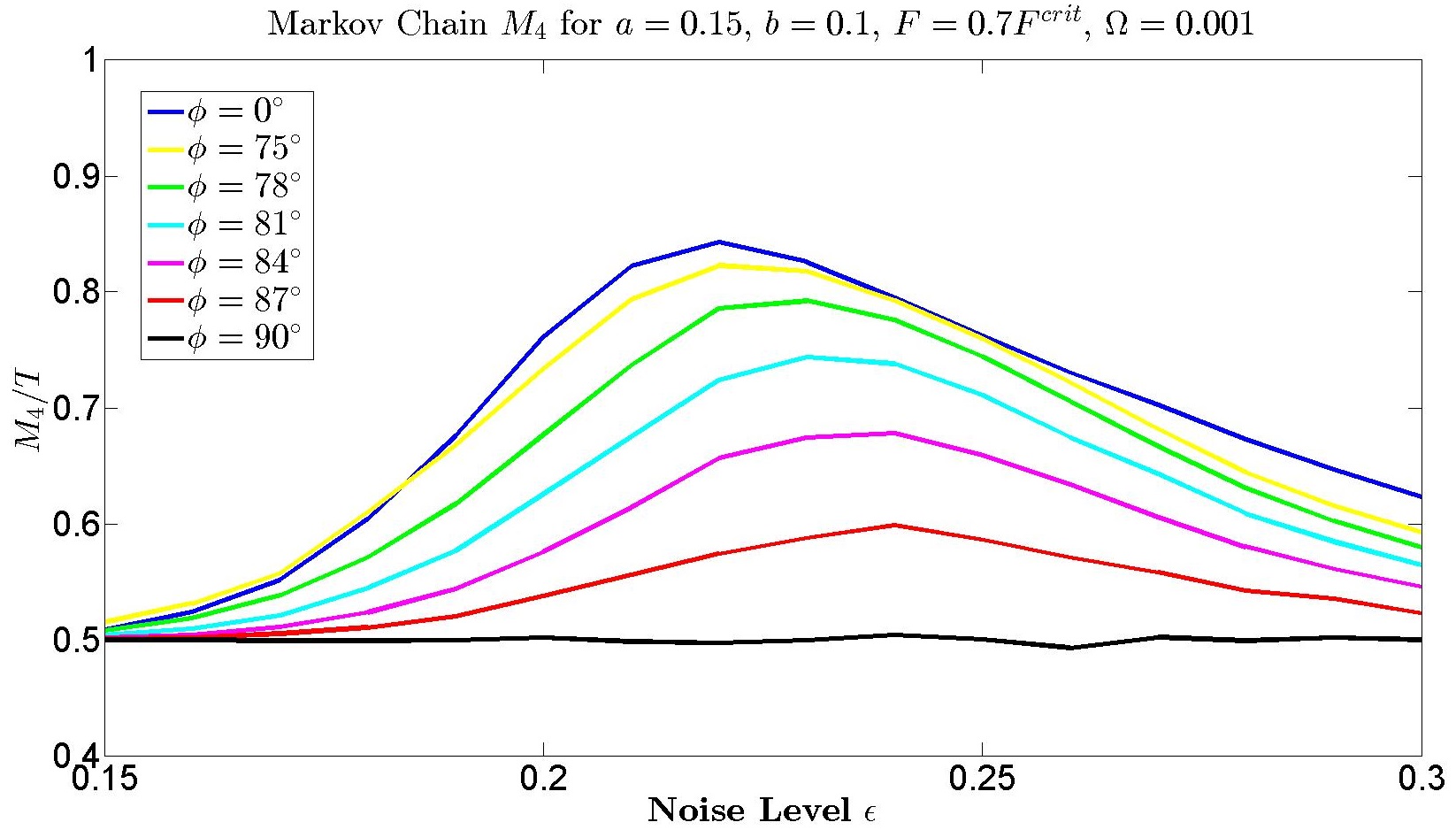}}
\caption{The measure $M_4$ for the Markov Chain for various angles and noise levels.}
\label{chap_8_g75_markov_m_4_measures}
\end{figure}

\begin{figure}[H]
\centerline{\includegraphics[scale=0.35]{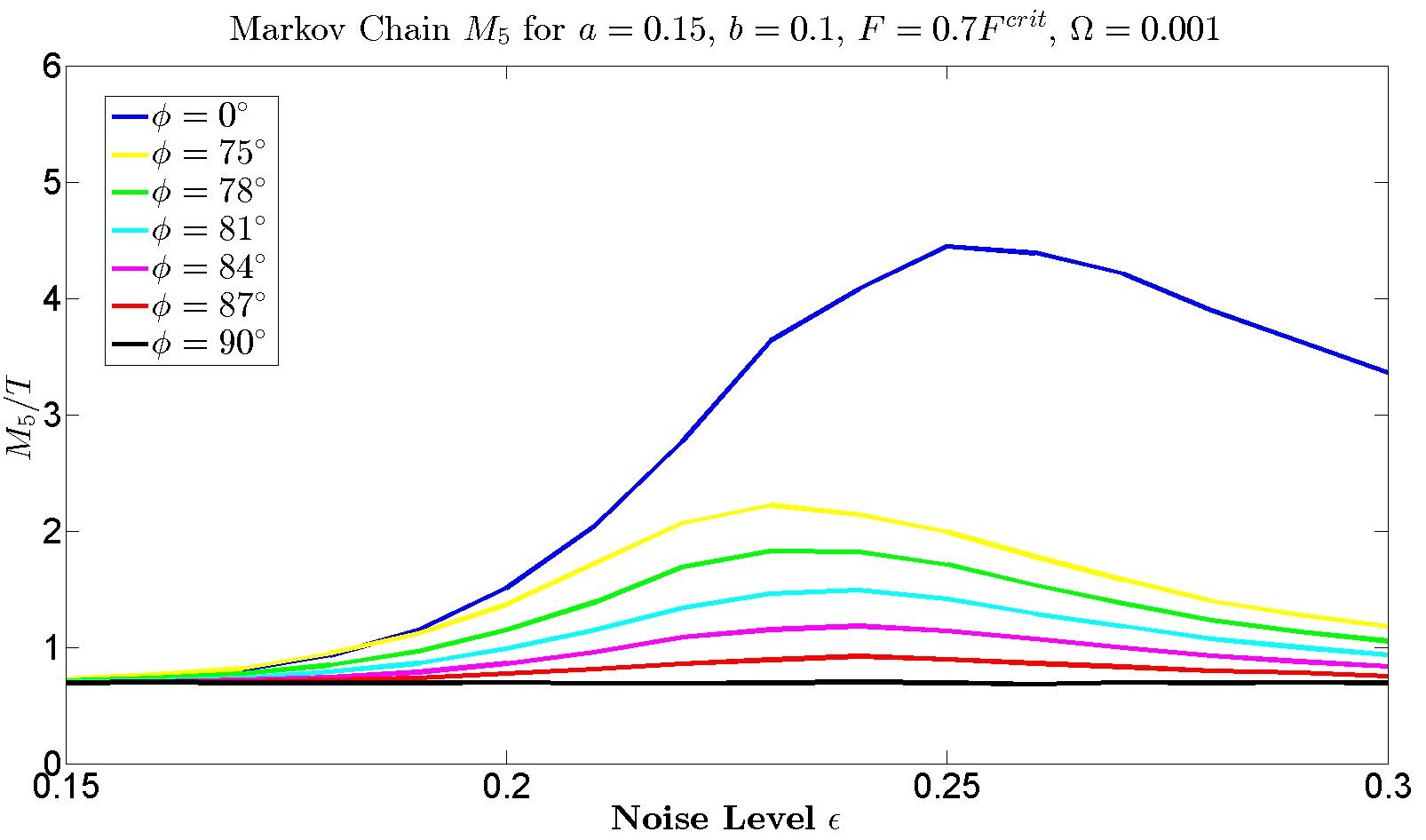}}
\caption{The measure $M_5$ for the Markov Chain for various angles and noise levels.}
\label{chap_8_g79_markov_m_5_measures}
\end{figure}

\begin{figure}[H]
\centerline{\includegraphics[scale=0.35]{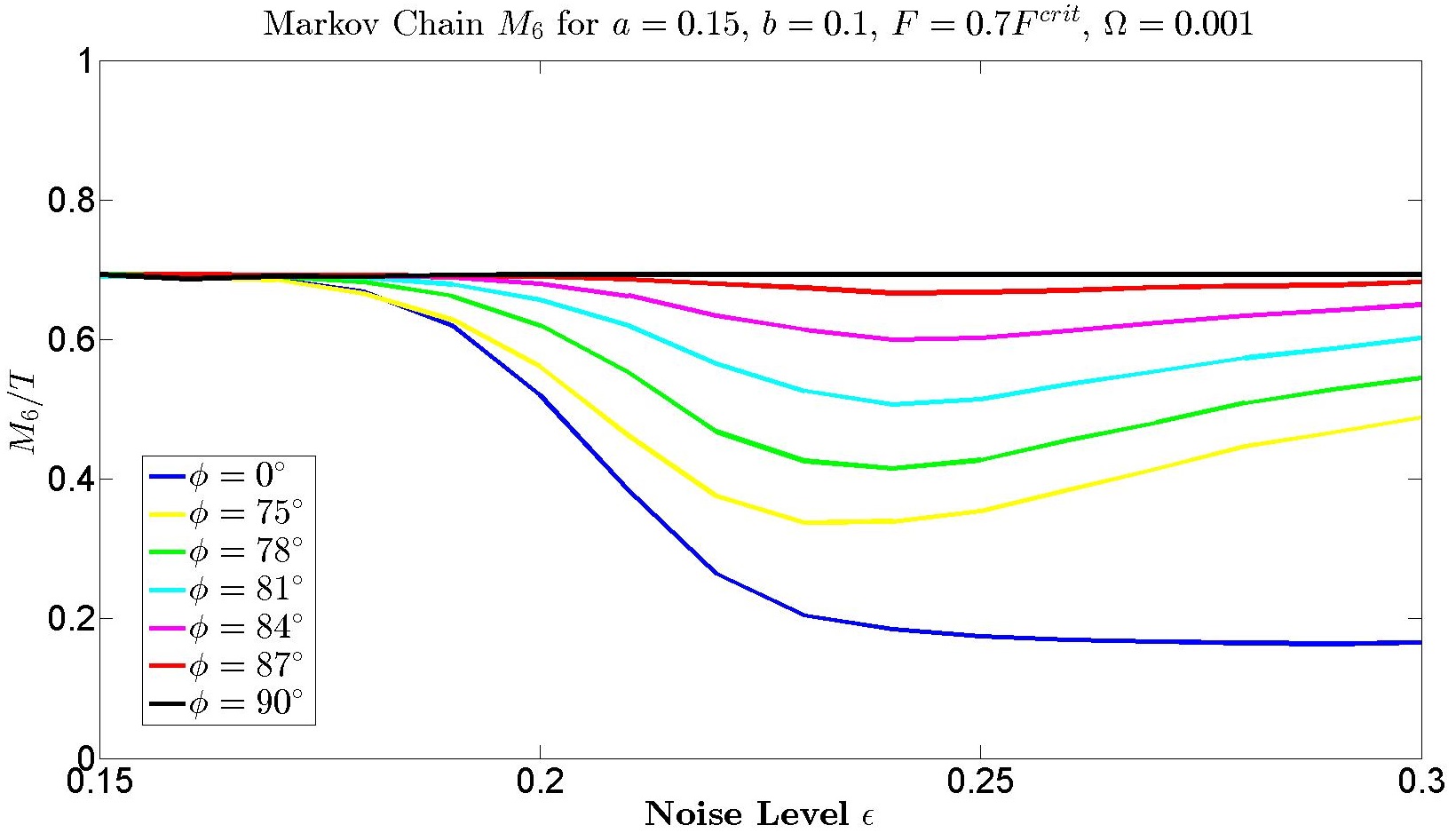}}
\caption{The measure $M_6$ for the Markov Chain for various angles and noise levels.}
\label{chap_8_g75_markov_m_6_measures}
\end{figure}

\subsection{Interpretation of the Six Measures Analysis}
The six measures $M_1$, $M_2$, $M_3$, $M_4$, $M_5$ and $M_6$ were plotted as a function of the noise level $\epsilon$. 
The six measures show a regular systematic behaviour in the angle $\phi$. 
The shape of the graphs of the six measures  were very similar for all the angles. 
As the angle  increased from $\phi=0^\circ$ to $\phi=90^\circ$ the six measures  gradually tended to being nearly constant in $\epsilon$. 

This effect can be explained with the invariant measures. 
When $\phi=0^\circ$ the probabilities for escaping from left to right $p_{-1+1}$ was different to  the probabilities for escaping from right to left $p_{+1-1}$.
But in the $\phi=90^\circ$ case they are the same, that is
\begin{align*}
\phi=0^\circ\,\,\, \quad p_{-1+1}&\neq p_{+1-1}\\
\phi=90^\circ \quad p_{-1+1}&= p_{+1-1}.
\end{align*}
The is can be understood geometrically. 
For $\phi=0^\circ$ we have $F_x\neq0$ and $F_y=0$. 
The two wells in the Mexican Hat potential move up and down and are alternating with each other. 
When one well is high the other is low. 
For $\phi=90^\circ$ we have $F_x=0$ and $F_y\neq0$.
The two wells are always at the same height as each other and the distance to the saddles (which is a gateway to escape) is also the same in both wells. 

Recall our discussions on the Markov Chain in Chapter \ref{chapter_oscil_times}. 
The $p$ is related to left to right escape $p_{-1+1}$
and $q$ was related to right to left escape $p_{+1-1}$. 
For $\phi=0^\circ$ the Markov Chain can be modelled with $p\neq q$ and for $\phi=90^\circ$ the Markov Chain can be modelled with $p=q$. 
In the case of $\phi=0^\circ$ the invariant measure was cyclically changing in time. 
In the case of $\phi=90^\circ$ the invariant measure was constant at $\overline{\nu}_-(\cdot)=\overline{\nu}_+(\cdot)=\frac{1}{2}$. 
This explains why the six measures $M_1$, $M_2$, $M_3$, $M_4$, $M_5$ and $M_6$ were nearly constant for angle $\phi=90^\circ$. 
As $\phi$ changed from $\phi=0^\circ$ to $\phi=90^\circ$, the Markov Chain changed from being modelled by $p\neq q $ to being modelled by $p=q$. 
This explains the change in the six measures tending to being constant in $\epsilon$ as $\phi$ was varied. 
The six measures can be thought of as a way of measuring how far away the invariant measures are from being constant. 
If the invariant measures are constant then the six measures will also be constant.\footnote{See Appendix \ref{appendix_six_measures} for how $M_5$ and $M_6$ were numerically calculated. The ideas were not that trivial.}

For fixed $\phi$ near $\phi=90^\circ$ there is no pronounced maximum of any measure for varying $\epsilon$. 
Hence the six measures indicate the absence of a pronounced stochastic resonance near $\phi=90^\circ$. 
But consider the trajectories at a range of angles. 

\begin{figure}[H]
\centerline{\includegraphics[scale=0.35]{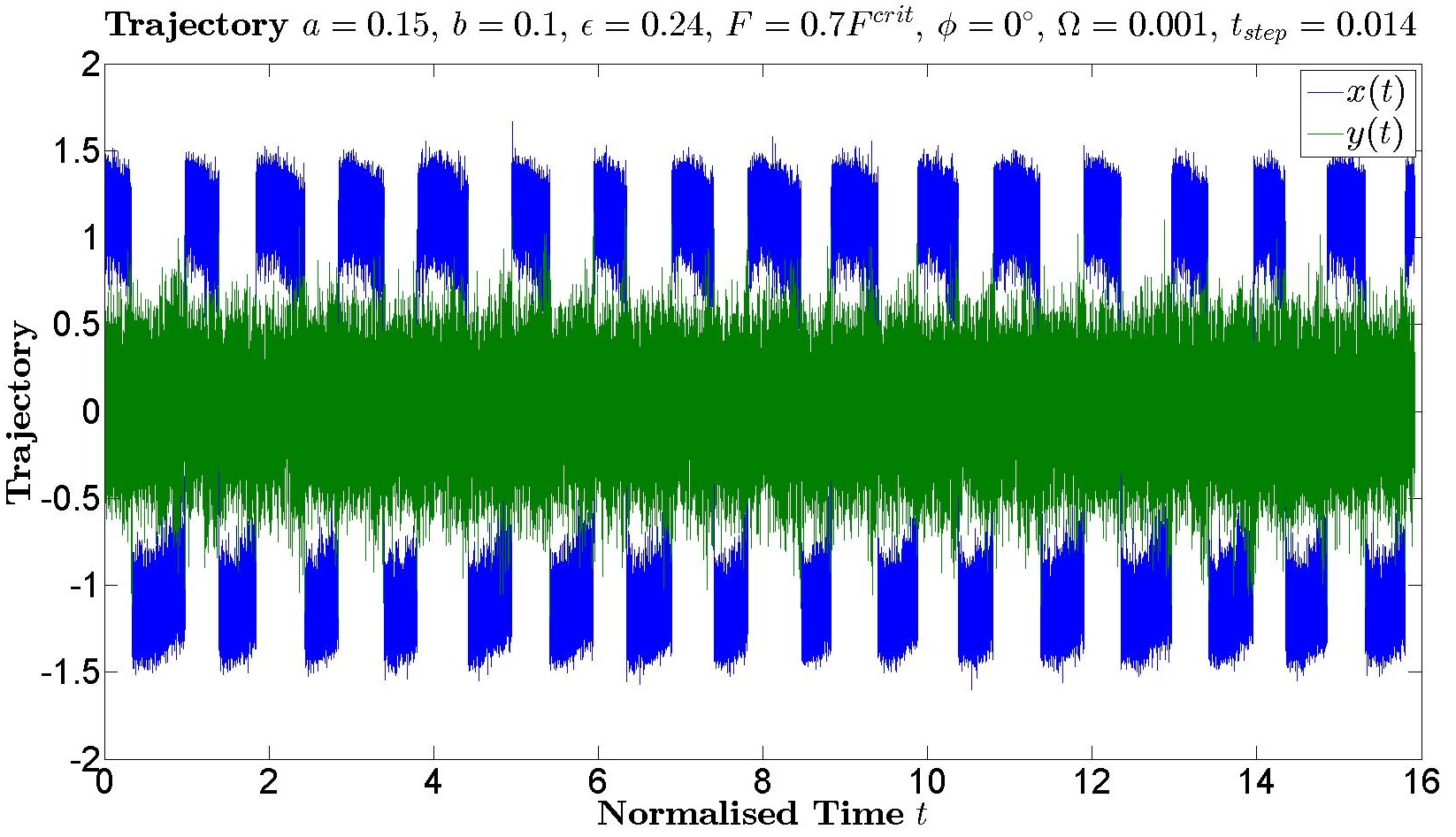}}
\caption{The blue trajectory is $x(t)$ and the green trajectory is $y(t)$.}
\label{chap_8_path_p0}
\end{figure}

\begin{figure}[H]
\centerline{\includegraphics[scale=0.35]{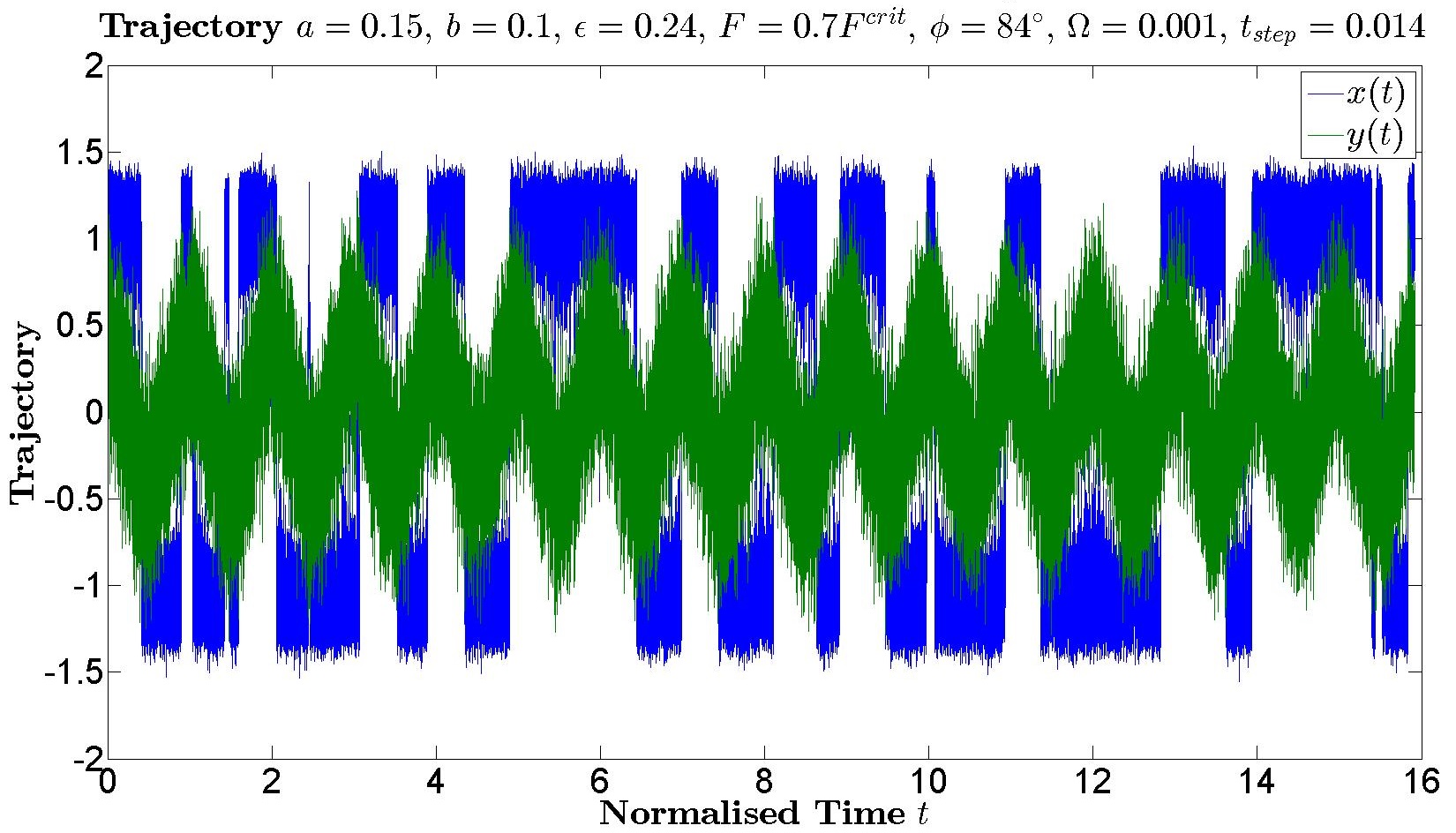}}
\caption{The blue trajectory is $x(t)$ and the green trajectory is $y(t)$.}
\label{chap_8_path_p84}
\end{figure}

\begin{figure}[H]
\centerline{\includegraphics[scale=0.35]{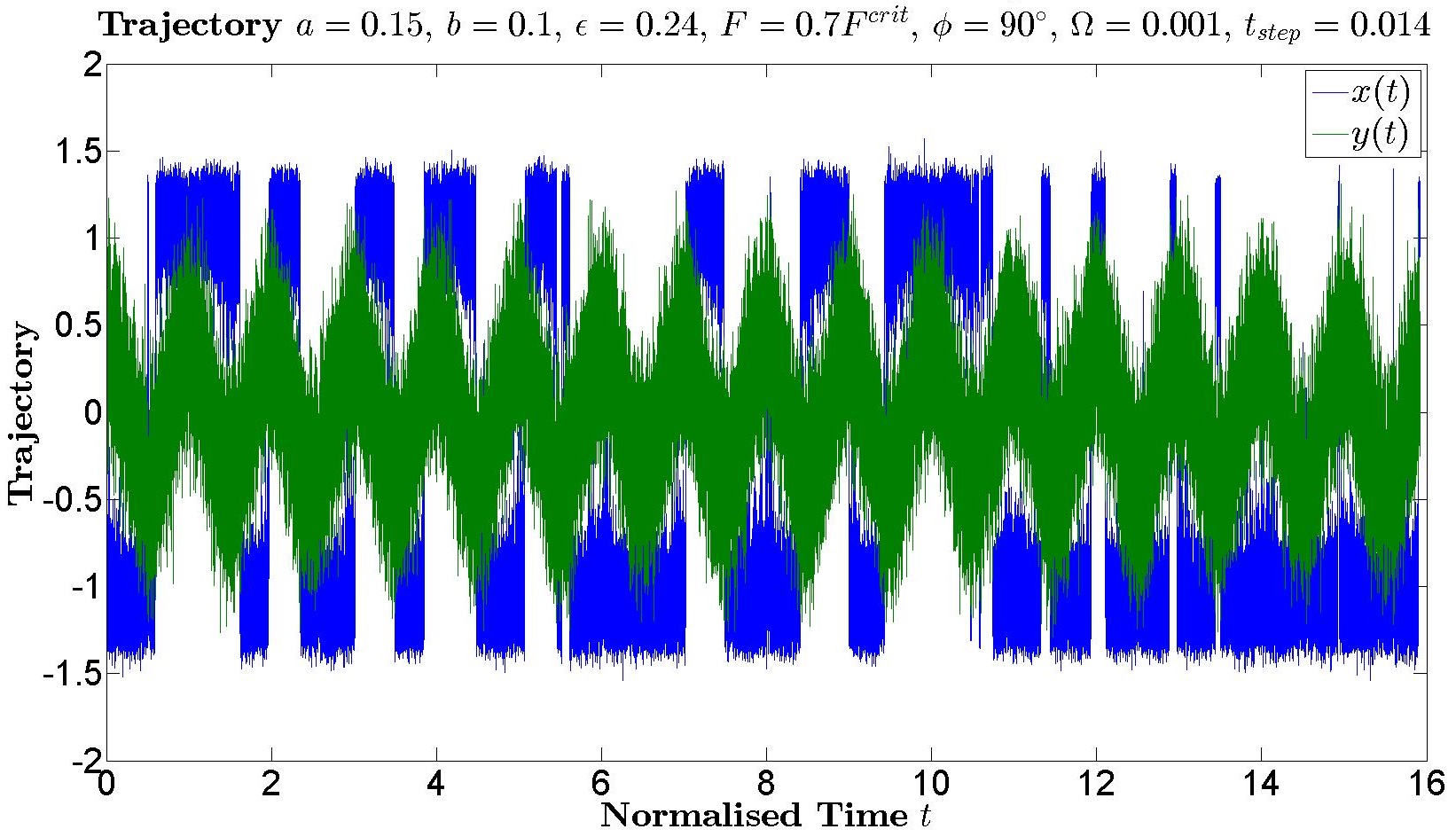}}
\caption{The blue trajectory is $x(t)$ and the green trajectory is $y(t)$.}
\label{chap_8_path_p90}
\end{figure}

\noindent When $\phi=0^\circ$ the $x(t)$ shows quasi-deterministic behaviour. 
The transitions are very regular and $y(t)$ fluctuates around zero. 
As the angle varies the transitions become less regular and $y(t)$ starts to oscillate. 
This suggests that there is some regularity in the behaviour of the trajectories but the six measures are not detecting it. 
Further studies with the escape times would tell us more.

\section{Escape Time and Conditional KS Test Analysis}
We remind ourselves of the PDF of escape times and the way the conditional KS test can be applied in our context. 
The conditional PDF of the escape times are
\begin{align*}
p_{-}(t,u)&=R_{-1+1}(t)\exp\left\{-\int^t_uR_{-1+1}(s)\,ds\right\}\\[0.5em] 
p_{+}(t,u)&=R_{+1-1}(t)\exp\left\{-\int^t_uR_{+1-1}(s)\,ds\right\}
\end{align*} 
where $R_{-1+1}$ and $R_{+1-1}$ are the Kramers' escape rate from left to right and right to left. 
In the case of $p_-(t,u)$, $t$ is the time coordinate of escape from the left well and $u$ is the time coordinate of entrance into the left well. 
In the case of $p_+(t,u)$, $t$ is the time coordinate of escape from the right well and $u$ is the time coordinate of entrance into the right well.
If we do not differentiate between escaping from the left or right then the PDF for an escape time $t$ is 
(note that $t$ here is an escape time as it is and not a time coordinate)
\begin{align*}
p_{tot}(t)=
\frac{1}{2}
\int_{0}^{T}p_-(t+u,u)m_-(u)+p_+(t+u,u)m_+(u)\,du
\end{align*}
where $m_-(\cdot)$ and $m_+(\cdot)$ are PDFs of the time of entrance into the left and right well respectively. 
We do not have  explicit expressions for  $m_-(\cdot)$ and $m_+(\cdot)$.
The  $p_{tot}(t)$ is approximated by
\begin{align*}
p_{tot}(t)\approx p_+(t,0).
\end{align*}
The times it took to escape from both the left or right wells are plotted in  histograms. 
This is an empirical approximation to the PDF $p_{tot}(t)\approx p_+(t,0)$. 
A selection of some of the  results are given below for various angles of the forcing $\phi$ and noise level $\epsilon$. 
They are examples of the Singles, Intermediate and Double Frequencies which we will explain later. 
Note that the escape times are given in units of normalised time, which is in the number of periods $T$. 

\begin{figure}[H]
\centerline{\includegraphics[scale=0.34]{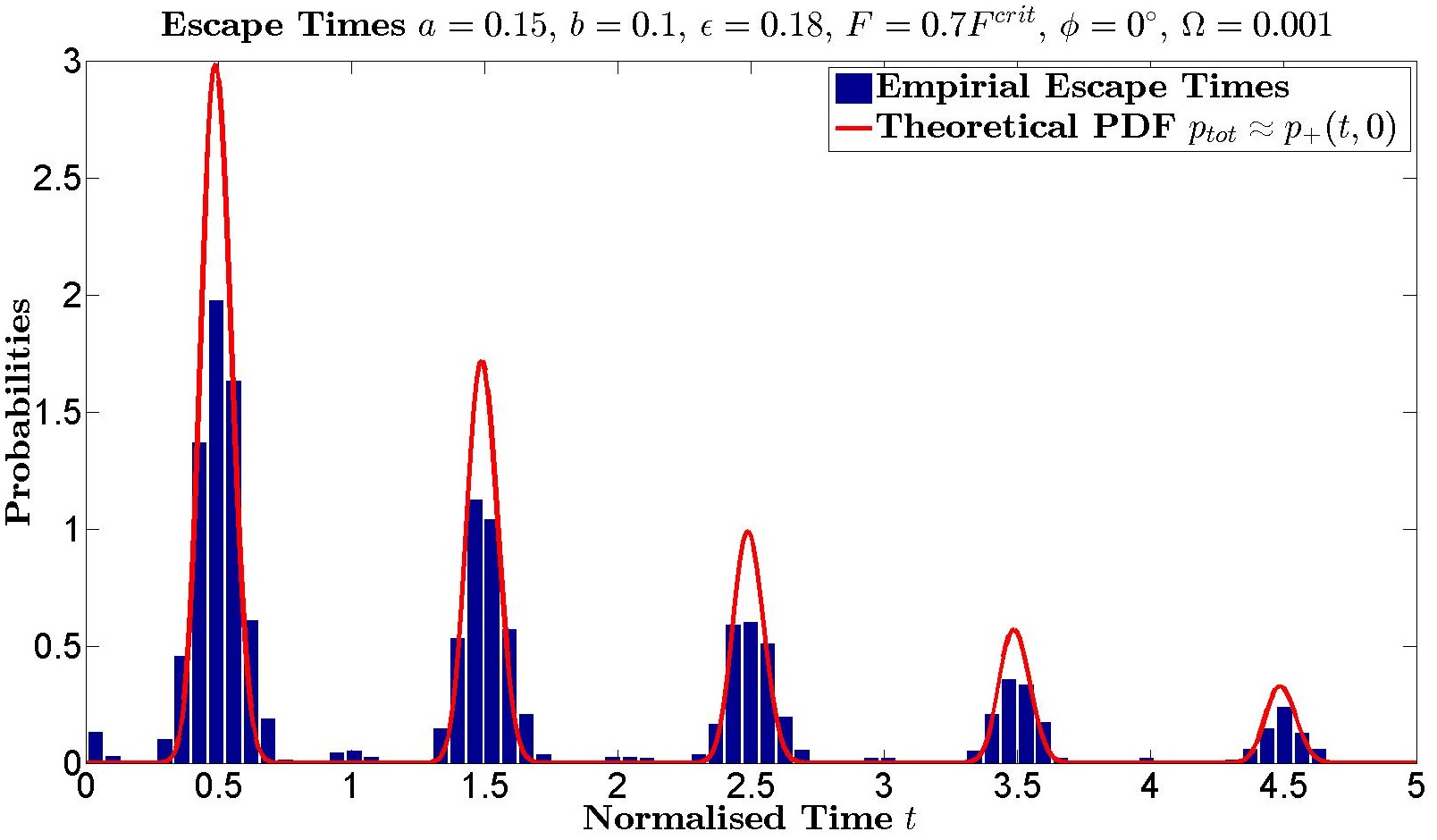}}
\caption{This is an example of the Single Frequency.
}
\label{chap_8_g75_p0_e18_pdf}
\end{figure}

\begin{figure}[H]
\centerline{\includegraphics[scale=0.34]{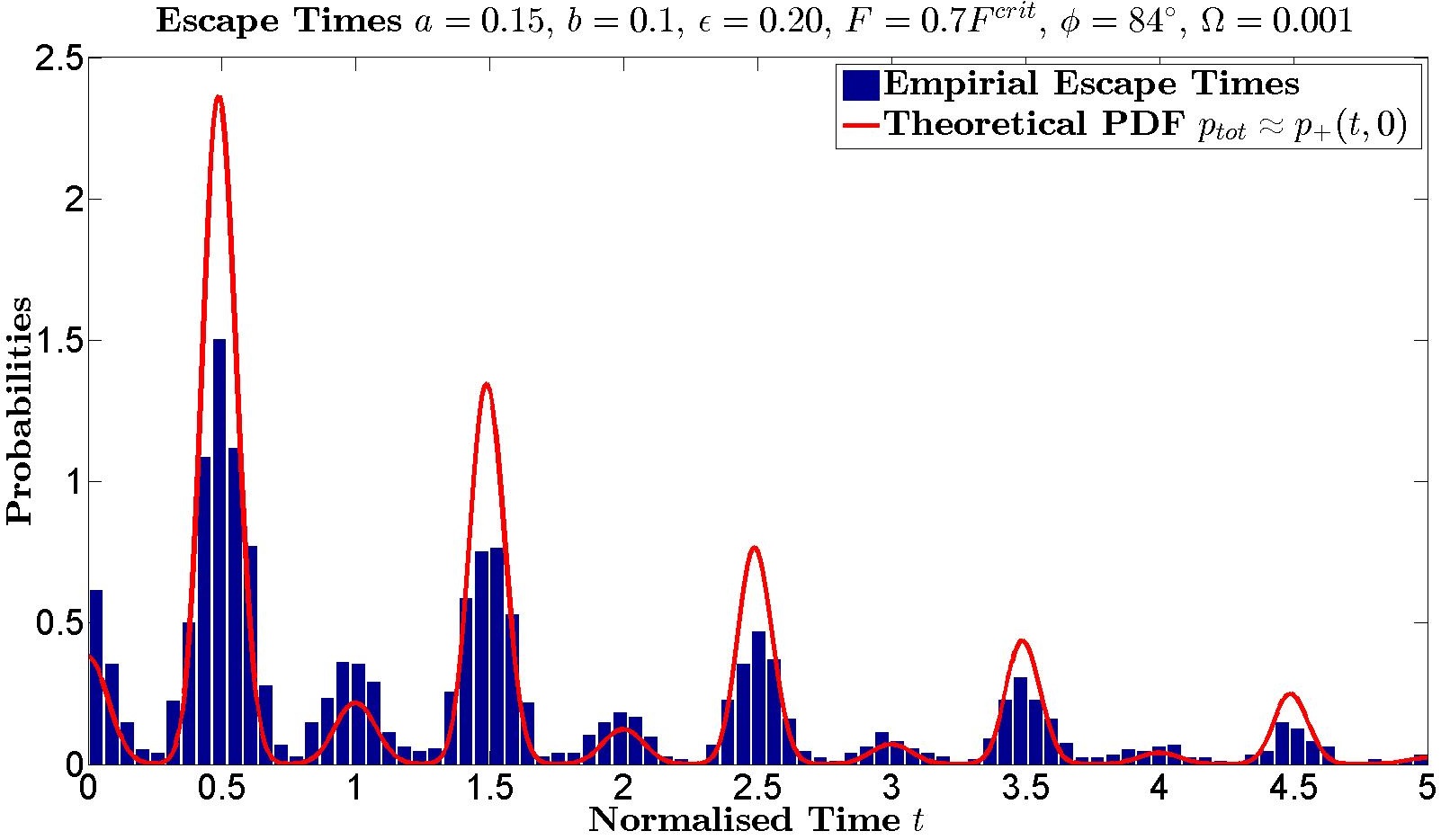}}
\caption{
This is an example of the Intermediate Frequency.
}
\label{chap_8_g75_p84_e20_pdf}
\end{figure}

\begin{figure}[H]
\centerline{\includegraphics[scale=0.34]{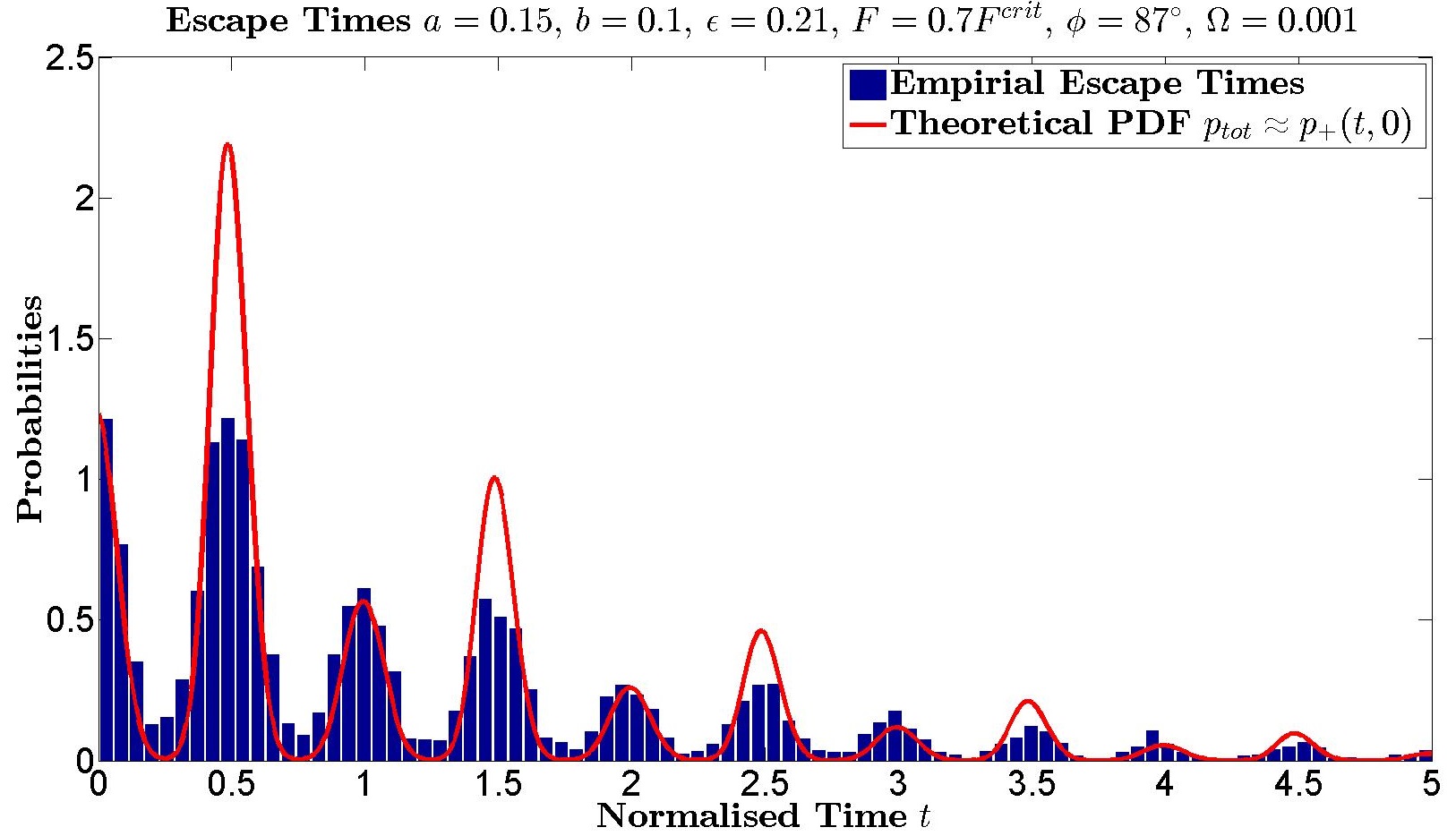}}
\caption{
This is an example of the Intermediate Frequency tending closer to the Double Frequency.
}
\label{chap_8_g75_p87_e21_pdf}
\end{figure}

\begin{figure}[H]
\centerline{\includegraphics[scale=0.34]{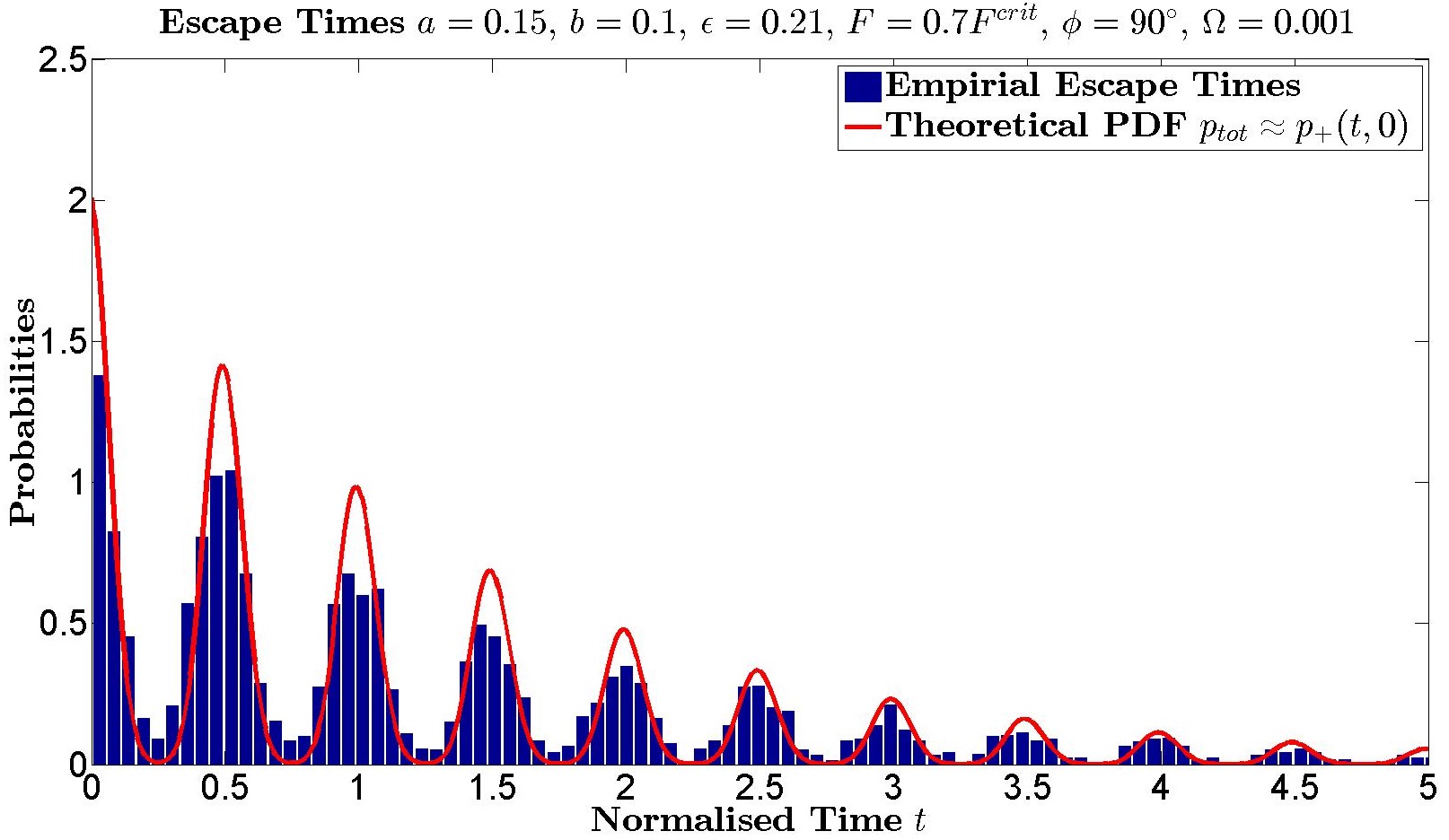}}
\caption{This is an example of the Double frequency.}
\label{chap_8_g75_p90_e21_pdf}
\end{figure}

\noindent It is important to note that Figures \ref{chap_8_g75_p0_e18_pdf}, \ref{chap_8_g75_p84_e20_pdf}, 
\ref{chap_8_g75_p87_e21_pdf} and 
\ref{chap_8_g75_p90_e21_pdf}
are histograms 
of the actual times it took to escape from either wells without differentiation between wells on the left or right. 
The times of entrance into the wells are not shown.
The PDF used is $p_{tot}(\cdot)$ which is being approximated by 
$p_{tot}(t)\approx p_+(t,0)$.

These escape times can be analysed in a different way. 
Let $u$ be the time of entrance into a well and $t$ the time of exit from a well. 
Figures \ref{chap_8_g75_p0_e18_pdf}, \ref{chap_8_g75_p84_e20_pdf}, 
\ref{chap_8_g75_p87_e21_pdf} and 
\ref{chap_8_g75_p90_e21_pdf}
are therefore histograms of the $(t-u)$ for both left and right escapes combined.
Thus $0\leq mod(u,T)\leq1$ is the phase of entrance into a well and $mod(t-u,T)$ is the escape time itself in normalised time. 
Such an analysis is done for the times in Figures \ref{chap_8_g75_p0_e18_pdf}, \ref{chap_8_g75_p84_e20_pdf}, 
and
\ref{chap_8_g75_p90_e21_pdf}
for both the left and right wells respectively. 

\begin{figure}[H]
\centerline{\includegraphics[scale=0.34]{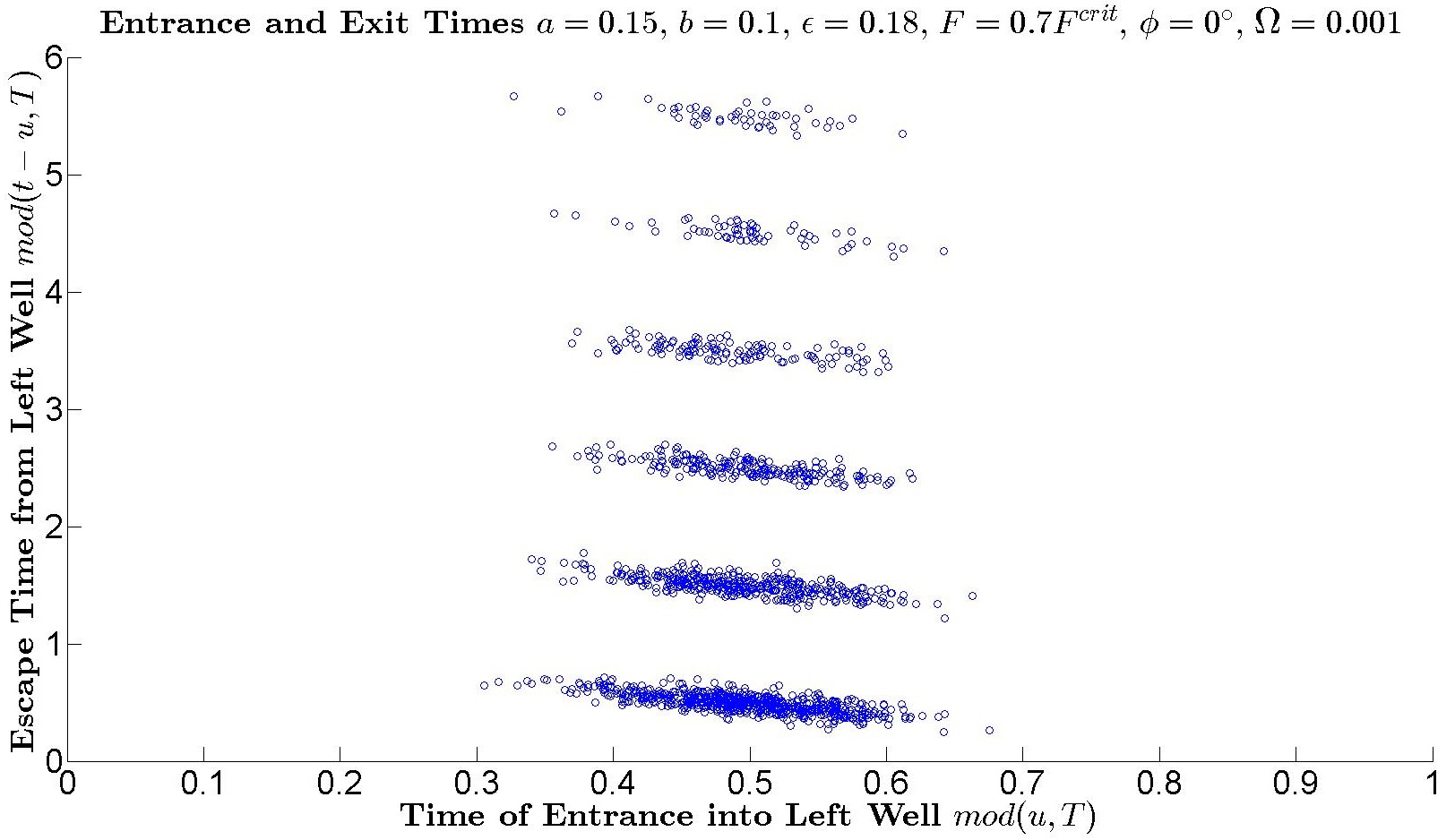}}
\caption{The $u$ is the time of entrance into the well and $t$ is the time of exit from the well.}
\label{chap_8_scatter_g75_p0_e18_pmaT}
\end{figure}

\begin{figure}[H]
\centerline{\includegraphics[scale=0.34]{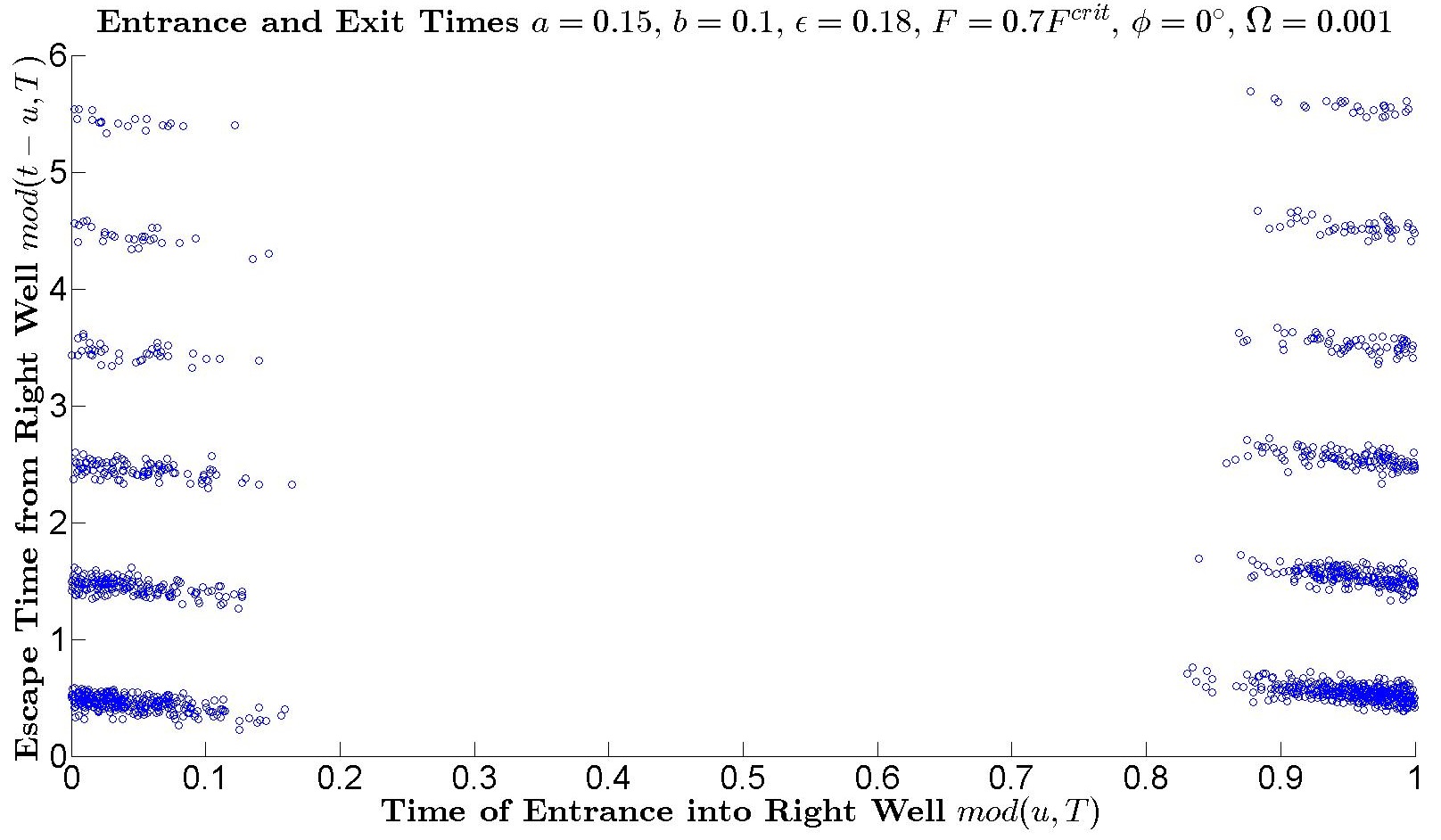}}
\caption{The $u$ is the time of entrance into the well and $t$ is the time of exit from the well.}
\label{chap_8_scatter_g75_p0_e18_ppaT}
\end{figure}

\begin{figure}[H]
\centerline{\includegraphics[scale=0.34]{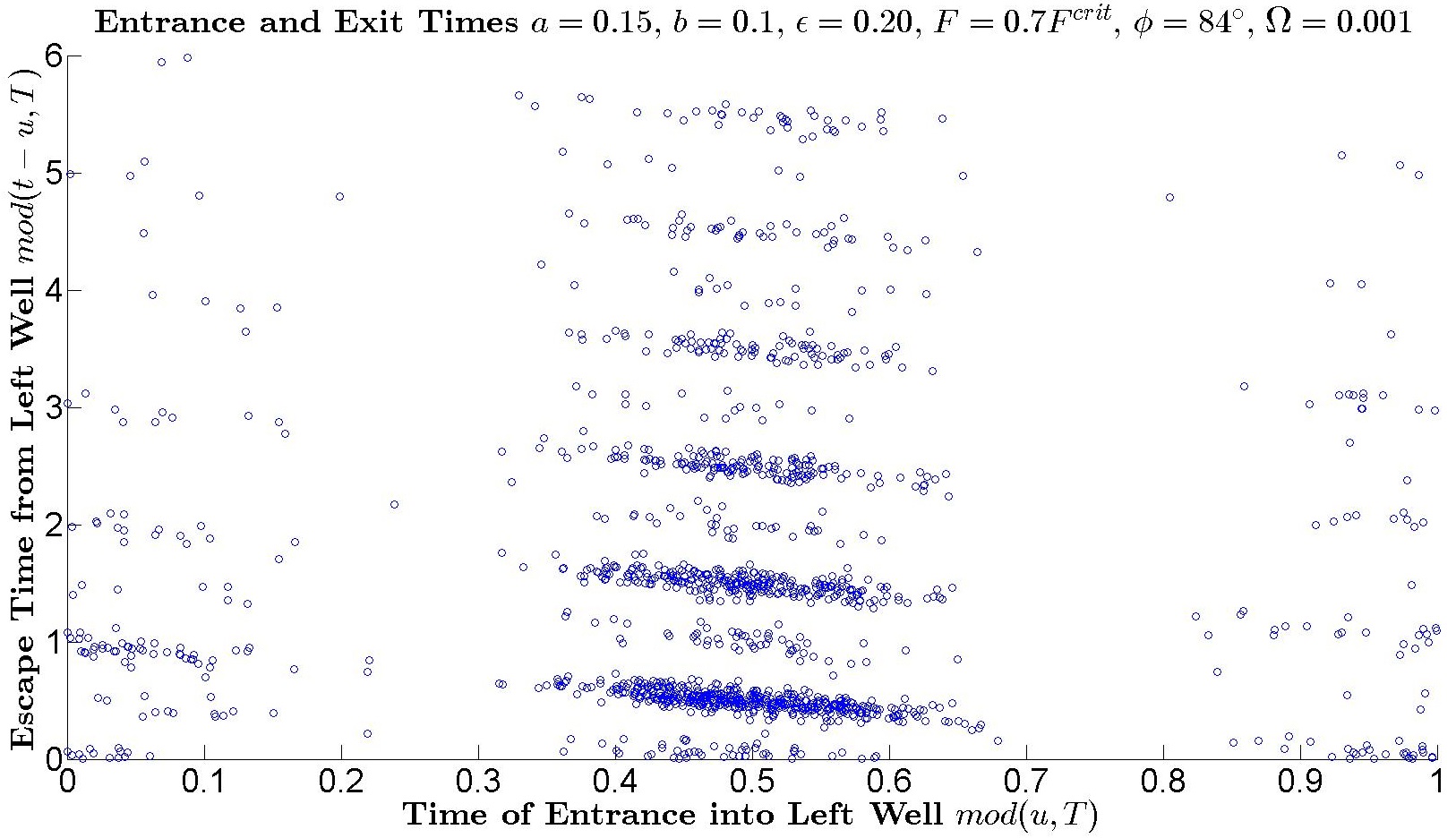}}
\caption{The $u$ is the time of entrance into the well and $t$ is the time of exit from the well.}
\label{chap_8_scatter_g75_p84_e20_pmaT}
\end{figure}

\begin{figure}[H]
\centerline{\includegraphics[scale=0.34]{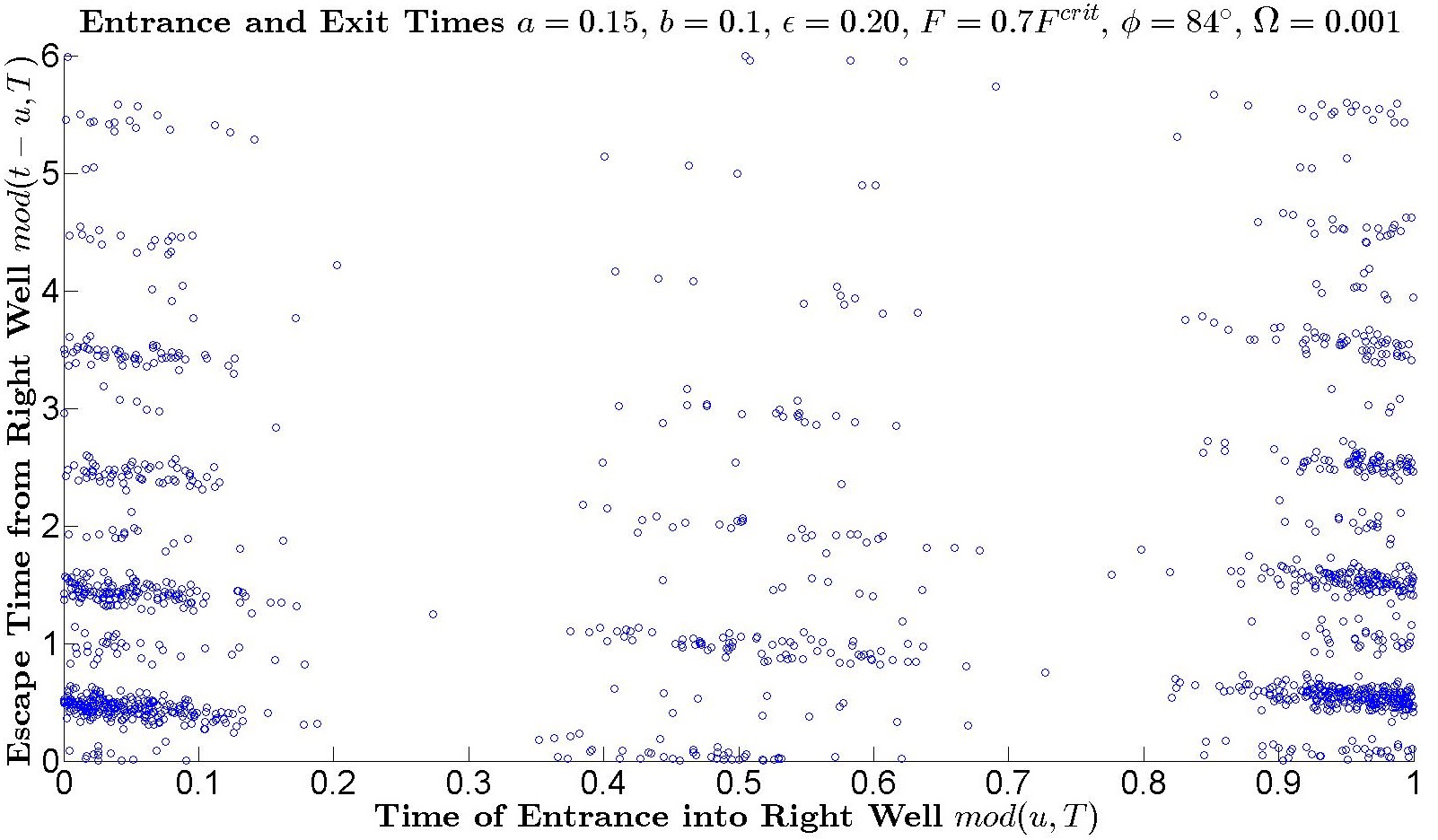}}
\caption{The $u$ is the time of entrance into the well and $t$ is the time of exit from the well.}
\label{chap_8_scatter_g75_p84_e20_ppaT}
\end{figure}

\begin{figure}[H]
\centerline{\includegraphics[scale=0.34]{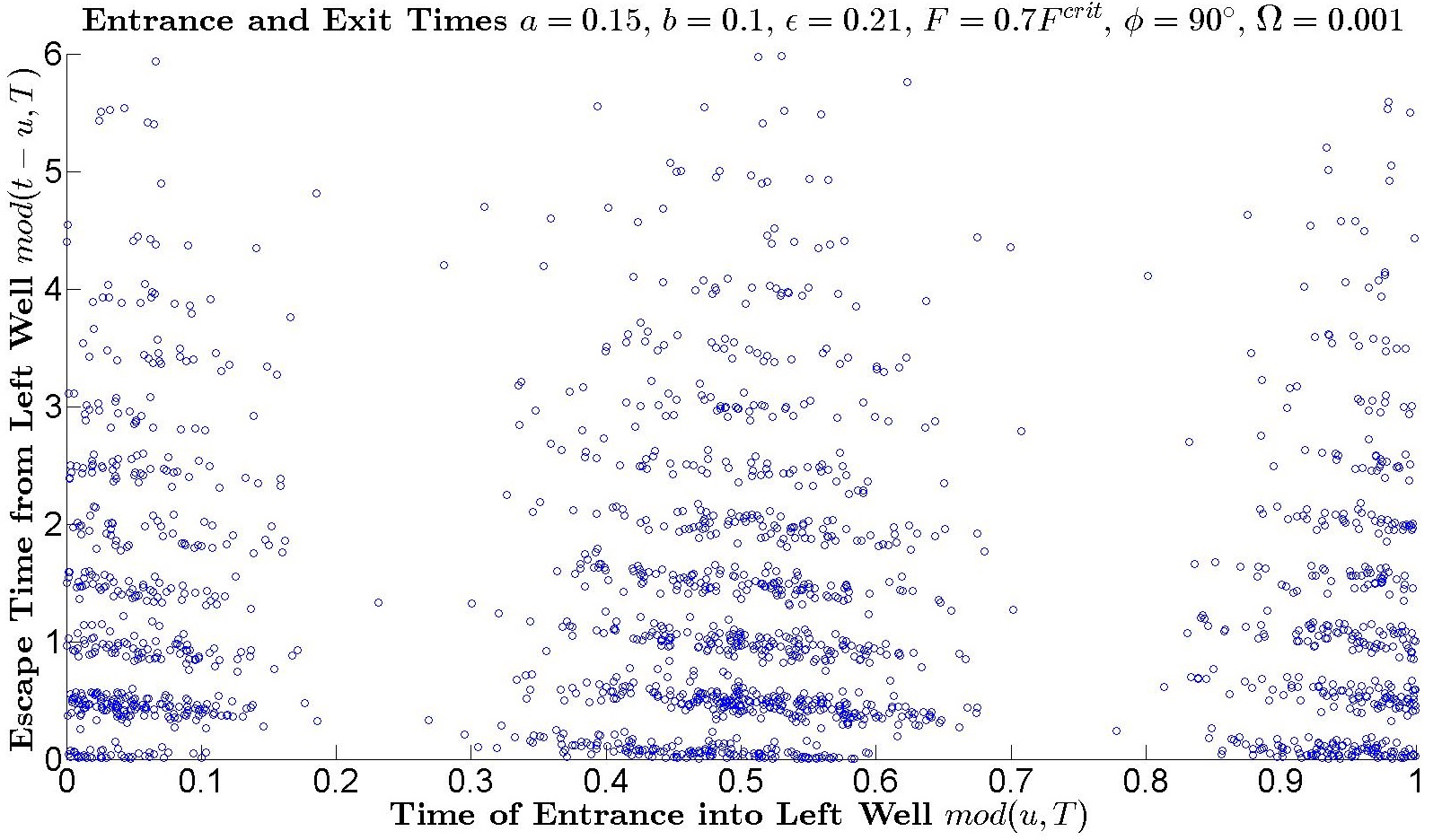}}
\caption{The $u$ is the time of entrance into the well and $t$ is the time of exit from the well.}
\label{chap_8_scatter_g75_p90_e21_pmaT}
\end{figure}

\begin{figure}[H]
\centerline{\includegraphics[scale=0.34]{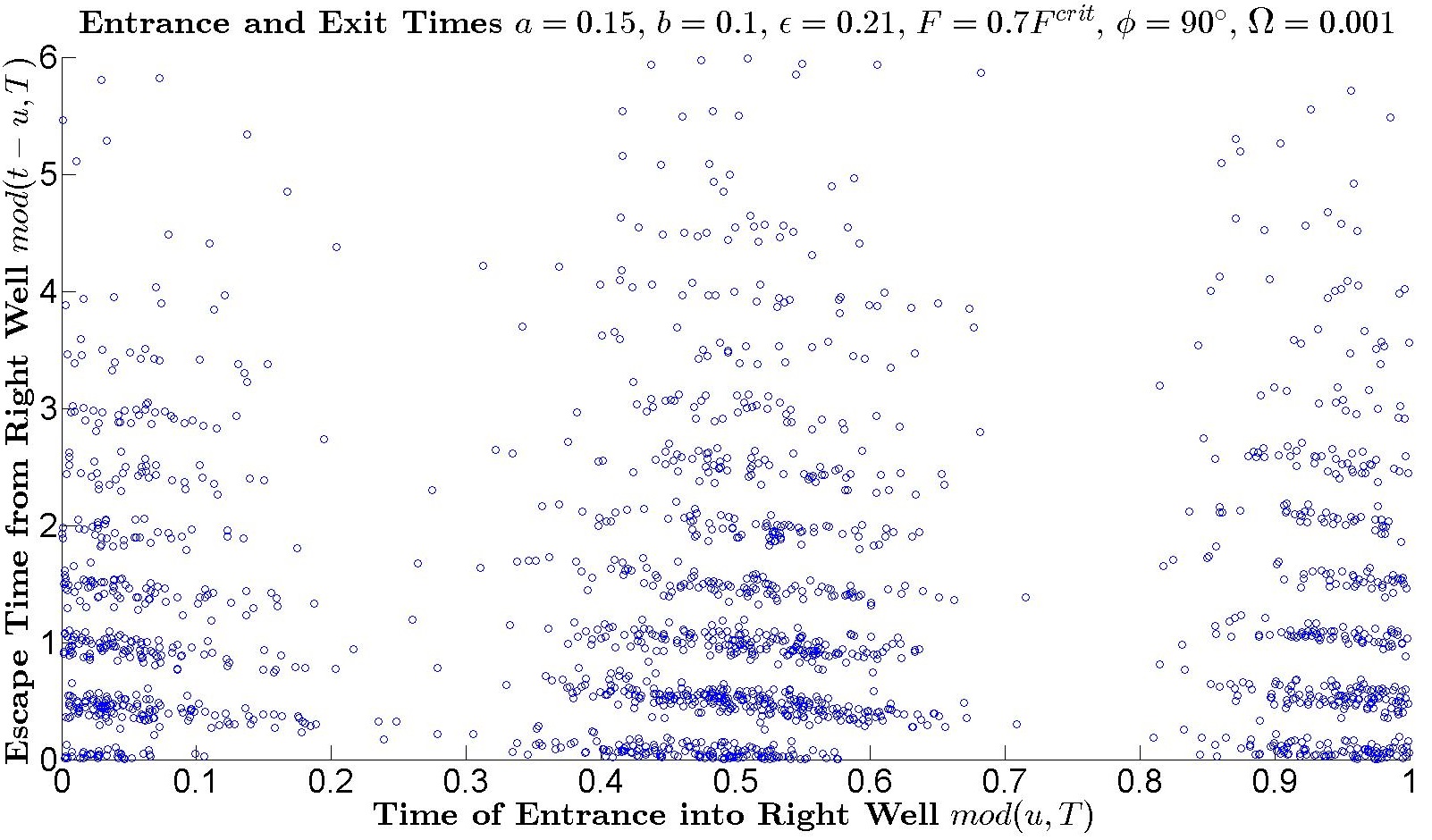}}
\caption{The $u$ is the time of entrance into the well and $t$ is the time of exit from the well.}
\label{chap_8_scatter_g75_p90_e21_ppaT}
\end{figure}

\noindent 
Notice the general behaviour of the data for $mod(u,T)$ and $mod(t-u,T)$. 
For the $\phi=0^\circ$ case the wells are alternating and one well is higher than the other. 
Entrance into the left well tend to occur near $u=0.5$ and entrance into the right well tend to occur near $u=0$ and $u=1$. 
For $\phi=90^\circ$ the wells are synchronised and are always at the same height as each other. 
Entrance and exit to and from either well tend to occur at $u=0$, $u=0.5$ and $u=1$. 
Notice that the Single, Intermediate and Double Frequencies can be seen in Figures
\ref{chap_8_scatter_g75_p0_e18_pmaT},
\ref{chap_8_scatter_g75_p0_e18_ppaT},
\ref{chap_8_scatter_g75_p84_e20_pmaT},
\ref{chap_8_scatter_g75_p84_e20_ppaT},
\ref{chap_8_scatter_g75_p90_e21_pmaT} and  
\ref{chap_8_scatter_g75_p90_e21_ppaT}.

Notice also  in Figure \ref{chap_8_scatter_g75_p0_e18_pmaT}
the data points are tiled near $0.5$. 
This seems to suggest that 
 the use of the Dirac delta function to approximate $p_{tot}\approx p_+(t,0)$
(see Chapter \ref{chap_approx_pdf}) may not be very good. 
The main problem here is the fact that 
we do not have an explicit formula for a probability measure of the time of entrance into a well, that is we do not have expressions for $m_-(u)$ and $m_+(u)$. 
This motivates us into developing the conditional KS test. 

We want to test whether the escape times we have measured are really distributed by the conditional PDFs $p_-(t,u)$ and $p_+(t,u)$. 
This is testing the conditional null hypothesis. 
Define the conditional CDFs by 
\begin{align*}
F^-_u(t)&=\int_u^tp_-(s,u)\,ds=1-\exp\left\{-\int_u^tR_{-1+1}(s)\,ds\right\}\\[0.5em]
F^+_u(t)&=\int_u^tp_+(s,u)\,ds=1-\exp\left\{-\int_u^tR_{+1-1}(s)\,ds\right\}.
\end{align*}
The time coordinates of the entrance and exit from the  wells are collected. 
These are 
\begin{align*}
\left(
\begin{array}{cccc}
u_1&u_2&\ldots&u_n\\
t_1&t_2&\ldots&t_n
\end{array}
\right)
\end{align*}
where $u_i$ is the time coordinate of the $i$th entrance into a well and $t_i$ is the time coordinate of the $i$th exit from a well. 
The conditional KS statistic is calculated by 
\begin{align*}
S_n^-&=\sup_{x\in[0,1]}
\left\Vert 
\frac{1}{n}
\sum_{i=1}^n\mathbf{1}_{[0,x]}
\left(
F^-_{u_i}(t_i)
-x
\right)
\right\Vert\\[0.5em] 
S_n^+&=\sup_{x\in[0,1]}
\left\Vert 
\frac{1}{n}
\sum_{i=1}^n\mathbf{1}_{[0,x]}
\left(
F^+_{u_i}(t_i)
-x
\right)
\right\Vert
\end{align*}
where in $S_n^-$ we sum over the time coordinates of entrance and exit to and from the left well
and in $S_n^+$ we sum over the time coordinates of entrance and exit to and from the right well. 
Recall that if the conditional null hypothesis is true then $S_n^-$ and $S_n^+$ are asymptotically distributed by 
\begin{align*}
\lim_{n\longrightarrow \infty} P(\sqrt{n} S_n \leq x)=Q(x)
\quad \text{where} \quad  
Q(x)=1-2\sum_{k=1}^\infty (-1)^{k-1}e^{-2k^2x^2}. 
\end{align*}
We want  99\% confidence. 
Note that 
\begin{align*}
P\left(
\sqrt{n}S_n\leq 1.6920
\right)
=Q(1.6920)
=0.99. 
\end{align*}
The $Q\left(\sqrt{n}S_n\right)$ is also calculated. 
The smaller $Q\left(\sqrt{n}S_n\right)$ is the more certain we are in accepting the null hypothesis. 
A selection of some of the data being implemented with the conditional KS test are given below for various angles of the forcing $\phi$ and noise level $\epsilon$. 
These are examples of the KS test being implemented for the histograms of escape times just given in Figures \ref{chap_8_g75_p0_e18_pdf}, \ref{chap_8_g75_p84_e20_pdf}, 
\ref{chap_8_g75_p87_e21_pdf} and 
\ref{chap_8_g75_p90_e21_pdf}

\begin{figure}[H]
\centerline{\includegraphics[scale=0.35]{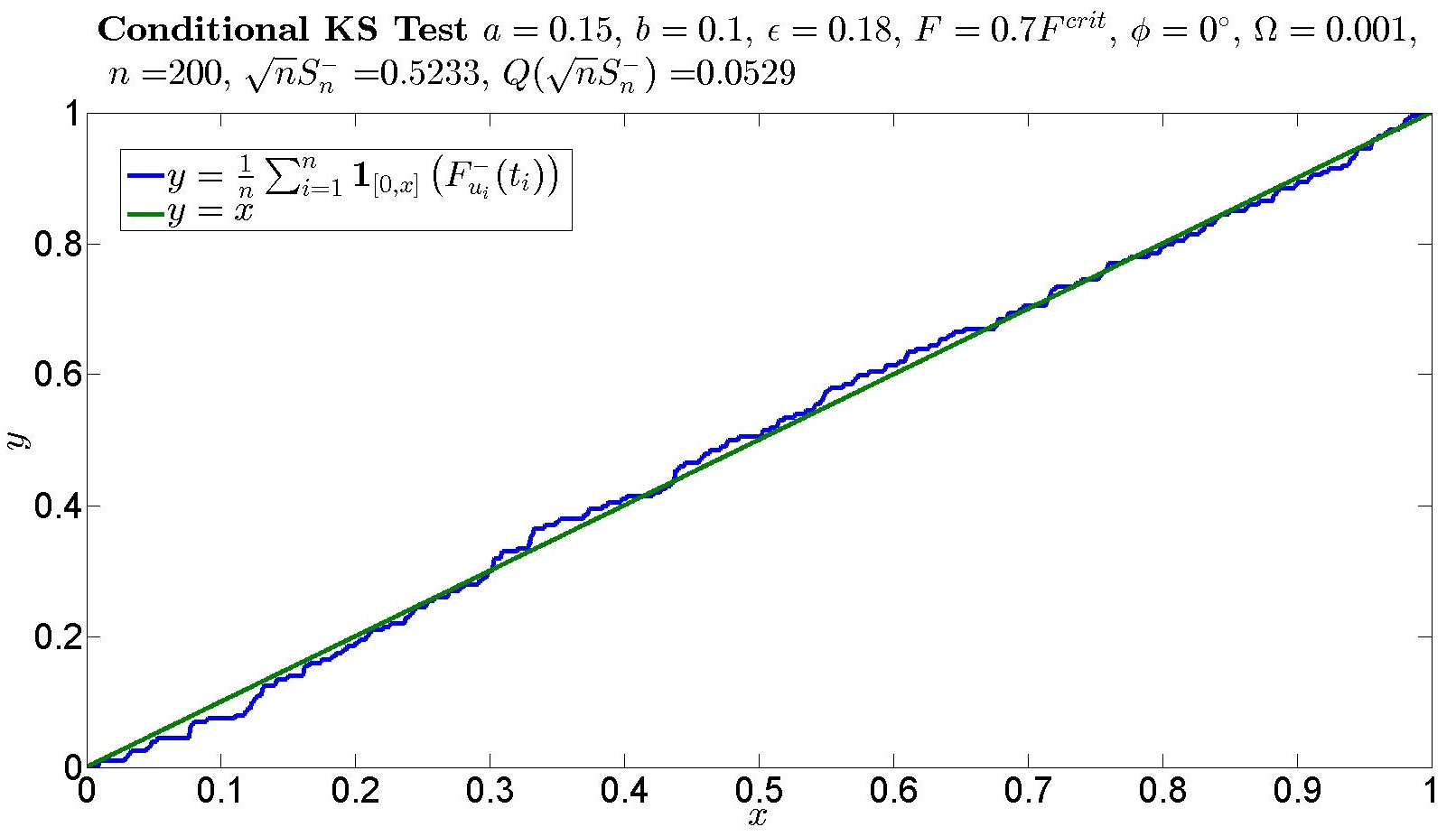}}
\caption{
This is an example of the conditional KS test being implemented for the data in Figure \ref{chap_8_g75_p0_e18_pdf}.
Note that $\epsilon=0.18$, $\phi=0^\circ$, $n=200$, $\sqrt{n}S^-_n=0.5233$ and $Q\left(\sqrt{n}S^-_n\right)=0.0529$.
}
\end{figure}

\begin{figure}[H]
\centerline{\includegraphics[scale=0.35]{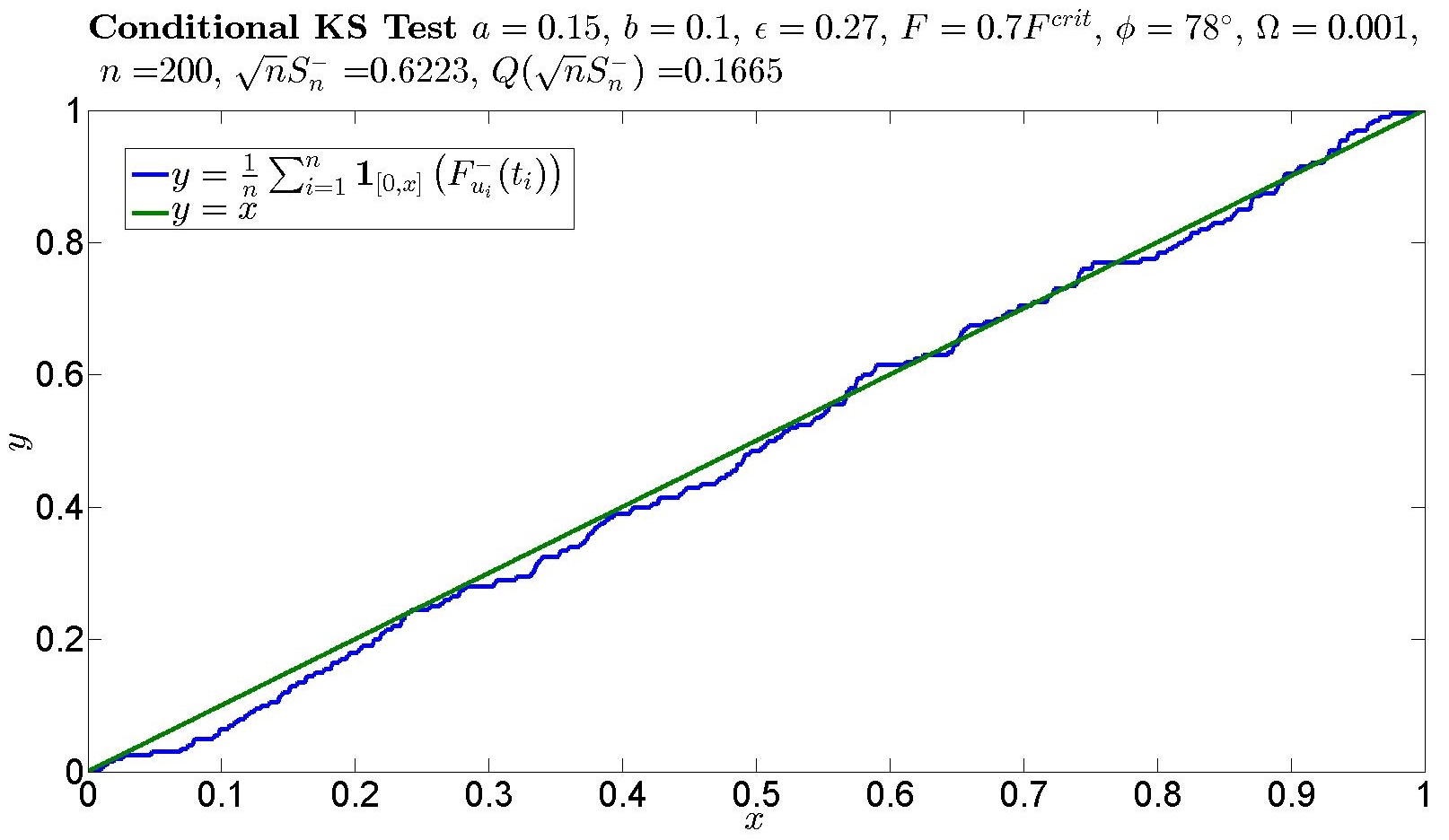}}
\caption{
This is an example of the conditional KS test being implemented for the data in Figure \ref{chap_8_g75_p84_e20_pdf}.
Note that $\epsilon=0.20$, $\phi=84^\circ$, $n=200$, $\sqrt{n}S^-_n=0.6223$ and $Q\left(\sqrt{n}S^-_n\right)=0.1665$.
}
\end{figure}

\begin{figure}[H]
\centerline{\includegraphics[scale=0.35]{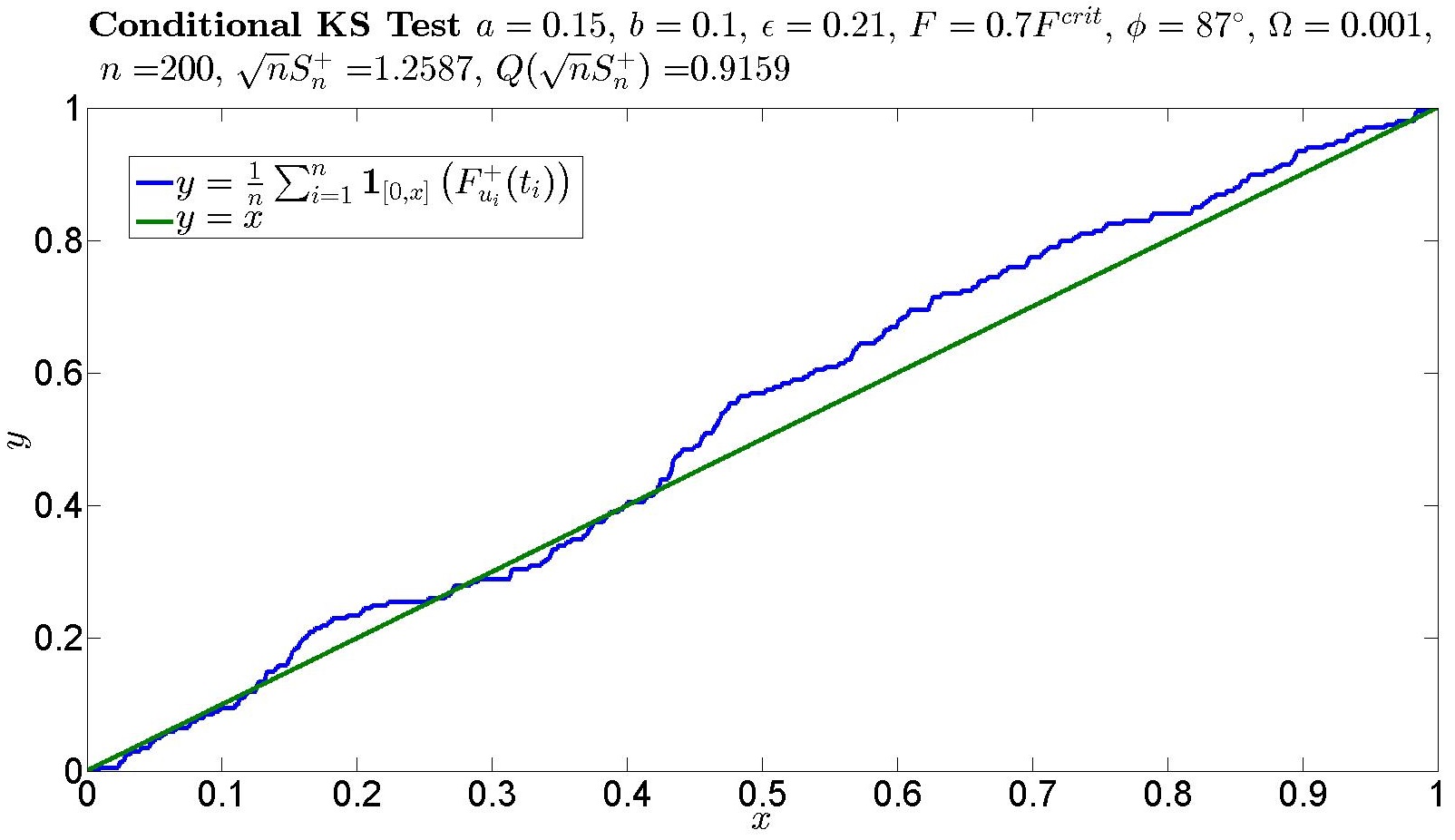}}
\caption{
This is an example of the conditional KS test being implemented for the data in Figure \ref{chap_8_g75_p87_e21_pdf}.
Note that $\epsilon=0.21$, $\phi=87^\circ$, $n=200$, $\sqrt{n}S^-_n=1.2587$ and $Q\left(\sqrt{n}S^+_n\right)=0.9159$.
}\label{chap_8_g80_p87_e21_m2}
\end{figure}

\begin{figure}[H]
\centerline{\includegraphics[scale=0.35]{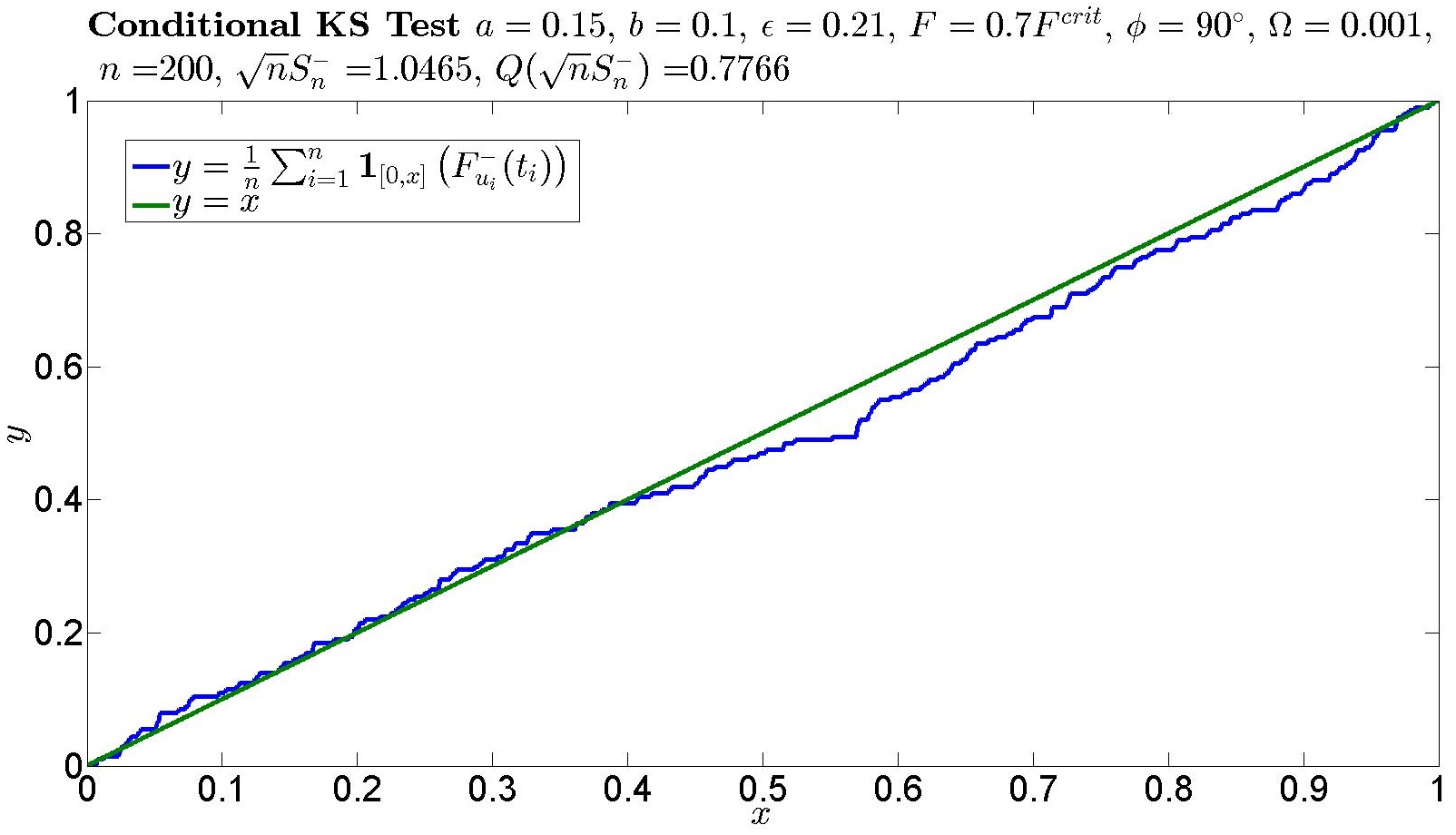}}
\caption{
This is an example of the conditional KS test being implemented for the data in Figure \ref{chap_8_g75_p90_e21_pdf}.
Note that $\epsilon=0.21$, $\phi=90^\circ$, $n=200$, $\sqrt{n}S^-_n=1.0465$ and $Q\left(\sqrt{n}S^-_n\right)=0.7766$.
}\label{chap_8_g80_p90_e21_m1}
\end{figure}

\subsection{Interpretation of the Escape Time and Conditional KS Test Analysis}
When $\phi=0^\circ$ there were peaks in the empirical PDF of the escape times. 
These occurred at times $\frac{1}{2}T$, $\frac{3}{2}T$, $\frac{5}{2}T$, \ldots. 
This effect we call the Single frequency. 
When $\phi=90^\circ$ the peaks occurred  at 
$\frac{1}{2}T$, $\frac{3}{2}T$, $\frac{5}{2}T$, \ldots
and $0$, $T$, $2T$, $3T$, $4T$, \ldots. 
This effect we call the Double Frequency. 
When $0^\circ<\phi<90^\circ$ an intermediate effect is seen. 
There were major peaks at 
$\frac{1}{2}T$, $\frac{3}{2}T$, $\frac{5}{2}T$, \ldots
and minor peaks at 
$0$, $T$, $2T$, $3T$, $4T$.

The behaviour of the Single, Intermediate and Double Frequencies can be explained geometrically. 
When the height between a well and a saddle is minimum, the optimal probability of escape has occurred. 
When $\phi=0^\circ$ the  frequency of the return of the optimal probability of escape is the same as the driving frequency  $\Omega$. 
This optimal probability comes back very $T$ which is once in a period. 
When $\phi=90^\circ$ the  frequency of the return of the optimal probability of escape is double the driving frequency at $2\Omega$. 
This optimal probability comes back very $\frac{T}{2}$ which is twice in a period. 
This explains why the peaks in the Single and Double Frequencies are seen where they are.

As the angle changed from $\phi=0^\circ$ to $\phi=90^\circ$ the Single Frequency gradually changes into the Double Frequency with the Intermediate Frequency seen in between. 
Thus  the angle of the forcing is leaving a mark in the PDFs of escape times.

When the conditional KS test was implemented, the functions
\begin{align*}
y_0(x)=x, 
\quad 
y_-(x)=\sum_{i=1}^n\mathbf{1}_{[0,x]}
\left(
F^-_{u_i}(t_i)
\right)
\quad \text{and} \quad 
y_+(x)=\sum_{i=1}^n\mathbf{1}_{[0,x]}
\left(
F^+_{u_i}(t_i)
\right)
\end{align*}
were used to calculate the following distances which are the conditional KS statistics
\begin{align*}
S^-_n=\left\Vert y_0-y_-\right\Vert_\infty
\quad \text{and} \quad 
S^+_n=\left\Vert y_0-y_+\right\Vert_\infty.
\end{align*}
It is reasonable to say that $y_-(\cdot)$ and $y_+(\cdot)$ were close enough to $y_0(\cdot)$ that we can accept the conditional null hypothesis. 
This can be seen and judged graphically with $S^-_n$ and $S^+_n$ calculated as well. 
This is an example of the conditional KS test giving a reasonable result.\footnote{See Appendix \ref{appendix_over_sample} for discussions as to how some of our implementation of the conditional KS test are examples of oversampling.}

\section{Sparse Data Analysis} 
We do the same analysis with the escape time and the conditional KS test. 
But now we artificially make the data sparse by only implementing the conditional KS test for 20 transitions. 
We want 99\% confidence.
Thus with $n=20$ tables for the KS distribution show that 
\begin{align*}
P\left(
S_{20}\leq 0.356
\right)
=0.99
\end{align*}
and there are two particular examples we want to focus on.
These are when $p_{tot}\approx p_+(t,0)$ is not a good approximation and the conditional KS test is performed in such a situation.

\begin{figure}[H]
\centerline{\includegraphics[scale=0.35]{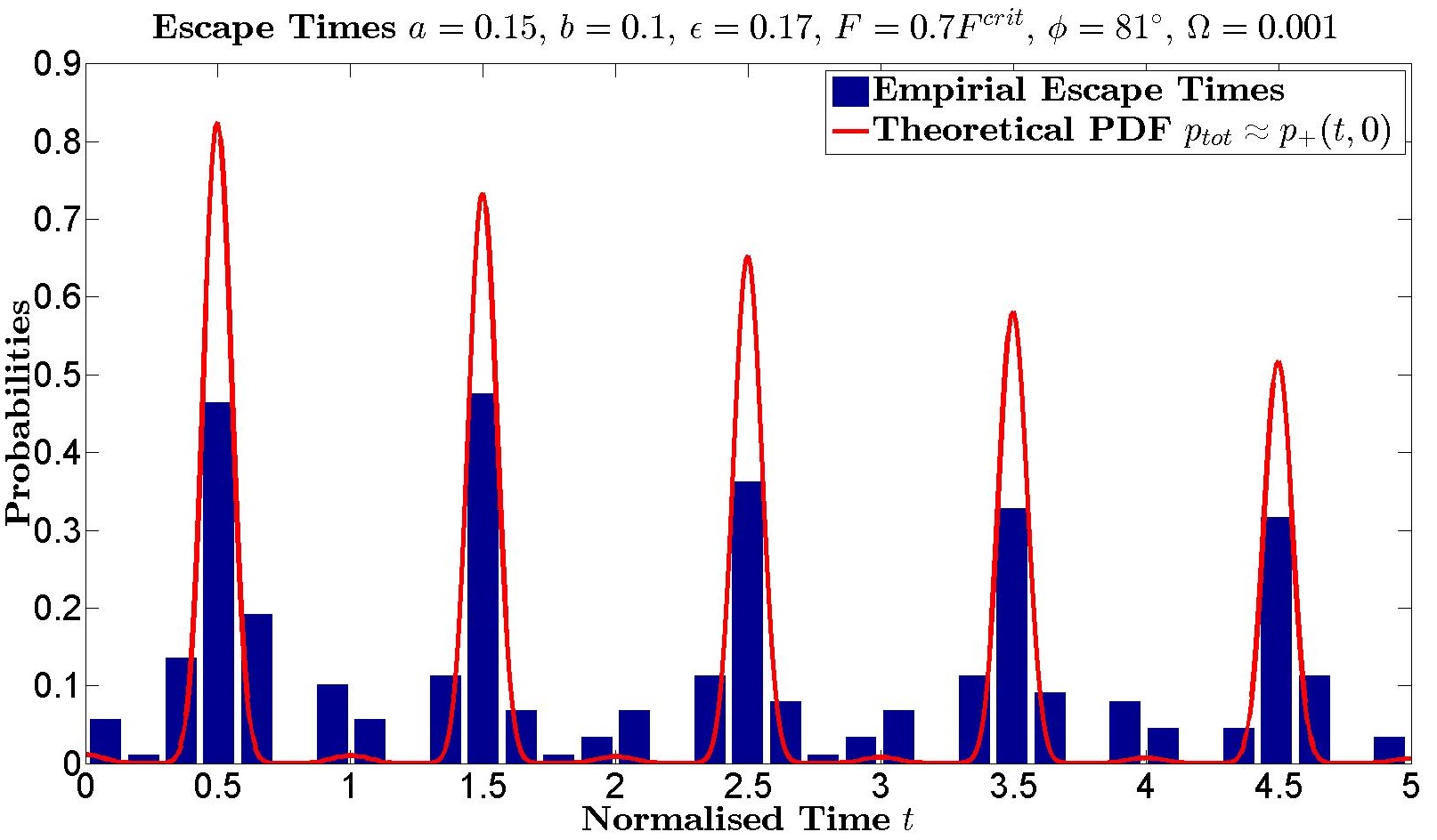}}
\caption{The $p_{tot}\approx p_+(t,0)$ is not a good approximation here.}
\label{chap_8_g75_p81_e17_pdf}
\end{figure}

\begin{figure}[H]
\centerline{\includegraphics[scale=0.35]
{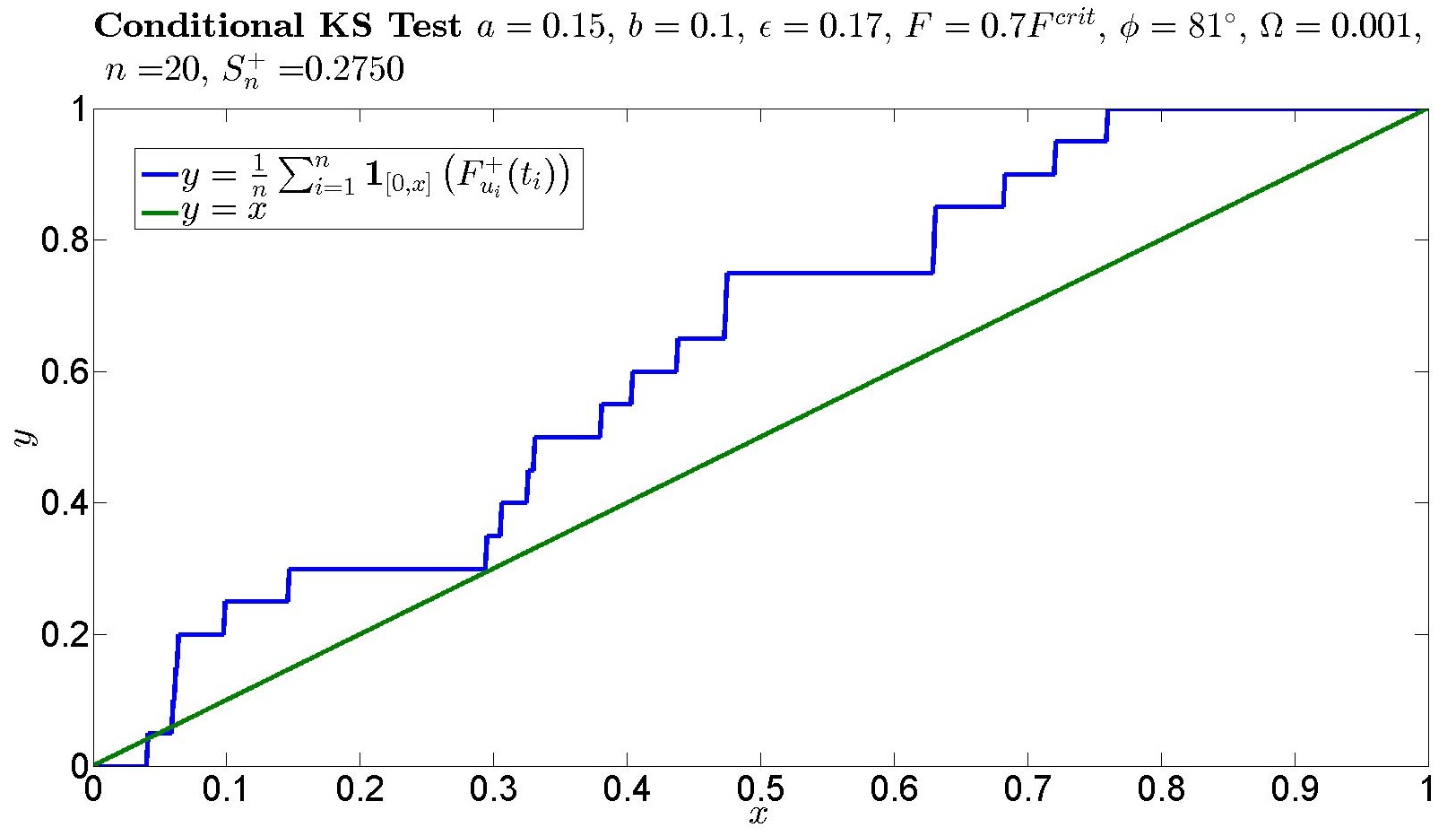}}
\caption{
This is a KS test on the data in Figure \ref{chap_8_g75_p81_e17_pdf}.
The conditional null hypothesis can be reasonably accepted.
Note that $\epsilon=0.17$, $\phi=81^\circ$, $n=20$ and $S^+_n=0.2750$. $Q(\sqrt{n}S_n^+)=0.9029$. 
}
\label{chap_8_g81_p81_e17_m2}
\end{figure}

\begin{figure}[H]
\centerline{\includegraphics[scale=0.35]{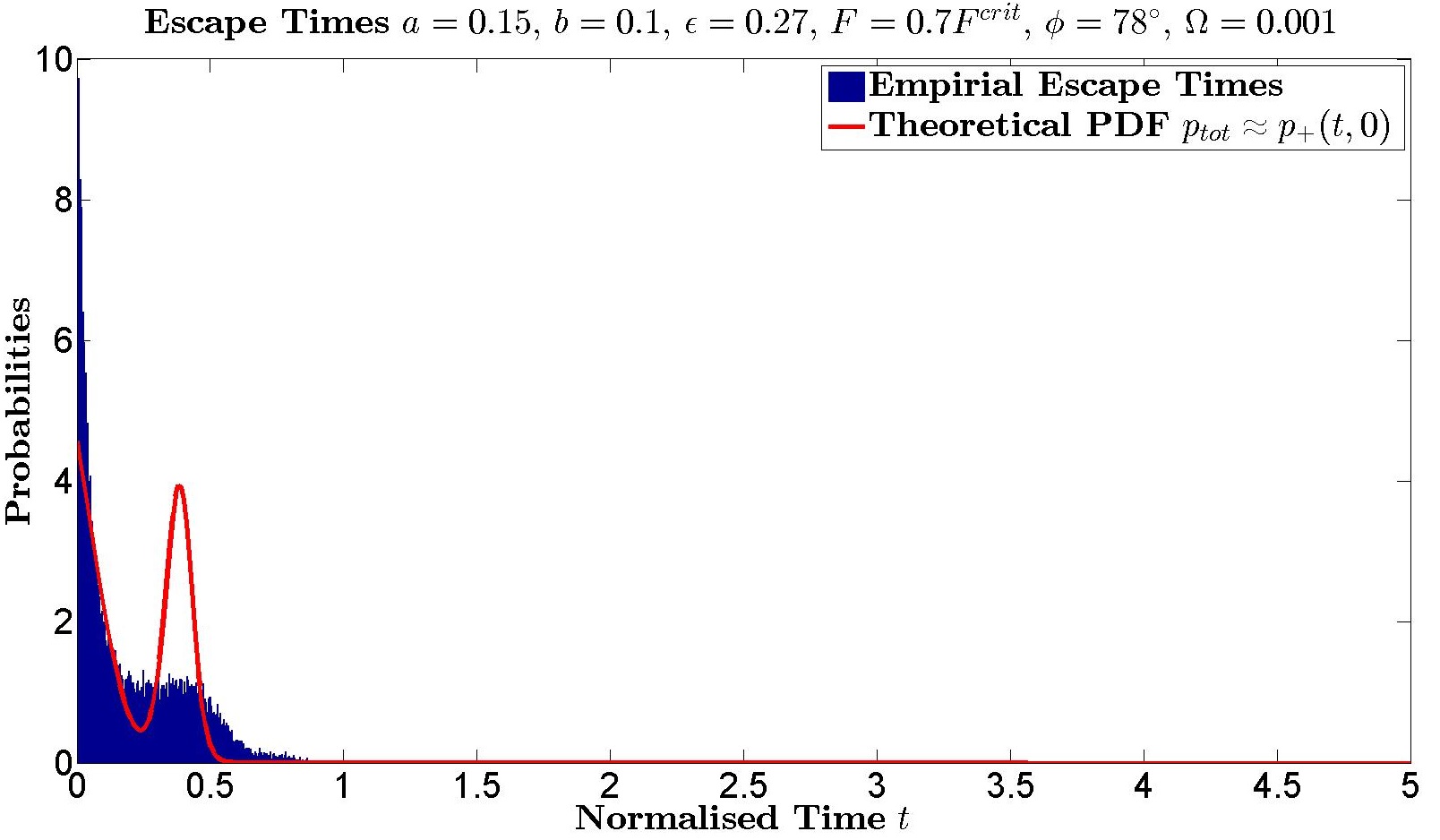}}
\caption{The $p_{tot}\approx p_+(t,0)$ is not a good approximation here.}
\label{chap_8_g75_p78_e27_pdf}
\end{figure}

\begin{figure}[H]
\centerline{\includegraphics[scale=0.35]
{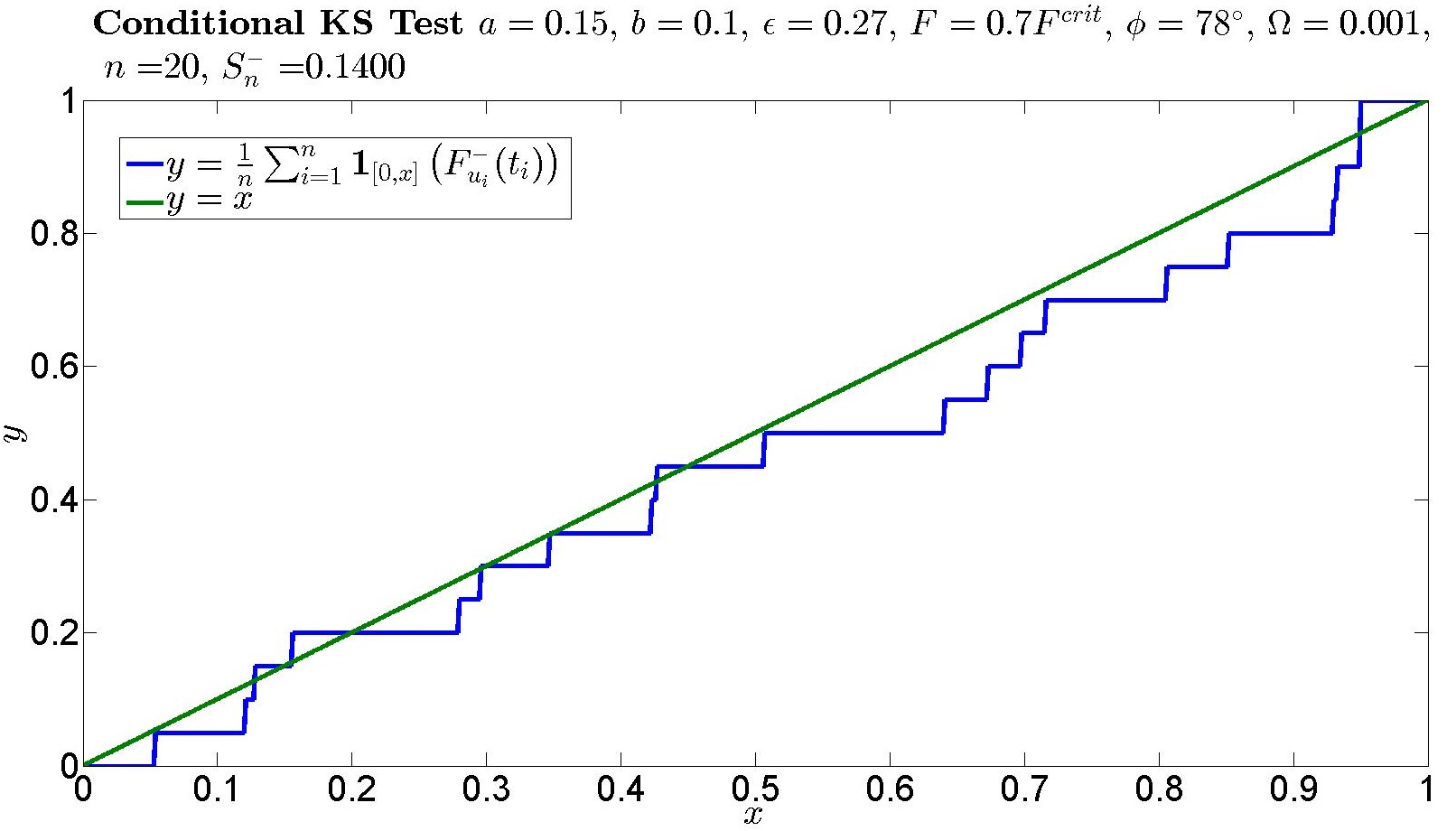}}
\caption{
This is a conditional KS test on the data in Figure \ref{chap_8_g75_p78_e27_pdf}.
The conditional null hypothesis can be reasonably accepted.
Note that $\epsilon=0.27$, $\phi=78^\circ$, $n=20$ and $S^-_n=0.1400$.
Also note that $Q(\sqrt{n}S_n^-)= 0.1720$. 
}
\label{chap_8_g81_p78_e27_m1}
\end{figure}

\subsection{Interpretation of the Sparse Data Analysis}
\label{conclusion_sparse_no_good_pdf}
The aim of Sparse Data Analysis is to see how the conditional KS test performs even if less data is available. 
This is done by looking at two cases where $p_{tot}\approx p_+(t,0)$ is not a good approximation and implementing the conditional KS test on them after artificially making the data sparse.

Consider the case for the parameters in Figure \ref{chap_8_g75_p81_e17_pdf}. 
Figure \ref{chap_8_g75_p81_e17_pdf} is an example of when the noise is so small there is very little escape times being detected in the range $[0,5T]$. 
The $p_{tot}\approx p_+(t,0)$ is not a good approximation here. 
In Figure \ref{chap_8_g81_p81_e17_m2} the conditional KS test was performed on the data in 
\ref{chap_8_g75_p81_e17_pdf} and the distance between the two functions is small. 
This means we can accept the conditional null hypothesis even when there is fewer data and $p_{tot}\approx p_+(t,0)$ is not a good approximation. 

Now consider the case for the parameters in Figure \ref{chap_8_g75_p78_e27_pdf}. 
The noise is so large $p_{tot}\approx p_+(t,0)$ is no longer a good approximation. 
But in Figure \ref{chap_8_g81_p78_e27_m1} the conditional KS test was performed on the data in Figure \ref{chap_8_g75_p78_e27_pdf}. 
Again this is an example of us being able to accept the conditional null hypothesis even if $p_{tot}\approx p_+(t,0)$ is not a good approximation. 

Only 20 escape times were implemented in the conditional KS test and the conditional null hypothesis can still be accepted with a reasonable degree of certainty. 
But the $p_{tot}\approx p_+(t,0)$ was not a good approximation for the empirical PDF of escape times. 
These are examples of the conditional KS test giving reasonable conclusions even when there are sparse data.
It also shows that the conditional KS test can still be used even if there is no  good approximation of the PDF of escape times.

Back in Chapter \ref{chap_approx_pdf} we approximated $m_-(u)$ and $m_+(u)$ by 
\begin{align*}
m_-(u)&\approx\delta\left(u-T/2\right)\\[0.5em]
m_+(u)&\approx
\frac{1}{2}\delta\left(u\right)
+
\frac{1}{2}\delta\left(u-T\right).
\end{align*}
Although the escape times (represented as dots on a scatter graph) tend to cluster around $u=0$, $u=0.5$ and $u=1$ there are spread around them. 
As the noise levels $\epsilon$ increases the spread around $u=0$, $u=0.5$ and $u=1$ would increase and $p_{tot}\approx p_+(t,0)$ would stop to be a good approximation. 
Despite this the conditional KS test still shows sensible results, in that we can accept the conditional null hypothesis.

\section{Remarks on Analysis of Stochastic Resonance}
There are a few subtleties and setbacks to the analysis which is worth mentioning here. 

\subsection{Remarks on Implementing the Conditional KS Test}
Notice that all the theories developed about the KS Test were based on the assumption that the null hypothesis is true. 
This means strictly speaking a small KS statistic, that is a small $S_n^-$ or $S_n^+$ does not immediately allow us to accept the null hypothesis but good reasons not to reject it. 
Also when there were many transitions, that is for large $n$, the terms $Q(\sqrt{n}S_n^-)$ and $Q(\sqrt{n}S_n^+)$ were also calculated.
The smaller  $Q(\sqrt{n}S_n^-)$ and $Q(\sqrt{n}S_n^+)$ are the more confidence we have in not rejecting the null hypothesis. 
This is because 
for very large $n$, we would expect
\begin{align*}
\lim_{n\rightarrow\infty}\sqrt{n}S_n^-=0
\quad \text{and} \quad 
\lim_{n\rightarrow\infty}\sqrt{n}S_n^+=0
\end{align*}
so the smaller $Q(\sqrt{n}S_n^-)$ and $Q(\sqrt{n}S_n^+)$ are the more certain we are in not rejecting the null hypothesis. 

\subsection{Remarks on Adiabatic Approximation}
Notice that in the PDFs $p_-(t,u)$, $p_+(t,u)$ and $p_{tot}(t)$ expressions for the escape rates $R_{-1+1}(t)$ and $R_{+1-1}(t)$ were required. 
These rates were also required for the conditional KS test. 
Strictly speaking these rates are dependent on the driving frequency $\Omega$, 
but we stress that these rates were calculated using Kramers' formula as though the particle is escaping from a static potential. 
This is the adiabatic approximation where an oscillatory potential is approximated by a static potential. 
Considerations for whether the adiabatic approximation would fail in our calculations were done back in Chapter \ref{adiabatic_parameters}

It is worth summarising all the approximations which the analysis of the data have been based.
There is the small noise approximation and slow forcing approximation from Kramers' formula, the adiabatic approximation and the perfect phase approximation where $p_{tot}$ is approximated by  $p_{tot}\approx p_{tot}(t,0)$.

\chapter*{Conclusion}
\addcontentsline{toc}{chapter}{Conclusion}

\section*{Outline of Results}
\addcontentsline{toc}{section}{Outline of Results}

In this thesis we have considered the following problem. 
Let $X^\epsilon_t$ be a stochastic process in $\mathbb{R}^2$ which is described by  the SDE 
\begin{align*}
dX^\epsilon_t=b\left(X^\epsilon_t,t\right)dt+\epsilon\,dW_t
\end{align*}
and the drift term $b(\cdot,\cdot)$ is expressed by  
\begin{align*}
b(x,t)=-\nabla V_0 (x) + F\cos \Omega t 
\end{align*}
where $V_0:\mathbb{R}^2\longrightarrow \mathbb{R}$ is a time independent function, the unperturbed potential, with two metastable states, and two pathways between these states. The $F\in\mathbb{R}^r$ is the magnitude of the forcing and $\Omega$ is the driving frequency. 
Our aim was to see characteristics of the trajectory $X^\epsilon_t$ which only depends on the qualitative structure of $V_0$, that is the existence of two metastable states and two pathways. 

For concreteness we considered a model, which we call the  Mexican Hat Toy Model
\begin{align*}
V_0(x,y)=\frac{1}{4}r^4-\frac{1}{2}r^2-ax^2+by^2
\quad \text{where} \quad r=\sqrt{x^2+y^2}.
\end{align*}
The magnitude and angle of the forcing are given by
\begin{align*}
F=\sqrt{F_x^2+F_y^2}
\quad \text{and} \quad 
\phi=\tan^{-1}\left(\frac{F_y}{F_x}\right).
\end{align*}
The angle $\phi$ and noise level $\epsilon$ were varied. 
At $\phi=0$ the wells were alternating, that is one well is higher than the other, in the sense that it is easer to jump from one well to the other than vice versa.
At $\phi=90^\circ$ the wells are synchronised, that is both wells are always at the same height but the heights of the barrier for the two paths is alternating.

A potential with two pathways has never been considered before in the context of stochastic resonance. 
We studied it using approximation techniques and direct simulations. 
In an adiabatic regime the Freidlin-Wentzell theory allows one to give analytical solutions of the jump type distributions asymptotically in this regime. 
This theory predicted the appearance of additional resonance peaks at half the frequency when the angle approaches $\phi=90^\circ$.

We simulated $X_t^\epsilon$ for different values of $\phi$ and $\epsilon$
and computed for the values of angle increasing from  $\phi=0$ to $\phi=90^\circ$  the six measures $M_1$, $M_2$, $M_3$, $M_4$, $M_5$ and $M_6$  as functions of the noise level. 
The first major surprise was that the graphs showed less and less pronounced minima (or maxima) and hence suggests that the phenomena of stochastic resonance gets less and less pronounced, see Chapter \ref{conclusion_six_measures}.
The effect of resonance seems to disappear overall.

However, considering the path $X_t^\epsilon$ itself, one sees  that there may be nevertheless some synchronisation, see Figure        \ref{chap_8_path_p0},
\ref{chap_8_path_p84} and 
\ref{chap_8_path_p90}.
We carefully controlled our simulation and checked it for consistency, see Chapter \ref{conclusion_parameter_selection}.
To properly quantify synchronisation we considered the histograms of the escape times, which to our knowledge has been not considered thoroughly before. 
The histograms showed a clear periodicity and also the emergence of peaks at the Double Frequency for increasing angle. 
For a quantitative consideration we assume that the entrance time is in perfect phase 
(this is when $m_-(u)$ and $m_+(u)$ \emph{can} be approximated by Dirac delta functions). 
This gives for several cases good quantitative and in general good qualitative agreement with the combined adiabatic and small noise approximation. A more sophisticated analysis based on a Kolmogorov-Smirnov test developed here shows that this approximation works for a larger range of parameters where the approximation of the perfect phase of the entrance time is not appropriate
(this is when $m_-(u)$ and $m_+(u)$ \emph{cannot} be approximated by Dirac delta functions)
see Chapter \ref{conclusion_sparse_no_good_pdf}. 
Summarizing, the theoretical and the simulation results are in very good agreement.
We want to stress that in the comparison no free parameters were present and so no fitting took place. 

The fact that the six measures are blind can be explained using Markov chain models approximating the SDE. As one expects from large deviation theory, for small noise and in an adiabatic regime the SDE can be approximated by a continuous time Markov chain.
In this Markov chain model we showed that the invariant measures are constant when $\phi=90^\circ$. 
Hence we expect that the invariant measure gives in the diffusion case equal weights to the left and the right well. 
Together, this gives us the following qualitative picture of the dynamics for any angle. 
At a fixed time the probability that one sees a jump from the left to the right well or vice versa has the same probability. 
However, conditioned on the phase and the direction of the last jump, for concreteness assume that it was at phase $u$ and from the left to the right
(that is to say the particle entered the well at time $u$)
the next jump will be at  phase which is near to a multiple of $T/2$
(that is to say the particle will leave the well near the times $t=nT/2$ where $n$ is an integer). 
The jump rates will be given by the height of the potential barriers.

At $\phi=90^\circ$, the path $X_t^\varepsilon$ and $-X_t^\varepsilon$ will appear with the same probability if one starts in the invariant measure. 
This explains why the six measures are all insensitive in this case. 
The equilibration happens because the process will skip some of the jump opportunities and in this way the left-right synchronization will get lost quickly.

This new phenomena we discovered has added an additional motivation to the observation of Hermann, Imkeller, Pavlyukevich, Berglund and Gentz that the appropriate 
consideration has to be on the path level. 
Averaged quantities like the six measures can be very misleading and masking the real behaviour of the system. 
The escape time distribution shows a clear signal of stochastic resonance in accordance with the theoretical consideration. 
The presence of a two pathways manifests itself in an appearance of peaks at the Double Frequency. 
We showed that adiabatic small noise approximation gives a good statistical model.
We demonstrated that this appearance can be detected also when only a limited number of transitions is available. 
Our analysis provides us with a clear footprint indicating the existence of a second pathway. 
The angle dependence of our result should also allow us to predict the orientation of the saddles with respect to the wells.

\section*{Further Studies}
\addcontentsline{toc}{section}{Further Studies}

The invariant measures studied in this thesis are for a simple two state model.
One could try to generalise this to continuous states, that is a space-time phase PDF for the position of the particle could be derived.

The conditional KS test gives us confidence that one could develop statistical inference, using maximum likelihood for example, to develop a statistic test to estimate the basic parameters of the system, if they are unknown to us.
Instead of the approximation $p_{tot}\approx p_+(t,0)$ used in parts of the consideration, a better approximation may be found by studying the PDFs of $m_-(u)$ and $m_+(u)$ theoretically and statistically. 
 

A theory beyond adiabatic approximation may be developed for very slow to fast frequencies. 
Higher order of approximation to the escape times than Kramers' formula could be studied. 
Analytic and theoretical developments to go beyond adiabatic approximation and potential theory may be a real mathematical challenge.
But experimental simulations may provide an idea of what this new theory may be like.

\appendix 


\chapter{Conventions in Defining the SDEs, Potential, Time Dependency and Forcing}\label{appendix_potential}
This is more of a clarification on the notation being used. 
In this thesis only two toy model potentials are studied. These in their most unperturbed forms are denoted by 
\begin{align*}
 V_0(x)&=\frac{x^4}{4}-a\frac{x^2}{2}\\
V_0(x,y)&=\frac{1}{4}r^4-\frac{1}{2}r^2-ax^2+by^2
\end{align*}
where $r=\sqrt{x^2+y^2}$, $a>0$ and $b>0$ 
which when perturbed by a force are denoted by 
\begin{align*}
V_F(x)&=\frac{x^4}{4}-a\frac{x^2}{2}+Fx\\
&=V_0(x)+Fx\\
V_F(x,y)&=\frac{1}{4}r^4-\frac{1}{2}r^2-ax^2+by^2+F_xx+F_yy\\
&=V_0(x,y)+F_xx+F_yy\\
&=V_0(x,y)+\mathbf{F}\cdot\mathbf{x}
\end{align*}
and when given a periodic forcing are denoted by 
\begin{align*}
V_t(x,F)&=\frac{x^4}{4}-a\frac{x^2}{2}-Fx\cos\Omega t\\
&=V_0(x)-Fx\cos\Omega t\\
V_t(x,y,F_x,F_y)&=\frac{1}{4}r^4-\frac{1}{2}r^2-ax^2+by^2-F_xx\cos \Omega t-F_yy\cos \Omega t\\
&=V_0(x,y)-F_xx\cos \Omega t-F_yy\cos \Omega t\\
&=V_0(x,y)-\mathbf{F}\cdot\mathbf{x}\cos \Omega t.
\end{align*}
This is so that the SDEs can be written in the form
\begin{align*}
dX^\epsilon_t=-\nabla V_t\,dt+\epsilon dw 
\end{align*}
which when expanded can be written as 
\begin{align*}
dx&=\left[-\frac{\partial V_0}{\partial x}+F_x\cos \Omega t \ \right]dt+\epsilon \ dw_x\\
dy&=\left[-\frac{\partial V_0}{\partial y}+F_y\cos \Omega t \ \right]dt+\epsilon \ dw_y
\end{align*}
meaning more details about the system can be quickly seen in the notation. 
This also implies that the SDEs are always defined with a negative forcing. 
When the critical points of the system are being studied (in Chapter \ref{chap_mexican_hat_toy_model} for example) we can study the critical points with a positive force and $V_F$ would be an appropriate notation to use. 
Using $V_0$, $V_F$ and $V_t$ may seem like an abuse of notation, but if anything specific is being referred to, we can denote
$V_{F=F^{crit}}$ for example. 
When the most general expression for a potential $V$ is being used, it should be deduced from context whether $V=V_0$, $V=V_F$ or $V=V_t$ is being referred to. 

Note also that for a stochastic process in $\mathbb{R}^r$ which is described by the SDE
\begin{align*}
\dot{X^\epsilon_t}=-\nabla V+F\cos(\Omega t)+\epsilon \dot{W_t}
\end{align*}
and the magnitude of the forcing is sometimes denoted by 
\begin{align*}
F=\sqrt{F_1^2+F_2^2+\ldots+F_r^2}. 
\end{align*}
Again this may seem like an abuse of notation, but it should be clear from context whether $F$ is a vector or scalar.

\chapter{Further Numerical Methods}
\label{append_further_methods}

\section{Numerical Methods for measuring Escape Times}\label{appendix_escape_times}
The Markov Chain takes the values $Y^\epsilon_t=\pm1$. 
But in simulations time is discrete with a time step $t_{step}$, that is 
\begin{align*}
0, t_{step}, 2t_{step}, \ldots, Nt_{step}.
\end{align*}
The reduction from the diffusion $X^\epsilon_t$ to the Markov Chain at the $(n+1)$th time step is actually given by 
\begin{align*}
Y^\epsilon_{(n+1)t_{step}}
=
\left\{
\begin{array}{ccc}
-1 & \text{if} & \left|X^\epsilon_{nt_{step}}-w_l(nt_{step})\right|<R\\[0.8em]
+1 & \text{if} & \left|X^\epsilon_{nt_{step}}-w_r(nt_{step})\right|<R\\[0.8em]
Y_{nt_{step}}^\epsilon&\quad\text{if otherwise}
\end{array}
\right. 
\end{align*}
which is slightly different from the way $Y^\epsilon
_t$ was defined in Chapter \ref{chapter_oscil_times} (see page \pageref{chapter_oscil_times_R}). 
This is so that the definition of $Y^\epsilon_t$ was easier to write down theoretically, such that the sets
\begin{align*}
\left\{t:\left|X^\epsilon_t-w_l(t)\right|\leq R\right\}
\quad \text{and} \quad 
\left\{t:\left|X^\epsilon_t-w_r(t)\right|\leq R\right\}
\end{align*}
are compact sets given the continuity of $X^\epsilon
_t$, $w_l(t)$ and $w_r(t)$.  
This meant 
\begin{align*}
Y^\epsilon_t=\left\{
\begin{array}{ccc}
-1&\text{if}&\left|X^\epsilon_t-w_l(t)\right|\leq R\\[0.5em]
+1&\text{if}&\left|X^\epsilon_t-w_r(t)\right|\leq R\\[0.5em]
Z&\text{if neither}&
\end{array}
\right.
\end{align*}
then $Y^\epsilon_t$ would be easier to define for $t\notin \left\{t:\left|X^\epsilon_t-w_l(t)\right|\leq R\right\} \cup \left\{t:\left|X^\epsilon_t-w_r(t)\right|\leq R\right\}$. 
But alternatively if we had 
\begin{align*}
Y^\epsilon_t=\left\{
\begin{array}{ccc}
-1&\text{if}&\left|X^\epsilon_t-w_l(t)\right|< R\\[0.5em]
+1&\text{if}&\left|X^\epsilon_t-w_r(t)\right|< R\\[0.5em]
Z&\text{if neither}&
\end{array}
\right.
\end{align*}
then 
\begin{align*}
\left\{t:\left|X^\epsilon_t-w_l(t)\right|< R\right\}
\quad \text{and} \quad 
\left\{t:\left|X^\epsilon_t-w_r(t)\right|< R\right\}
\end{align*}
would be open sets and $Y^\epsilon_t$ would be harder to define for $t\notin \left\{t:\left|X^\epsilon_t-w_l(t)\right|< R\right\} \cup \left\{t:\left|X^\epsilon_t-w_r(t)\right|< R\right\}$ which is the neither case.  
Nevertheless the simulations should gloss out all these details.

\section{Numerical Methods for calculating Fourier Transform and Linear Response}\label{appendix_linear_response}
Fourier Transforms are involved in finding the linear response. 
The trajectory of the particle is in theory a continuous object, but in practice when simulations are done it is a finite discrete object.
The exact mechanism of obtaining the linear response from a simulated trajectory is now being discussed. 

When the  trajectory is being numerically realised it is a finite discrete set. Let the $x$ (or $y$) coordinate of the particle at time $nt_{step}$ where $0\leq n \leq (N-1)t_{step}$ be denoted by $X_{nt_{step}}$. This gives rise to the set 
\begin{align*}
X&=\left\{
X_{0}, X_{t_{step}}, X_{2t_{step}}, X_{3t_{step}}, \ldots, X_{(N-1)t_{step}}
\right\}\\
&=\left\{
x_0, x_1, x_2, x_3, \ldots, x_{N-1}
\right\}
\end{align*}
where $x_n=X_{nt_{step}}$ etc. 
Notice that time is discrete here. 
When this is Discrete Fourier Transformed (being quickly implemented by the Fast Fourier Transform algorithm) it is denoted by 
\begin{align*}
\tilde{X}&=\left\{
\tilde{X}_{0}, \tilde{X}_{\omega_{step}}, \tilde{X}_{2\omega_{step}}, \tilde{X}_{3\omega_{step}}, \ldots, \tilde{X}_{(N-1)\omega_{step}}
\right\}\\
&=\left\{
\tilde{x}_0, \tilde{x}_1, \tilde{x}_2, \tilde{x}_3, \ldots, \tilde{x}_{N-1}
\right\}
\end{align*}
where $\tilde{x}_n=\tilde{X}_{n\omega_{step}}$ etc and the transform is given by 
\begin{align*}
\tilde{x}_k=\sum_{n=0}^{N-1}x_n e^{-2\pi i kn/N}
\end{align*}
and the following relation is used
\begin{align*}
\omega_{step}=\frac{1}{(N-1)t_{step}}
\end{align*}
which is the highest detectable frequency divided by the number of steps. If we want to find the linear response at driving frequency $\Omega$, then $\Omega$ needs to be approximated by a finite number of $\omega_{step}$ as in 
\begin{align*}
\frac{\Omega}{2\pi}\approx n \omega_{step}
\end{align*}
and the linear response at this driving frequency is then given by 
\begin{align*}
X^\Omega_{lin}=2\times\left|\tilde{X}_{n\omega_{step}}\right|.
\end{align*}
Notice the factor of $2$ being used here.
Suppose that the trajectory can be approximated by
\begin{align*}
X_t^\epsilon\approx A\cos(\Omega t +\phi)
\end{align*}
then a good approximate expression for $A$ and $\phi$ would be 
\begin{align*}
A\approx X^\Omega_{lin}
\quad\text{and}\quad
\phi\approx\text{arg} \left( \tilde{X}_{n\omega_{step}} \right )=\tan^{-1}
\left\{
\frac{\text{Im}\left(\tilde{X}_{n\omega_{step}}\right)}{\text{Re}\left(\tilde{X}_{n\omega_{step}}\right)}
\right\}
\end{align*}
where $\phi$ is the angle of the complex number $\tilde{X}_{n\omega_{step}}$.

\section{Numerical Methods for calculating $M_5$ and $M_6$}
\label{appendix_six_measures}
Here we present how we computed $M_5$ and $M_6$ numerically.
This is how $M_5$ and $M_6$ are calculated in theory 
\begin{align*}
M_5&=\int_0^T
\phi^-(t)\ln\left(\frac{\phi^-(t)}{\overline{\nu}_-(t)}\right)+
\phi^+(t)\ln\left(\frac{\phi^+(t)}{\overline{\nu}_+(t)}\right)
dt\\
M_6&=\int^T_0
-\overline{\nu}_-(t)\ln\overline{\nu}_-(t)
-\overline{\nu}_+(t)\ln\overline{\nu}_+(t)\,
dt
\end{align*}
where 
\begin{align*}
\phi^-(t)&=
\left\{
\begin{array}{c}
1 \quad \text{if} \quad mod(t,T)\leq T/2\\
0 \quad \text{if} \quad mod(t,T)> T/2
\end{array}
\right.\\[0.5em]
\phi^+(t)&=
\left\{
\begin{array}{c}
0 \quad \text{if} \quad mod(t,T)\leq T/2\\
1 \quad \text{if} \quad mod(t,T)> T/2. 
\end{array}
\right.
\end{align*}
When the invariant measures are generated numerically they are finite discrete objects described by 
\begin{align*}
\nu_-&=\left\{\nu^-_1, \nu^-_2, \cdots, \nu^-_N\right\}\\
\nu_+&=\left\{\nu^+_1, \nu^+_2, \cdots, \nu^+_N\right\}.
\end{align*}
The real invariant measure were close to zero sometimes and in the numerical approximation they became actually zero or even negative which lead to numerical artefacts.
Note that 
\begin{align*}
\lim_{x\longrightarrow0}\ln\left(\frac{1}{x}\right)=\infty
\quad \text{and} \quad 
\lim_{x\longrightarrow0}x\ln\left(x\right)=0.
\end{align*}
Define
\begin{align*}
\nu_-^{lim}&=\min_{\substack{i=1,2,\ldots,N\\\nu^-_i>0}}\left\{\nu^-_1, \nu^-_2, \cdots, \nu^-_N\right\}\\
\nu_+^{lim}&=\min_{\substack{i=1,2,\ldots,N\\\nu^+_i>0}}\left\{\nu^+_1, \nu^+_2, \cdots, \nu^+_N\right\}. 
\end{align*}
The quantities $M_5$ and $M_6$ are computed numerically in the following way  
\begin{align*}
M_5&=\sum_{\substack{i\leq\frac{N}{2}\\\nu^-_i>0}}t_{step}\ln\left(\frac{1}{\nu^-_i}\right)
+
\sum_{\substack{i\leq\frac{N}{2}\\\nu^-_i\leq0}}t_{step}\ln\left(\frac{1}{\nu_-^{lim}}\right)
+
\sum_{\substack{i>\frac{N}{2}\\\nu^+_i>0}}t_{step}\ln\left(\frac{1}{\nu^+_i}\right)
+
\sum_{\substack{i>\frac{N}{2}\\\nu^+_i\leq0}}t_{step}\ln\left(\frac{1}{\nu^{lim}_+}\right)\\[0.5em]
M_6&=\sum_{\substack{i=1,2,\cdots,N\\\nu^-_i>0}}\nu^-_i\ln(\nu^-_i)(-t_{step})\quad+\quad
\sum_{\substack{i=1,2,\cdots,N\\\nu^+_i>0}}\nu^+_i\ln(\nu^+_i)(-t_{step}).
\end{align*}

\chapter{Further Commentary on Sparse Data Analysis}

\section{Examples of Oversampling}
\label{appendix_over_sample}
Subjectively one may think that Figures 
\ref{chap_8_g80_p87_e21_m2} and \ref{chap_8_g80_p90_e21_m1}
are so bad the conditional null hypothesis may be rejected. 
This is actually an example of oversampling, where too many transitions were used in the implementation of the conditional KS test. 
We know the PDF we are fitting is not the real PDF but an approximation in the limit of small noise and adiabatic forcing. Hence if one has enough data points this should be picked up and the conditional KS test will refuse the approximate PDF as it will pick up even slight deviation from the real PDF.
When $n=20$ are used we have the following. 
\begin{figure}[H]
\centerline{\includegraphics[scale=0.35]{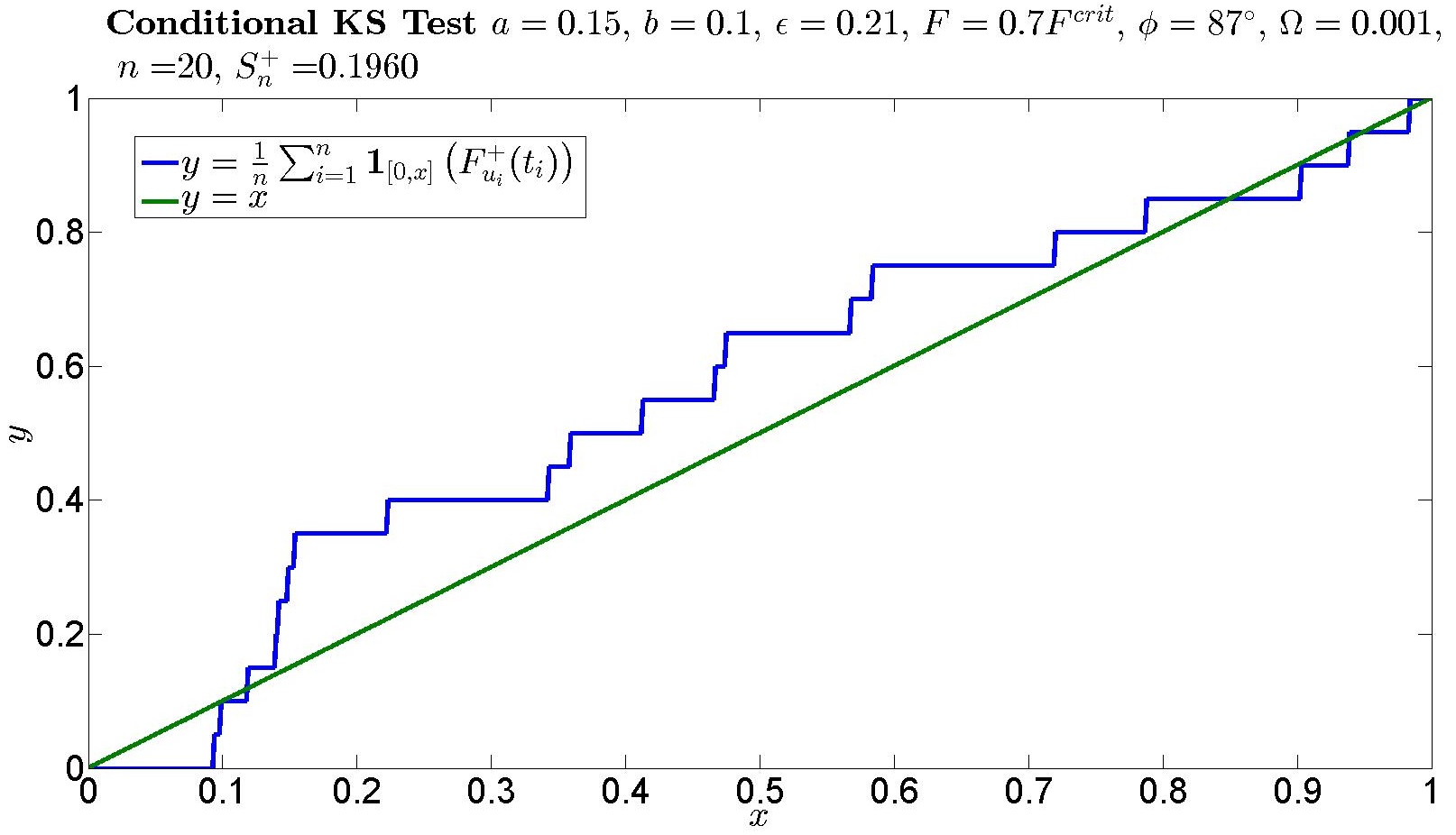}}
\caption{
This is Figure \ref{chap_8_g80_p87_e21_m2} redone with 20 transitions. 
Note that $\epsilon=0.21$, $\phi=87^\circ$, $n=20$, $S^+_n=0.1960$.
}\label{chap_8_g81_p87_e21_m2}
\end{figure}

\begin{figure}[H]
\centerline{\includegraphics[scale=0.35]{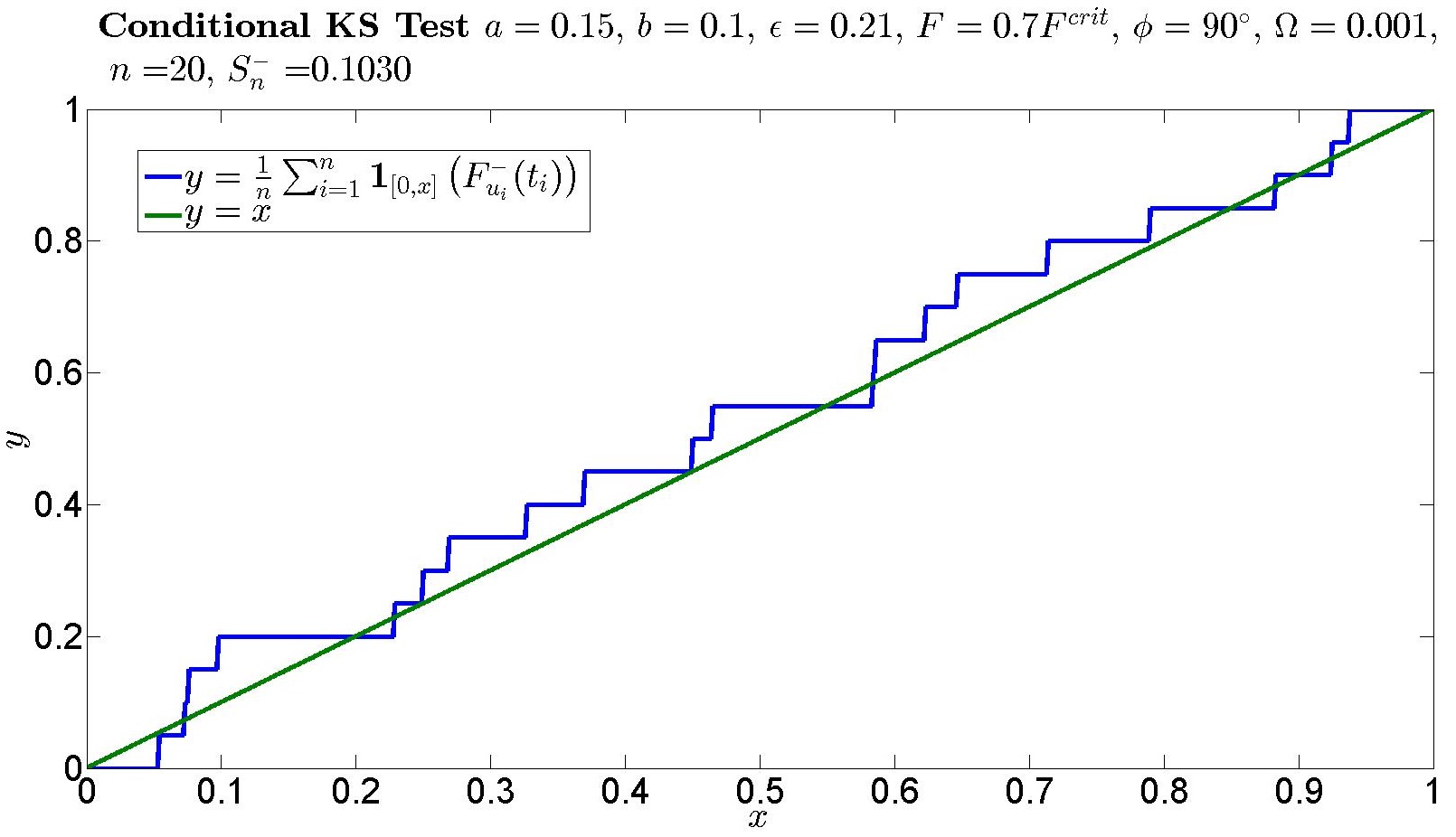}}
\caption{
This is Figure \ref{chap_8_g80_p90_e21_m1} redone with 20 transitions. 
Note that $\epsilon=0.21$, $\phi=90^\circ$, $n=20$, $S^-_n=0.1030$.
}\label{chap_8_g81_p90_e21_m1}
\end{figure}

\section{Empirical CDF}
Consider Figure \ref{chap_8_g81_p81_e17_m2}. Notice that the empirical CDF is on top on the $y=x$ line. There were enough data to give 10 more realisations of the random variable $S_n^+$. Note that all ten of these $S_n^+$ with $n=20$ were calculated from 200 transitions divided into ten sets for the ten $S_n^+$. This meant 10 more versions of the Figure \ref{chap_8_g81_p81_e17_m2} were plotted. Out of these 10 plots, one had the empirical CDF to the bottom of the $y=x$ line and one had roughly half the empirical CDF above and below the $y=x$ line. The noise level was very low at $\epsilon=0.17$, which meant the escape times were very long with a very large spread, which gave rise to data looking unreasonable. Only 200 transitions were detected which is significantly less than other parameters, which meant only 10 realisations of the $S_n^+$ random variable was possible. No further conclusions are drawn here.


\newpage
\addcontentsline{toc}{chapter}{References}
\bibliographystyle{ieeetr}
\bibliography{reportreferences}


\end{document}